\documentclass[english]{amsart}

\usepackage{epsfig}
\usepackage{amsmath}
\usepackage{amssymb}
\usepackage{amscd}
\usepackage{graphicx}
\usepackage{color}
\usepackage{verbatim}
\usepackage{enumerate}
\usepackage{bbm}
\usepackage{tikz}
\usepackage{verbatim}
\usepackage{dsfont}
\usepackage{url}
\usepackage{soul}
\usepackage{babel}
\usepackage{rotating}
\usepackage{amssymb}
\newcommand{\longnearrow}{
        \begin{turn}{20}
               \raisebox{-1ex}{$\Longrightarrow$}
        \end{turn}
}
\newcommand{\longsearrow}{
        \begin{turn}{-20}
                \raisebox{-1ex}{$\Longrightarrow$}
        \end{turn}
}

\newtheorem{thm}{Theorem}[section]
\newtheorem{lem}[thm]{Lemma}
\newtheorem{prop}[thm]{Proposition}
\newtheorem{fact}[thm]{Proposition}

\newtheorem{que}[thm]{Question}
\newtheorem{cor}[thm]{Corollary}
\newtheorem{defn}[thm]{Definition}

\theoremstyle{remark}
\newtheorem{rem}[thm]{Remark}

\newtheorem{exam}[thm]{Example}

\def \N {\mathbb N}
\def \S {\mathcal S}
\def \F {\mathcal F}

\def \D {\mathcal D}
\def \B {\mathcal B}

\def \K {\mathcal K}

\def \CT {\mathcal T}
\def \CS {\mathcal S}
\def \T {\mathsf T}
\def \TTT {\boldsymbol{\mathsf T}}
\def \Z {\mathbb Z}
\def \R {\mathbb R}

\def \Q {\mathcal Q}

\def \M {\mathcal M}
\def \mgx {\mathcal M_G(X)}
\def \mgxb {\mathcal M_G(\bar X)}
\def \megx {\mathcal M^{\mathsf e}_G(X)}

\def \mgl {\mathcal M_G(\Lambda^G)}
\def \P {\mathcal P}

\def \Fr {\mathsf{Fr}}

\def \Rep {\bar v}

\def \sq {sequence}

\def \xg {$(X,G)$}

\def \xmt {$(X,\Sigma,\mu,T)$}

\def \tl {topological}
\def \im {invariant measure}
\def \inv {invariant}
\def \ds {dynamical system}

\def \htop {h_{\mathsf{top}}}

\def \eps {\varepsilon}
\def \dist {\mathsf{dist}}

\title[Deterministic functions on amenable semigroups]{Deterministic functions on amenable semigroups and a generalization of the Kamae--Weiss theorem on normality preservation}
\author{Vitaly Bergelson}
\address{Department of Mathematics, Ohio State University, Columbus,
  OH 43210, USA}
\email{vitaly@math.ohio-state.edu}

\author{Tomasz Downarowicz}
\address{Faculty of Pure and Applied Mathematics, Wroc\l aw University
  of Science and Technology, Wybrze\.ze Wyspia\'nskiego 21, 50-370
  Wroc\l aw, Poland}
\email{Tomasz.Downarowicz@pwr.edu.pl}

\author{Joseph Vandehey}
\address{Department of Mathematics, The University of Texas at Tyler, 3900 University Blvd.
Tyler, TX 75799, USA}
\email{JVandehey@uttyler.edu}

\numberwithin{equation}{section}


\newcommand{\notet}[1]{\textcolor{red}{{#1}}}

\begin{document}
{\let\thefootnote\relax\footnote{\today}}
\thanks{The research of first two authors was supported by the NCN grant 2018/30/M/ST1/00061. Additionally, the research of the second author was supported by the Wroc\l aw University of Science and Technology grant 049U/0052/19.}
\keywords{Amenable semigroup action, deterministic function, normality-preserving set, subexponential complexity}

\subjclass[2010]{prim.: 37B05, 37C85, 37B10, sec.: 43A07, 20E07}

\begin{abstract}
A classical Kamae--Weiss theorem states that an increasing sequence $(n_i)_{i\in\N}$ of positive lower density is \emph{normality preserving}, i.e.\ has the property that for any normal binary sequence $(b_n)_{n\in\N}$, the sequence $(b_{n_i})_{i\in\N}$ is normal, if and only if $(n_i)_{i\in\N}$ is a deterministic sequence. Given a countable cancellative amenable semigroup $G$, and a F\o lner sequence $\F=(F_n)_{n\in\N}$ in $G$, we introduce the notions of normality preservation, determinism and subexponential complexity for subsets of $G$ with respect to $\F$, and show that for sets of positive lower $\F$-density these three notions are equivalent. The proof utilizes the apparatus of the theory of tilings of amenable groups and the notion of tile-entropy. We also prove that under a natural assumption on $\F$, positive lower $\F$-density follows from normality preservation. Finally, we provide numerous examples of normality preserving sets in various semigroups.
\end{abstract}

\maketitle

\tableofcontents

\section{Introduction}
The impetus for this paper comes from the desire to better understand and, if possible, extend to the context of general countable cancellative amenable semigroups, the Kamae--Weiss theorem \cite{W1,Kamae} (see also \cite{W2}) on normality preservation along deterministic sets.

In this paper we abide by the convention that $\N=\{1,2,\dots\}$. Given an integer $b\ge2$, a \sq\ $x=(x_n)_{n\in\N}\in\{0,1,\dots,b-1\}^\N$ is called normal to base $b$ if every word $w=\langle w_1,w_2,\dots,w_k\rangle\in\{0,1,\dots,b-1\}^k$ occurs in $x$ with frequency $b^{-k}$. Let us say that an increasing \sq\ $(n_i)_{i\in\N}\subset\N$ preserves normality if for any \sq\ $(x_n)_{n\in\N}$ that is normal to base $b$, the \sq\ $(x_{n_i})_{i\in\N}$ is also normal. Wall \cite{Wa} showed, using purely combinatorial techniques, that any periodic increasing sequence of integers preserves normality; Furstenberg \cite{Fu2} later reproved this result as a simple consequence of the theory of disjointness for measure preserving systems. Kamae and Weiss succeeded in characterizing the increasing sequences which preserve normality. Their theorem states, roughly speaking, that an increasing \sq\ of integers $(n_i)_{i\in\N}$ preserves normality if and only if it is deterministic and has positive lower density. (See also \cite{BV}.)

The notion of a deterministic sequence, which appears in the Kamae--Weiss theorem, has (at least) two equivalent definitions. One of them involves quasi-generic points for \im s on the shift space $\{0,1\}^\N$ and measure-theoretic entropy. We will freely identify an increasing \sq\ $(n_i)_{i\in\N}$ with its indicator function $y=\mathbbm1_{\{n_i:i\in\N\}}\in\{0,1\}^\N$. According to \cite[Definition 1.6]{W2}, a \sq\ $y\in\{0,1\}^\N$ is \emph{completely deterministic} if any \im, arising as an accumulation point of the \sq\ of averages $\bigl(\frac1n\sum_{i=0}^{n-1}\delta_{\sigma^i(y)}\bigr)_{n\in\N}$, has entropy zero under the action of the standard shift transformation $\sigma$ (here $\delta_z$ denotes the probability measure concentrated at $z$). The other definition is closely related to the concept of  subexponential subword complexity (see \cite[Lemma 8.9]{W2}): 

\emph{A \sq\ $y=(y_n)_{n\in\N}\in\{0,1\}^\N$ is completely deterministic if and only if for any $\eps>0$ there exists $k\in\N$ such that the collection of subwords of length $k$ appearing in $y$ can be divided in two families: the first family has cardinality smaller than $2^{\eps k}$, and the words from the second family appear in $y$ with frequencies summing up to at most $\eps$.}

Yet another approach to determinism, due to Rauzy \cite{Rauzy}, utilizes the concept of a noise function, essentially a measure of predictability. This approach will not be explored in this paper.
\medskip

While the Kamae--Weiss theorem has an elegant formulation for \sq s in $\N$, both the proof of the theorem (see \cite{W1,Kamae,W2}) and the definition of determinism, which is instrumental to it, are quite non-trivial. When one attempts to generalize this theorem to broader spaces than $\N$, one has, first of all, to properly define the notions of normality, normality along a subset, and determinism. A general approach to normality (which broadens the concept even for \sq s in $\N$) was developed in a recent article \cite{BDM}. In the current paper the authors make the next step and introduce and study in some depth the notions of normality along a set and that of determinism. Among other things, we show, skipping some technical details, that
\begin{enumerate}[(i)]
	\item determinism = normality preservation, and
	\item determinism = subexponential complexity.
\end{enumerate}
\medskip

Below is a more precise description of the main results obtained in this paper.
\medskip

$\bullet$ We study two types of deterministic real-valued functions on a countable cancellative amenable semigroup $G$: strongly deterministic and $\F$-deterministic (i.e.\ deterministic with respect to a fixed F\o lner \sq\ $\F=(F_n)_{n\in\N}$ in $G$). Strongly deterministic functions generalize classical ``deterministic \sq s'', i.e.\ functions on $\N$ arising by reading consecutive values of a continuous function along the orbit of point in a \ds\ with zero \tl\ entropy. The definition of $\F$-deterministic functions utilizes the notion of $\F$-quasi-generic points and measure-theoretic entropy. $\{0,1\}$-valued $\F$-deterministic functions generalize ``completely deterministic sets'' introduced by Weiss in \cite{W1} for $G=\N$ with the standard F\o lner \sq\ $F_n=\{1,2,\dots,n\}$. It is worth mentioning that our notion can be applied to any F\o lner \sq, leading to new classes of subsets already at the level of~$\N$.

$\bullet$ The notion of a ``normality-preserving set'' in $\N$ (also introduced in \cite{W1}) is very natural; however, it implicitly relies on the fact that for any infinite subset $A\subset\N$ there is a natural bijection from $A$ to $\N$ which sends the ``traces'' $F_n\cap A$ of the elements $F_n$ of the standard F\o lner \sq\ to elements of the same F\o lner \sq. For example, when $G=\N$ and $F_n=\{1,2,\dots, n\}$ (which we will think of as the classical case), then for any infinite subset $A\subset \N$, the natural bijection sends the $k$th smallest element of $A$ to $k$, so that if $|F_n\cap A|=m$, then the bijection naturally sends $F_n\cap A$ to $F_m$. This property fails (regardless of the F\o lner \sq) already for $\N^2$, probably the simplest semigroup beyond $\N$, let alone for more general semigroups. This is why it is a challenge to reasonably define ``normality along $A$'' (and, subsequently, normality-preserving subsets) in full generality for countable cancellative amenable semigroups. In this paper we propose three notions of ``normality along $A$''. Just like in the case of $\F$-deterministic sets, we work in the context of a fixed F\o lner \sq\ $\F$. Although the proposed notions are not mutually equivalent, the resulting three types of $\F$-normality-preserving sets turn out to coincide in the class of sets of positive lower $\F$-density. Moreover, in the classical case ($G=\N$ and $F_n=\{1,2,\dots,n\}$),
all three types of $\F$-normality preservation coincide with normality preservation in the sense of Weiss.

$\bullet$ Our first main achievement, and the evidence that our notions of both $\F$-normality preservation and $\F$-determinism are satisfactory, is a theorem in the spirit of Kamae--Weiss \cite{Kamae,W1} (see also \cite{W2}). We show that for any F\o lner \sq\ $\F$ in $G$, a set $A\subset G$, which has positive lower $\F$-density, preserves $\F$-normality if and only if it is $\F$-deterministic. Moreover, under additional (mild) assumptions on the F\o lner \sq, we show that $\F$-normality preservation implies positive lower $\F$-density. Since this strengthening applies, in particular, to the classical case, we slightly improve the original Kamae--Weiss theorem.\footnote{\notet{This improvement is also implicit in a recent paper \cite{WK}.}} 

$\bullet$ Our second main achievement is a characterization of finite-valued deterministic functions in terms of complexity. Strong determinism is characterized by the subexponential growth of complexity, which is a straightforward generalization of the corresponding well-known fact for actions of $\N$ or $\Z$. A complexity-based characterization of $\F$-determinism requires a careful definition of $\F$-complexity and its growth rate. We provide such a definition and successfully prove the corresponding characterization. The proof is complicated and involves the apparatus of the theory of tilings of amenable groups developed in \cite{DHZ} and the notion of tile-entropy introduced recently in \cite{DZ}.

\medskip
We conclude our paper with a series of examples of deterministic functions and normality-preserving sets in various semigroups:

$\bullet$ Some well-known classes of actions which have \tl\ entropy zero give rise to some novel examples of strongly deterministic functions. For instance we show strong determinism for certain functions on (or subsets of) $\Z^d$ defined in terms of multiple recurrence or in terms of so-called generalized polynomials. Strong determinism is proven via rather deep results connecting such functions with actions by translations on nilmanifolds. 

$\bullet$ We also give a new natural example of a strongly deterministic set in the non-commutative amenable group of finite permutations of a countable set. 

$\bullet$ Automatic sequences are a classical example of strongly deterministic subsets of $\N$. We describe two variations of automatic sequences in a broader semigroup context, both of which are strongly deterministic. This provides a second non-commutative example, as our results apply to the integer Heisenberg group.

$\bullet$ It is much harder to find concrete (and natural) examples of $\F$-deterministic sets which are not strongly deterministic. We provide examples of this kind in the rings of integers of algebraic extensions of $\mathbb Q$, as well as in lattices on $\mathbb{R}^n$, where we generalize a classical example of a completely (but not strongly) deterministic subset of $\N$ --- the set of square-free numbers.

\section{Preliminaries}

\subsection{Amenable (semi)groups, F\o lner \sq s}
Throughout this paper, $G$ denotes an infinite countable cancellative semigroup (or an infinite countable group). Most of the time, we will assume that the semigroup has a unit, denoted by $e$. Given a nonempty finite set $K\subset G$ and $\eps>0$ we will say that a finite set $F\subset G$ is
\emph{$(K,\eps)$-invariant} if
\[
\frac{|KF\triangle F|}{|F|}\le\varepsilon
\]
($\triangle$ stands for the symmetric difference of sets, and $|\cdot|$ denotes cardinality of a set).
\begin{defn}\label{Folner}{\color{white}.}

\begin{itemize}
	\item A \sq\ of finite sets $\F=(F_n)_{n\in\N}$ in $G$ is a \emph{F{\o}lner \sq}
	if for any nonempty finite $K\subset G$ and $\eps>0$, the sets $F_n$ are eventually 
	(i.e.\ except for finitely many of them) $(K,\eps)$-\inv.
	\item $G$ is amenable if it has a F\o lner \sq.
\end{itemize}

\end{defn}
\begin{rem}\label{sgip} We will often use the following jargon: when we say that a set $F$ has ``good invariance properties'', we will mean that there exists a (large) finite set $K\subset G$ and a (small) $\eps>0$ such that $F$ is $(K,\eps)$-\inv. A F\o lner \sq\ is any \sq\ of finite sets that eventually has arbitrarily good invariance properties. When saying that some condition holds for any set $F$ with ``sufficiently good invariance properties'' we will mean that there exists a finite set $K\subset G$ and an $\eps>0$ such that the condition holds for any $(K,\eps)$-invariant set $F$.
\end{rem}

A F{\o}lner \sq\ $\F=(F_n)_{n\in\N}$ will be called 
\begin{itemize}
	\item \emph{nested}, if for each $n\in\N$, $F_n\subset F_{n+1}$, 
	\item \emph{centered}, if $G$ has a unit $e$ and for each $n\in\N$, $e\in F_n$,
	\item \emph{disjoint}, if the sets $F_n$ are pairwise disjoint,
	\item \emph{exhaustive}, if $\bigcup_{n\in\N}F_n=G$.
\end{itemize}
The most frequently considered example of a F\o lner \sq\ appears in the additive semigroup of positive integers $\N$, and is given by $F_n=\{1,2,\dots,n\}$. It is nested and exhaustive. In $\N_0=\N\cup\{0\}$ one often uses the centered version 
$F_n=\{0,1,2,\dots,n-1\}$. A general F\o lner \sq\ need not have any of the properties listed above. Take for example $F_n=\{3^n,3^n+1,3^n+1,\dots,3^{n+1}+(-2)^n\}$~in~$\N_0$.

Given a F{\o}lner sequence $\F=(F_n)_{n\in\N}$ in $G$ and a set $A\subset G$, one
defines the \emph{upper and lower $\F$-densities} of $A$ by the
formulas
\begin{align*}
\overline d_{\F}(A)&=\limsup_{n\to\infty}\frac{|F_n\cap A|}{|F_n|},
\\
\underline d_{\F}(A)&=\liminf_{n\to\infty}\frac{|F_n\cap A|}{|F_n|}.
\end{align*}
If $\overline d_{\F}(A)=\underline d_{\F}(A)$, then we denote
the common value by $d_{\F}(A)$ and call it the
\emph{$\F$-density} of $A$. 

We will also use the notions of upper and lower Banach densities:
\begin{align*}
d^*(A)&=\sup_{\F}\overline d_{\F}(A),\\
d_*(A)&=\inf_{\F}\underline d_{\F}(A),
\end{align*}
where $\F$ ranges over all F\o lner \sq s in $G$.
If $d^*(A)=d_*(A)$, then we denote the common value by $d_{\mathsf B}(A)$ and call it the \emph{Banach density} of $A$. 

\begin{defn}\label{core} Let $F'\subset F$ and $K$ be nonempty finite subsets of $G$.
\begin{itemize}
	\item Given $\eps>0$, $F'$ is called a \emph{$(1-\eps)$-subset} of $F$ if 
	$\frac{|F'|}{|F|}\ge1-\eps$. 
	\item The \emph{$K$-core} of $F$ is the set $F_K=\{g\in F:Kg\subset F\}$. 
\end{itemize}
\end{defn}

The following lemma is elementary (see Lemma~2.6 in~\cite{DHZ} or Lemma~2.4 in~\cite{BDM}). 

\begin{lem}\label{estim}
Fix some $\eps>0$ and a finite subset $K\subset G$. 
If $F$ is $(K,\frac\varepsilon{|K|})$-invariant, then the $K$-core of $F$ is a
$(1-\eps)$-subset of $F$.
\end{lem}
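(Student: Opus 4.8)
The plan is to estimate the size of the complement $F\setminus F_K$ directly, using only the definition of $(K,\eps')$-invariance with $\eps' = \eps/|K|$. The key observation is that an element $g\in F$ fails to lie in the $K$-core precisely when $Kg\not\subset F$, i.e.\ when there exists $k\in K$ with $kg\notin F$. For each fixed $k\in K$, the set of such ``bad'' $g$ is contained in $\{g\in F : kg\notin F\}$, and this latter set injects (by left cancellation, using that $G$ is cancellative) into $kF\setminus F\subset kF\triangle F$. Hence $|\{g\in F : kg\notin F\}|\le |kF\triangle F|\le |KF\triangle F|$.

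The main step is then a union bound over $k\in K$: since $F\setminus F_K\subset\bigcup_{k\in K}\{g\in F : kg\notin F\}$, we get
\[
|F\setminus F_K|\le\sum_{k\in K}|\{g\in F : kg\notin F\}|\le |K|\cdot|KF\triangle F|.
\]
Applying the hypothesis that $F$ is $(K,\tfrac{\eps}{|K|})$-invariant, i.e.\ $|KF\triangle F|\le\tfrac{\eps}{|K|}|F|$, yields $|F\setminus F_K|\le|K|\cdot\tfrac{\eps}{|K|}|F| = \eps|F|$. Therefore $|F_K|\ge(1-\eps)|F|$, which is exactly the assertion that $F_K$ is a $(1-\eps)$-subset of $F$.

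I do not anticipate a serious obstacle here; the only point requiring a little care is the cancellativity input. One must make sure that $g\mapsto kg$ is injective on $F$ so that the count $|\{g\in F : kg\notin F\}|$ is genuinely bounded by $|kF\setminus F|$ rather than something larger — this is where the standing assumption that $G$ is cancellative (stated in the Preliminaries) is used, and it is the reason the lemma need not hold for arbitrary amenable semigroups. Everything else is a routine union bound and substitution of the invariance parameter.
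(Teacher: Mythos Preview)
Your argument is correct. The paper does not actually supply a proof of this lemma; it simply declares the statement ``elementary'' and refers the reader to \cite[Lemma~2.6]{DHZ} or \cite[Lemma~2.4]{BDM}. What you have written is precisely the standard proof one finds in those references: bound $F\setminus F_K$ by the union $\bigcup_{k\in K}\{g\in F:kg\notin F\}$, note (using left cancellativity) that each summand has size $|kF\setminus F|\le|KF\triangle F|$, and apply the invariance hypothesis. There is nothing to add or correct.
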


\begin{defn}\label{equi}
We will say that two \sq s of sets, $(F_n)_{n\in\N}$ and $(F'_n)_{n\in\N}$, are \emph{equivalent} if $\frac{|F'_n\triangle
  F_n|}{|F_n|}\to 0$ (equivalently, $\frac{|F'_n\triangle
  F_n|}{|F'_n|}\to 0$ or $\frac{|F'_n\cap F_n|}{|F'_n\cup F_n|}\to 1$).
\end{defn}

It is immediate to see that if $\F=(F_n)_{n\in\N}$ is a F{\o}lner \sq\ and $\F'=(F_n')_{n\in\N}$ is an equivalent \sq\ of sets, then the latter is also a F{\o}lner \sq\ and the notions of (upper/lower) $\F$-density and $\F'$-density coincide.

\subsection{Semigroup actions, \im s}\label{2.2}
Let $X$ be a compact metric space. We will say that the semigroup $G$ \emph{acts on $X$} if there exists a homomorphism $\tau$ from $G$ to the semigroup $C(X,X)$ of continuous transformations of $X$ (with composition). If $G$ is a group, then, for any $g\in G$, $\tau(g)$ is a homeomorphism. If $G$ is a semigroup without a unit, $G$ embeds naturally in $G\cup\{e\}$. The unit $e$ always acts on $X$ as the identity map. So, whenever convenient, we will tacitly assume that $G$ has a unit. The action of $G$ on $X$ will be referred to as the \emph{\ds} $(X,\tau)$, alternatively denoted by $(X,G)$, if there is no ambiguity as to which particular action of $G$ on $X$ is considered. Instead of $\tau(g)(x)$ we will write $g(x)$. For subsets $A\subset G$ and $B\subset X$, $A(B)$ will denote the set $\{g(x):g\in A, x\in B\}$. If $G$ acts on $X$, it also acts on the space of Borel probability measures on $X$, denoted by $\M(X)$, endowed with the (compact and metrizable) weak* topology, by the formula $g(\mu)(B)=\mu(g^{-1}(B))$, where $B$ is a Borel subset of $X$. The set of \emph{\im s}, i.e.\ measures $\mu\in\M(X)$ satisfying $g(\mu)=\mu$ for all $g\in G$, will be denoted by $\mgx$. It is always a compact convex subset of $\M(X)$, and if $G$ is amenable, it is also nonempty. Indeed, it is not hard to see that for any F\o lner \sq\ $\F=(F_n)_{n\in\N}$ and any \sq\ of measures $(\nu_n)_{n\in\N}$ from $\M(X)$, any accumulation point of the \sq\ of Ces\`aro averages
\begin{equation}\label{domi}
\frac1{|F_n|}\sum_{g\in F_n}g(\nu_n)
\end{equation}
is an \im. 

An \im\ is \emph{ergodic} if any Borel measurable set $B$ which is \emph{\inv} (i.e.\ such that for each $g\in G$, $B\subset g^{-1}(B)$) has measure either zero or one.
It is well known that, whenever $\mgx$ is nonempty (which is the case if $G$ is amenable), the extreme points of $\mgx$ are precisely the ergodic measures, the set $\megx$ of all ergodic measures is a Borel measurable subset of $\mgx$, and any $\mu\in\mgx$ can be uniquely represented as the barycenter of a Borel probability measure $\xi_\mu$ supported by $\megx$: 
$$
\mu = \int\nu\,d\xi_\mu(\nu).
$$
More explicitly, the above formula means that for any measurable set $B\subset X$, we have
$$
\mu(B) = \int\nu(B)\,d\xi_\mu(\nu).
$$

\subsection{Generic points}\label{gen} We now fix a F\o lner \sq\ $\F=(F_n)_{n\in\N}$ in $G$. A point $x$ is \emph{$\F$-generic} for an \im\ $\mu$ if the \sq\ of Ces\`aro averages
\begin{equation}\label{ca}
\frac1{|F_n|}\sum_{g\in F_n}\delta_{g(x)}
\end{equation}
converges to $\mu$, where $\delta_z$ denotes the point-mass at $z$. 
In most cases, $\F$-generic points exist for all ergodic and some (but not all) other \im s. In certain specific systems (for example in the full shift) $\F$-generic points exist for all \im s. We will say that $x$ is \emph{$\F$-quasi-generic} for an \im\ $\mu$ if $\mu$ is an accumulation point of the \sq\ \eqref{ca}. Clearly, every point $x\in X$ is $\F$-quasi-generic for at least one \im. The following fact  plays a crucial role in our considerations.

\begin{fact}\label{miner} If $\mu\in\megx$ and $\F=(F_n)_{n\in\N}$ is a F\o lner \sq\ then there exists a sub\sq\ $\F_\circ=(F_{n_k})_{k\in\N}$ such that $\mu$-almost every point $x\in X$ is $\F_\circ$-generic for $\mu$ (in particular, $x$ is $\F$-quasi-generic for $\mu$).
\end{fact}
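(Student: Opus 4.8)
The plan is to deduce the statement from the $L^2$ mean ergodic theorem along $\F$, combined with a diagonal extraction of subsequences.

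First I would fix a countable set $\{f_j:j\in\N\}$ which is dense in $C(X)$ (available since $X$ is compact metric) and recall that a sequence of probability measures $\nu_k$ converges to $\mu$ in the weak$^*$ topology if and only if $\int f_j\,d\nu_k\to\int f_j\,d\mu$ for every $j$. Consequently, writing
\[
\nu_k^x=\frac1{|F_{n_k}|}\sum_{g\in F_{n_k}}\delta_{g(x)},
\]
a point $x$ is $\F_\circ$-generic for $\mu$ exactly when, for every $j$, the numerical sequence $\frac1{|F_{n_k}|}\sum_{g\in F_{n_k}}f_j(g(x))$ converges to $\int f_j\,d\mu$. Hence it is enough to produce a single subsequence $(n_k)_{k\in\N}$ along which, for each fixed $j$, this convergence holds for $\mu$-a.e.\ $x$.

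The engine is the following mean ergodic statement: \emph{for every $f\in L^2(\mu)$ one has $\frac1{|F_n|}\sum_{g\in F_n}f\circ g\to\int f\,d\mu$ in $L^2(\mu)$.} To prove it, note that since $\mu\in\mgx$ each $g$ induces a linear isometry $U_gf=f\circ g$ of $L^2(\mu)$, with $U_{gh}=U_hU_g$; put $A_nf=\frac1{|F_n|}\sum_{g\in F_n}U_gf$, so $\|A_n\|\le1$. The closed subspace $\mathcal I$ of $G$-invariant vectors equals, by ergodicity of $\mu$, the space of constants, so the orthogonal projection onto $\mathcal I$ sends $f$ to $\int f\,d\mu$, while on $\mathcal I$ each $A_n$ is the identity. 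Using that for an isometry $U$ one has $\ker(I-U)=\ker(I-U^*)$, one gets $\mathcal I^\perp=\overline{\mathrm{span}}\{v-U_hv:v\in L^2(\mu),\,h\in G\}$, and for such a vector, since $U_gU_h=U_{hg}$ and $|hF_n|=|F_n|$ by left cancellation,
\[
\|A_n(v-U_hv)\|_2=\Bigl\|\tfrac1{|F_n|}\Bigl(\textstyle\sum_{g\in F_n}U_gv-\sum_{g\in hF_n}U_gv\Bigr)\Bigr\|_2\le\frac{|F_n\triangle hF_n|}{|F_n|}\,\|v\|_2\longrightarrow0
\]
by the (left) F\o lner property applied to $K=\{h\}$. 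A standard $3\varepsilon$ argument using $\|A_n\|\le1$ and density of $\mathrm{span}\{v-U_hv\}$ in $\mathcal I^\perp$ then gives $A_nf\to\int f\,d\mu$ for all $f$.

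Finally I would run the diagonalization. For $j=1$, $L^2$-convergence of $A_nf_1$ yields a subsequence along which $A_nf_1\to\int f_1\,d\mu$ $\mu$-a.e.; iterating, for each $j$ one passes to a further subsequence on which $A_nf_j\to\int f_j\,d\mu$ $\mu$-a.e., and the diagonal subsequence $\F_\circ=(F_{n_k})_{k\in\N}$ then satisfies $A_{n_k}f_j\to\int f_j\,d\mu$ $\mu$-a.e.\ for every $j$ at once. Intersecting the corresponding countably many sets of full $\mu$-measure, we obtain a set of full measure on which $\int f_j\,d\nu_k^x\to\int f_j\,d\mu$ for all $j$, hence (by density of the $f_j$ and the fact that each $\nu_k^x$ is a probability measure) $\nu_k^x\to\mu$ weak$^*$; so every point of this set is $\F_\circ$-generic for $\mu$, and a fortiori $\F$-quasi-generic for $\mu$. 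The one genuinely substantive ingredient is the mean ergodic theorem above, where the delicate point is that the maps $g$ need not be invertible (so the $U_g$ are only isometries, not unitaries) and that only a one-sided F\o lner condition is available; as the displayed computation shows, neither obstructs the norm convergence, and everything else is bookkeeping.
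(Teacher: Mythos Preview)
Your proof is correct and follows essentially the same route as the paper's: invoke the $L^2$ mean ergodic theorem along the F\o lner sequence, pass to an a.e.\ convergent subsequence for each function, and diagonalize over a countable dense set in $C(X)$. The only difference is that you supply a self-contained proof of the mean ergodic theorem (carefully handling the fact that the Koopman operators are merely isometries and that only a one-sided F\o lner condition is available), whereas the paper simply cites it as known.
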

\begin{proof}
By the mean ergodic theorem (which holds for amenable semigroups and all F\o lner \sq s), for every $f\in L^2(\mu)$ we have
$$
\lim_{n\to\infty} \frac1{|F_n|}\sum_{g\in F_n}f(g(x)) = \int f\,d\mu
$$ 
(the convergence is in the norm of $L^2(\mu))$. For any fixed $f$ there is a sub\sq\ of $\F$ along which the convergence holds for $\mu$-almost every $x$. Using a diagonal argument we can find a sub\sq\ $\F_\circ$ which works for all functions from a dense (in the supremum norm) countable subset of $C(X)$. This implies that $\mu$-almost every point in $X$ is $\F_\circ$-generic for~$\mu$.
\end{proof}

\subsection{Subshifts, cylinders, the zero-coordinate partition}\label{s24} Let $\Lambda$ be a finite set called the \emph{alphabet}. The elements of $\Lambda$ are called \emph{symbols}. The space $\Lambda^G$, equipped with the (compact) product topology is called \emph{the symbolic space} and its elements $x=(x(g))_{g\in G}$ are referred to as \emph{symbolic elements} or just \emph{points} (in the symbolic space). A natural action $\sigma$ of $G$ on $\Lambda^G$, called the \emph{shift-action}, is defined as follows (recall that, by convention, $g(x)$ stands for $\sigma(g)(x)$):
\begin{equation}\label{shift}
\forall_{g,\,h\in G,\, x\in X}\ g(x)(h) = x(hg).
\end{equation}
The \ds\ $(\Lambda^G,G)$ is called \emph{the full shift over $\Lambda$}. By a \emph{subshift} we will mean any subsystem of the full shift, i.e.\ any closed \inv\ subset $X\subset\Lambda^G$, together with the (restricted to $X$) shift action of $G$. When dealing with subshifts we will never consider actions of $G$ other than the shift-action. For this reason we will often denote the subshift just by the letter $X$ (instead of $(X,\sigma)$ or $(X,G)$).

By a \emph{block} we will mean any element $B\in\Lambda^K$, where $K$ is a finite subset of $G$ called the \emph{domain} of the block. We will sometimes use \emph{equality of blocks modulo shift}\footnote{For example, when $G=\N$, the block $110$ at coordinates 1,2,3 equals the block $110$ at coordinates 7,8,9 modulo shift.} defined as follows. Let $B\in\Lambda^K$ and $C\in\Lambda^{Kg}$ for some $g\in G$. We will write $B\approx C$ if 
$$
\forall_{h\in K}\ C(hg) = B(h). 
$$
In particular, if $D\in\Lambda^F$, where $F$ is any subset of $G$ (including the case $F=G$ in which $D$ becomes $x\in\Lambda^G$) we will say that the block $B$ \emph{occurs in $D$ with an anchor at $g\in G$} if $Kg\subset F$ and $D|_{Kg}\approx B$, where $D|_{Kg}$ denotes the restriction of $D$ to $Kg$ and
by $D|_{Kg}\approx B$, we mean that 
$$
\forall_{h\in K}\ D(hg) = B(h).\footnote{For instance, the block $B\in\{0,1\}^{\{3,5,11\}}$, with values $B(3)=1$, $B(5)=1$ and $B(11)=0$, occurs in some $x\in\{0,1\}^\N$ with an anchor at 2 whenever $x(5)=1,\ x(7)=1,\ x(13)=0$.}
$$

With each block $B\in\Lambda^K$ we associate the \emph{cylinder set}
$$
[B] = \{x: x|_K = B\}.
$$
Each cylinder is closed and open (we will say \emph{clopen}) in the symbolic space.
The condition that $B$ occurs in $x$ with an anchor at $g$ is equivalent to $g(x)\in[B]$. 
Given a finite set $K\subset G$, the cylinders associated to all blocks $B\in\Lambda^K$ partition the symbolic space into clopen sets. Whenever this does not lead to a confusion, we will abuse the notation and skip the brackets in the denotation of cylinders, identifying blocks with the associated cylinders. With this convention, the above mentioned partition will be simply denoted by $\Lambda^K$. If $K=\{e\}$, the corresponding partition will be called the \emph{partition by symbols} or the \emph{zero-coordinate partition}\footnote{This terminology bears apparent hallmarks of $\N_0$-actions, but we decided to use it anyway.}, and be denoted by $\Lambda$. 
If $X$ is a subshift, a cylinder (associated to a block $B$)  will be understood as the intersection of $[B]$ with $X$. In such a case, we will often restrict our attention to blocks $B$ for which this intersection is nonempty, which is equivalent to the condition that $B$ \emph{occurs in $X$} (meaning that it occurs anchored at some coordinate in some element of $X$).

Cylinders play an important role in both the topological and measurable structure of the symbolic space (and of any subshift); they form a base for the topology and they generate the Borel sigma-algebra. The indicator functions of cylinders are linearly dense in $C(X)$, the Banach space of all (real or complex) continuous functions on $X$, equipped with the topology of uniform convergence. Thus, the weak* convergence of measures can be verified by testing it only on cylinders. In particular, a point $x\in\Lambda^G$ is generic for an \im\ $\mu\in\mgl$ if for each finite set $K\subset G$ (it suffices to take finite sets containing the unit, if there is one), and each block $B\in\Lambda^K$, we have
\begin{equation}\label{gener}
d_{\F}(\{g\in F_n: y|_{Kg}\approx B\})= \mu(B).
\end{equation}

\subsection{Entropy}\label{2.5} Let now $(X,\Sigma,\mu)$ be a standard probability space\footnote{A probability space $(X,\Sigma,\mu)$ is standard if it is measure-theoretically isomorphic to $([0,1],\bar\B,m)$, where $[0,1]$ is the unit interval, $m$ is a convex combination of the Lebesgue measure with a purely atomic probability measure and $\bar\B$ is the Borel sigma-algebra completed with respect to $m$. Every compact metric space with a completed Borel probablity measure is a standard probability space.} on which $G$ acts by measure-preserving transformations. Let $\P$ be a finite measurable partition of $X$. The \emph{Shannon entropy} of $\P$ (with respect to $\mu$) is the quantity
$$
H(\mu,\P)= -\sum_{B\in\P} \mu(B)\log(\mu(B))
$$
(throughout, $\log$ stands for  $\log_2$). Next, the \emph{dynamical entropy} of $\P$ is defined as
$$
h(\mu,\P) = \lim_{n\to\infty} \frac1{|F_n|}H(\mu,\P^{F_n}),
$$
where $\P^{F_n}=\bigvee_{g\in F_n}g^{-1}(\P)$. (Note that $g^{-1}(\P) = \{g^{-1}(B): B\in\P\}$ is again a finite measurable partition of $X$, and so is $\P^{F_n}$.) By the general theory of entropy in amenable semigroup actions, the above limit exists, does not depend on the F\o lner \sq, and in fact equals
\begin{equation}\label{dfr}
\inf_F \frac1{|F|}H(\mu,\P^{F}),
\end{equation}
where $F$ ranges over all finite subsets of $G$ (this \emph{infimum rule} can be found, e.g., in \cite{DFR}). Finally the (\emph{Kolmogorov--Sinai}) \emph{entropy} of the action $(X,\Sigma,\mu, G)$ is defined as follows:
$$
h(X,\Sigma,\mu,G) = \sup_\P h(\mu,\P),
$$
where $\P$ ranges over all finite measurable partitions of $G$. If there is no risk of ambiguity, we will abbreviate $h(X,\Sigma,\mu,G)$ as $h(\mu)$. By
\eqref{dfr}, $h(\mu)$ does not depend on the choice of the F\o lner \sq. A finite measurable partition $\P$ of $X$ is a \emph{generator} if the smallest sigma-algebra containing all partitions $g^{-1}(\P)$ ($g\in G$) coincides with $\Sigma$. The general form of the Kolmogorov--Sinai Theorem asserts that if $\P$ is a generator then  $h(\mu) = h(\mu,\P)$. In particular, if $X$ is a subshift, $\mu$ is an \im\ on $X$ and $\P=\Lambda$ is the zero-coordinate partition then $h(X,\Sigma,\mu,G) = h(\mu,\Lambda)$.

In this paper, we will also need the notion of \tl\ entropy. We will find it convenient to use the definition of topological entropy based on the Variational Principle (which is valid for actions of countable amenable semigroups). Assume that $X$ is a compact metric space on which $G$ acts by continuous transformations. We define the \emph{\tl\ entropy} of the \ds\ $(X,G)$ as
\begin{equation}\label{vp}
\htop(X,G)= \sup_{\mu\in\mgx}h(\mu).
\end{equation}
Note that if $\mu\in\mgx$ and $\xi$ is the (unique) probability measure on $\megx$ such that $\mu=\int\nu\,d\xi(\nu)$ then $h(\mu)=\int h(\nu)\,d\xi(\nu)$.
This implies that the range of the supremum in \eqref{vp} can be restricted to the set of $\megx$ of ergodic measures. This ``ergodic version'' of the variational principle will be utilized in Sections~\ref{determinism}~and~\ref{nine}.

A \emph{measure of maximal entropy} is a measure $\mu\in\mgx$ such that $h(\mu)=\htop(X,G)$. In a general dynamical system such a measure need not exist. However, if $X=\Lambda^G$ is the full shift, then it supports a distinctive invariant (and in fact ergodic) measure, which assigns to each cylinder $B\in\Lambda^K$ the value $|\Lambda|^{-|K|}$. It is called the \emph{uniform Bernoulli measure} and we will denote it by $\lambda$; it is the unique measure of maximal entropy (equal to $\log|\Lambda|)$ on $X$. This measure will play an important role in this note.

\subsection{Normality}\label{normality}
Let us fix a finite alphabet~$\Lambda$ and a F\o lner \sq\ $\F=(F_n)_{n\in\N}$ in $G$.

\begin{defn}\label{normal} A point $y\in\Lambda^G$ is \emph{$\F$-normal} if it is $\F$-generic for the uniform Bernoulli measure $\lambda$.
\end{defn}

By \eqref{gener}, the above definition has the following equivalent formulation in terms of densities: for every finite set $K\subset G$ and every block $B\in\Lambda^K$, we have
\begin{equation}\label{mol}
d_{\F}(\{g\in F_n: y|_{Kg}\approx B\})= |\Lambda|^{-|K|}.
\end{equation}

\begin{rem}
Theorem 4.2 in \cite{BDM} asserts that if the cardinalities of the sets $F_n$ strictly increase (which is a very weak and natural restriction) then $\lambda$-almost every point is $\F$-normal.
\end{rem}

\smallskip
In the special case of $G=\N$ and $F_n=\{1,2,\dots,n\}$, $\F$-normality will be called \emph{classical normality}. Let us say that $W$ is an \emph{initial block} if $W\in\Lambda^{\{0,1,\dots,m-1\}}$ for some $m\in\N$. Since cylinders associated to initial blocks generate the Borel sigma-algebra in $\Lambda^{\N}$, in order to determine the normality of a \sq\ $x\in\Lambda^\N$ it suffices to verify \eqref{mol} for initial blocks. The following statement summarizes the above discussion:

\begin{fact}\label{class}
A \sq\ $y\in\Lambda^{\N}$ is classically normal if and only if for every $m\in\N$ and every initial block $W\in\Lambda^{\{0,1,\dots,m-1\}}$, we have
$$
\lim_{n\to\infty} \frac1n |\{i=1,2,\dots,n: y|_{\{i,i+1,\dots,i+m-1\}}\approx W\}| = |\Lambda|^{-m}.
$$
\end{fact}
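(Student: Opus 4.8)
The plan is to unwind the definitions and show that the two formulations of normality for $G = \N$ with $F_n = \{1,2,\dots,n\}$ agree. By Definition~\ref{normal}, $y \in \Lambda^\N$ is classically normal precisely when it is $\F$-generic for the uniform Bernoulli measure $\lambda$, and by the discussion preceding the statement (equation~\eqref{gener} together with \eqref{mol}), this is equivalent to requiring
\[
d_\F(\{g \in F_n : y|_{Kg} \approx B\}) = |\Lambda|^{-|K|}
\]
for every finite $K \subset \N$ and every block $B \in \Lambda^K$. So the content of the statement is the reduction from arbitrary finite domains $K$ to \emph{initial} blocks, i.e.\ to intervals $K = \{0,1,\dots,m-1\}$.

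First I would observe the ``only if'' direction is immediate: an initial block $W \in \Lambda^{\{0,\dots,m-1\}}$ is in particular a block with finite domain, and for $K = \{0,1,\dots,m-1\}$ and $g = i \in \N$ one has $Kg = \{i, i+1, \dots, i+m-1\}$ (using the semigroup operation of $\N$, where translation by $i$ shifts the interval), and $|K| = m$, so the defining condition~\eqref{mol} specializes exactly to the displayed limit with value $|\Lambda|^{-m}$. Hence classical normality gives the stated frequency condition for every initial block.

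For the ``if'' direction, I would argue that verifying~\eqref{mol} on initial blocks suffices to verify it on all blocks, hence to establish $\F$-genericity for $\lambda$. The key point, already flagged in the text, is that the cylinders associated to initial blocks generate the Borel $\sigma$-algebra on $\Lambda^\N$, and moreover their indicator functions are linearly dense in $C(\Lambda^\N)$; therefore weak* convergence of the Cesàro averages~\eqref{ca} can be tested on initial-block cylinders alone. Concretely: given an arbitrary finite $K \subset \N$, enlarge it to an interval $\{1,2,\dots,M\}$ containing it (for $\N$ with this F\o lner sequence one can always do so, taking $M = \max K$), or rather to an initial block domain $\{0,1,\dots,m-1\}$ after the appropriate index shift; a block $B \in \Lambda^K$ then decomposes its cylinder $[B]$ as a finite disjoint union of cylinders of initial blocks extending $B$, and the frequency of occurrences of $B$ is the corresponding finite sum of the frequencies of those initial blocks. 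Since each initial-block frequency converges to $|\Lambda|^{-m}$ by hypothesis and there are $|\Lambda|^{m - |K|}$ such extensions, the sum converges to $|\Lambda|^{m-|K|} \cdot |\Lambda|^{-m} = |\Lambda|^{-|K|}$, which is exactly~\eqref{mol} for $B$. As this holds for all finite $K$ and all $B \in \Lambda^K$, the point $y$ is $\F$-generic for $\lambda$, i.e.\ classically normal.

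I do not expect any genuine obstacle here: the statement is essentially a bookkeeping consequence of \eqref{gener}, the density reformulation~\eqref{mol}, and the elementary fact that on $\Lambda^\N$ the standard F\o lner sequence has the feature that every finite coordinate set sits inside an interval, so restriction to initial blocks is without loss of generality. The only mild care needed is the index alignment between ``initial block'' domains $\{0,1,\dots,m-1\}$ and the occurrence sets $\{i,i+1,\dots,i+m-1\}$ appearing in Fact~\ref{class}, but this is a harmless relabeling given that $\N$ acts on $\Lambda^\N$ by the shift~\eqref{shift} and translation by $i$ carries $\{1,\dots,m\}$ to $\{i+1,\dots,i+m\}$ (equivalently, one may work in $\N_0$ with the centered F\o lner sequence $\{0,1,\dots,n-1\}$, where the match is exact).
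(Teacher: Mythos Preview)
Your proposal is correct and follows exactly the reasoning the paper itself indicates: the paper does not give a separate proof of this fact but presents it as a summary of the preceding discussion, noting that ``cylinders associated to initial blocks generate the Borel sigma-algebra in $\Lambda^{\N}$'' so that it suffices to verify \eqref{mol} for initial blocks. Your write-up simply makes this explicit, including the disjoint-union-of-extensions argument for the ``if'' direction, which is the standard unwinding.
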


A set $A\subset G$, such that the indicator function $y=\mathbbm 1_A\in\{0,1\}^G$ is $\F$-normal, will be called an \emph{$\F$-normal set}.

\section{Normality-preserving subsets of amenable semigroups}

In this section we will define the property of an infinite set $A\subset G$ to be ``$\F$-normality preserving''. To do this, we first need to define $\F$-normality (of a symbolic element $y\in\Lambda^G$) along a set $A$, and there are several different ways to do so. As we will show later, all of them lead to equivalent notions of $\F$-normality preservation.

\subsection{The classical case}\label{clacs}
To find inspiration for a definition of $\F$-normality along $A$, we will first take a closer look at the classical case of $G=\N$ and $F_n=\{1,2,\dots,n\}$. We choose an infinite subset $A\subset\N$ and we enumerate it increasingly: $A=\{a_1,a_2,\dots\}$. This allows us to view $A$ as a sub\sq\ of $\N$. The simplest (and weakest) notion of normality is the following:

\begin{defn}[Simple normality along $A$]\label{snA} A \sq\ $y\in\Lambda^\N$ is \emph{simply normal along $A=\{a_1,a_2,\dots\}$} (or just \emph{simply normal}, if $A=\N$) if, for every $b\in\Lambda$,
$$
\lim_{n\to\infty} \frac1n |\{k=1,2,\dots,n: y(a_k)=b\}| = \frac1{|\Lambda|}.
$$
\end{defn}

Simple normality along a set is a very weak notion. For example when $A=\N$, simple normality of $y\in\{0,1\}^\N$ means merely that $y$ consists of 50\% zeros and 50\% ones. 

Definition \ref{snA} naturally leads to the notion of normality along a set $A\subset\N$. It is this notion which is present in both Wall's theorem (\cite{Wa}) and in Kamae--Weiss theorem (\cite{W1}, \cite{Kamae}), alluded to in the Introduction (see also Theorem \ref{KW} in section~\ref{determinism}). 

\begin{defn}[Normality along $A\subset\N$]\label{onA} A \sq\ $y\in\Lambda^\N$ is \emph{normal along $A$} if the \sq\ $(y(a_k))_{k\in\N}$ generates $\lambda$, meaning that for every $m\in\N$ and every initial block $W\in\Lambda^{\{0,1,\dots,m-1\}}$, we have
\begin{multline*}
\lim_{n\to\infty} \frac1n |\{k=1,2,\dots,n: \\y(a_k)=W(0), y(a_{k+1})=W(1),\dots,y(a_{k+m-1})=W(m-1)\}|=|\Lambda|^{-m}.
\end{multline*}
\end{defn}


The problem with generalizing Definition \ref{onA} to other semigroups is that it uses very special properties of the semigroup $(\N,+)$, such as orderability and the fact that the initial order-intervals (i.e.\ sets $\{i: 1\le i\le n\}$) form a F\o lner \sq. Moreover, this definition crucially uses the fact that any infinite subset $A\subset\N$ is order-isomorphic to $\N$. Notice that already in $\N^2$ (which is orderable by the lexicographical order), the initial order-intervals do not form a F\o lner \sq.\footnote{With respect to the lexicographical order, the initial intervals in $\N^2$ have the form 
$$
\{(i,j):(1,1)\le(i,j)\le(m,n)\}=([1,m-1]\times\N)\cup(\{m\}\times[1,n]). 
$$
Unless $m=1$, these sets are infinite, so they cannot be elements of any F\o lner \sq\ (and those with $m=1$ clearly do not have the asymptotic invariance property).}

To cope with this problem, we introduce two new notions of normality along a set $A\subset\N$, which can (and will) be generalized to countable cancellative amenable semigroups, and which retain the spirit of the classical notion.

\begin{defn}[Orbit-normality along $A$]\label{nA} 
A \sq\ $y\in\Lambda^\N$ is \emph{orbit-normal along $A$} if the \sq\ $(\sigma^{a_k}(y))_{k\in\N}$ generates the uniform Bernoulli measure $\lambda$ in the following sense:
$$
\lim_{n\to\infty}\frac1n\sum_{k=1}^n\delta_{\sigma^{a_k}(y)}=\lambda
$$
(with respect to the weak* convergence).
\end{defn}
That is to say, for every $m\in\N$ and every initial block $W\in\Lambda^{\{0,1,2,\dots,m-1\}}$, we have
$$
\lim_{n\to\infty} \frac1n |\{k=1,2,\dots,n: y|_{\{a_k+1,a_k+2,\dots,a_k+m\}}\approx W\}| = |\Lambda|^{-m}.
$$ 
\smallskip

Recall that $A_K$ denotes the $K$-core of $A$ (see Definition \ref{core}). In case $G=\N$,
we have $A_K=\{k\in\N:K+k\subset A\}$.

\begin{defn}[Block-normality along $A$]\label{bnA} A \sq\ $y\in\Lambda^\N$ is \emph{block-normal along $A$} if for every finite set $K\subset\N$ such that 
\begin{equation}\label{tut}
\liminf_{n\to\infty}\frac{|\{1,2,\dots,n\}\cap A_K\}|}{|\{1,2,\dots,n\}\cap A|}>0,
\end{equation}
and every block $B$ over $K$, we have
\begin{equation}\label{tu}
\lim_{n\to\infty} \frac{|\{k\in\{1,2,\dots,n\}\cap A_K, \  y|_{K+k}\approx B\}|}{|\{1,2,\dots,n\}\cap A_K|} = |\Lambda|^{-|K|}.
\end{equation}
\end{defn}
The requirement \eqref{tut} says, roughly speaking, that the growth of the $K$-core of $A$ is ``proportional'' to the growth of $A$ (in such a case we will say that $K$ is \emph{visible} in $A$). This allows us to interpret the limit in \eqref{tu} as the ratio of two ``relative densities'' in $A$:
$$
\lim_{n\to\infty} \frac{\hphantom{a}\frac{|\{k\in\{1,2,\dots,n\}\cap A_K, \  y|_{K+k}\approx B\}|}{|\{1,2,\dots,n\}\cap A|}\hphantom{a}}{\frac{|\{1,2,\dots,n\}\cap A_K|}{|\{1,2,\dots,n\}\cap A|}} = |\Lambda|^{-|K|}.
$$
Note that the formula \eqref{tu} makes no sense for invisible sets $K$. As an extreme example, consider $A=2\N$ and $K=\{1,2\}$. Clearly, no shift of $K$ is contained in $A$, so the formula \eqref{tu} for $K$ is meaningless. An ostensible disadvantage of Definition \ref{bnA} is that there may be very few visible sets $K$. For example, if $A=\{n^2:n\in\N\}$ then the only visible sets are singletons and block-normality along $A$ reduces to simple normality along $A$. On the other hand, as we will see later, this definition will be useful in dealing with normality preservation along sets having positive lower density.
\smallskip

Note that in Definition \ref{onA} all the coordinates $a_k,a_{k+1},\dots,a_{k+m-1}$ belong to $A$, and while they do not necessarily form a ``connected interval'' (i.e.\ an interval of the form $\{a_k,a_k+1,\dots,a_k+m-1\}$) they do form a ``relatively connected'' subset of $A$, i.e.\ the intersection of $A$ with an interval. 

In Definition \ref{nA}, just like in the case of classical normality (see Proposition \ref{class}), we count occurrences of $W$ in $y$ along ``connected intervals'', where only the starting position $a_k$ (the anchor) must belong to $A$.

Finally, in Definition \ref{bnA}, we count occurrences of $B$ in $y$, whose domains $K+k$ are entirely contained in $A$ and are not necessarily ``relatively connected'' in $A$. 

\smallskip
One can show that the above four notions of normality along $A$ are not equivalent (yet all of them trivially imply simple normality along $A$).

\subsection{Generalizations to amenable semigroups}
The definitions \ref{snA}, \ref{nA} and \ref{bnA} can be extended to any infinite countable cancellative amenable semigroup $G$ with a fixed F\o lner \sq\ $\F=(F_n)_{n\in\N}$. Assume that $A$ is a subset of $G$ satisfying the following condition (effective throughout the rest of the paper):
\begin{equation}\label{infi}
\lim_{n\to\infty}|F_n\cap A|=\infty.\footnote{In the classical case of $G=\N$ and $F_n=\{1,2,\dots,n\}$, the condition \eqref{infi} is fulfilled for any infinite set $A$.}
\end{equation}
As before, $\lambda$ denotes the uniform Bernoulli measure on $\Lambda^G$, where $\Lambda$ is a (fixed throughout the rest of the paper) finite alphabet.

\begin{defn}[Simple $\F$-normality along a set $A$]\label{GSnA}
An element $y\in\Lambda^G$ is \emph{simply $\F$-normal along $A$} if, for each $a\in\Lambda$,
$$
\lim_{n\to\infty} \frac{|\{g\in (F_n\cap A): y(g)=a\}|}{|F_n\cap A|} = \frac1{|\Lambda|}.
$$
\end{defn}

\begin{defn}[Orbit-$\F$-normality along a set $A$]\label{GnA} 
An element $y\in\Lambda^G$ is \emph{orbit-$\F$-normal along $A$} if the set $\{g(y):g\in A\}$ $\F$-generates the uniform Bernoulli measure $\lambda$ in the following sense:
$$
\lim_{n\to\infty}\frac1{|F_n\cap A|}\sum_{g\in F_n\cap A}\delta_{g(y)}=\lambda
$$
(with respect to the weak* convergence).
\end{defn}

\begin{defn}[Block-$\F$-normality along a set $A$]\label{GbnA} An element $y\in\Lambda^G$ is \emph{block-$\F$-normal along $A$} if for every finite set $K\subset G$ which is \emph{$\F$-visible in $A$}, i.e.\ such that 
$$ 
\liminf_{n\to\infty} \frac{|F_n\cap A_K|}{|F_n\cap A|} >0,
$$
and every block $B\in\{0,1\}^K$, we have
$$
\lim_{n\to\infty} \frac{|\{g\in F_n\cap A_K, \  y|_{Kg}\approx B\}|}{|F_n\cap A_K|} = |\Lambda|^{-K}.
$$
\end{defn}

As we have mentioned at the end of subsection \ref{clacs}, the above notions of $\F$-normality along a set are not mutually equivalent, even in the classical case. 

We are now in a position to formulate the definition of a normality preserving~set.
\begin{defn}\label{np}{\color{white}.}
\begin{enumerate}[(i)]
	\item A set $A\subset\N$ \emph{preserves normality} if for every 
	$y\in\Lambda^\N$ which is classically normal, $y$ is also normal along $A$ (in the sense 
	of Definition \ref{onA}).
	\item Let $\F=(\F_n)_{n\in\N}$ denote a F\o lner \sq\ in a countable cancellative amenable  semigroup $G$. A set $A\subset G$ \emph{preserves} simple $\F$-normality (respectively, orbit-$\F$-normality, block-$\F$-normality) if for every $y\in\Lambda^G$ which is $\F$-normal, $y$ is also simply $\F$-normal (respectively, orbit-$\F$-normal, block-$\F$-normal) along~$A$. 
\end{enumerate}
\end{defn}

\section{Determinism}\label{determinism} In this section we introduce and discuss two natural notions of determinism for real bounded functions defined on countable cancellative amenable semigroups. 

\begin{defn}\label{std} Let $G$ be a countable cancellative amenable semigroup. A function $f:G\to\R$ will be called \emph{strongly} \emph{deterministic} if there exists a \ds\ $(X,G)$ of \tl\ entropy zero, a continuous function $\varphi:X\to\R$ and a point $x\in X$ such that $f(g)=\varphi(g(x))$ ($g\in G$).
\end{defn}

For $G=\N$, the above notion (under the name ``deterministic'') has recently enjoyed  popularity in the works related to Sarnak's famous \emph{M\"obius disjointness conjecture} (see \cite{Sa}). The following theorem provides a useful characterization of strongly deterministic functions. 

\begin{thm}\label{banal}
A function $f:G\to\R$ is strongly deterministic if and only if it is bounded and the orbit closure of $f$ under the shift action of $G$ on $I^G$, where $I$ is the closure of the range of $f$, has \tl\ entropy zero.\footnote{
In total analogy with the formula \eqref{shift}, the shift action of $G$ on $I^G$ is defined as follows: if $g\in G$ and $x\in I^G$, then $g(x) = y, \text{ \ where \ }\forall_{h\in G}\ y(h) = x(hg)$.} 
\end{thm}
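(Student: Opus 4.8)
The plan is to prove the two implications separately, with the backward direction being essentially immediate and the forward direction requiring a standard ``factor map'' construction.

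\textbf{The easy direction.} Suppose $f$ is strongly deterministic, say $f(g) = \varphi(g(x))$ for a system $(X,G)$ with $\htop(X,G)=0$, a continuous $\varphi: X \to \R$, and a point $x \in X$. First I would note that $f$ is bounded: since $X$ is compact and $\varphi$ is continuous, $\varphi(X)$ is a compact subset of $\R$, so the range of $f$ is contained in $\varphi(X)$, hence $I = \overline{f(G)} \subseteq \varphi(X)$ is compact. Now consider the map $\Phi: X \to I^G$ defined by $\Phi(z)(h) = \varphi(h(z))$; this is continuous (it is a composition of continuous maps into each coordinate, and the product topology on $I^G$ is generated by coordinate projections) and equivariant, i.e.\ $\Phi(g(z)) = g(\Phi(z))$, which one checks directly from \eqref{shift}: $(g(\Phi(z)))(h) = \Phi(z)(hg) = \varphi(hg(z)) = \varphi(h(g(z))) = \Phi(g(z))(h)$. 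Since $\Phi(x)$ is exactly the point of $I^G$ whose orbit closure is the subshift in question, that subshift is a continuous equivariant image of the $G$-invariant closed set $\overline{\{g(x): g \in G\}} \subseteq X$. Topological entropy does not increase under factor maps (for amenable semigroup actions; this follows from the variational principle \eqref{vp} together with the fact that pushing forward an invariant measure along a factor map does not increase its Kolmogorov--Sinai entropy, since the pulled-back partitions generate a sub-sigma-algebra), and the orbit closure of $x$ inside $X$ has entropy at most $\htop(X,G) = 0$. Hence the orbit closure of $f$ in $I^G$ has topological entropy zero.

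\textbf{The nontrivial direction.} Conversely, suppose $f$ is bounded, let $I = \overline{f(G)}$, and assume the orbit closure $Y := \overline{\{g(f): g \in G\}} \subseteq I^G$ (with the shift action) has $\htop(Y,G) = 0$. Then I would simply take $X = Y$, let $x = f \in Y$ (it lies in its own orbit closure), and let $\varphi: Y \to \R$ be the zero-coordinate evaluation $\varphi(z) = z(e)$, which is continuous on $I^G$ and hence on $Y$. It remains to verify $f(g) = \varphi(g(f))$ for all $g \in G$: indeed $\varphi(g(f)) = (g(f))(e) = f(eg) = f(g)$ by \eqref{shift}. Since $(Y,G)$ is a system of topological entropy zero by hypothesis, this exhibits $f$ as strongly deterministic.

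\textbf{Where the work is.} The only genuine content is the monotonicity of topological entropy under factor maps in the forward-easy direction; everything else is bookkeeping with the shift formula \eqref{shift} and compactness. I would isolate this as the main point: given a continuous equivariant surjection $\pi: (X,G) \to (X',G)$ between compact $G$-systems, $\htop(X',G) \le \htop(X,G)$. Via \eqref{vp} it suffices to show that for every $\mu' \in \mathcal{M}_G(X')$ there is $\mu \in \mgx$ with $\pi(\mu) = \mu'$ and $h(\mu) \ge h(\mu')$; one picks any preimage $\mu$ (the set of such preimages is nonempty and weak*-compact, being the image-fiber of the continuous affine map $\pi_*$ on the nonempty compact $\mgx$), and then for any finite partition $\P'$ of $X'$ the pullback $\pi^{-1}(\P')$ is a finite partition of $X$ with $H(\mu, (\pi^{-1}\P')^{F_n}) = H(\mu', (\P')^{F_n})$ for every F\o lner set $F_n$ (because $\pi$ is equivariant and measure-preserving onto $\mu'$), whence $h(\mu, \pi^{-1}\P') = h(\mu', \P')$ and taking the supremum over $\P'$ gives $h(\mu) \ge h(\mu')$. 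I expect this lemma is either already standard in the amenable-semigroup literature the paper draws on or can be quoted; in the write-up I would state it cleanly and reference it rather than belabor the (routine) entropy inequalities.
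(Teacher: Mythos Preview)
Your proof is correct and follows essentially the same approach as the paper: both use the coding map $\Phi(z)(h)=\varphi(h(z))$ to exhibit the orbit closure of $f$ as a factor of a subsystem of $(X,G)$ in one direction, and the evaluation $\varphi(z)=z(e)$ on the orbit closure $Y$ in the other. One cosmetic remark: your labels ``easy'' and ``nontrivial'' are swapped relative to where the actual work lies (as you yourself note in the final paragraph)---the direction requiring the factor-map entropy inequality is the one deserving the ``nontrivial'' tag, while the evaluation-at-$e$ direction is essentially a one-line verification.
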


\begin{proof}If $f$ is strongly deterministic, then we have at our disposal a zero entropy system $(X,G)$, a continuous function $\varphi:X\to\R$ and a point $x\in X$ such that $f(g)=\varphi(g(x))$. Observe that $x\mapsto(\varphi(g(x)))_{g\in G}$ is a continuous map from $X$ to $I^G$ (henceforth denoted by $\Phi:X\to I^G$). Moreover, $\Phi$ is a \tl\ factor map from $(X,G)$ to the shift-action of $G$ on the image $\Phi(X)$, i.e.\ for any $h\in G$ and $x\in X$, the following diagram is commutative. 
$$
\begin{matrix}
&x&\overset{\Phi}\longrightarrow&(\varphi(g(x)))_{g\in G}\\ \vspace{-10pt} \\
\text{action by\!\!\!\!}&h\downarrow\ \ \ &&\downarrow \text{shift by }h\\ \vspace{-10pt} \\
&h(x)&\overset{\Phi}\longrightarrow&(\varphi(gh(x)))_{g\in G}
\vspace{8pt}
\end{matrix}
$$
Finally, by Definition \ref{std}, $f$ is the image by $\Phi$ of some $x\in X$. Thus the shift-orbit closure of $f$ is a \tl\ factor of the orbit closure of $x$. Because entropy zero is inherited by subsystems and factors, the proof of the implication in one of the direction is finished.

Now suppose that $f$ is bounded and the action of $G$ restricted to the (compact) orbit closure $X_f=\overline{\{g(f):g\in G\}}$ has entropy zero. Define $\varphi:X_f\to\R$ by the formula $\varphi(x) = x(e)$. (This is one of the places where it is convenient to assume that $e\in G$.) It remains to observe that $f\in X_f$ and, for each $g\in G$, we have $\varphi(g(f))=f(g)$. This proves that $f$ is strongly deterministic.
\end{proof}

\begin{thm}\label{vit1}
Let \xg\ be a \ds\ with \tl\ entropy zero. Let $\varphi:X\to\R$ be a bounded function and let $D_\varphi$ denote the set of discontinuity points of $\varphi$. If $\mu(D_\varphi)=0$ for all $\mu\in\mgx$, then, for any $x\in X$, the function $f:G\to\R$ given by $f(g)=\varphi(g(x))$ is strongly deterministic.
\end{thm}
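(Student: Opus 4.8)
The plan is to reduce Theorem~\ref{vit1} to Theorem~\ref{banal} by approximating the (possibly discontinuous) function $\varphi$ by continuous functions in a way that is controlled by the invariant measures on $X$. The key point is that Theorem~\ref{banal} characterizes strong determinism in terms of the topological entropy of the orbit closure of $f$ in $I^G$, so it suffices to show that this orbit closure has topological entropy zero. By the variational principle \eqref{vp}, that amounts to showing that every invariant measure for the shift action on $X_f=\overline{\{g(f):g\in G\}}$ has measure-theoretic entropy zero.

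First I would set up the natural map $\Phi\colon X\to I^G$ by $\Phi(x)(g)=\varphi(g(x))$ (where $I$ is the closure of the range of $\varphi$, a compact interval since $\varphi$ is bounded), noting that $\Phi$ need not be continuous now, but it \emph{does} intertwine the $G$-action on $X$ with the shift on $I^G$ (the diagram in the proof of Theorem~\ref{banal} commutes regardless of continuity). Thus $f=\Phi(x)\in I^G$, and I want to bound the entropy of the orbit closure $X_f$. The obstruction is that $\Phi$ being discontinuous means we cannot simply say $X_f$ is a topological factor of $(X,G)$. To get around this, I would approximate: given $\eps>0$, choose a continuous $\varphi_\eps\colon X\to\R$ with $\|\varphi_\eps-\varphi\|_{L^1(\mu)}$ small simultaneously for all $\mu\in\mgx$ — this is possible because $\mu(D_\varphi)=0$ for all invariant $\mu$, combined with Lusin-type reasoning and the compactness of $\mgx$ in the weak* topology (one can first find, for each fixed $\mu$, a continuous approximant, then use a finite cover of the compact set $\mgx$ and take a common refinement, or invoke upper semicontinuity arguments). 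Actually the cleanest route: the set of invariant measures for the orbit closure $(X_f,G)$ pushes back, via the (measurable) map $\Phi$, to invariant measures on $(X,G)$, so it is enough to control things measure-by-measure and then handle the sup.

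The core estimate is then: let $\nu$ be any invariant measure on $X_f\subset I^G$, and let $h(\nu)=h(\nu,\Lambda')$ where $\Lambda'$ is a fine finite partition of $I$ (or rather of $I^G$ by the zero-coordinate partition into small intervals — here I must be a little careful since $I$ is not finite, so I instead work with finite partitions $\Q$ of $I$ into subintervals of small diameter and show $h(\nu)=\sup_\Q h(\nu,\Q^{\{e\}})$ where $\Q^{\{e\}}$ is the corresponding zero-coordinate partition, which holds because such partitions generate the Borel $\sigma$-algebra up to $\nu$-null sets as we refine). For a fixed finite partition $\Q$ of $I$, if the partition points of $\Q$ avoid the (at most countable, hence $\mu$-null for an appropriate pullback $\mu$) set of ``bad levels'', then $\Phi^{-1}$ of the zero-coordinate $\Q$-cylinder in $I^G$ is, up to a $\mu$-null set, equal to $\varphi^{-1}(\text{interval})$, and the discontinuity set of the indicator of this set is contained in $D_\varphi\cup(\text{level sets})$, which is $\mu$-null. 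Hence $\Phi^{-1}(\Q^{\{e\}})$ is a partition of $X$ into sets whose boundaries are $\mu$-null; since $(X,G)$ has topological entropy zero, every such partition has zero dynamical entropy under every invariant $\mu$ (entropy zero means $h(\mu,\P)=0$ for \emph{all} finite measurable partitions $\P$, in particular these). Therefore $h(\nu,\Q^{\{e\}}) = h(\mu,\Phi^{-1}(\Q^{\{e\}})) = 0$ for the invariant measure $\mu$ on $X$ that pushes forward to $\nu$. Taking the supremum over partitions $\Q$ with good partition points (these still generate), $h(\nu)=0$. Since $\nu$ was arbitrary, $\htop(X_f,G)=0$, and Theorem~\ref{banal} finishes the proof that $f$ is strongly deterministic.

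The main obstacle I expect is the measure-theoretic bookkeeping around the discontinuity: making sure that (a) every invariant measure on $X_f$ really does arise as a pushforward under $\Phi$ of an invariant measure on $X$ — this needs the fact that any quasi-generic construction of $\nu$ on $X_f$ can be lifted to $X$ by taking a subsequential limit of the corresponding Cesàro averages of point masses at $g(x)$ there (using sequential compactness of $\M(X)$), plus the invariance of the limit from \eqref{domi}; and (b) the claim that a partition of $X$ with $\mu$-null boundaries has zero $\mu$-entropy when $\htop(X,G)=0$ — this is immediate from \eqref{vp} only if one is careful: $\htop(X,G)=0$ gives $h(\mu)=0$ for every invariant $\mu$, and $h(\mu)=\sup_\P h(\mu,\P)$ over \emph{all} finite measurable partitions, so indeed $h(\mu,\P)=0$ for every finite measurable $\P$, and in particular we do not even need the boundaries to be null — but we \emph{do} need the null-boundary condition to ensure $\Phi^{-1}(\Q^{\{e\}})$ is genuinely a measurable partition of $X$ adapted to the right $\mu$, i.e.\ to relate generators on $X_f$ to $X$. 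Handling the non-finite alphabet $I$ (passing from finite interval partitions to the Borel structure) is routine but needs a sentence. Once these points are in place the argument is essentially a soft factor-map-through-a-measurable-map argument, with entropy zero being robust enough to survive the loss of continuity.
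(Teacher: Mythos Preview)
Your approach is workable but genuinely different from the paper's, and step (a) has a real gap.

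The paper does not attempt to show $\htop(X_f,G)=0$ directly. Instead it builds the \emph{topological} extension $\bar X=\overline{\{(x',\Phi(x')):x'\in X\}}\subset X\times I^G$, notes that the first-coordinate projection $\pi_1:\bar X\to X$ is a measure-theoretic isomorphism for every $\nu\in\M_G(\bar X)$ (the $\pi_1$-fibre over any $x'$ whose orbit misses $D_\varphi$ is the singleton $\{(x',\Phi(x'))\}$), concludes $\htop(\bar X,G)=0$ via the variational principle, and then realises $f$ as $g\mapsto\bar\varphi(g(x,\Phi(x)))$ for the \emph{continuous} function $\bar\varphi(x',z)=z(e)$ on $\bar X$. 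This sidesteps altogether the question of which measures on $X_f$ lift to $X$, and reduces the theorem to Definition~\ref{std} rather than to Theorem~\ref{banal}.

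Your route --- push every $\nu\in\M_G(X_f)$ back to some $\mu\in\M_G(X)$ via $\Phi$ and use $h(\nu)\le h(\mu)=0$ --- also succeeds, but your justification of (a) is incomplete. You assume every $\nu\in\M_G(X_f)$ arises as a weak* limit of Ces\`aro averages along the orbit of $f$; this is false in general for non-ergodic $\nu$. The repair: restrict to ergodic $\nu$ (enough by the ergodic form of the variational principle in Section~\ref{2.5}), invoke the lemma used in the proof of Theorem~\ref{fsq} to obtain a F\o lner sequence $\F'$ along which $f$ is $\F'$-generic for $\nu$, and pass to a subsequence along which $\frac{1}{|F'_n|}\sum_{g\in F'_n}\delta_{g(x)}\to\mu$ in $\M(X)$. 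You then need one more step you omit: for continuous $\psi$ on $I^G$ the composition $\psi\circ\Phi$ is bounded with discontinuity set contained in $\bigcup_{g\in G} g^{-1}(D_\varphi)$, hence $\mu$-null, so the Portmanteau theorem gives $\int(\psi\circ\Phi)\,d\mu=\int\psi\,d\nu$, i.e.\ $\Phi_*\mu=\nu$. Once that is established, $(I^G,\nu,G)$ is a measurable factor of $(X,\mu,G)$ via the equivariant map $\Phi$, and $h(\nu)=0$ follows at once; your discussion of interval partitions $\Q$, level sets, and the continuous approximants $\varphi_\eps$ is a detour and can be removed entirely.
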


\begin{proof}
With each point $x\in X$ we associate the element $\Phi(x)=(\varphi(g(x)))_{g\in G}\in I^G$, where $I$ is the closure of the range of $\varphi$. We now create a \tl\ extension $(\bar X,G)$ of $(X,G)$, where $\bar X=\overline{\{(x,\Phi(x))\in X\times I^G\}}$ and $g(x,y)=(g(x),g(y))$ (on the second coordinate we apply the shift as it is defined on $I^G$). The factor map from $\bar X$ to $X$ is simply the projection on the first axis. Note that every point $x\in X$ whose orbit never visits $D_\varphi$ has a one-element preimage in this extension, namely the pair $(x,\Phi(x))$. Since $D_\varphi$ has measure zero for all invariant measures on $X$, the projection $\pi_1$ to the first coordinate is a measure-theoretic isomorphism between the measure-preserving systems $(\bar X,\nu,G)$ and $(X,\pi_1(\nu),G)$ for each $\nu\in\M_G(\bar X)$. Since $\htop(X,G)=0$, we have by the variational principle that $h(\mu)=0$ for all $\mu\in\mgx$. Thus $h(\nu)=0$ for all $\nu\in\M_G(\bar X)$. Applying the variational principle one more time, we conclude that $\htop(\bar X,G)=0$ as well. Now, define $\bar\varphi:\bar X\to\R$ by $\bar\varphi((x,z))=z(e)$.  (Again, it is convenient to assume that $G$ has a unit.) This is clearly a continuous function on $\bar X$. For each $x\in X$, the pair $(x,\Phi(x))$ is an element of $\bar X$ and thus the function $\bar f:G\to\R$ given by $\bar f(g)=\bar\varphi(g(x,\Phi(x)))$ is strongly deterministic. Finally note that
$$
\bar f(g)=\bar\varphi(g(x,\Phi(x)))=g(\Phi(x))(e)=\Phi(x)(g)=\varphi(g(x))=f(g),
$$ 
so $\bar f\equiv f$ and hence $f$ is strongly deterministic. 
\end{proof}

When $f$ is a strongly deterministic $\{0,1\}$-valued function, we will call the set $\{g\in G: f(g)=1\}$ a \emph{strongly deterministic set}. The following theorem provides a useful dynamical characterization of strongly deterministic sets via visiting times of an orbit of a point to a set with ``small boundary'' in a zero entropy system:

\begin{thm}\label{vit} Let $A$ be a subset of $G$. The following conditions are equivalent:
\begin{enumerate} 
\item $A$ is strongly deterministic.
\item There exists a zero entropy system $(X,G)$, a set $C\subset X$ such that $\mu(\partial C)=0$ for every \im\ $\mu\in\mgx$ ($\partial C$ denotes the boundary of $C$) and a point $x_0\in X$ such that
$$
A=\{g\in G: g(x_0)\in C\}.
$$
\item There exists a zero entropy system $(X,G)$, a clopen set $C\subset X$ and a point $x_0\in X$ such that
$$
A=\{g\in G: g(x_0)\in C\}.
$$
\end{enumerate}
\end{thm}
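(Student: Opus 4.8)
The plan is to prove the cycle of implications $(3)\Rightarrow(2)\Rightarrow(1)\Rightarrow(3)$. The implication $(3)\Rightarrow(2)$ is immediate: a clopen set $C$ has empty boundary, so $\mu(\partial C)=0$ trivially holds for every invariant measure. The implication $(1)\Rightarrow(3)$ will follow by first applying $(1)\Rightarrow(2)$-type reasoning and then sharpening, but it is cleanest to get $(3)$ directly from strong determinism: if $A$ is strongly deterministic, then by definition $\mathbbm{1}_A(g)=\varphi(g(x))$ for a zero-entropy system $(X,G)$, a continuous $\varphi:X\to\R$ and a point $x\in X$. Since $\mathbbm{1}_A$ is $\{0,1\}$-valued, the range of $\varphi$ along the orbit of $x$ lies in $\{0,1\}$; better, pass to the orbit closure and use Theorem~\ref{banal}, which gives that the subshift $X_A=\overline{\{g(\mathbbm 1_A):g\in G\}}\subset\{0,1\}^G$ has topological entropy zero. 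Then take $C=[1]\cap X_A$, the cylinder over the block $B\in\{0,1\}^{\{e\}}$ with $B(e)=1$; this is clopen in $X_A$, $x_0=\mathbbm 1_A$, and by \eqref{shift} one checks $g(\mathbbm 1_A)\in C\iff \mathbbm 1_A(g)=1\iff g\in A$. This establishes $(1)\Rightarrow(3)$.

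The substantive implication is $(2)\Rightarrow(1)$: from a zero-entropy system $(X,G)$, a set $C$ with $\mu(\partial C)=0$ for all $\mu\in\mgx$, and a point $x_0$ with $A=\{g:g(x_0)\in C\}$, we must produce a strongly deterministic representation of $\mathbbm 1_A$. The natural move is to take $\varphi=\mathbbm 1_C$ (or $\mathbbm 1_{\bar C}$) and apply Theorem~\ref{vit1}. The function $\varphi=\mathbbm 1_C$ is bounded, and its set of discontinuity points $D_\varphi$ is contained in $\partial C$ (in fact for an indicator, $D_{\mathbbm 1_C}=\partial C$ when $C$ is neither open nor closed in a neighborhood, but in any case $D_{\mathbbm 1_C}\subset\partial C$). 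By hypothesis $\mu(\partial C)=0$ for every $\mu\in\mgx$, hence $\mu(D_\varphi)=0$ for all invariant $\mu$. Theorem~\ref{vit1} then applies verbatim: since $\htop(X,G)=0$, the function $f(g)=\varphi(g(x_0))=\mathbbm 1_C(g(x_0))=\mathbbm 1_A(g)$ is strongly deterministic. Thus $A$ is a strongly deterministic set.

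There is one technical wrinkle worth flagging as the main obstacle: whether to use $\mathbbm 1_C$ or $\mathbbm 1_{\bar C}$ (the indicator of the closure) to control discontinuities. For an arbitrary set $C$, the discontinuity set of $\mathbbm 1_C$ equals $\partial C=\bar C\setminus\mathrm{int}(C)$, so $D_{\mathbbm 1_C}=\partial C$ exactly, and the hypothesis $\mu(\partial C)=0$ is precisely what Theorem~\ref{vit1} needs. However, one must also ensure that $\varphi(g(x_0))$ genuinely equals $\mathbbm 1_A(g)$ rather than something that differs on the boundary: since $A=\{g:g(x_0)\in C\}$ by hypothesis, we have $\mathbbm 1_C(g(x_0))=\mathbbm 1_A(g)$ as an exact identity, with no boundary ambiguity, so this is not actually a problem. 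The only real care needed is the standard fact $D_{\mathbbm 1_C}=\partial C$, which is elementary. Hence the proof is short once Theorems~\ref{banal} and~\ref{vit1} are invoked, and I expect no genuine difficulty beyond assembling these pieces and checking the cylinder identification in $(1)\Rightarrow(3)$.
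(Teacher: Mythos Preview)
Your proof is correct and follows essentially the same approach as the paper's own proof: the cycle $(1)\Rightarrow(3)\Rightarrow(2)\Rightarrow(1)$, with $(1)\Rightarrow(3)$ via Theorem~\ref{banal} and the clopen cylinder $[1]$ in the orbit closure of $\mathbbm 1_A$, and $(2)\Rightarrow(1)$ as a direct application of Theorem~\ref{vit1} using that $D_{\mathbbm 1_C}\subset\partial C$. The ``technical wrinkle'' you flag is not actually an issue, as you yourself observe, so the write-up could be tightened by omitting that discussion.
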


\begin{proof}
If $A\subset G$ is strongly deterministic then, by Theorem \ref{banal}, the action of the shift on the orbit closure $Y$ of $\mathbbm 1_A$ has \tl\ entropy zero. Putting $x_0=\mathbbm 1_A$, we have $A=\{g\in G: g(x_0)\in[1]\}$. Since the cylinder $[1]$ is clopen in $Y$, (1) implies (3), which trivially implies (2). The implication (2)$\implies$(1) is a special case of Theorem \ref{vit1} because the indicator function of $C$ is discontinuous only on $\partial C$. 
\end{proof}

\begin{rem}\label{dd}
(i) It is easy to see from the definition of upper and lower Banach densities that the set $A$ has positive lower (resp.\ upper) Banach density if and only if $\inf_\mu\mu(C)>0$ (resp. $\sup_\mu\mu(C)>0$), where $\mu$ ranges over all \im\ supported by the orbit closure of $\mathbbm 1_A$ and $C$ is the set appearing in (2) or (3). 

\noindent(ii) Sets $A$ with Banach density 0 or 1 are ``trivially deterministic''. The orbit closure in $\{0,1\}^G$ of the indicator function of such a set carries a unique invariant measure which is the point mass concentrated at a fixpoint
(either the constant 0 function or the constant 1 function).
\end{rem}

We will introduce now a weaker notion of determinism which will play a crucial role in this paper.

\begin{defn}Let $G$ be a countable cancellative amenable semigroup in which we fix a F\o lner \sq\ $\F$. Let $I\in\R$ be a compact set. A function $f:G\to I$ will be called \emph{$\F$-deterministic} if for every \im\ $\mu$ on $I^G$, for which $f$ is an $\F$-quasi-generic point under the shift action of $G$, the system $(I^G,\mu,G)$ has Kolmogorov--Sinai entropy zero. A set $A\subset G$ will be called  \emph{$\F$-deterministic} if the indicator function $\mathbbm 1_A$ is $\F$-deterministic.
\end{defn}

When $G=\N$ and $F_n=\{1,2,\dots,n\}$, the above notion of an $\F$-deterministic set is the same as that of a ``completely deterministic set'' in \cite{W2}. We will retain this terminology when dealing with the classical case. Now that the notions of normality preservation and complete determinism have been introduced, we can give a precise formulation of the Kamae--Weiss Theorem alluded to in the introduction.
\begin{thm}[Kamae--Weiss Theorem]\label{KW}
Let $A\subset\N$ be a set of positive lower density. Then $A$ preserves normality if and only if $A$ is completely deterministic. 
\end{thm}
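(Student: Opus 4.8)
The plan is to deduce the Kamae--Weiss theorem as the special case $G=\N$, $F_n=\{1,2,\dots,n\}$ of the more general equivalence advertised in the Introduction, but since that general theorem is proved later, let me instead sketch a self-contained argument following the ``spectral/disjointness'' strategy that underlies all known proofs. Write $y=\mathbbm 1_A\in\{0,1\}^\N$, and throughout use the classical F\o lner sequence. The two directions are genuinely different in character, so I would treat them separately.

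\emph{Complete determinism $\Rightarrow$ normality preservation.} Suppose $A$ is completely deterministic and let $b=(b_n)_{n\in\N}\in\Lambda^\N$ be classically normal, i.e.\ $b$ is generic for the uniform Bernoulli measure $\lambda$ on $\Lambda^\N$. Form the joint sequence $z=(b_n,y_n)_{n\in\N}\in(\Lambda\times\{0,1\})^\N$ and pass to a subsequence along which the empirical measures $\frac1n\sum_{i=0}^{n-1}\delta_{\sigma^i z}$ converge to some shift-invariant measure $\rho$ on $(\Lambda\times\{0,1\})^\N$. By construction the two marginals of $\rho$ are $\lambda$ (full entropy $\log|\Lambda|$) and an invariant measure $\mu$ for which $y$ is quasi-generic; by complete determinism $h(\mu)=0$. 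Since a zero-entropy factor is disjoint from (indeed, a relatively-Bernoulli-independent complement of) a Bernoulli system in the sense needed here --- more precisely, an invariant measure with one marginal Bernoulli of full entropy and the other marginal of zero entropy must be the product measure, because $h(\rho)\le h(\lambda)+h(\mu)=\log|\Lambda|$ forces the $\Lambda$-coordinate to remain Bernoulli and independent of the zero-entropy part --- we conclude $\rho=\lambda\times\mu$. Now read off normality of $(b_{a_k})_{k}$ along $A$: the frequency with which an initial block $W\in\Lambda^{\{0,\dots,m-1\}}$ appears anchored at positions $a_k\in A$ is governed by $\rho$ restricted to the cylinders $\{(w,1): w|_{\{0,\dots,m-1\}}=W\}$ divided by $\rho(\text{second coordinate}=1)=\mu([1])=d(A)>0$; since $\rho=\lambda\times\mu$, this ratio equals $\lambda([W])=|\Lambda|^{-m}$, which is exactly Definition~\ref{onA}. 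The positivity of $d(A)$ is where the lower-density hypothesis enters: it guarantees the denominator does not vanish and that ``anchored at $a_k$'' captures a positive-density set of indices, so the limit along $k$ is well-defined.

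\emph{Normality preservation $\Rightarrow$ complete determinism.} This is the harder direction and the place I expect the main obstacle. Argue by contraposition: suppose $A$ is \emph{not} completely deterministic, so there is an invariant measure $\mu$ on $\{0,1\}^\N$, quasi-generic for $y=\mathbbm 1_A$ along some subsequence $(n_j)$, with $h(\mu)=\delta>0$. Using the complexity reformulation of non-determinism quoted in the Introduction (for any $k$, a non-negligible family of length-$k$ subwords of $y$ each of small frequency, so that the number of ``typical'' $k$-blocks grows like $2^{\delta k}$), I want to build a \emph{classically normal} sequence $b\in\{0,1\}^\N$ for which $(b_{a_k})_{k}$ fails to be normal along $A$ --- in fact I will violate normality already at a controlled level. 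The mechanism: use the entropy $\delta$ present in $\mu$ to ``transmit information'' from the positions of $A$ into $b$. Concretely, one couples the orbit of $y$ with a Bernoulli source to produce an invariant measure $\rho$ on $(\{0,1\}^2)^\N$ whose first marginal is the uniform Bernoulli measure $\lambda$ but whose first coordinate, \emph{when sampled along the positions where the second coordinate equals $1$}, is not Bernoulli --- this is possible precisely because $h(\mu)>0$ gives room, inside the full-entropy Bernoulli system, to embed a factor correlated with the $\mu$-part without lowering the first marginal's entropy below $\log 2$. Taking a generic point $z=(b,y')$ of such a $\rho$ with $y'=y$ (achievable by a Rokhlin-type or direct combinatorial construction, inserting prescribed bits of $b$ at the locations dictated by $A$ and filling the rest with an independent Bernoulli stream, then checking via the frequency computation that the global first-marginal statistics are still uniform because $A$ has density $<1$ worth of ``corrupted'' slots whose contribution is diluted), we get $b$ classically normal with $(b_{a_k})$ not normal along $A$, contradicting normality preservation.

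The crux --- and the step I'd budget the most effort for --- is the construction in the second direction: producing a sequence that is \emph{simultaneously} classically normal in full and non-normal along $A$, which requires carefully balancing the entropy budget ($\log 2$ must be preserved for the first marginal while a genuinely non-Bernoulli correlation is smuggled in along $A$) and making the combinatorial/density bookkeeping rigorous rather than merely measure-theoretic. I expect one packages this via the ergodic decomposition of $\mu$ to reduce to $\mu$ ergodic of positive entropy, then invokes a relative Sinai-type theorem (a Bernoulli factor of the appropriate entropy living over the $\mu$-system) to get the coupling $\rho$ abstractly, and finally descends to an actual generic sequence by Fact~\ref{miner} together with a diagonalization matching the quasi-generic subsequence $(n_j)$ for $y$ --- taking care that ``generic for $\rho$'' on the ambient full shift exists because the full shift has generic points for every invariant measure, as noted in Subsection~\ref{gen}.
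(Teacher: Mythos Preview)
The paper does not prove Theorem~\ref{KW} from scratch: it is stated as a classical result and then recovered as the $G=\N$, $F_n=\{1,\dots,n\}$ case of Theorem~\ref{main} via Corollary~\ref{cmain}. So the relevant comparison is between your sketch and the proof of Theorem~\ref{main}.

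Your forward direction (determinism $\Rightarrow$ preservation) follows exactly the paper's strategy for (1)$\Rightarrow$(2): any subsequential joining of the Bernoulli marginal $\lambda$ with a zero-entropy marginal $\mu$ must be the product, and one reads off the frequencies. There is, however, a slip: the quantity you compute --- the frequency of $W$ at positions \emph{anchored} in $A$, i.e.\ $\rho([W]\times[1])/\rho(\Lambda^\N\times[1])$ --- is orbit-normality along $A$ (Definition~\ref{nA}), not the classical Definition~\ref{onA} you cite, which concerns the subsequence $(b_{a_k})_k$. The two notions of normality along $A$ are genuinely different (see the end of Subsection~\ref{clacs}); the paper handles this by proving that all the normality-preservation notions coincide for sets of positive lower density (Corollary~\ref{cmain}), and asserts the equivalence with classical preservation in part (ii) without a separate argument. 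Your sketch as written does not bridge this gap.

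For the reverse direction your outline is in the right spirit but diverges from the paper in its mechanism. You propose a relative-Sinai / Rokhlin-tower route to produce a non-product joining $\rho$ with Bernoulli first marginal and then a generic point $(b,y)$ with the prescribed second coordinate $y=\mathbbm 1_A$. The paper instead breaks this into three dedicated results: Theorem~\ref{depzc} constructs, by a direct coordinate-by-coordinate coupling (no Sinai theorem), a joining $\xi=\mu\vee\lambda$ in which the zero-coordinate partitions are dependent; Theorem~\ref{generic} produces, for the given $\F$-generic point $x=\mathbbm 1_A$, a $y$ such that $(x,y)$ is $\F$-quasi-generic for $\xi$ (this is the step you correctly flag as the crux --- the paper proves it via the tiling machinery of Subsection~\ref{til}); and Theorem~\ref{semi} upgrades the quasi-generic $y$ to an $\F$-generic (hence $\F$-normal) $y'$ by an $\F_\circ$-modification on a set of density zero. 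Your ``embed a correlated factor without dropping the first marginal below $\log 2$'' is a different heuristic from Theorem~\ref{depzc}, and your appeal to Fact~\ref{miner} plus ``full shifts have generic points for every measure'' does not by itself give a generic pair with \emph{prescribed} second coordinate $y$; that is precisely what Theorems~\ref{generic} and~\ref{semi} supply. So the reverse direction in your proposal has the right target but leaves the two hardest technical steps unresolved in a way the paper addresses explicitly.
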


Note that strongly deterministic functions are $\F$-deterministic for any F\o lner \sq\ $\F$ in $G$. This follows from Theorem \ref{banal} and from the fact that, by the variational principle, all \im s supported by a system with \tl\ entropy zero have Kolmogorov--Sinai entropy zero. On the other hand, the notion of $\F$-determinism is essentially weaker than that of strong determinism. For example, consider a minimal symbolic system $X\subset\Lambda^G$ with positive \tl\ entropy, such that for some ergodic measure $\mu$ the measure-preserving system $(X,\mu,G)$ has Kolmogorov--Sinai entropy zero.\footnote{
For the proof of existence of such systems, when $G$ is a countable amenable group, see \cite{FH}. This result can be generalized to countable cancellative amenable semigroups via the method of natural extensions.
} 
By Proposition \ref{miner}, $\mu$-almost every point is $\F$-generic, for some F\o lner \sq\ $\F$. Now, every such point (viewed as a function from $G$ to $\Lambda$) is obviously $\F$-deterministic, while, since $(X,G)$ is minimal and has positive \tl\ entropy, no point of $X$ is strongly deterministic. For another proof of the existence of $\F$-deterministic but not strongly deterministic functions see Propositon~\ref{trivia}.

A well-known (and quite natural) example of a completely, but not strongly, deterministic set is provided by the set $S\subset\N$ of square-free numbers. Let $\mathbbm 1_S =\omega\in\{0,1\}^\N$ and let $X_S=\overline{\{\sigma^n(\omega),n\in\N\}}\subset\{0,1\}^\N$ (where $\sigma$ denotes the shift transformation on $\{0,1\}^\N$). One can show (see \cite[Theorem 8]{Sa}) that the \tl\ system $(X_S,\sigma)$ has positive \tl\ entropy. On the other hand, the point $\omega$ is generic for the so-called Mirsky measure $\nu$ (introduced and studied by L. Mirsky \cite{Mi}), which has the property that the measure-preserving system $(X_S,\nu,\sigma)$ has rational discrete spectrum (and hence has zero Kolmogorov-Sinai entropy (see \cite[Theorem 9]{Sa}).

More examples of deterministic sets (of both types) will be provided in Section~\ref{nine}. 
\smallskip

An important property of the class of $\F$-deterministic functions is that it is closed under uniform limits. In the theorem below $\F$ is a fixed F\o lner \sq\ in $G$.

\begin{thm}\label{ulodf}
For each $n\ge1$, let $f_n:G\to \R$ be an $\F$-deterministic function and assume that the \sq\ $(f_n)_{n\ge1}$ converges uniformly to a function $f_0:G\to\R$. Then $f_0$ is also $\F$-deterministic.
\end{thm}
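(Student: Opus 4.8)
The plan is to show that if $f_0$ were not $\F$-deterministic, then some $f_n$ (for $n$ large) would also fail to be $\F$-deterministic. This requires relating the invariant measures for which $f_0$ is $\F$-quasi-generic on $I_0^G$ to those for which $f_n$ is $\F$-quasi-generic on $I_n^G$ (where $I_0, I_n$ are the closures of the ranges of $f_0, f_n$ respectively), and transferring the entropy information across. The natural device is the map $\Pi_n: I_n^G \to I_0^G$ induced coordinatewise by a near-identity map $I_n \to I_0$; uniform closeness $\|f_n - f_0\|_\infty \to 0$ will make these maps "almost" conjugacies in the weak* sense.

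First I would set up the topological framework precisely. Fix a compact $I \subset \R$ large enough to contain all the ranges (e.g. $I = [-M,M]$ with $M = \sup_n \|f_n\|_\infty$, finite by uniform convergence), so that all $f_n$ and $f_0$ live in the single full shift $I^G$ with its shift action $\sigma$. Suppose $\mu$ is an invariant measure on $I^G$ for which $f_0$ is $\F$-quasi-generic: there is a subsequence $(F_{n_k})$ along which $\frac{1}{|F_{n_k}|}\sum_{g\in F_{n_k}}\delta_{g(f_0)} \to \mu$ weak*. Because $g(f_n) \to g(f_0)$ uniformly over all $g \in G$ (this is the key point: $\sup_g |f_n(hg) - f_0(hg)| \le \|f_n - f_0\|_\infty$ for every $h$, so the shifted points are uniformly close in $I^G$, uniformly in $g$), the Cesàro averages $\frac{1}{|F_{n_k}|}\sum_{g \in F_{n_k}}\delta_{g(f_n)}$ are close to those of $f_0$, with the discrepancy controlled by $\|f_n - f_0\|_\infty$. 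Passing to a further subsequence (diagonalizing over $n$), I would extract measures $\mu_n$ for which $f_n$ is $\F$-quasi-generic and with $\mu_n \to \mu$ weak*.

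The heart of the argument is then: each $\mu_n$ has Kolmogorov--Sinai entropy zero (as $f_n$ is $\F$-deterministic), and I must deduce $h(\mu) = 0$. This uses upper semicontinuity of entropy. On the full shift $I^G$ over a \emph{finite} alphabet this is standard, but $I$ is a continuum, so entropy need not be weak*-upper-semicontinuous directly. The standard workaround is to discretize: for each $m \ge 1$ fix a partition $\P_m$ of $I$ into finitely many intervals of length $\le 1/m$ with $\mu(\partial P) = 0$ for all $P \in \P_m$ and all the relevant measures (possible since only countably many atoms arise); let $Q_m$ denote the corresponding generating partition of $I^G$ (pull $\P_m$ back by the coordinate-$e$ map). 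Since the boundaries are null, $H(\mu_n, Q_m^{F}) \to H(\mu, Q_m^{F})$ as $n \to \infty$ for each finite $F$, hence $h(\mu, Q_m) \le \liminf_n h(\mu_n, Q_m) \le \liminf_n h(\mu_n) = 0$ via the infimum rule \eqref{dfr}. Finally $h(\mu) = \sup_m h(\mu, Q_m) = 0$ because the partitions $Q_m$ refine to the full Borel sigma-algebra on $I^G$ (their join generates, as the intervals shrink). This shows $f_0$ is $\F$-deterministic.

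\textbf{Main obstacle.} The delicate point is the passage from "$\mu_n \to \mu$ and $h(\mu_n)=0$" to "$h(\mu)=0$" in the non-finite-alphabet setting: naive upper semicontinuity of KS-entropy fails on $I^G$. The resolution --- choosing boundary-null finite partitions adapted simultaneously to all the $\mu_n$ and to $\mu$, then using that $H(\cdot, Q^F)$ is genuinely continuous at measures giving null mass to the (finitely many) relevant cylinder boundaries --- is routine in spirit but must be stated carefully, in particular ensuring the chosen $\P_m$ works for the countable family $\{\mu\} \cup \{\mu_n\}$ at once. A secondary, purely bookkeeping issue is the diagonal extraction producing a single subsequence of $\F$ along which \emph{every} $f_n$ (and $f_0$) converges; this is standard but should be spelled out so that the $\mu_n$ are all $\F$-quasi-generic for the \emph{original} $\F$, not for different subsequences.
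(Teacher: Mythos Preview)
Your argument has a genuine gap at the entropy step. You correctly observe that for partitions $Q_m$ with $\mu$-null (and $\mu_n$-null) boundaries, the Shannon entropies satisfy $H(\mu_n,Q_m^F)\to H(\mu,Q_m^F)$ for each finite $F$. But the conclusion you draw from this, namely $h(\mu,Q_m)\le\liminf_n h(\mu_n,Q_m)$, is the wrong direction. By the infimum rule, $h(\cdot,Q_m)=\inf_F\frac1{|F|}H(\cdot,Q_m^F)$ is an infimum of functions that are continuous at $\mu$, and an infimum of continuous functions is \emph{upper} semicontinuous, yielding only $\limsup_n h(\mu_n,Q_m)\le h(\mu,Q_m)$. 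That inequality is useless here: you need $h(\mu)\le$ something involving $h(\mu_n)$, not the reverse. And this is not a technicality that can be patched --- entropy is genuinely not lower semicontinuous in the weak* topology. In the full shift $\{0,1\}^\Z$, periodic-orbit measures (all of entropy zero) are weak*-dense, so zero-entropy measures can converge to the Bernoulli measure of entropy $\log 2$. Your argument, as written, uses nothing about the $\mu_n$ beyond weak* convergence to $\mu$, so it cannot succeed.

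The paper's proof avoids this trap by exploiting the exact algebraic relation $f_0=\lim_n f_n$, not merely the proximity $\|f_n-f_0\|_\infty\to0$. It passes to the infinite joining: one looks at the orbit closure of the full sequence $(f_n)_{n\ge0}$ in $\prod_{n\ge0}I_n^G$. Any invariant measure for which $(f_n)_{n\ge1}$ is $\F$-quasi-generic is a joining of zero-entropy measures, hence has entropy zero. The crucial point is that uniform convergence makes the map $(\varphi_n)_{n\ge1}\mapsto\lim_n\varphi_n$ a continuous, shift-commuting factor map onto the orbit closure of $f_0$, sending $(f_n)_{n\ge1}$ to $f_0$. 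Thus every measure for which $f_0$ is $\F$-quasi-generic is a \emph{factor} of a zero-entropy measure, hence itself has entropy zero. The factor-map structure is what transfers entropy zero in the direction you need; mere weak* approximation does not.
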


\begin{proof}By definition, each strongly deterministic function is bounded, and hence $f_0$ is bounded as a uniform limit of bounded functions. Consider the (compact) product space $\mathfrak X=\prod_{n\ge 0}I_n^G$,  where, for each $n\ge 0$, $I_n\subset\R$ is the closure of the range of $f_n$. The semigroup $G$ acts on $\mathfrak X$ coordinatewise, by the shifts
$$
g((\phi_n)_{n\ge 0}) = (g(\phi_n))_{n\ge 0} =(\psi_n)_{n\ge 0}, \ \ g\in G,
$$  
where, for each $n\ge 0$, $\psi_n:G\to I_n$ is defined by $\psi_n(h)=\phi_n(hg)$, $h\in G$.
We will restrict our attention to the orbit closure $\mathfrak F=\overline{\{g((f_n)_{n\ge 0}):g\in G\}}$ of the element $(f_n)_{n\ge 0}\in\mathfrak X$. For $n\ge 0$, we will denote by $\mathfrak F_n$ the projection of $\mathfrak F$ onto the $n$th coordinate. Note that $\mathfrak F_n$ equals the orbit closure $\overline{\{g(f_n):g\in G\}}$ of $f_n\in I_n^G$. The projection $\mathfrak F_{\N}$ of $\mathfrak F$ onto $\prod_{n\ge1}I_n^G$ coincides with the orbit closure $\mathfrak F=\overline{\{g((f_n)_{n\ge 1}):g\in G\}}$ of the element $(f_n)_{n\ge1}$. Clearly, every \im\ $\mu$ on $\mathfrak F_\N$ for which this element is $\F$-quasi-generic is a joining\footnote{
In ergodic theory, a joining of a \sq\ of measure preserving systems $(X_n,\mu_n,G)$, $n\in\N$, is a system of the form $(X,\mu,G)$, where $X=\prod_{n\ge1}X_n$, the action of $G$ is coordinatewise, and $\mu$ is any \im\ on $X$ such that for every $n\ge1$ the marginal of $\mu$ on $X_n$ equals $\mu_n$.
} 
of measures $\mu_n$ on $I_n^G$ for which $f_n$ is $\F$-quasi-generic, $n\ge 1$. Because any joining of measures of entropy zero has entropy zero, we conclude that $h(\mu)=0$. We will show that $\mathfrak F_0$ is a \tl\ factor of $\mathfrak F_\N$ via a map which sends the element $(f_n)_{n\ge1}$ to $f_0$. This will imply that any measure $\mu_0$ for which $f_0$ is $\F$-quasi-generic is the image by a \tl\ factor map of a measure $\mu$ on $\mathfrak F_\N$ for which the element $(f_n)_{n\ge1}$ is $\F$-quasi-generic. Since we have already shown that any such measure $\mu$ has entropy zero, this will imply that $\mu_0$ also has entropy zero, and so, it will follow that $f_0$ is $\F$-deterministic.

It remains to build the desired factor map. For every element $(\varphi_n)_{n\ge 0}\in\mathfrak F$ there exists a \sq\ $(g_k)_{k\ge 1}$ in $G$ such that
$$
(\varphi_n)_{n\ge 0} = \lim_{k\to\infty} g_k((f_n)_{n\ge 0}),
$$
that is, for every $n\ge 0$ and $h\in G$, we have the convergence
$$
\varphi_n(h)=\lim_{k\to\infty}f_n(hg_k).
$$
We have assumed the uniform (with respect to $h\in G$) convergence $\lim_{n\to\infty} f_n(h)=f_0(h)$. So we can write
$$
\varphi_0(h)=\lim_{k\to\infty}f_0(hg_k)=\lim_{k\to\infty}\lim_{n\to\infty}f_n(hg_k),
$$
where the limit $\lim_{n\to\infty}f_n(hg_k)$ is uniform with respect to $h\in G$.
Since uniform limits commute with pointwise limits, we get
$$
\varphi_0(h)=\lim_{n\to\infty}\lim_{k\to\infty}f_n(hg_k)=\lim_{n\to\infty}\varphi_n(h),
$$
where $\lim_{n\to\infty}\varphi_n(h)$ is again uniform with respect to $h\in G$. The above equality has two consequences: \begin{enumerate}
\item each element $(\varphi_n)_{n\in\N}\in\mathfrak F_\N$ is a uniformly convergent \sq\ of functions, 
\item $\lim_{n\to\infty}\varphi_n$ belongs to $\mathfrak F_0$. 
\end{enumerate}
Thus, the assignment $(\varphi_n)_{n\in\N}\mapsto \lim_{n\to\infty}\varphi_n$ defines a map from $\mathfrak F_\N$ to $\mathfrak F_0$, continuous with respect to the product topology. Clearly, this map commutes with the shift action of $G$, and so it is a \tl\ factor map. The image of $(f_n)_{n\ge1}$ by this map is $f_0$, as desired.
\end{proof}

\begin{rem}\label{rr}
It is not difficult to see that sums and products of $\F$-deterministic functions are $\F$-deterministic. Indeed, let $f_1, f_2$ be $\F$-deterministic functions and let $I_1,I_2$ denote the closures of their ranges, respectively. Arguing as in the proof of Theorem \ref{ulodf}, we obtain that the pair $(f_1,f_2)$ is $\F$-deterministic in the system $(I_1\times I_2)^G$. The coordinatewise summation (resp., multiplication) is a \tl\ factor map from $I_1^G\times I_2^G$ to $(I_1+I_2)^G$ (resp., to $(I_1\cdot I_2)^G$) such that the pair $(f_1,f_2)$ is mapped to $f_1+f_2$ (resp., to $f_1f_2$), and hence $f_1+f_2$ (resp., $f_1f_2$) is $\F$-deterministic. Extending the scalars to complex numbers, we see that $\F$-deterministic functions on $G$ form a commutative $L^\infty$-algebra with involution, which by the Gelfand Representation Theorem is isomorphic to the algebra $C(X_{\F})$ of continuous functions on a (nonmetrizable) Hausdorff compactification $X_{\F}$ of $G$. This compactification has the following properties:
\begin{enumerate}
	\item The action of $G$ on itself by right multiplication extends to a continuous action on $X_{\F}$. 
	\item The elements of $X_{\F}$ which correspond to elements of $G$ are, in the system $(X_{\F},G)$,	$\F$-quasi-generic \emph{only} for measures of entropy zero. 
\end{enumerate}
The system $(X_{\F},G)$ is the maximal compactification of $G$ satisfying (1) and (2),
in the sense that any other compactification with these properties is a \tl\ factor of
$(X_{\F},G)$.
\end{rem}

The next theorem shows that, in a way, strong determinism is an ``extreme'' form of $\F$-determinism.

\begin{thm}\label{fsq}
A function $f:G\to I$, where $I\subset\R$ is a compact set, is strongly deterministic if and only if it is $\F$-deterministic for every F\o lner \sq\ $\F$ in $G$.
\end{thm}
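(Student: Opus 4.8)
The forward implication is essentially already recorded in the discussion preceding the statement: if $f$ is strongly deterministic then, by Theorem \ref{banal}, its shift-orbit closure $X_f\subset I^G$ has topological entropy zero, so by the variational principle \eqref{vp} every invariant measure supported on $X_f$ has Kolmogorov--Sinai entropy zero. In particular, every invariant measure $\mu$ on $I^G$ for which $f$ is $\F$-quasi-generic is supported on $X_f$ (any accumulation point of the Ces\`aro averages \eqref{ca} along $F_n$ lives in the orbit closure), hence $h(\mu)=0$. Since this holds for every F\o lner \sq\ $\F$, the function $f$ is $\F$-deterministic for every $\F$. The plan is to write this paragraph out carefully and then concentrate on the converse.

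For the converse, suppose $f$ is $\F$-deterministic for every F\o lner \sq\ $\F$; we must show $\htop(X_f,G)=0$, where $X_f=\overline{\{g(f):g\in G\}}$, since by Theorem \ref{banal} this is equivalent to strong determinism. By the (ergodic) variational principle it suffices to show $h(\mu)=0$ for every \emph{ergodic} $\mu\in\M_G(X_f)$. Fix such a $\mu$. By Proposition \ref{miner}, there is a subsequence $\F_\circ=(F_{n_k})_{k\in\N}$ of the \emph{original} F\o lner \sq\ such that $\mu$-almost every point of $X_f$ is $\F_\circ$-generic for $\mu$. Now comes the key maneuver: I want to produce a \emph{single} F\o lner \sq\ $\F'$ in $G$ for which $f$ itself is $\F'$-quasi-generic for $\mu$. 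To do this, pick a point $x_0\in X_f$ which is $\F_\circ$-generic for $\mu$; since $x_0$ lies in the orbit closure of $f$, there are $g_k\in G$ with $g_k(f)\to x_0$. The sets $F_{n_k}g_k$ (note the right-multiplication, matching the shift convention \eqref{shift}) still form a F\o lner \sq\ because right translation preserves the $(K,\eps)$-invariance ratios in a cancellative semigroup (this is where cancellativity is used, and one should cite or verify the elementary fact $|K(Fg)\triangle Fg| = |(KF\triangle F)g| = |KF\triangle F|$). Along this new F\o lner \sq\ $\F'=(F_{n_k}g_k)_{k}$, the Ces\`aro averages $\frac1{|F_{n_k}g_k|}\sum_{g\in F_{n_k}g_k}\delta_{g(f)}$ compare with $\frac1{|F_{n_k}|}\sum_{g\in F_{n_k}}\delta_{g(g_k(f))}$; since $g_k(f)\to x_0$ and $x_0$ is $\F_\circ$-generic for $\mu$, a diagonalization (passing to a further subsequence and using uniform continuity of finitely many test functions from a countable dense subset of $C(I^G)$) shows that $f$ is $\F'$-quasi-generic for $\mu$. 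By hypothesis $f$ is $\F'$-deterministic, so $h(\mu)=0$.

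Thus every ergodic measure on $X_f$ has entropy zero, whence $\htop(X_f,G)=0$ and $f$ is strongly deterministic by Theorem \ref{banal}. The main obstacle is the construction and verification of the F\o lner \sq\ $\F'$ witnessing $\F'$-quasi-genericity of $f$ for the given $\mu$: one has to be careful that right translates $F_{n_k}g_k$ are genuinely F\o lner (cancellativity, and the orientation of the shift action), and that the approximations $g_k(f)\to x_0$ can be combined with the genericity of $x_0$ along $\F_\circ$ by a single diagonal argument — choosing, for each $k$, the index $n_k$ large enough that $\frac1{|F_{n_k}|}\sum_{g\in F_{n_k}}\delta_{g(x_0)}$ is within $1/k$ (on the first $k$ test functions) of $\mu$, and then $g_k$ close enough to make $g(g_k(f))$ and $g(x_0)$ agree on those test functions for all $g\in F_{n_k}$. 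I expect this to be the only nontrivial point; everything else is an assembly of Theorem \ref{banal}, Proposition \ref{miner}, and the variational principle.
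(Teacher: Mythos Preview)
Your proposal is correct and follows essentially the same route as the paper. The paper isolates the key maneuver as a separate lemma (for any ergodic $\mu$ supported on the orbit closure of a point $x$, there is a F\o lner \sq\ $\F$ with $x$ $\F$-generic for $\mu$), proves it by exactly your right-translation trick $F_n=F'_ng_n$ with $g_n(x)\to y$ for an $\F'$-generic point $y$, and then applies it contrapositively; your direct argument and the paper's contrapositive are logically equivalent, and the paper's uniform-continuity formulation is slightly cleaner than your diagonalization but accomplishes the same thing (indeed it yields $\F$-genericity of $f$ for $\mu$, not merely quasi-genericity).
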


The proof utilizes the following lemma:
\begin{lem}
Consider a \tl\ \ds\ \xg and a point $x\in X$. For any ergodic measure $\mu\in\mgx$ supported by the orbit closure of $x$, there exists a F\o lner \sq\ $\F$ in $G$ such that $x$ is $\F$-generic for $\mu$. 
\end{lem}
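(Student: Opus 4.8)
The plan is to apply Proposition~\ref{miner} to produce a point $z$ in the orbit closure $Y=\overline{\{g(x):g\in G\}}$ which is genuinely generic (not merely quasi-generic) for $\mu$ along some Følner subsequence, and then to \emph{transfer} that genericity back to $x$ itself by replacing the Følner sets with suitable right translates. First I would fix an arbitrary Følner sequence $\mathcal E=(E_n)_{n\in\N}$ in $G$. Since $\mu\in\megx$, Proposition~\ref{miner} yields a subsequence $\F_\circ=(E_{n_k})_{k\in\N}$ for which $\mu$-almost every point of $X$ is $\F_\circ$-generic for $\mu$; because $\mu$ is supported by the closed set $Y$, the set of such generic points meets $Y$, so I may pick $z\in Y$ that is $\F_\circ$-generic, together with a sequence $(h_m)_{m\in\N}$ in $G$ with $h_m(x)\to z$.

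The crux is the following identity. For every $f\in C(X)$ and every fixed $k$, continuity of the action gives $f(g(z))=\lim_{m}f\bigl((gh_m)(x)\bigr)$ for each $g\in E_{n_k}$, whence, $E_{n_k}$ being finite,
\[
\frac{1}{|E_{n_k}|}\sum_{g\in E_{n_k}}f(g(z))=\lim_{m\to\infty}\frac{1}{|E_{n_k}|}\sum_{g\in E_{n_k}}f\bigl((gh_m)(x)\bigr)=\lim_{m\to\infty}\frac{1}{|E_{n_k}h_m|}\sum_{g'\in E_{n_k}h_m}f(g'(x)),
\]
where in the last equality I use that, by cancellativity, right multiplication by $h_m$ is a bijection of $E_{n_k}$ onto the translate $E_{n_k}h_m$, so $|E_{n_k}h_m|=|E_{n_k}|$. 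Now fix a countable dense set $\{f_j\}_{j\in\N}\subset C(X)$ (available since $X$ is compact metric), and for each $k$ use the displayed identity to choose $m(k)$ so large that for all $j\le k$ the Cesàro average of $f_j(g'(x))$ over $g'\in E_{n_k}h_{m(k)}$ is within $1/k$ of the Cesàro average of $f_j(g(z))$ over $g\in E_{n_k}$. Put $F_k=E_{n_k}h_{m(k)}$.

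It then remains to verify the two assertions. First, $x$ is $(F_k)$-generic for $\mu$: for each $j$ the average of $f_j(g(x))$ over $g\in F_k$ lies within $1/k$ of the average of $f_j(g(z))$ over $g\in E_{n_k}$, which converges to $\int f_j\,d\mu$ since $z$ is $\F_\circ$-generic; density of $\{f_j\}$ in $C(X)$ then upgrades this to $\frac1{|F_k|}\sum_{g\in F_k}\delta_{g(x)}\to\mu$ in the weak* topology. Second, $(F_k)_{k\in\N}$ is a Følner sequence: for any finite $K\subset G$, cancellativity yields $KF_k\triangle F_k=(KE_{n_k}\triangle E_{n_k})h_{m(k)}$, hence $|KF_k\triangle F_k|/|F_k|=|KE_{n_k}\triangle E_{n_k}|/|E_{n_k}|\to0$, since $(E_{n_k})_k$ — a subsequence of the Følner sequence $\mathcal E$ — is itself Følner.

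The one step I expect to require real care is the interplay between the translation by $h_{m(k)}$ and the Følner/cardinality bookkeeping: one must check that right-translating the sets $E_{n_k}$ neither destroys the Følner property nor distorts cardinalities (this is precisely where cancellativity is used), and one must order the diagonalization correctly — first fix $k$, then choose $m(k)$ large relative to the already-fixed $E_{n_k}$ — so that the tolerance $1/k$ is genuinely attainable. Everything else is routine weak*-convergence manipulation over a countable dense family of test functions.
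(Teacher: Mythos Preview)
Your proposal is correct and follows essentially the same approach as the paper: obtain (via Proposition~\ref{miner}) a point $z$ in the orbit closure that is generic for $\mu$ along some F\o lner sub\sq, then right-translate the F\o lner sets by elements $h_{m(k)}$ pushing $x$ close to $z$, so that the empirical averages along the translated sets match those of $z$. The only cosmetic difference is that the paper works directly with a metric on $\M(X)$ (choosing $\eps_n$ so that $d(z,z')<\eps_n$ forces the empirical measures over $F'_n$ to be $\tfrac1n$-close, then picking $g_n$ with $d(g_n(x),y)<\eps_n$), whereas you spell out the equivalent diagonalization over a countable dense family of test functions.
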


\begin{proof} By Proposition \ref{miner}, the measure $\mu$ has an $\F'$-generic point $y\in \overline{\{g(x):g\in G\}}$ for some F\o lner \sq\ $\F'=(F'_n)_{n\in\N}$. For each $n\in\N$, let $\eps_n$ be so small that 
$$
d(z,z')<\eps_n\implies
\dist\Bigl(\frac1{|F'_n|}\sum_{h\in F'_n}\delta_{h(z)},\frac1{|F'_n|}\sum_{h\in F'_n}\delta_{h(z')}\Bigr)<\frac1n
$$
(where $d$ is a metric on $X$ and $\dist$ is some metric on $\M(X)$ compatible with the weak* topology),
and let $g_n\in G$ be such that $d(g_n(x),y)<\eps_n$. We define $F_n=F_n'g_n$ and note that $\F=(F_n)_{n\in\N}$ is a F\o lner \sq\ in $G$. Now, 
$$
\lim_{n\to\infty} \frac1{|F_n|}\sum_{h\in F_n}\delta_{h(x)}=
\lim_{n\to\infty} \frac1{|F'_n|}\sum_{h\in F'_n}\delta_{hg_n(x)}=
\lim_{n\to\infty} \frac1{|F'_n|}\sum_{h\in F'_n}\delta_{h(y)}=\mu.
$$
We have shown that $x$ is $\F$-generic for $\mu$.
\end{proof}

\begin{proof}[Proof of Theorem \ref{fsq}]
Sufficiency has been already explained earlier in this section. For necessity, suppose that $f$ is not strongly deterministic. Lack of strong determinism means (by Theorem \ref{banal}) that the shift action of $G$ on the orbit closure $X_f$ of $f$ in $I^G$ has positive \tl\ entropy. By the ergodic version of the variational principle, there exists an ergodic measure $\mu$ supported by $X_f$, such that the measure-preserving system $(X_f,\mu,G)$ has positive Kolmogorov-Sinai entropy. By the above lemma, there exists a F\o lner \sq\ $\F$ such that $f$ is $\F$-generic for $\mu$. So $f$ is not $\F$-deterministic.
\end{proof}

Theorem \ref{fsq} combined with Remark \ref{rr} implies that the family of strongly deterministic functions is closed under finite sums and products. This, together with Theorem \ref{ulodf}, implies the following result.

\begin{cor}\label{uf}
The family of strongly deterministic functions is an algebra closed under uniform limits. 
\end{cor}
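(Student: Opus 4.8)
The plan is to deduce the statement entirely from three facts already in hand: the characterization of strong determinism as $\F$-determinism for \emph{every} F\o lner \sq\ (Theorem \ref{fsq}), the stability of $\F$-determinism under sums and products (Remark \ref{rr}), and its stability under uniform limits (Theorem \ref{ulodf}). The role of Theorem \ref{fsq} is to convert the ``global'' notion of strong determinism into the ``$\F$-by-$\F$'' notion, for which the relevant closure properties are already established; once this translation is in place, everything is a routine assembly.

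First I would check that the strongly deterministic functions form a vector space. Fix strongly deterministic $f_1,f_2:G\to\R$ and a scalar $c\in\R$. By Theorem \ref{fsq}, both $f_1$ and $f_2$ are $\F$-deterministic for every F\o lner \sq\ $\F$ in $G$. The constant function $g\mapsto c$ arises from a one-point (hence zero-entropy) system, so it is strongly deterministic, and therefore $\F$-deterministic for every $\F$ as well. By Remark \ref{rr}, for each fixed $\F$ the functions $f_1+f_2$ and $c\cdot f_1$ are $\F$-deterministic. Applying Theorem \ref{fsq} in the converse direction, since $\F$ was arbitrary, $f_1+f_2$ and $cf_1$ are strongly deterministic. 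The same reasoning applied to the product $f_1f_2$ shows the family is closed under multiplication, so it is an algebra; extending scalars to $\C$ exactly as in Remark \ref{rr} yields the complex-algebra version if desired.

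Finally I would handle uniform limits. Suppose $f_n\to f_0$ uniformly with each $f_n$ strongly deterministic. By Theorem \ref{fsq}, each $f_n$ is $\F$-deterministic for every F\o lner \sq\ $\F$. Fixing an arbitrary such $\F$ and invoking Theorem \ref{ulodf} (applied to the \sq\ $(f_n)_{n\ge1}$ of $\F$-deterministic functions converging uniformly to $f_0$), we conclude that $f_0$ is $\F$-deterministic. Since $\F$ was arbitrary, Theorem \ref{fsq} again gives that $f_0$ is strongly deterministic.

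There is essentially no obstacle in this argument: the only points requiring attention are to quantify over all F\o lner \sq s at each application of Theorem \ref{fsq}, and to observe that constant functions belong to the class (so that scalar multiplication is covered by the product case of Remark \ref{rr}), both of which are immediate. One could also argue more directly from Theorem \ref{banal}, noting that the orbit closure of $(f_1,f_2)$ embeds as a subsystem of $X_{f_1}\times X_{f_2}$, whose \tl\ entropy is at most $\htop(X_{f_1})+\htop(X_{f_2})=0$; but the route through Theorem \ref{fsq} is the most economical given what has been developed above.
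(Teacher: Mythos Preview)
Your proof is correct and follows exactly the route the paper takes: the corollary is deduced from Theorem \ref{fsq}, Remark \ref{rr}, and Theorem \ref{ulodf} in precisely the way you describe. The paper's own argument is the single sentence preceding the corollary, and your write-up simply unpacks it with a bit more care (e.g., the observation about constants handling scalar multiplication).
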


We conclude that complex-valued strongly deterministic functions on $G$ form a commutative $L^\infty$-algebra with involution, which, by the Gelfand Representation Theorem, is isomorphic to the algebra $C(X)$ of continuous functions on a (nonmetrizable) Hausdorff  compactification $X$ of $G$. The action of $G$ on itself by right translations extends to a continuous action of $G$ on $X$. The system $(X,G)$ is the universal zero entropy system in the sense that any other \tl\ system $(Y,G)$ with \tl\ entropy zero is a \tl\ factor of \xg. 
\smallskip

We will say that $f:G\to\R$ is a \emph{negligible function}, if, for every F\o lner \sq\ $\F=(F_n)_{n\in\N}$ in $G$,
\begin{equation}\label{null}
\lim_{n\to\infty}\frac 1{|F_n|}\sum_{g\in F_n}|f(g)|=0.
\end{equation}

\begin{cor}\label{coc} 
Every negligible function is strongly deterministic.
\end{cor}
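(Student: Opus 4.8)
The plan is to reduce to Theorem~\ref{fsq}: it suffices to prove that $f$ is $\F$-deterministic for \emph{every} F\o lner \sq\ $\F$ in $G$, and for this I will show that the only \im\ on $I^G$, where $I=\overline{f(G)}$, for which $f$ is $\F$-quasi-generic is the point mass $\delta_{\mathbf 0}$ at the constant function $\mathbf 0$ --- a measure supported on a single fixed point, hence of Kolmogorov--Sinai entropy zero.

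First I would record that a negligible function is automatically bounded; this is what makes $I$ a genuine compact subset of $\R$, so that Theorem~\ref{fsq} applies. Indeed, if $\sup_{g\in G}|f(g)|=\infty$, fix a centered F\o lner \sq\ $\F'=(F'_n)_{n\in\N}$ (one always exists: replace any F\o lner \sq\ $(F_n)$ by $(F_n\cup\{e\})$, which is again F\o lner), and pick, for each $n$, an element $g_n\in G$ with $|f(g_n)|>n\,|F'_n|$. By cancellativity the right-translated \sq\ $(F'_ng_n)_{n\in\N}$ is again a F\o lner \sq, and since $e\in F'_n$ we have $g_n\in F'_ng_n$; therefore
\[
\frac1{|F'_ng_n|}\sum_{g\in F'_ng_n}|f(g)|\ \ge\ \frac{|f(g_n)|}{|F'_n|}\ >\ n,
\]
contradicting negligibility. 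Hence $f$ is bounded; moreover $0\in I$, for otherwise $|f|$ would be bounded below by a positive constant, again contradicting negligibility.

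Now fix any F\o lner \sq\ $\F=(F_n)_{n\in\N}$ in $G$, and let $\mu$ be an \im\ on $I^G$ for which $f$ is $\F$-quasi-generic, say $\mu=\lim_{j}\frac1{|F_{n_j}|}\sum_{g\in F_{n_j}}\delta_{g(f)}$. The function $\psi\colon I^G\to\R$ given by $\psi(x)=|x(e)|$ is continuous and bounded, and $\psi(g(f))=|g(f)(e)|=|f(g)|$; testing weak* convergence against $\psi$ gives
\[
\int |x(e)|\,d\mu(x)=\lim_{j}\frac1{|F_{n_j}|}\sum_{g\in F_{n_j}}|f(g)|=0,
\]
so $x(e)=0$ for $\mu$-a.e.\ $x$. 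For an arbitrary $h\in G$, the identity $h(x)(e)=x(h)$ together with the invariance of $\mu$ yields $\int |x(h)|\,d\mu(x)=\int |h(x)(e)|\,d\mu(x)=\int|x(e)|\,d\mu(x)=0$, hence $x(h)=0$ for $\mu$-a.e.\ $x$. Since $G$ is countable, $\mu$-a.e.\ $x$ equals $\mathbf 0$, i.e.\ $\mu=\delta_{\mathbf 0}$. As $\delta_{\mathbf 0}$ has entropy zero, $f$ is $\F$-deterministic; since $\F$ was arbitrary, Theorem~\ref{fsq} gives that $f$ is strongly deterministic.

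The argument is essentially formal, so I do not anticipate a serious obstacle; the only points needing a moment's care are the preliminary reduction to bounded $f$ (so that $I$ is compact and Theorem~\ref{fsq} is legitimately applicable) and the use of the invariance of $\mu$ to pass from the unit coordinate to an arbitrary coordinate. As an alternative to invoking Theorem~\ref{fsq}, one could argue directly via Theorem~\ref{banal}: by the lemma preceding Theorem~\ref{fsq}, every ergodic measure carried by the orbit closure $X_f$ admits $f$ as an $\F$-generic point for some F\o lner \sq, hence equals $\delta_{\mathbf 0}$ by the computation above, and the ergodic variational principle then yields $\htop(X_f,G)=0$.
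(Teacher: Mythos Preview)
Your proof is correct and follows the same strategy as the paper: show that for every F\o lner \sq\ $\F$ the only measure for which $f$ is $\F$-quasi-generic is $\delta_{\mathbf 0}$, hence $f$ is $\F$-deterministic for all $\F$, and then invoke Theorem~\ref{fsq}. Your version is actually more careful than the paper's, which simply asserts $\F$-genericity for $\delta_{\mathbf 0}$ without justification; in particular, you supply the missing verification that a negligible function is bounded (so that $I$ is compact and $\mathbf 0\in I^G$), a point the paper's proof silently takes for granted.
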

\begin{proof}
The condition \eqref{null} implies that for any F\o lner \sq\ $\F$, the element $f\in I^G$ is $\F$-generic for the atomic measure concentrated at the constant zero function. Clearly any atomic measure concentrated at a fixpoint has Kolmogrov--Sinai entropy zero, so $f$ is $\F$-deterministic and Theorem \ref{fsq} applies. 
\end{proof}

We end this section with two examples (of classes) of strongly deterministic functions. The first example concerns the group $\Z^d$ and appears naturally in the theory of multiple recurrence.

\begin{exam}
Let \xmt\ be an invertible measure-preserving system with a probability measure. Let $k\in\N$ and let $f_0,f_1\dots,f_k$ be in $L^\infty(\mu)$. Then, for any $d\in\N$ and any polynomials $p_1,p_2,\dots,p_k:\Z^d\to\Z$, the function
$$
\varphi(n) = \int f_0\cdot T^{p_1(n)}f_1\cdot\ldots\cdot T^{p_k(n)}f_k \,d\mu,\ \ n\in\Z^d,
$$
is strongly deterministic on $\Z^d$. Indeed, it was shown in \cite[Theorem 6.2]{Le2} that $\varphi$ can be represented as $\varphi=\varphi_1+\varphi_2$, where $\varphi_1$ is a uniform limit of basic nilfunctions (i.e.\ functions of the form $\varphi(n)=\psi(T^n(x))$, where $T^n$ is a niltranslation on a compact nilmanifold $M$, $x\in M$ and $\psi\in C(M)$), and $\varphi_2$ is a negligible function. It is well known that niltranslations are distal and hence, by a general form of a theorem due to Parry, they have \tl\ entropy zero (see Proposition \ref{distal} in section \ref{sdistal}). Theorem \ref{banal} thus implies that basic nilfunctions are strongly deterministic. Now, Corollary \ref{uf} implies that the function $\varphi_1$ is strongly deterministic. By Corollary \ref{coc}, $\varphi_2$ is also strongly deterministic, so $\varphi$, being a sum of two strongly deterministic functions, is strongly deterministic. 
\end{exam}

The following family of deterministic functions has its roots in the classical topological dynamics. 

\begin{exam} Let $\tau:G\to C(X,X)$ be an action of a semigroup $G$ on a compact metric space $X$. The \emph{Ellis semigroup} $E(X,G)$ of \xg\ is the closure of the family $\{\tau(g):g\in G\}$ in the product topology of $X^X$. A \ds\ \xg\ is called \emph{weakly almost periodic} if all elements of $E(X,T)$ are continuous maps. A function $f:G\to\R$ is called \emph{weakly almost periodic} if there exists a weakly almost periodic system \xg, a continuous function $\varphi:X\to\R$ and a point $x\in X$ such that $f(g)=\varphi(g(x))$, $g\in G$. Weakly almost periodic functions are characterized by the following property: $f$ is bounded and
$$
\lim_{n\to\infty} \lim_{k\to\infty} f(g_nh_k) = \lim_{k\to\infty} \lim_{n\to\infty} f(g_nh_k)
$$
whenever $(g_n)_{n\in\N}$ and $(h_k)_{k\in\N}$ are sequences in $G$ and all limits involved exist (see \cite{BH}). A function $f:G\to\R$ is \emph{almost periodic} if the family $\{h(f):h\in G\}$ (where $h(f)$ is given by $h(f)(g)=f(gh)$, $g\in G$) is precompact in the topology of uniform convergence on $\R^G$. It follows from the definition that almost periodic functions on $G$ have the form $f(g)=\varphi(R_g(x))$, where $g\mapsto R_g$ is an equicontinous action on a compact metric space, and hence they are strongly deterministic.
\begin{prop}
Let $G$ be a countable cancellative amenable semigroup. Any weakly almost periodic function $F:G\to\R$ is strongly deterministic.
\end{prop}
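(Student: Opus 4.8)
The plan is to verify the condition of Definition~\ref{std} directly, exploiting the fact that the definition of a weakly almost periodic function already supplies the data we need: there are a weakly almost periodic system $(X,G)$, a continuous function $\varphi\colon X\to\R$ and a point $x\in X$ with $F(g)=\varphi(g(x))$ for every $g\in G$. Consequently the whole statement reduces to the single claim that \emph{every weakly almost periodic system $(X,G)$ has topological entropy zero}; once this is established, $F$ is strongly deterministic by Definition~\ref{std}. (Equivalently, one may first pass to the orbit closure $\overline{\{g(x):g\in G\}}$, which is again weakly almost periodic, and then invoke Theorem~\ref{banal}.)

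There are (at least) two ways to prove the reduced claim. The quickest runs through tameness: in a weakly almost periodic system every element of the Ellis semigroup $E(X,G)$ is continuous, a fortiori of Baire class one, so $(X,G)$ is a \emph{tame} system; and a tame system has zero topological entropy by the theorem of Kerr and Li. That theorem is usually stated for actions of countable discrete amenable groups, but its proof uses nothing more than a F\o lner sequence (independence sets and F\o lner averages), so it carries over to countable cancellative amenable semigroups; if one prefers, one may instead embed $G$ into its group of left fractions --- which exists because a cancellative left-amenable semigroup is left-reversible --- or use the natural-extension device employed elsewhere in the paper. Either way $\htop(X,G)=0$.

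A more self-contained route argues through invariant measures and the structure of $E(X,G)$. For a weakly almost periodic system, $E(X,G)$ is a compact \emph{semitopological} semigroup all of whose elements are continuous; by the de Leeuw--Glicksberg theory its minimal two-sided ideal $\mathbf{K}$ is a compact topological group, with identity a minimal idempotent $u\in E(X,G)$. One checks that $uX$ is a closed $G$-invariant set on which $G$ acts through the dense subsemigroup $\{gu:g\in G\}$ of $\mathbf{K}$, so that $(uX,G)$, being governed by a compact group of homeomorphisms, is equicontinuous, hence distal. Moreover every $\mu\in\mgx$ concentrates on $uX$: writing $u=\lim_\alpha g_\alpha$ in the pointwise topology --- which may be arranged along a sequence, since $E(X,G)$ is a pointwise-compact family of continuous maps on a compact metric space and is therefore metrizable --- one has $\int\varphi(g_\alpha(x))\,d\mu(x)=\int\varphi\,d\mu$ for all $\varphi\in C(X)$, and dominated convergence gives $\int\varphi(u(x))\,d\mu(x)=\int\varphi\,d\mu$, i.e.\ $u_*\mu=\mu$, whence $\mu(uX)=1$. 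Thus $(X,\mu,G)$ is isomorphic to the measure-preserving system carried by the distal system $(uX,G)$, so Proposition~\ref{distal} (Parry's theorem) together with the variational principle~\eqref{vp} forces $h(\mu)=0$. As $\mu\in\mgx$ was arbitrary, \eqref{vp} yields $\htop(X,G)=0$.

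I expect the genuine obstacle to be precisely this entropy claim --- ``weakly almost periodic $\Rightarrow$ zero topological entropy'' --- in the amenable \emph{semigroup} generality: the first route leans on the nontrivial Kerr--Li theorem plus a routine-but-necessary semigroup-to-group reduction, while the second requires assembling the de Leeuw--Glicksberg description of $E(X,G)$ and verifying that invariant measures collapse onto the equicontinuous part $uX$. The remaining steps --- quoting Definition~\ref{std}, passing to an orbit closure, or citing Proposition~\ref{distal} and~\eqref{vp} --- are routine.
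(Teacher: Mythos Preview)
Your proposal is correct, but the paper takes a shorter and more self-contained route. Rather than proving directly that a weakly almost periodic \emph{system} has topological entropy zero, the paper applies the deLeeuw--Glicksberg decomposition at the level of \emph{functions}: any weakly almost periodic function $F$ splits as $F=f_1+f_2$ with $f_1$ almost periodic and $f_2$ negligible (i.e.\ satisfying~\eqref{null}). The almost periodic piece $f_1$ arises from an equicontinuous action and is therefore strongly deterministic; the negligible piece $f_2$ is strongly deterministic by Corollary~\ref{coc}; and the sum of two strongly deterministic functions is strongly deterministic by Corollary~\ref{uf}. This avoids both the Kerr--Li machinery and the analysis of the minimal ideal of $E(X,G)$, relying instead only on a classical decomposition theorem together with results already established in the paper. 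Your second route is close in spirit --- the retraction $u$ onto the compact-group minimal ideal is precisely what produces the almost-periodic component in the deLeeuw--Glicksberg splitting --- but you work on the system side and must still argue that every invariant measure collapses onto $uX$, whereas the paper stays on the function side and simply adds two strongly deterministic summands.
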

\begin{proof}
By the deLeeuw--Glicksberg decomposition \cite[Thm.\ 5.7 and Cor.\ 5.9]{DG}\footnote{The existence of an invariant mean required in \cite[Thm.\ 5.7]{DG} follows from the amenability of $G$. Two years after \cite{DG}, C.\ Ryll-Nardzewski \cite{RN} proved that an invariant mean on the space of weakly almost periodic functions exists for any group.}, $f=f_1+f_2$, where $f_1$ is almost periodic and $f_2$ is a negligible function (i.e.\ $f_2$ satisfies \eqref{null}).   As mentioned above, $f_1$ is strongly deterministic. The function $f_2$ is strongly deterministic by Corollary \ref{null}, and so $f$ is strongly deterministic as a sum of two strongly deterministic functions.
\end{proof}
\end{exam}
We note in passing that for some groups (the so-called minimally almost periodic groups) the space of weakly almost periodic functions reduces (modulo negligible functions) to constants. For example, the group of finite even permutations of $\N$ is such a group.

\section{First main result: determinism = normality preservation}

We will now state the first main result of this paper, which is analogous to the Kamae--Weiss characterization of normality-preserving sets in $\N$ (Theorem \ref{KW}). Recall that a set $A\subset G$ is \emph{$\F$-deterministic} if its indicator function $y=\mathbbm 1_A\in\{0,1\}^G$ is $\F$-deterministic. Recall also that the definitions of simple, orbit- and block-$\F$-normality preservation apply only to sets $A\subset G$ such that $|F_n\cap A|\to\infty$ (condition~\eqref{infi}).

\begin{thm}\label{main}Let $G$ be a countable cancellative amenable semigroup in which we fix an arbitrary F\o lner \sq\ $\F=(F_n)_{n\in\N}$. Let $A\subset G$ satisfy $|F_n\cap A|\to\infty$. Consider the following conditions:
\begin{enumerate}
	\item $A$ has positive lower $\F$-density and is $\F$-deterministic,
	\item $A$ preserves orbit-$\F$-normality, 
	\item $A$ preserves block-$\F$-normality,
	\item $A$ preserves simple $\F$-normality,\footnote{According to our Definition 
	\ref{np}, this means that every $\F$-normal (not just simply $\F$-normal) element is 
	simply $\F$-normal along $A$.}
	\item $A$ has positive upper $\F$-density and is $\F$-deterministic.
\end{enumerate}

Then 
$$
(1)^{\longnearrow} \!\!\!\!\!\!\!\!\!\!_{^{\longsearrow}} \begin{matrix}(2)\\(3) \end{matrix}
^{\longsearrow} \!\!\!\!\!\!\!\!\!\!_{^{\longnearrow}}(4)\!\!\implies\!\!(5).
$$
\end{thm}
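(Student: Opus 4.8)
The theorem asserts the implication diagram
$$
(1)\implies(2),\quad (1)\implies(3),\quad (2)\implies(4),\quad (3)\implies(4),\quad (4)\implies(5).
$$
Several of these are easy and should be disposed of first; the substantive content is $(1)\implies(2)$ and $(1)\implies(3)$, which are the two "determinism $\Rightarrow$ normality preservation'' directions and will require the disjointness/zero-entropy machinery.

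\emph{The cheap implications.} The implications $(2)\implies(4)$ and $(3)\implies(4)$ should be essentially formal. For $(2)\implies(4)$: if $\{g(y):g\in A\}$ $\F$-generates the uniform Bernoulli measure $\lambda$, then projecting onto the zero coordinate (reading $y(g)$ for $g\in F_n\cap A$) and using that $\lambda$ restricted to the zero-coordinate partition is equidistributed over $\Lambda$, we recover exactly the defining limit of simple $\F$-normality along $A$. For $(3)\implies(4)$: the singleton $K=\{e\}$ is always $\F$-visible in $A$ (indeed $A_{\{e\}}=A$, so the $\liminf$ in Definition \ref{GbnA} equals $1>0$), and block-$\F$-normality applied to $K=\{e\}$ is literally simple $\F$-normality along $A$. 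For $(4)\implies(5)$: I would show the contrapositive is false, i.e.\ that $(4)$ forces positive upper $\F$-density and $\F$-determinism. The $\F$-determinism part is the delicate one — I would argue that if $A$ is \emph{not} $\F$-deterministic then one can build an $\F$-normal $y$ whose restriction along $A$ is not even simply normal, by using that a positive-entropy quasi-generic measure for $\mathbbm 1_A$ lets one ``encode'' a biased sequence into a normal $y$ read along $A$. The positive-upper-density part: if $\overline d_\F(A)=0$ then $|F_n\cap A|/|F_n|\to0$, and one checks that essentially every $\F$-normal $y$ fails simple $\F$-normality along such a sparse $A$ (the counts along $A$ are too few to equidistribute robustly — more precisely, one engineers $y$ to be constant, or heavily biased, on the sparse set $A$ while keeping it $\F$-normal on all of $G$, which is possible precisely because $A$ has density zero). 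Here condition \eqref{infi}, $|F_n\cap A|\to\infty$, is what makes the ``simple normality along $A$'' statement non-vacuous, so this argument has teeth.

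\emph{The main implications $(1)\implies(2)$ and $(1)\implies(3)$.} Assume $A$ has positive lower $\F$-density and is $\F$-deterministic, and let $y\in\Lambda^G$ be $\F$-normal, i.e.\ $\F$-generic for $\lambda$ on $\Lambda^G$. Consider the joint system: look at the pair $(\mathbbm 1_A, y)\in\{0,1\}^G\times\Lambda^G$ and any invariant measure $\rho$ for which $(\mathbbm 1_A,y)$ is $\F$-quasi-generic (pass to a subsequence $\F_\circ$ of $\F$ along which the Cesàro averages converge). Its marginals are: on the first coordinate, some measure $\kappa$ for which $\mathbbm 1_A$ is $\F_\circ$-quasi-generic, hence $h(\kappa)=0$ by $\F$-determinism; on the second coordinate, $\lambda$, which has entropy $\log|\Lambda|$ and — crucially — is the \emph{unique measure of maximal entropy}, in fact a Bernoulli (hence K, hence) system. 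The key dynamical input is a \emph{disjointness / relative independence} statement: a zero-entropy system is disjoint from any Bernoulli system, so the joining $\rho$ must be the product $\kappa\otimes\lambda$. From $\rho=\kappa\otimes\lambda$ one extracts both (2) and (3): reading the averages of $\delta_{g(y)}$ over $g\in F_n\cap A$ amounts to conditioning the joining on the event $\{$first coordinate at $e$ equals $1\}$ and its shifts; product structure forces the conditional distribution of the $y$-coordinate to remain $\lambda$, which (after normalizing by $|F_n\cap A|$, legitimate because $\underline d_\F(A)>0$) is exactly orbit-$\F$-normality along $A$; similarly, restricting to $g\in F_n\cap A_K$ for $\F$-visible $K$ and using product structure gives block-$\F$-normality. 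The normalization by $|F_n\cap A|$ rather than $|F_n|$ is where positive lower density is used — it guarantees the averages along $A$ don't degenerate.

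\emph{Main obstacle.} The crux is the disjointness step in the semigroup setting: I need that an invariant measure with zero Kolmogorov–Sinai entropy is disjoint from the uniform Bernoulli system over a countable cancellative amenable $G$, and I need the quantitative ``conditioning along $A$'' argument to go through with the relative densities $|F_n\cap A|/|F_n|$ staying bounded away from $0$. The entropy-zero/Bernoulli disjointness for amenable groups is classical, but one must be careful that quasi-genericity (convergence only along a subsequence of $\F$) interacts correctly with the density hypotheses — in particular that every subsequential limit of the averages along $F_n\cap A$ is recovered from the corresponding subsequential limit of the full averages, which is where the $\liminf$ in positive lower density (and the $\F$-visibility $\liminf$ for $K$) does the bookkeeping. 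I expect the honest proof to set up a single joining on $\{0,1\}^G\times\Lambda^G$, invoke disjointness once to get the product, and then harvest (2) and (3) as corollaries, with the density conditions entering only to justify the passage to conditional/relative averages.
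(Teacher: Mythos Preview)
Your treatment of $(2)\Rightarrow(4)$, $(3)\Rightarrow(4)$, and the density half of $(4)\Rightarrow(5)$ is correct and matches the paper. Your approach to $(1)\Rightarrow(2)$ and $(1)\Rightarrow(3)$ via disjointness of zero-entropy systems from Bernoulli is also essentially the paper's argument (the paper phrases it contrapositively---a non-product joining forces positive entropy on the $\mathbbm 1_A$-marginal---but this is the same reasoning). The disjointness fact itself is indeed classical (the paper just cites Furstenberg), so your ``main obstacle'' paragraph is worrying about a step that turns out to be routine.

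The genuine gap is in the determinism half of $(4)\Rightarrow(5)$, which you call ``delicate'' but then dismiss in one sentence. This is where almost all the paper's technical work actually lives. Your phrase ``a positive-entropy quasi-generic measure for $\mathbbm 1_A$ lets one encode a biased sequence into a normal $y$'' hides three independent, nontrivial constructions, each occupying its own subsection of the paper:
\begin{itemize}
\item Given $\mu$ of positive entropy, build a joining $\xi=\mu\vee\lambda$ in which the \emph{zero-coordinate partitions} are dependent. Kamae's original proof for $\N$ uses the conditional entropy formula $h(\mu,\P)=H(\mu,\P\mid\P^+)$, which requires the ``future'' $\sigma$-algebra and hence linear order. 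The paper gives a genuinely new argument (Theorem~\ref{depzc}) that works for amenable semigroups.
\item Given that the specific point $x=\mathbbm 1_A$ is $\F_\circ$-generic for $\mu$, produce a $y$ so that the \emph{pair} $(x,y)$ is $\F_\circ$-quasi-generic for $\xi$. This is Theorem~\ref{generic}; its proof uses the tiling machinery of amenable groups and is not a simple Borel--Cantelli or genericity argument.
\item The $y$ so produced is only $\F_\circ$-quasi-generic for $\lambda$, not $\F$-normal. One must modify $y$ on a set of $\F_\circ$-density zero to make it fully $\F$-generic for $\lambda$ while keeping $(x,y)$ quasi-generic for $\xi$ along the subsequence. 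This is Theorem~\ref{semi}, which needs ergodicity of $\lambda$ and again relies on tilings.
\end{itemize}
In short, you have the architecture right but have inverted the difficulty: the direction you flag as the crux is the easy one, and the direction you wave through in a sentence is where the paper spends three subsections and introduces its main new tools.
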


The proof of the above theorem consists of three main steps presented in subsections \ref{kml}, \ref{spg}, \ref{msgg}, and is wrapped up in subsection \ref{pmr} (subsections \ref{til} and \ref{dbbm} are of auxiliary character and contain the description of some tools used later in subsections \ref{spg} and \ref{msgg}).

We would like to mention that in Kamae's paper \cite{Kamae} (which served as inspiration for Theorem \ref{main}) three similar  parts are also present, but, due to the much more general setup, the details of our proofs are far more intricate.

In general, the implications (1)$\implies$(2), (1)$\implies$(3) and (4)$\implies$(5) cannot be reversed; appropriate examples are provided in Propositions \ref{5n2} and \ref{234n1} below. We leave the question about the validity of the implications (4)$\implies$(2) and (4)$\implies$(3) open. Clearly, these two implications may fail only for sets $A$ such that $0=\underline d_\F(A)<\overline d_\F(A)$. Since such sets are, from the point of view of this paper, of lesser interest, we refrain from attempting to solve this (most likely difficult) problem.

On the other hand, conditions (1)--(5) become equivalent under additional assumptions, and we will now discuss two such cases, (A) and (B).

(A) It is clear that in the class of sets of positive lower $\F$-density the conditions (1)--(5) are equivalent (with (1) and (5) reduced to just ``$A$ is $\F$-deterministic''). In this manner we obtain a direct generalization of the Kamae--Weiss characterization of normality-preserving sets which have positive lower density (see Theorem \ref{KW}):

\begin{cor}\label{cmain}
\begin{enumerate}[(i)]
\item If $A\subset G$ has positive lower $\F$-density then the following conditions are equivalent:
\begin{enumerate}
	\item $A$ is $\F$-deterministic, 
	\item $A$ preserves simple $\F$-normality, 
	\item $A$ preserves orbit-$\F$-normality, 
	\item $A$ preserves block-$\F$-normality. 
\end{enumerate}
\item In the classical case ($G=\N$ and $F_n=\{1,2,\dots,n\}$), the conditions (a)--(d) are also equivalent to \hfill\break(e) $A$ preserves normality (in the sense of Definition \ref{np} (i)).
\end{enumerate}
\end{cor}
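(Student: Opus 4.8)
\textbf{Plan of proof of Corollary \ref{cmain}.} The bulk of the work is already encapsulated in Theorem \ref{main}; Corollary \ref{cmain}(i) is essentially a bookkeeping consequence of the displayed chain of implications, while Corollary \ref{cmain}(ii) additionally requires identifying normality preservation in the sense of Definition \ref{np}(i) with one of the ``intrinsic'' normality-preservation notions in the classical case. So the plan splits into two parts.

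For part (i), I would proceed as follows. Assume $A\subset G$ has positive lower $\F$-density, $\underline d_\F(A)>0$; in particular $|F_n\cap A|\to\infty$, so all three notions of $\F$-normality along $A$ are applicable and Theorem \ref{main} is in force. Since $\underline d_\F(A)>0$ automatically implies $\overline d_\F(A)>0$, the statements ``$A$ has positive lower $\F$-density and is $\F$-deterministic'' (condition (1) of Theorem \ref{main}) and ``$A$ has positive upper $\F$-density and is $\F$-deterministic'' (condition (5)) both collapse to the single statement ``$A$ is $\F$-deterministic'', which is (a). Now read off the chain from Theorem \ref{main}: (1) implies (2) and (3), each of (2) and (3) implies (4), and (4) implies (5). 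In our restricted class (1) $=$ (5) $=$ (a), so we get (a) $\Leftrightarrow$ (1), (1) $\Rightarrow$ (2) $=$ (c), (1) $\Rightarrow$ (3) $=$ (d), (c) $\Rightarrow$ (4) $=$ (b), (d) $\Rightarrow$ (4) $=$ (b), and (4) $\Rightarrow$ (5) $=$ (a). Chasing these arrows shows that (a), (b), (c), (d) are mutually equivalent: e.g.\ (a) $\Rightarrow$ (c) $\Rightarrow$ (b) $\Rightarrow$ (a) closes one loop, and (a) $\Rightarrow$ (d) $\Rightarrow$ (b) closes the other, so (d) is in the cycle as well. Nothing beyond Theorem \ref{main} is needed here.

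For part (ii), set $G=\N$ and $F_n=\{1,2,\dots,n\}$, so that $\F$-normality is classical normality and $\F$-determinism is complete determinism. Here I must also bring in condition (e): $A$ preserves normality in the sense of Definition \ref{np}(i), i.e.\ every classically normal $y\in\Lambda^\N$ is normal along $A$ in the sense of Definition \ref{onA}. The key observation is that in the classical case Definition \ref{onA} (normality along $A$ via the pushed sequence $(y(a_k))_{k\in\N}$ generating $\lambda$) coincides with orbit-$\F$-normality along $A$ (Definition \ref{nA}/\ref{GnA}). Indeed, by the standing remark in subsection \ref{clacs} relating $A_K$ to $\{k:K+k\subset A\}$ and by Proposition \ref{class}, for $G=\N$ with the standard F\o lner sequence the sequence $(y(a_k))_k$ generates $\lambda$ precisely when the averages $\frac1n\sum_{k=1}^n\delta_{\sigma^{a_k}(y)}$ converge to $\lambda$, because an initial block $W\in\Lambda^{\{0,\dots,m-1\}}$ occurs at position $k$ in $(y(a_j))_j$ exactly when $\sigma^{a_k}(y)$ reads $W(0),W(1),\dots,W(m-1)$ at coordinates $a_k+1,\dots,a_k+m$ relative to the enumeration — and one checks directly that counting these occurrences along $k\le n$ and dividing by $n$ is, after passing through the bijection $a_k\leftrightarrow k$, exactly the defining average in Definition \ref{nA}. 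Hence ``preserves normality in the sense of Definition \ref{np}(i)'' is literally ``preserves orbit-$\F$-normality'', i.e.\ (e) $=$ (c). Since $A$ was assumed of positive lower $\F$-density, and since (by the original Kamae--Weiss theorem, Theorem \ref{KW}, or simply by (i) once (e)$=$(c) is known) preserving normality forces positive lower density in any case, part (i) applies and (e) is added to the equivalence class.

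\textbf{Main obstacle.} Part (i) is a pure diagram chase with no real content beyond Theorem \ref{main}. The only genuinely delicate point is the identification (e)$=$(c) in part (ii): one must unwind Definition \ref{onA} and Definition \ref{nA} carefully, verifying that the ``relatively connected'' window $a_k,a_{k+1},\dots,a_{k+m-1}$ used in Definition \ref{onA} and the ``connected'' window $a_k+1,\dots,a_k+m$ used in Definition \ref{nA} give rise to the same asymptotic frequencies after the order-isomorphism $A\cong\N$ — a bijection-bookkeeping argument that is routine but must be written with care, since the two definitions superficially look different. I expect this translation, together with correctly invoking positive lower density to land inside the scope of part (i), to be the only step requiring attention; everything else is immediate from the theorem just proved.
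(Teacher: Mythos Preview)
Your treatment of part (i) is correct and matches the paper's approach: the paper simply remarks that once $\underline d_\F(A)>0$, conditions (1) and (5) of Theorem \ref{main} both reduce to ``$A$ is $\F$-deterministic'', and the displayed chain of implications closes into a cycle.

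Your part (ii), however, rests on a false identification. You claim that in the classical case normality along $A$ in the sense of Definition \ref{onA} is \emph{literally the same} as orbit-$\F$-normality along $A$ in the sense of Definition \ref{nA}, so that (e)$=$(c). This is not true, and the paper says so explicitly at the end of subsection \ref{clacs}: ``One can show that the above four notions of normality along $A$ are not equivalent.'' The discrepancy is exactly the one you try to argue away: Definition \ref{onA} examines the values $y(a_k),y(a_{k+1}),\dots,y(a_{k+m-1})$ at \emph{consecutive elements of $A$}, whereas Definition \ref{nA} examines $y(a_k+1),y(a_k+2),\dots,y(a_k+m)$ at \emph{consecutive integers} anchored at $a_k$. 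Your sentence ``an initial block $W$ occurs at position $k$ in $(y(a_j))_j$ exactly when $\sigma^{a_k}(y)$ reads $W(0),\dots,W(m-1)$ at coordinates $a_k+1,\dots,a_k+m$'' conflates $y(a_{k+j})$ with $y(a_k+j)$; these differ whenever $A$ has gaps. So (e) and (c) are not identical conditions, and your ``bijection-bookkeeping argument'' cannot succeed.

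The paper's (implicit) route to (ii) is the one you mention only parenthetically: invoke the classical Kamae--Weiss theorem (Theorem \ref{KW}), which gives (e)$\iff$(a) directly for sets of positive lower density, and then combine with part (i). Since Theorem \ref{KW} is quoted as an established result, this suffices. Your proposal would be repaired by dropping the attempted identification (e)$=$(c) entirely and routing (e) through (a) via Theorem \ref{KW}.
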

\smallskip

(B) The assumption that $A$ has positive lower $\F$-density can be replaced in the Theorem \ref{main} with some mild assumptions about the F\o lner sequence. These assumptions are easily seen to be satisfied by the standard F\o lner \sq\ $F_n=\{1,2,\dots,n\}$ in $\N$. In particular, we obtain a strengthening of the original Kamae--Weiss Theorem \ref{KW} \notet{(the same strengthening can also be found in \cite{WK})}.

\begin{thm}\label{1}
Let $\F=(F_n)_{n\in\N}$ be a F\o lner \sq\ in $G$ which is nested (i.e.\  $F_n\subset F_{n+1}$, $n\in\N$) and satisfies the condition $\lim_{n\to\infty}\frac{|F_{n+1}|}{|F_n|}=1$. If $A\subset G$ preserves simple $\F$-normality then $A$ has positive lower $\F$-density, and consequently the conditions (1)--(5) of Theorem \ref{main} are equivalent.
\end{thm}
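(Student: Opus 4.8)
The plan is to argue by contraposition: assuming that $\underline d_\F(A)=0$, I will construct a $\{0,1\}$-valued point $y$ which is $\F$-normal but \emph{not} simply $\F$-normal along $A$, contradicting the hypothesis that $A$ preserves simple $\F$-normality. Fix an $\F$-normal point $z\in\{0,1\}^G$; such a point exists because, replacing $\F$ by the equivalent sub\sq\ of its distinct terms if necessary, we may assume the cardinalities $|F_n|$ strictly increase, and then $\lambda$-almost every point is $\F$-normal (remark after Definition~\ref{normal}). Since $z$ is $\F$-normal, the set $Z_0=\{g\in G:z(g)=0\}$ satisfies $|F_n\cap A\cap Z_0|=\bigl(\tfrac12+o(1)\bigr)|F_n\cap A|$; in particular $A_0:=A\cap Z_0$ still satisfies $|F_n\cap A_0|\to\infty$ and $\underline d_\F(A_0)=0$.

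The heart of the proof is to produce a set $D\subseteq A_0$ with
$$\overline d_\F(D)=0\quad\text{and}\quad\limsup_{n\to\infty}\frac{|F_n\cap D|}{|F_n\cap A|}\ \ge\ \tfrac14.$$
Granting this, put $y=z$ outside $D$ and $y\equiv 1$ on $D$. Because $y$ and $z$ differ only on $D$, for any finite $K\subseteq G$ and $B\in\{0,1\}^K$ the set of anchors $g$ at which $y|_{Kg}\approx B$ differs from $z|_{Kg}\approx B$ is contained in $\bigcup_{k\in K}\{g:kg\in D\}$; as $G$ is cancellative and $\F$ is F\o lner we have $|\{g\in F_n:kg\in D\}|=|kF_n\cap D|=|F_n\cap D|+o(|F_n|)$, so this exceptional set has $\overline d_\F=0$, whence $y$ is $\F$-generic for $\lambda$, i.e.\ $\F$-normal. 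On the other hand $D\subseteq A\cap Z_0$, so $|F_n\cap A\cap\{y=1\}|=|F_n\cap D|+|F_n\cap A\cap\{z=1\}|=|F_n\cap D|+\bigl(\tfrac12+o(1)\bigr)|F_n\cap A|$, and along the sub\sq\ realizing the $\limsup$ above this is at least $\bigl(\tfrac34+o(1)\bigr)|F_n\cap A|$; hence the frequency of the symbol $1$ read along $A$ does not converge to $\tfrac12$ and $y$ is not simply $\F$-normal along $A$. This proves the contrapositive, i.e.\ $\underline d_\F(A)>0$. The final assertion is then immediate: by Theorem~\ref{main} (case (A), equivalently Corollary~\ref{cmain}(i)), for a set of positive lower $\F$-density conditions (1) and (5) both reduce to ``$A$ is $\F$-deterministic'' and all of (1)--(5) are equivalent.

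It remains to build $D$, and I expect this to be the main obstacle. When $\overline d_\F(A)=0$ one may simply take $D=A_0$, so assume $0=\underline d_\F(A)<\overline d_\F(A)=:c$. The crucial use of the two hypotheses on $\F$ is the following: because $\F$ is nested with $|F_{n+1}|/|F_n|\to1$, the density sequence $r_n:=|F_n\cap A|/|F_n|$ varies slowly, namely $|r_{n+1}-r_n|\le 2\bigl(1-|F_n|/|F_{n+1}|\bigr)\to0$. Since $\liminf r_n=0<c=\limsup r_n$, between a scale where $A$ is $\F$-dense and the next scale where $r_n$ is small there are arbitrarily many intermediate scales along which $|F_n|$ grows only gradually. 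One then chooses sparse scales $m_1<m_2<\cdots$ with $r_{m_j}\to 0$ and $|F_{m_{j-1}}\cap A|=o(|F_{m_j}\cap A|)$, and lets $D$ absorb elements of $A_0$ only through such ``short'' steps of the nested chain that accumulate near the $m_j$: short enough that no single batch pushes $\limsup_n|F_n\cap D|/|F_n|$ above $0$ — this is where the slow-variation estimate, hence the hypothesis $|F_{n+1}|/|F_n|\to1$, is indispensable, since $|F_n\cap D|/|F_n|$ must be controlled at \emph{every} scale $n$, including the intermediate ones where $A$ itself may have density close to $c$ — yet collectively capturing at least a quarter of $F_{m_j}\cap A$ for each $j$. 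Carrying out this two-sided bookkeeping (simultaneously $\overline d_\F(D)=0$ and $|F_{m_j}\cap D|\ge\tfrac14|F_{m_j}\cap A|$) is the delicate combinatorial core; the density-zero perturbation step of the previous paragraph, by contrast, is routine.
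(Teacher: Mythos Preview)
Your overall strategy coincides with the paper's: modify an $\F$-normal point on a subset of $A$ that has $\F$-density zero yet captures a positive fraction of $A$ along infinitely many scales, thereby destroying simple $\F$-normality along $A$ while preserving $\F$-normality. The reduction step (first two paragraphs) is correct, though more elaborate than needed: the paper simply takes an $\F$-normal set $B$, removes the density-zero set $A'\subset A$, and notes that $B'=B\setminus A'$ is still $\F$-normal while $\frac{|F_n\cap B'\cap A|}{|F_n\cap A|}\le 1-\frac{|F_n\cap A'|}{|F_n\cap A|}$, so the frequency of $1$'s along $A$ drops below $\tfrac12$ on a sub\sq. Working inside $A_0=A\cap\{z=0\}$ and flipping $0$'s to $1$'s is an unnecessary detour.

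The genuine gap is that you do not actually construct $D$. You correctly identify the slow-variation estimate $|r_{n+1}-r_n|\le 1-|F_n|/|F_{n+1}|\to 0$ as the key consequence of the hypotheses, but your description of $D$ (``absorb elements of $A_0$ only through short steps \dots\ that accumulate near the $m_j$'') is not a definition, and the two-sided bookkeeping you flag as ``the delicate combinatorial core'' is left undone. The paper's lemma gives a clean explicit construction: for each small $\eps$ pick $m_\eps<M_\eps$ with $r_{m_\eps}<\eps$ and $r_{M_\eps}>d-\eps$; by slow variation there is a smallest $n_\eps\in(m_\eps,M_\eps)$ with $3\eps\le r_{n_\eps}<4\eps$; set $A'_\eps=(A\cap F_{n_\eps})\setminus F_{m_\eps}$, which automatically satisfies $\frac{|A'_\eps\cap F_{n_\eps}|}{|A\cap F_{n_\eps}|}>1-\frac{\eps}{3\eps}=\tfrac23$. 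Taking $A'=\bigcup_k A'_{\eps_k}$ along a sparse \sq\ $\eps_k\downarrow 0$ with $M_{\eps_k}<m_{\eps_{k+1}}$, one checks $\frac{|A'\cap F_n|}{|F_n|}\le 5\eps_k$ for $m_{\eps_k}\le n<m_{\eps_{k+1}}$ (splitting into $n\le n_{\eps_k}$ and $n>n_{\eps_k}$), so $d_\F(A')=0$. This is exactly the missing piece in your argument; once you have it (with $\tfrac23$ rather than $\tfrac14$, and for $A$ rather than $A_0$), the proof is complete along the lines you wrote.
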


The proof relies on the following lemma \notet{(see also \cite[Appendix A]{WK} for the classical case)}:
\begin{lem}
Let $\F$ be as in Theorem \ref{1} and let $A\subset G$ be a set of zero lower and positive upper $\F$-density. Then there exists a set $A'\subset A$ of $\F$-density zero and such that $\limsup_{n\to\infty}\frac{|F_n\cap A'|}{|F_n\cap A|}\ge\frac23$.
\end{lem}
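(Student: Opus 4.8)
The goal is to extract from a set $A$ with $\underline d_\F(A)=0$ but $\overline d_\F(A)>0$ a ``thin'' subset $A'\subseteq A$ which is still substantial (captures at least $\frac23$ of $A$ infinitely often) but has $\F$-density zero. The nestedness of $\F$ together with $|F_{n+1}|/|F_n|\to1$ is what makes this possible: it forces the ``density profile'' $n\mapsto |F_n\cap A|/|F_n|$ to vary slowly, so that although it dips to $0$ along a subsequence and rises to $\overline d_\F(A)>0$ along another, it cannot oscillate wildly between consecutive indices. The plan is to use this slow variation to interleave long stretches where we keep the elements of $A$ (around the ``peaks'' of the profile) with long stretches where we discard them (around the ``troughs''), arranging the discarded stretches to be so dominant in the relevant windows that the kept set has $\F$-density $0$, while still, at the peaks, retaining the lion's share of $F_n\cap A$.

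\textbf{Key steps.} First I would set $\overline d = \overline d_\F(A)>0$ and fix, via the hypotheses, a rapidly increasing sequence of indices. Choose ``peak'' indices $p_k$ along which $|F_{p_k}\cap A|/|F_{p_k}|\to\overline d$, and ``trough'' indices $q_k$ with $p_k < q_k < p_{k+1}$ along which $|F_{q_k}\cap A|/|F_{q_k}|\to0$. Second, I would define $A' = A\setminus \bigcup_k (F_{q_k}\setminus F_{p_k})$ --- that is, inside the annulus $F_{q_k}\setminus F_{p_k}$ we throw away every element of $A$, while we keep $A$ untouched on $F_{p_1}$ and on each annulus $F_{p_{k+1}}\setminus F_{q_k}$. (Using nestedness, $F_{p_k}\subseteq F_{q_k}\subseteq F_{p_{k+1}}$, so these annuli are genuine and disjoint.) Third, I would verify $\limsup_{n\to\infty}|F_n\cap A'|/|F_n\cap A|\ge\frac23$ by evaluating at $n=p_{k+1}$: here $F_{p_{k+1}}\cap A' = (F_{p_k}\cap A)\cup\big((F_{p_{k+1}}\setminus F_{q_k})\cap A\big)$, so the only elements of $F_{p_{k+1}}\cap A$ that were discarded lie in $F_{q_k}\setminus F_{p_k}$, a set of size at most $|F_{q_k}|$. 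Since $|F_{q_k}\cap A|/|F_{q_k}|\to0$ one might hope this is negligible, but $|F_{q_k}|$ itself can be large relative to $|F_{p_{k+1}}\cap A|$; the honest bound is that the discarded part has size $\le|F_{q_k}\cap A|$, and $|F_{q_k}\cap A|/|F_{p_{k+1}}\cap A|$ must be controlled. This forces me to choose the $q_k$ more carefully: among all indices between $p_k$ and $p_{k+1}$, pick $q_k$ to be one where $|F_{q_k}\cap A|$ is small not just in relative density but small compared to $|F_{p_k}\cap A|$ as well — possible because the profile passes through values near $0$, and $|F_n\cap A|$ need not be monotone, but one can at least find an index where $|F_{q_k}\cap A|\le\varepsilon_k|F_{p_k}\cap A|$ for a preassigned $\varepsilon_k\to 0$, provided $p_{k+1}$ is taken large enough after $q_k$ is chosen. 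Then at $n=p_{k+1}$ the retained fraction is at least $1 - \varepsilon_k\cdot|F_{p_k}\cap A|/|F_{p_{k+1}}\cap A|$, and since $|F_{p_k}\cap A|\le|F_{p_{k+1}}|$ while $|F_{p_{k+1}}\cap A|\sim\overline d\,|F_{p_{k+1}}|$ and $|F_{p_k}|/|F_{p_{k+1}}|\le 1$, this is $\ge 1-\varepsilon_k/\overline d\to1$, which is in fact stronger than the claimed $\frac23$. Fourth, to see $d_\F(A')=0$ I would evaluate at general $n$: for $n$ in the range $q_k\le n\le q_{k+1}$, the set $A'\cap F_n$ differs from $A'\cap F_{q_k}$ only by elements lying outside $F_{q_k}$, and those elements of $A'$ lie in $F_{p_{k+1}}\setminus F_{q_k}$ at most, hence $|F_n\cap A'| \le |F_{q_k}\cap A| + |F_{p_{k+1}}|$. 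Dividing by $|F_n|\ge|F_{q_k}|$ and using $|F_{p_{k+1}}|/|F_{q_k}|\to 1$ (here is where $|F_{n+1}|/|F_n|\to1$, i.e. slow growth, is essential — it forces $|F_{p_{k+1}}|/|F_{q_k}|$ to be close to $1$ only if $p_{k+1}$ is not too far past $q_k$, so I must also cap the gap $p_{k+1}-q_k$) gives a bound tending to $0$... but wait, this conflicts with wanting $|F_{q_k}\cap A|\le\varepsilon_k|F_{p_k}\cap A|$ AND $|F_{p_{k+1}}|/|F_{q_k}|$ close to $1$. Reconciling these two requirements — ``$q_k$ deep in a trough'' versus ``$p_{k+1}$ close enough to $q_k$ that $F$ hasn't grown'' versus ``$p_{k+1}$ far enough that it's a genuine peak'' — is exactly where the hypothesis $|F_{n+1}|/|F_n|\to1$ earns its keep: slow growth means that between any trough index and the next peak index the sets $F_n$ have comparable size, so ``close in index'' is not needed, only ``close in measure'', which is automatic.

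\textbf{Main obstacle.} The crux is the simultaneous bookkeeping in the previous step: one needs a single choice of the interleaved sequence $p_1<q_1<p_2<q_2<\cdots$ making $A'$ both $\F$-density zero (controlled by troughs dominating every window $[q_k,q_{k+1}]$) and $\limsup\ge\frac23$ along $\{p_{k+1}\}$ (controlled by troughs being small relative to the preceding peak). These pull in opposite directions unless one exploits that, by $|F_{n+1}|/|F_n|\to1$, the quantity $|F_n|$ is essentially slowly varying, so the only real constraint is on the density profile $|F_n\cap A|/|F_n|$, which by hypothesis does dip to $0$ and rise to $\overline d$. I expect the clean way to organize this is: fix $\varepsilon_k\downarrow0$; inductively, given $q_{k-1}$, first choose $p_k>q_{k-1}$ with $|F_{p_k}\cap A|\ge\frac{\overline d}{2}|F_{p_k}|$ and $|F_{p_k}|$ so large that $|F_{q_{k-1}}\cap A|+|F_{p_k}|\le(1+\varepsilon_k)|F_{q_{k-1}}|$ is \emph{not} what we want — rather, ensure $|F_{p_k}|\le(1+\varepsilon_k)|F_{q_{k-1}}|$ is impossible if $p_k$ is large; so instead the density-zero estimate must be run with $|F_n|$ in the denominator being $\ge|F_n\cap A'|$ directly rather than comparing to $|F_{q_k}|$. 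The honest fix: show $|F_n\cap A'|/|F_n|\to 0$ by noting $F_n\cap A' \subseteq (F_n\cap A)\setminus\bigcup_k(F_{q_k}\setminus F_{p_k})$, and for $n$ with $p_k\le n\le p_{k+1}$ bound $|F_n\cap A'|\le |F_{p_k}\cap A| + |(F_n\setminus F_{q_k})\cap A|$ when $n\ge q_k$ and $\le|F_{p_k}\cap A|+|(F_n\setminus F_{p_k})\cap A|$ when $n\le q_k$ — in the first regime this is $\le|F_{p_k}\cap A|+|F_{p_{k+1}}\cap A|-|F_{q_k}\cap A|$... I will have to be careful that $|F_n\cap A|$ is not monotone, so $|(F_n\setminus F_{q_k})\cap A| = |F_n\cap A|-|F_{q_k}\cap A|$ only up to sign. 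Handling this non-monotonicity cleanly, probably by passing to the monotone majorant $\tilde a(n)=\max_{m\le n}|F_m\cap A|$ or simply by always bounding $|F_n\cap A|\le|F_n|$ and $|F_{q_k}\cap A|\ge 0$, is the technical heart; I anticipate the final estimates will show $|F_n\cap A'|/|F_n|$ is, for $n$ in a window governed by $q_k$, at most (something) $+ |F_{q_k}\cap A|/|F_{q_k}|\cdot(|F_{q_k}|/|F_n|)\le \cdots\to0$, using $|F_{q_k}|/|F_n|\le 1$ and slow growth to make the leftover terms vanish. The main obstacle, in one sentence, is choosing the peak/trough subsequences so that troughs are simultaneously ``deep enough'' to kill the $\F$-density of $A'$ and ``early enough'' (in measure, via slow growth) that they discard only a vanishing fraction of each peak's worth of $A$ — and I would resolve it by interleaving the inductive choices, each time taking the next index far enough out that the previously accumulated error is swamped.
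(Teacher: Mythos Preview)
Your construction has a genuine structural gap: as defined, $A'=A\setminus\bigcup_k(F_{q_k}\setminus F_{p_k})$ \emph{cannot} have $\F$-density zero. Evaluate at $n=p_{k+1}$. Since $A'=A$ on the annulus $F_{p_{k+1}}\setminus F_{q_k}$, you get
\[
\frac{|F_{p_{k+1}}\cap A'|}{|F_{p_{k+1}}|}\ \ge\ \frac{|A\cap F_{p_{k+1}}|-|A\cap F_{q_k}|}{|F_{p_{k+1}}|}\ \approx\ \overline d-\frac{|A\cap F_{q_k}|}{|F_{p_{k+1}}|}.
\]
Now $|A\cap F_{q_k}|=o(|F_{q_k}|)\le o(|F_{p_{k+1}}|)$, so the right-hand side tends to $\overline d>0$. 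Thus $\overline d_\F(A')\ge\overline d$, contradicting what you need. Your attempted repair --- forcing $|F_{p_{k+1}}|/|F_{q_k}|\to1$ --- is incompatible with $p_{k+1}$ being a genuine peak, and your fallback claim that ``slow growth means that between any trough index and the next peak index the sets $F_n$ have comparable size'' is simply false: $|F_{n+1}|/|F_n|\to1$ says nothing about $|F_m|/|F_n|$ for $m\gg n$ (take $|F_n|=n$).

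The missing idea, which the paper supplies, is to \emph{stop short of the peak}. For each small $\eps>0$, first find a trough index $m_\eps$ with density $<\eps$, then a later peak index $M_\eps$ with density $>d-\eps$. The slow-growth hypothesis implies that the density profile changes by at most $\eps$ between consecutive indices, so somewhere in $(m_\eps,M_\eps)$ there is a \emph{first} index $n_\eps$ with density in $[3\eps,4\eps)$. Now set $A'_\eps=(A\cap F_{n_\eps})\setminus F_{m_\eps}$ and $A'=\bigcup_k A'_{\eps_k}$ for a sequence $\eps_k\downarrow0$ with well-separated windows. The upper bound $4\eps$ on the density at $n_\eps$ (and $3\eps$ just before) is exactly what gives $d_\F(A')=0$, while the lower bound $3\eps$ at $n_\eps$ against the bound $\eps$ at $m_\eps$ (and $|F_{m_\eps}|\le|F_{n_\eps}|$) gives the ratio $|A'_\eps\cap F_{n_\eps}|/|A\cap F_{n_\eps}|>1-\tfrac{\eps}{3\eps}=\tfrac23$. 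The peak $M_\eps$ is used only to certify that the intermediate index $n_\eps$ exists; it plays no role in the definition of $A'$.
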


\begin{proof} Let $d>0$ denote the upper $\F$-density of $A$. For each $0<\eps<\frac d5$ there exist arbitrarily large integers $m_\eps<M_\eps\in\N$ such that 
$$
\frac{|A\cap F_{m_\eps}|}{|F_{m_\eps}|}<\eps\text{\ \ and \ \ }\frac{|A\cap F_{M_\eps}|}{|F_{M_\eps}|}>d-\eps.
$$
We can assume that $m_\eps$ is so large that $1-\frac{|F_n|}{|F_{n+1}|}<\eps$ for any $n\ge m_\eps$. For each $n\ge m_\eps$ the ratio $\frac{|A\cap F_n|}{|F_n|}$ can grow between the indices $n$ and $n+1$ by at most
$$
\frac{|A\cap F_{n+1}|}{|F_{n+1}|}-\frac{|A\cap F_n|}{|F_n|}\le\frac{|A\cap F_{n+1}|-|A\cap F_n|}{|F_{n+1}|}\le\frac{|F_{n+1}|-|F_n|}{|F_{n+1}|}<\eps.
$$
(The middle inequality uses nestedness of $\F$.)
This implies that for some $n$ lying between $m_\eps$ and $M_\eps$ we have
$$
3\eps\le\frac{|A\cap F_n|}{|F_n|}<4\eps.
$$
We let $n_\eps$ be the smallest such $n$ and define $A'_\eps=(A\cap F_{n_\eps})\setminus F_{m_\eps}$. Observe that, since $F_{m_\eps}\subset F_{n_\eps}$, we have
\begin{multline}\label{eqn}
\frac{|A'_\eps\cap F_{n_\eps}|}{|A\cap F_{n_\eps}|}=\frac{|A\cap F_{n_\eps}|-|A\cap F_{m_\eps}|}{|A\cap F_{n_\eps}|}=
1-\frac{|A\cap F_{m_\eps}|}{|F_{m_\eps}|}\frac{|F_{m_\eps}|}{|F_{n_\eps}|}\frac{|F_{n_\eps}|}{|A\cap F_{n_\eps}|}>\\
1-\frac\eps{3\eps}=\frac23.
\end{multline}
We choose a \sq\ $(\eps_k)_{k\in\N}$ decreasing to zero, and select integers $m_{\eps_k}$ and $M_{\eps_k}$ so that $M_{\eps_k}<m_{\eps_{k+1}}$ for each $k\ge1$. Define 
$$
A'=\bigcup_{k\in\N}A'_{\eps_k}.
$$
The inequality \eqref{eqn} implies that $\limsup_{n\to\infty}\frac{|F_n\cap A'|}{|F_n\cap A|}\ge\frac23$. In order to show that $A'$ has $\F$-density zero, fix any $n\in\N$ and let $k$ be the largest integer such that $m_{\eps_k}\le n$ (notice that $k$ tends to infinity as $n$ increases). If $n\le n_{\eps_k}$ then 
$$
\frac{|A'\cap F_n|}{|F_n|}\le\frac{|A\cap F_n|}{|F_n|}<3\eps_k.
$$
For $n>n_{\eps_k}$ we have $n<M_{\eps_k}$, so $A'\cap F_n \subset (A\cap F_{m_{\eps_k}})\cup A'_{\eps_k}$, hence
$$
\frac{|A'\cap F_n|}{|F_n|}\le\frac{|A\cap F_{m_{\eps_k}}|}{|F_{m_{\eps_k}}|}+\frac{|A\cap F_{n_{\eps_k}}|}{|F_{n_{\eps_k}}|}\le5\eps_k.
$$
The above estimates for $n\le n_{\eps_k}$ and $n>n_{\eps_k}$ end the proof of the lemma.
\end{proof}

\begin{proof}[Proof of Theorem \ref{1}]
Let $A\subset G$ preserve simple $\F$-normality. By Theorem \ref{main}, $A$ has positive upper $\F$-density. Suppose the lower $\F$-density of $A$ equals zero. We will arrive at a contradiction by showing that $A$ does not preserve simple $\F$-normality. Let $B$ be any $\F$-normal set and let $A'\subset A$ be the set of $\F$-density zero constructed in the preceding lemma. Clearly $B'=B\setminus A'$ is also $\F$-normal. But $\liminf_{n\to\infty}\frac{|F_n\cap B'|}{|F_n\cap A|}\le\frac13$, which implies that $B'$ is not simply $\F$-normal along $A$.
\end{proof}

\subsection{Special joinings between positive entropy and Bernoulli measures}\label{kml}
This subsection is devoted to the first out of three key steps towards proving Theorem~\ref{main}.

In \cite[Lemma~3.1]{Kamae}, Kamae proves a fact (for the semigroup $\N_0$) which can be stated as follows: if $\mu$ is an \im\ on $\Lambda_1^{\N_0}$ with positive entropy and $\lambda$ is the uniform Bernoulli measure on $\Lambda_2^{\N_0}$, then there exists a joining $\xi=\mu\vee\lambda$ (supported on $\Lambda_1^{\N_0}\times\Lambda_2^{\N_0}$) such that the zero-coordinate partitions on both symbolic spaces are stochastically dependent. In the proof, Kamae essentially uses the conditional entropy formula $h(\mu,\P)=H(\mu,\P|\P^+)$ (where $\P^+=\bigvee_{n=1}^\infty T^{-n}(\P)$ is the \emph{future} of the process generated by a finite partition $\P$). Clearly, the notion of the future requires the semigroup to be linearly ordered, so Kamae's proof does not generalize to semigroups considered in this paper. Nevertheless, we are able to generalize his lemma using a proof which does not depend on orderability. 
\smallskip

Recall that a \emph{joining} of two measure-preserving systems $(X,\mu,G)$ and $(Y,\nu,G)$ is any system of the form $(X\!\times\!Y,\mu\vee\nu,G)$, where the measure $\mu\vee\nu$ has marginals $\mu$ and $\nu$ and is \inv\ under the product action of $G$ given by $g(x,y)=(g(x),g(y))$, $x\in X,\ y\in Y,\ g\in G$. Any measure on $X\times Y$ (not necessarily \inv\ under the product action) with marginals $\mu$ and $\nu$ will be referred to as a \emph{coupling} of $\mu$~and~$\nu$.

\begin{thm}\label{depzc}
Let $G$ be a countable cancellative amenable semigroup with a unit. Let $\mu$ be a shift \im\ on $\Lambda_1^G$ with positive entropy $h(\mu)=h(\mu,\Lambda_1)$. Let $\lambda$ denote the uniform Bernoulli measure on $\Lambda_2^G$. Then there exists a joining $\xi=\mu\vee\lambda$ which makes the zero-coordinate partitions $\Lambda_1, \Lambda_2$ not independent.
\end{thm}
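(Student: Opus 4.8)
The plan is to realise $\xi$ as a weak$^*$ limit of a sequence of \emph{non-invariant} couplings $\xi_n$ of $\mu$ and $\lambda$, each built over the F\o lner set $F_n$ and designed so that a positive \emph{proportion} of the coordinates $g\in F_n$ display dependence between $x(g)$ and $y(g)$ with a \emph{fixed sign}; this dependence then survives the averaging $\bar\xi_n:=\frac1{|F_n|}\sum_{g\in F_n}(g,g)_*\xi_n$ and is inherited by the limit, where --- the limit being shift-\inv\ --- it becomes dependence of the zero-coordinate partitions. The role played in Kamae's argument by the linearly ordered "past'' (the future) will be played by a \emph{uniformly random} order on each $F_n$, and this is what makes the proof order-free.

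Step 1 (local consequence of positive entropy). Draw an i.i.d.\ field $u=(u_g)_{g\in F_n}$ of $[0,1]$-uniforms; it induces a uniformly random total order on $F_n$ in which $g$ follows the set $W_g:=\{h\in F_n:u_h<u_g\}$. The chain rule gives, for every realisation of the order, $\sum_{g\in F_n}H_\mu\bigl(x(g)\mid x|_{W_g}\bigr)=H(\mu,\Lambda_1^{F_n})$, so that
\[
\tfrac1{|F_n|}\sum_{g\in F_n}\E_u\bigl[H_\mu(x(g)\mid x|_{W_g})\bigr]=\tfrac1{|F_n|}H(\mu,\Lambda_1^{F_n})\longrightarrow h(\mu)>0 .
\]
Hence, for $n$ large, there is a set $G_n\subset F_n$ with $|G_n|\ge c|F_n|$ (a fixed $c>0$) such that for $g\in G_n$ the conditional law $\rho_g$ of $x(g)$ given $(x|_{W_g},u)$ has Shannon entropy $\ge h(\mu)/4$ with probability $\ge p_0>0$; on that event $\rho_g$ is quantitatively "spread out'', its two heaviest atoms having mass bounded below by constants depending only on $h(\mu)$ and $|\Lambda_1|$.

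Step 2 (the coupling $\xi_n$, and the limit). Revealing the coordinates of $F_n$ in the random order, set $y_n(g):=$ a $\Lambda_2$-valued sample from a stochastic kernel $\kappa_{\rho_g}(x(g),\cdot)$ (realised with a fresh auxiliary uniform $\zeta_g$), where $\kappa_\rho$ is chosen --- canonically and measurably in $\rho$ --- so that $\sum_{v\in\Lambda_1}\rho(v)\kappa_\rho(v,\cdot)$ is the uniform law on $\Lambda_2$ and so that, when $\rho$ is nondegenerate, $\kappa_\rho$ pushes mass by a canonical amount $\theta(\rho)>0$ toward a fixed symbol $c_1\in\Lambda_2$ on the row indexed by the heaviest atom of $\rho$ of smaller index, compensating on the heavier of the remaining atoms; for $g\notin F_n$ let $y_n(g)$ be independent fresh uniform noise. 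The uniform-marginal property of $\kappa_\rho$ together with the nesting $W_{g'}\subset W_g$ for predecessors $g'$ of $g$ yields, by the standard filtration argument, that conditionally on the order the $y_n(g)$ are i.i.d.\ uniform on $\Lambda_2$; thus $y_n\sim\lambda$ and $\xi_n:=\mathrm{law}(x,y_n)$ is a coupling of $\mu$ and $\lambda$. Testing against a fixed \emph{strictly decreasing} $\phi:\Lambda_1\to\R$ and conditioning on the revealed past gives $\mathrm{Cov}_{\xi_n}\bigl(\phi(x(g)),\mathbbm1_{y_n(g)=c_1}\bigr)=\E\bigl[\theta(\rho_g)\,\rho_g(a_<)\,(\phi(a_<)-\phi(a_>))\bigr]$, which is $\ge0$ for every $g\in F_n$ and $\ge\delta_1>0$ for $g\in G_n$. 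Since $\bar\xi_n$ and each $\xi_n$ have marginals $\mu,\lambda$, a direct computation then gives $\mathrm{Cov}_{\bar\xi_n}(\phi(x(e)),\mathbbm1_{y(e)=c_1})=\frac1{|F_n|}\sum_{g\in F_n}\mathrm{Cov}_{\xi_n}(\phi(x(g)),\mathbbm1_{y_n(g)=c_1})\ge c\,\delta_1>0$. Passing to a weak$^*$ limit point $\xi$ of $(\bar\xi_n)$, the F\o lner property $|hF_n\triangle F_n|=o(|F_n|)$ makes $\xi$ shift-\inv\ (and the marginals remain $\mu,\lambda$), so $\xi$ is a joining, and by continuity of the functions involved $\mathrm{Cov}_\xi(\phi(x(e)),\mathbbm1_{y(e)=c_1})\ge c\delta_1>0$; hence $\Lambda_1$ and $\Lambda_2$ are not independent under $\xi$.

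The two delicate points, which I expect to be the crux, are: (a) replacing Kamae's ordered innovations by the random-order chain-rule bound of Step 1, thereby turning positive entropy into a positive \emph{density} of coordinates carrying genuine conditional randomness (this is the place where orderability is genuinely dispensed with); and (b) arranging the dithering kernel $\kappa_\rho$ so that the dependence it creates has a \emph{fixed sign} and the per-coordinate covariance is \emph{nonnegative at every} coordinate --- otherwise the contribution of the $\ge c|F_n|$ "good'' coordinates could be cancelled by the other coordinates under the averaging. The remaining ingredients (measurability of $\kappa_\rho$ and validity of the mass shifts; the quantitative spread of $\rho$ forced by its entropy; the filtration argument for i.i.d.-ness; invariance and marginals of the weak$^*$ limit) are routine and I would carry them out along the lines indicated.
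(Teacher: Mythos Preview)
Your strategy is sound and does prove the theorem, but it is considerably more elaborate than what the paper actually needs. The paper observes that one can simply fix a \emph{deterministic} enumeration $G=\{g_0,g_1,\dots\}$ compatible with a nested exhaustive F\o lner sequence (so that $F_n=\{g_0,\dots,g_{l_n-1}\}$), and then the ordinary chain rule already gives a set $M$ of positive lower $\F$-density of coordinates $g_l$ with $H(\mu,\Lambda_1^{\{g_l\}}\mid\Lambda_1^{\{g_0,\dots,g_{l-1}\}})\ge h(\mu)/2$. No random order is required; a bare enumeration of the group is all the ``order-freeness'' one needs. The paper then refines to a single symbol $a_0\in\Lambda_1$ and a fixed $b_0\in\Lambda_2$, and builds the coupling by simply forcing $\zeta([a_0]_{g_l}\times[b_0]_{g_l}\mid B\times C)=0$ on the good events (with an explicit redistribution making the $\Lambda_2$-marginal uniform). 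Averaging and passing to a limit gives $\xi([a_0,b_0])<\mu([a_0])\lambda([b_0])$ directly. So both proofs share the skeleton (chain rule $\Rightarrow$ many ``spread'' coordinates $\Rightarrow$ build a non-invariant coupling with same-sign dependence $\Rightarrow$ average), but the paper's implementation is shorter: one enumeration instead of an i.i.d.\ random order, one fixed pair $(a_0,b_0)$ instead of a $\rho$-dependent kernel, and an explicit four-line mass assignment instead of a general dithering kernel with measurable selection.

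What your route buys is that you never have to pigeonhole down to a single symbol $a_0$; the kernel adapts to $\rho_g$. What it costs is the random-order apparatus and the filtration/measurability bookkeeping, which you correctly flag as routine but which the paper avoids entirely. One genuine imprecision to fix: your description ``the heaviest atom of $\rho$ of smaller index'' and ``the heavier of the remaining atoms'' does not force $a_<<a_>$ in the index order (e.g.\ if the unique heaviest atom has the largest index), so $\phi(a_<)-\phi(a_>)$ could be negative and the per-coordinate covariance need not be $\ge 0$. You should instead take $a_<$ and $a_>$ to be, say, the smallest- and largest-index atoms among those with $\rho$-mass above a fixed threshold (which exist once $H(\rho)\ge h(\mu)/4$); then $a_<<a_>$ is automatic and the sign argument goes through.
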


\begin{proof} Fix a centered, nested and exhaustive F\o lner \sq\ $\F=(F_n)_{n\in\N}$ in $G$ (such a F\o lner \sq\ always exists) and let us list the elements of the semigroup in some order: $G=\{e=g_0,g_1,g_2,\dots\}$, so that for each $n$ there exists $l_n$ such that $F_n=\{g_0,g_1,g_2,\dots,g_{l_n-1}\}$. For each $l\in\N$, we have
$$
\tfrac1l H(\mu,\Lambda_1^{\{g_0,g_1,g_2,\dots,g_{l-1}\}})=\tfrac1l H(\mu,\Lambda_1)+\tfrac1l \sum_{i=1}^{l-1}H(\mu,\Lambda_1^{\{g_i\}}|\Lambda_1^{\{g_0,g_1,\dots,g_{i-1}\}}).
$$
Clearly, for $l=l_n$, $\tfrac1{l_n}H(\mu,\Lambda_1^{\{g_0,g_1,g_2,\dots,g_{l_n-1}\}})=\frac1{|F_n|}H(\mu,\Lambda_1^{F_n})$ and one has 
$$
\lim_{n\to\infty}\frac1{|F_n|}H(\mu,\Lambda_1^{F_n})=h(\mu).
$$ 

Because the numbers $H(\mu,\Lambda_1^{\{g_i\}}|\Lambda_1^{\{g_0,g_1,\dots,g_{i-1}\}})$ are bounded (by $\log|\Lambda_1|$), and the averages $\tfrac1{l_n} H(\mu,\Lambda_1)+\tfrac1{l_n} \sum_{i=1}^{l_n-1}H(\mu,\Lambda_1^{\{g_i\}}|\Lambda_1^{\{g_0,g_1,\dots,g_{i-1}\}})$ converge to $h(\mu)$, there exists a positive number $\alpha$ such that for $n$ large enough we have
\begin{equation}\label{ojo}
\frac1{l_n}{|\{i=0,1,\dots,l_n-1}:H(\mu,\Lambda_1^{\{g_{i}\}}|\Lambda_1^{\{g_0,g_1,\dots,g_{i-1}\}})\ge\tfrac12h(\mu)\}|\ge\alpha.
\end{equation}
Let 
$$
M=\{g_l\in G:H(\mu,\Lambda_1^{\{g_{l}\}}|\Lambda_1^{\{g_0,g_1,\dots,g_{l-1}\}})\ge\tfrac12h(\mu)\}.
$$
Inequality \eqref{ojo} shows that $\underline d_{\F}(M)>0$. For a symbol $a\in\Lambda_1$ and $g\in G$ let $[a]_g=\{x\in\Lambda_1^G: x(g)=a\}$. The fact that $\underline d_{\F}(M)>0$ implies that there exists an $\eps>0$ such that for each $g_l\in M$, there is a collection $\B_l$ of blocks $B\in\Lambda_1^{\{g_0,g_1,\dots,g_{l-1}\}}$ satisfying 
$\mu(\bigcup_{B\in\B_l}[B])\ge\eps$, and such that for any $B\in\B_l$ the conditional distribution on the symbols of $\Lambda_1$, $a\mapsto \mu([a]_{g_l}|B)$, is ``far enough'' from being concentrated at one symbol; more precisely, there exists a symbol $a\in\Lambda_1$ (depending on $g_l\in M$ and $B\in\B_l$) with 
$$
\eps\le\mu([a]_{g_l}|B)\le1-\eps.
$$
Then either $\eps\le\mu([a]_{g_l}|B)\le\frac12$ or $\frac12<\mu([a]_{g_l}|B)\le 1-\eps$ and in the latter case there exists another symbol, $a'\in\Lambda_1$ with $\frac\varepsilon{|\Lambda_1|}\le\mu([a']_{g_l}|B)<\frac12$.
So, replacing, if needed, $a$ by $a'$ we have proved that for each $g_l\in M$ and $B\in\B_l$ there exists a symbol $a\in\Lambda_1$ such that
\begin{equation}\label{pop}
\frac\varepsilon{|\Lambda_1|}\le\mu([a]_{g_l}|B)\le\frac12.
\end{equation}
Now, for each $g_l\in M$, there exists an element $a_l\in\Lambda_1$ and a subset  $\B'_l\subset\B_l$ such that, on the one hand, 
\begin{equation}\label{tp}
\mu(\bigcup_{B\in\B'_l}[B])\ge\frac\varepsilon{|\Lambda_1|},
\end{equation} 
and, on the other hand, for all blocks $B\in\B'_l$ the inequalities \eqref{pop} hold for $a=a_l$. 
Finally, observe that $a_l$ will assume a common value $a_0\in\Lambda_1$ for $g_l$ in a subset $M'\subset M$, such that $\overline d_\F(M')\ge\frac1{|\Lambda_1|}{\underline d_\F(M)}$. We have shown that there exists a symbol $a_0\in\Lambda_1$ verifying
\begin{equation}\label{tpp}
\frac\varepsilon{|\Lambda_1|}\le\mu([a_0]_{g_l}|B)\le\frac12,
\end{equation}
for all $g_l\in M'$ and $B\in\B'_l$. By the law of total probability and invariance of $\mu$, for $g_l\in M'$ we have 
\begin{multline*}
\mu([a_0])=\mu([a_0]_{g_l})=\sum_{B\in\Lambda_1^{\{g_0,g_1,\dots,g_{l-1}\}}}\mu([B])\mu([a_0]_{g_l}|B)\ge\\ \sum_{B\in\B'_l}\mu([B])\mu([a_0]_{g_l}|B)\ge\Bigl(\frac\eps{|\Lambda_1|}\Bigr)^2>0.
\end{multline*}

We will now define a special coupling $\zeta$ of $\mu$ and $\lambda$. Before we proceed, we fix a symbol $b_0\in\Lambda_2$. We will define $\zeta$ inductively. First, we set
$$
\zeta([a]_{g_0}\times[b]_{g_0})=\mu([a])\lambda([b])=\mu([a])\frac1{|\Lambda_2|}, \ a\in\Lambda_1,\ b\in\Lambda_2.
$$ 
Let $l\in\N$ and assume that the coupling $\zeta$ is defined on 
$\Lambda_1^{\{g_0,g_1,\dots,g_{l-1}\}}\times \Lambda_2^{\{g_0,g_1,\dots,g_{l-1}\}}$. We now extend it to $\Lambda_1^{\{g_0,g_1,\dots,g_l\}}\times \Lambda_2^{\{g_0,g_1,\dots,g_l\}}$. To this end, it suffices, for every pair of blocks $(B,C)\in\Lambda_1^{\{g_0,g_1,\dots,g_{l-1}\}}\times\Lambda_2^{\{g_0,g_1,\dots,g_{l-1}\}}$, to appropriately assign the conditional values
$$
\zeta([a]_{g_l}\times[b]_{g_l}\bigl|\bigr.B\times C)
$$
for $a\in\Lambda_1$, $b\in\Lambda_2$. Here is how we do it. If $g_l\notin M'$, we let  $\zeta([a]_{g_l}\times[b]_{g_l}\bigl|\bigr.B\times C)=\mu([a]_{g_l}|B)\frac1{|\Lambda_2|}$ (regardless of $C$ and $b$), i.e.\ we declare the cylinders $[a]_{g_l}$ and $[b]_{g_l}$ to be conditionally independent given $B$. We apply the same formula if $g_l\in M'$ and $B\notin\B'_l$. But if $g_l\in M'$ and $B\in\B'_l$, we distribute the masses differently: namely we let
\begin{align}
\zeta([a_0]_{g_l}\times[b_0]_{g_l}|B\times C)&=0, &  &\label{f1}\\
\zeta([a_0]_{g_l}\times[b]_{g_l}|B\times C)&= \mu([a_0]_{g_l}|B)\tfrac1{|\Lambda_2|-1} &  &(b\neq b_0),\label{f2}\\
\zeta([a]_{g_l}\times[b_0]_{g_l}|B\times C)&=\tfrac{\mu([a]_{g_l}|B)}{1-\mu([a_0]_{g_l}|B)}\tfrac1{|\Lambda_2|} &  &(a\neq a_0),\label{f3}\\
\zeta([a]_{g_l}\times[b]_{g_l}|B\times C)&=\mu([a]_{g_l}|B)\tfrac1{|\Lambda_2|}\cdot \mathsf R & &(a\neq a_0,\,b\neq b_0),\label{f4}
\end{align} 
where 
$$
\mathsf R = \frac{|\Lambda_2|-\frac1{1-\mu([a_0]_{g_l}|B)}}{|\Lambda_2|-1}.
$$
Clearly, $\mathsf R\le 1$. Because we have arranged that $\mu([a_0]_{g_l}|B)\le\frac12$, the term $\frac1{1-\mu([a_0]_{g_l})}$ does not exceed $2$, and hence does not exceed $|\Lambda_2|$ and $\mathsf R$ is nonnegative. Obviously, the numbers appearing on the right hand sides in \eqref{f1}--\eqref{f4} range between $0$~and~$1$. It is straightforward to verify that so defined measure $\zeta$ satisfies
$$
\zeta\Bigl(\Bigl.[a]_{g_l}\times\bigcup_{b\in\Lambda_2}[b]_{g_l}\Bigr|B\times C\Bigr)=\mu([a]_{g_l}|B) \text{ \ \ and \ \ }
\zeta\Bigl(\Bigl.\bigcup_{a\in\Lambda_1}[a]_{g_l}\times[b]_{g_l}\Bigr|B\times C\Bigr)=\frac1{|\Lambda_2|}.
$$
This way the inductive procedure described above  leads to a coupling $\zeta$ whose marginals are $\mu$ and $\lambda$. We have 
\begin{equation*}\label{cas}
\zeta([a_0]_{g_l}\times[b_0]_{g_l}|B\times C)=\begin{cases}
0& \text{ if }g_l\in M'$ and $B\in\B'_l,\\
\mu([a_0]_{g_l}|B)\frac1{|\Lambda_2|}&\text{ otherwise}.
\end{cases} 
\end{equation*}
For $g_l\notin M'$ we invoke the law of total probability and the invariance of $\mu$ to obtain the following:
\begin{multline}\label{osz1}
\zeta([a_0]_{g_l}\times[b_0]_{g_l})=\sum_B\sum_C\zeta(B\times C)
\zeta([a_0]_{g_l}\times[b_0]_{g_l}|B\times C)=\\
\sum_B\mu(B)\mu([a_0]_{g_l}|B)\frac1{|\Lambda_2|}=
\mu([a_0]_{g_l})\frac1{|\Lambda_2|}=\mu([a_0])\frac1{|\Lambda_2|},
\end{multline}
(where $B$ and $C$ range over $\Lambda_1^{\{g_0,g_1,\dots,g_{l-1}\}}$ and $\Lambda_2^{\{g_0,g_1,\dots,g_{l-1}\}}$, respectively).

\noindent For $g_l\in M'$, repeating the above calculation (and using \eqref{tp} and \eqref{tpp}), we get
\begin{multline}\label{osz2}
\zeta([a_0]_{g_l}\times[b_0]_{g_l})= \mu([a_0]_{g_l})\frac1{|\Lambda_2|} - \sum_{B\in\B'_l}\mu(B)\mu([a_0]_{g_l}|B)\frac1{|\Lambda_2|}\le\\ \Bigl(\mu([a_0]) - \Bigl(\frac\eps{|\Lambda_1|}\Bigr)^2\Bigr)\frac1{|\Lambda_2|}.
\end{multline} 

Recall that $M'$ has positive upper $\F$-density. Replacing, if necessary, $\F$ by a sub\sq\ we can assume that $d_\F(M')$ exists and is positive. 

To create a joining from the coupling $\zeta$, we apply the standard averaging procedure, letting
$$
\xi = \lim_{k\to\infty} \frac1{l_{n_k}}\sum_{l=0}^{l_{n_k}-1}g_l(\zeta)=
\lim_{k\to\infty} \frac1{|F_{n_k}|}\sum_{g\in F_{n_k}}g(\zeta),
$$
where $(n_k)_{k\in\N}$ is any sub\sq\ for which the above weak* limit exists. Then $\xi$ is an \im\ on the product space $\Lambda_1^G\times\Lambda_2^G$ (see \eqref{domi}), and, by invariance of $\mu$ and $\lambda$, the marginals of $\xi$ are $\mu$ and $\lambda$. So, $\xi$ is indeed a joining of $\mu$ and $\lambda$. Finally, by \eqref{osz1} and \eqref{osz2} (and since $d_\F(M')>0$), we obtain
\begin{multline*}
\xi([a_0,b_0])= \lim_{k\to\infty}\frac1{|F_{n_k}|}\sum_{g\in F_{n_k}}\zeta([a_0,b_0]_g)\le\\
\lim_{k\to\infty}\left(\frac{|F_{n_k}\setminus M'|}{|F_{n_k}|} \mu([a_0])\frac1{|\Lambda_2|}+\frac{|F_{n_k}\cap M'|}{|F_{n_k}|} \Bigl(\mu([a_0]) - \Bigl(\frac\eps{|\Lambda_1|}\Bigr)^2\Bigr)\frac1{|\Lambda_2|}\right)=\\
\mu([a_0])\frac1{|\Lambda_2|}-\Bigl(d_\F(M')\Bigl(\frac\eps{|\Lambda_1|}\Bigr)^2\frac1{|\Lambda_2|}\Bigr) <\mu([a_0])\lambda([b_0]).
\end{multline*}
This proves that for the joining $\xi=\mu\vee\lambda$, the zero coordinate partitions $\Lambda_1$ and $\Lambda_2$ are not independent. We are done.
\end{proof}

\subsection{Tilings}\label{til} In this subsection we summarize some facts concerning tilings and systems of tilings of amenable groups introduced and studied in \cite{DHZ}, which will be used in the sequel (subsections \ref{spg} and \ref{msgg}). While the technology described in this section pertains to groups only, our main results (Theorem \ref{main} and \ref{chara}) will be proved with the help of tilings of groups for actions of general countable cancellative amenable semigroups.

\subsubsection{General tilings}
Let $G$ be a countable amenable group. 
\begin{defn}A \emph{tiling} of $G$ is a partition of $G$ into (countably many) finite sets. 
\end{defn}

\begin{defn}\label{prop} A tiling $\CT$ is \emph{proper} if there exists a finite collection $\CS$ of finite sets $S\in\S$ each containing the unit $e$, called the \emph{shapes} of $\CT$, such that for every $T\in\CT$ there exists a shape $S\in\S$ satisfying $T=Sc$ for some $c\in G$ (in fact, we then have $c\in T$). If $|\CS|=1$, $\CT$ is called a \emph{monotiling}.
\end{defn}
When dealing with a proper tiling $\CT$, we will always fix one collection of shapes $\CS$ and one representation $T\mapsto(S,c)$, where $S\in\S, c\in G$ are such that $T=Sc$. (We remark that, in general, there may be more than one such representation, even when $\S$ is fixed; for instance this is the case when some $S\in\CS$ is a finite subgroup.) Once such a representation is fixed, we will call $S$ and $c$ the \emph{shape} and \emph{center} of $T$, respectively. Given $S\in\CS$, we will denote by $C_S(\CT)$ the set of centers of the tiles having the shape $S$, while $C(\CT)=\bigcup_{S\in\CS}C_S(\CT)$ will be used to denote the set of centers of all the tiles.

\smallskip
We now present another type of a (not proper) tiling which depends on an a priori given F\o lner \sq\ $\F$. We will introduce it via an appropriate existence theorem, which follows from \cite[Theorem 5.1]{BDM}:

\begin{thm}\label{tiln} Let $\F=(F_n)_{n\in\N}$ be a F\o lner \sq\ in a countable amenable group $G$. Then there exists a tiling $\Theta$ of $G$ with the following properties:
\begin{itemize}
		\item The tiles $\theta$ comprising $\Theta$ (taken in any order) form a disjoint and exhaustive F\o lner \sq\ $(\theta_n)_{n\in\N}$.
	\item For each $n\in\N$, denote by $F_n^\Theta$ the \emph{$\Theta$-saturation of $F_n$}, 
	i.e.\ the union of all tiles $\theta\in\Theta$ such that $\theta\cap F_n\neq\emptyset$. 
	Then 
	$$
	\lim_{n\to\infty}\frac{|F_n^\Theta\setminus F_n|}{|F_n|}=0.
	$$ 
\end{itemize}
\end{thm}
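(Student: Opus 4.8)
The plan is to deduce the theorem from the exact--tiling technology of \cite{DHZ}; up to a reformulation this is \cite[Theorem~5.1]{BDM}, and I would follow that route. The engine is the fact, established in \cite{DHZ} (building on the Ornstein--Weiss quasi-tiling theorem), that for every finite $K\subset G$ and every $\delta>0$ the group $G$ admits a proper tiling all of whose shapes are $(K,\delta)$-invariant, together with its local version: given a finite union $R$ of tiles and a sufficiently invariant finite superset $R'\supset R$, one may extend the given tiling of $R$ to a tiling of $R'$ (up to a relatively negligible remainder) using only new tiles that are $(K,\delta)$-invariant.

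First I would set up an exhaustion. Fix finite sets $K_n\nearrow G$ and reals $\delta_n\searrow 0$, and build inductively an increasing chain of finite unions of tiles $R_0\subset R_1\subset\cdots$ with $\bigcup_nR_n=G$, as follows. At stage $n$, having built $R_{n-1}$, I first \emph{design} the stage-$n$ shapes (a proper tiling with $(K_n,\delta_n)$-invariant shapes), which in particular fixes a finite bound $B_n$ on their diameters; then I choose an index $N(n)$ so large that \emph{every} $F_m$ with $m\ge N(n-1)$ is $(\widetilde K_n,\eta_n)$-invariant, where $\widetilde K_n$ is assembled from $K_n$ and $B_n$ and $\eta_n>0$ is a tiny number to be fixed below; finally I extend the tiling of $R_{n-1}$ to cover $F_{N(n)}\setminus R_{n-1}$ by stage-$n$ tiles, obtaining $R_n\supseteq F_{N(n)}$ (modulo a thin remainder which I carry over to stage $n+1$). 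The union of all tiles ever created is the desired tiling $\Theta$.

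The verification then splits into the two bullet points. For the first, given finite $K$ and $\eps>0$ pick $n_0$ with $K_{n_0}\supseteq K$ and $\delta_{n_0}<\eps$; since each tile is $(K_n,\delta_n)$-invariant for the stage $n$ of its creation and only the finitely many tiles from stages $<n_0$ can fail to be $(K,\eps)$-invariant, in any enumeration all but finitely many tiles are $(K,\eps)$-invariant — so the tiles form a F\o lner \sq, and, being tiles of a tiling, they are pairwise disjoint with union $G$, i.e.\ the \sq\ is disjoint and exhaustive. For the second, I would use $R_n\supseteq F_{N(n)}$ together with the monotonicity of $N(\cdot)$ to localize: stage-$j$ tiles for $j\le n-1$ lie inside $F_{N(n-1)}$, hence inside $F_m$ whenever $m>N(n-1)$, while stage-$j$ tiles for $j\ge n+1$ lie outside $F_{N(n)}$, hence miss $F_m$ whenever $m\le N(n)$; thus for $m$ in the window $N(n-1)<m\le N(n)$ the only tiles of $\Theta$ that meet $F_m$ without being contained in it are stage-$n$ tiles, and each such tile has its center within $B_n$ of $\partial F_m$. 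By the choice of $N(n)$, the $B_n$-thickened boundary of such an $F_m$ has at most $C\eta_n|F_m|$ elements (with $C$ an absolute constant depending on $\widetilde K_n$), so summing the sizes of the stage-$n$ tiles anchored there gives $|F_m^\Theta\setminus F_m|\le CB_n\eta_n|F_m|$, and one forces $CB_n\eta_n<\eps$ by taking $\eta_n$ (hence $N(n)$) small enough. Letting $\eps\to0$ yields $|F_n^\Theta\setminus F_n|/|F_n|\to0$.

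I expect the main obstacle to be precisely the parameter bookkeeping in the inductive step: one must reconcile the requirement that the tiles eventually be \emph{arbitrarily} invariant (which forces $B_n\to\infty$) with the requirement that, for \emph{every} large $m$, the stage-$n$ tiles straddling $\partial F_m$ occupy a negligible fraction of $F_m$. The resolution is the order of quantifiers — the stage-$n$ shapes (and hence $B_n$) are chosen \emph{before} $N(n)$, and $N(n)$ is then taken large enough that all later $F_m$ are invariant at scale $B_n$ with a margin that beats $B_n$; a convenient safeguard, if the local exact-tiling step leaves awkward remainders, is to tile at each stage only the $\widetilde K_n$-core of $F_{N(n)}\setminus R_{n-1}$ and defer the resulting shell to stage $n+1$, so that stage-$n$ tiles never touch $\partial F_{N(n)}$ itself. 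All of this is standard within the framework of \cite{DHZ,BDM}, which is why the most economical presentation is to invoke \cite[Theorem~5.1]{BDM} directly.
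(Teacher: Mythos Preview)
Your proposal is correct and takes essentially the same approach as the paper: the paper does not give a proof of this theorem at all, but simply states that it follows from \cite[Theorem~5.1]{BDM}, which is exactly the reference you identify and propose to invoke directly. Your sketch of the underlying construction is a reasonable elaboration, but for the purposes of this paper the bare citation suffices.
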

The second condition simply means that the \sq s $\F$ and $\F^\Theta=(F_n^\Theta)_{n\in\N}$ are equivalent (see Definition \ref{equi}), in particular $\F^\Theta$ is a F\o lner \sq. Since $(\theta_n)_{n\in\N}$ is a F\o lner \sq, we have $|\theta_n|\to\infty$, the tiling $\Theta$ cannot be proper. In fact, it may have only finitely many tiles of any given shape. 
\subsubsection{Dynamical tilings}

Let $\CT$ be a proper tiling with the collection of shapes $\S$. Denote by $\rm V$ the finite alphabet consisting of symbols assigned bijectively to the shapes of $\CT$ plus one additional symbol:
\begin{equation}\label{alf}
\rm V = \{``S": S\in\S\}\cup\{``0"\}.
\end{equation} 
Then $\CT$ can be identified with the symbolic element, denoted by the same letter $\CT\in{\rm V}^G$, defined as follows: 
\begin{equation}\label{tsy}
\CT(g) = \begin{cases}
``S" \text{ for some }S\in\S, &\text{if } g\in C_S(\CT) \text{ \tiny($g$ is the center of a tile of shape $S$)},\\ 
``0", & \text{otherwise \tiny($g$ is not a center of any tile)}.
\end{cases}
\end{equation}

\begin{defn}
Let $\rm V$ be an alphabet of the form \eqref{alf} for some finite collection $\CS$ of finite sets $S$. Let $\T\subset V^G$ be a subshift such that each element $\CT\in\T$ represents a proper tiling with the collection of shapes $\S$. Then we call $\T$ a \emph{dynamical tiling} and $\S$ \emph{the collection of shapes} of $\T$. 
\end{defn}
It is elementary to see that the orbit-closure (under the shift-action of $G$) of any proper tiling $\CT$ is s dynamical tiling.

\smallskip
In the sequel we will be using a very special \tl\ joining of dynamical tilings. By
a \emph{\tl\ joining} of a \sq\ of \ds s $(X_k,G)$, $k\in\N$, (denoted by $\bigvee_{k\in\N}X_k$) we mean any closed subset of the Cartesian product $\prod_{k\in\N}X_k$ which has full projections onto the coordinates $X_k$, $k\in\N$, and is \inv\ under the product action given by $g(x_1,x_2,\dots)=(g(x_1),g(x_2),\dots)$.

\begin{defn}\label{tili}
Consider a \sq\ of dynamical tilings $(\T_k)_{k\in\N}$. By a \emph{system of tilings} (generated by the dynamical tilings $\T_k$) we will mean any \tl\ joining $\TTT=\bigvee_{k\in\N}\T_k$. 
\end{defn}

The elements of $\TTT$ have the form of \sq s of tilings $\boldsymbol\CT=(\CT_k)_{k\in\N}$, where $\CT_k\in\T_k$ for each $k$.

\begin{defn}\label{tili1}
Let $\TTT=\bigvee_{k\in\N}\T_k$ be a system of tilings and let $\S_k$ denote the collection of shapes of $\T_k$. The system of tilings is:
\begin{itemize}
	\item \emph{F\o lner}, if the union of the collections of shapes $\bigcup_{k\in\N}\S_k$ 
	(taken in any order) is a F\o lner \sq;
	\item \emph{congruent}, if for each $\boldsymbol{\CT}=(\CT_k)_{k\in\N}\in\TTT$ and each 	
	$k\in\N$, every tile of $\CT_{k+1}$ is a union of some tiles of $\CT_k$.
	\item \emph{uniquely congruent}, if it is congruent and for any $k\ge 1$ 
	all tiles of $\T_{k+1}$ having the same shape are partitioned by the tiles of $\T_k$ ``the 
	same way, up to shifting''. More precisely, whenever $\boldsymbol{\CT}=(\CT_k)_{k\in\N}$ 
	and $\boldsymbol{\CT'}=(\CT'_k)_{k\in\N}$ are elements of $\TTT$, $T=S'c$ and $T'=S'c'$ 
	are tiles of $\CT_{k+1}$ and $\CT'_{k+1}$, respectively (with the same shape 
	$S'\in\S_{k+1}$), and $T=\bigcup_{i=1}^l T_i$ is the partition of $T$ into the tiles of 
	$\mathcal T_k$, then the sets $T'_i=T_ic^{-1}c'$ (for
	$i=1,2,\dots,l$) are tiles of $\mathcal T'_k$ (and then they form the partition of 
	$T'$ into the tiles of $\T'_k$). 
\end{itemize}
\end{defn}	
	
One can see that if $\TTT$ is a uniquely congruent system of tilings	then for any $k'>k$ and any shape $S'\in\CS_{k'}$, there exist sets $C_{S}(S')\subset S'$ indexed by $S\in\CS_k$, such that
	$$
	S'=\bigcup_{S\in\CS_k}\ \bigcup_{c\in C_S(S')}Sc \text{\ \ (disjoint union),}
	$$
	and for each $\boldsymbol{\CT}=(\CT_i)_{i\in\N}\in\TTT$, whenever $S'c'$ is
	a tile of $\CT_{k'}$ then 
	$$
	S'c'=\bigcup_{S\in\CS_k}\ \bigcup_{c\in C_{S}(S')}Scc'
	$$
	is the partition of $S'c'$ by the tiles of $\CT_k$. 

\begin{rem}\label{remd} In the papers \cite{DHZ} and \cite{BDM}, uniquely congruent systems of tilings are called ``deterministic'' because in such a system, for each $\boldsymbol{\CT}=(\CT_k)_{k\in\N}\in\TTT$, each tiling $\CT_{k'}$ \emph{determines} all the tilings $\mathcal T_k$ with $k<k'$, and the assignment $\CT_{k'}\mapsto\CT_k$ is a \tl\ factor map from $\T_{k'}$ onto $\T_k$. In such a case, the joining $\TTT$ is in fact an inverse limit
$$
\TTT=\overset\longleftarrow{\lim_{k\to\infty}}\T_k.
$$
Since the central subject of this paper are ``deterministic functions'' and ``deterministic sets'', where the term ``deterministic'' has a different meaning, in order to avoid terminological collision, we have renamed ``deterministic systems of tilings'' to ``uniquely congruent systems of tilings''.
\end{rem}

The following theorem will play a crucial role in our considerations:	

\begin{thm}(\cite[Theorem 5.2]{DHZ})\label{dhz} Every countable amenable group admits a uniquely congruent F\o lner system of tilings $\TTT$ with \tl\ entropy~zero.
\end{thm}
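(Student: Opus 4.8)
The plan is to recall the architecture of the proof of this theorem from \cite{DHZ}: one builds $\TTT$ as an inverse limit of dynamical tilings whose shapes grow in both size and invariance, and then extracts entropy zero from a crude count of tile-patterns. The basic input is the exact tiling theorem for countable amenable groups (the central result of \cite{DHZ}, obtained from the Ornstein--Weiss quasi-tiling machinery by an iterative ``filling'' procedure together with a compactness argument): for every finite $K\subset G$ and every $\eps>0$ there is a proper tiling of $G$ all of whose shapes are $(K,\eps)$-invariant. Passing to its orbit closure gives a dynamical tiling with shapes of any prescribed quality of invariance; and if $e\in K$ and $\eps<1/|K|$ then, by Lemma~\ref{estim}, each shape has nonempty $K$-core and hence contains at least $|K|$ elements, so the minimal shape size can be forced to infinity by letting $K\uparrow G$ and $\eps\downarrow 0$.

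Next I would build the tower $\T_1,\T_2,\dots$ inductively. Fix $e\in K_1\subset K_2\subset\cdots$ with $\bigcup_k K_k=G$ and $\eps_k\downarrow 0$, and let $\T_1$ be a dynamical tiling with $(K_1,\eps_1)$-invariant shapes. Given $\T_k$, the tiles of a configuration $\CT_k\in\T_k$ coarsen $G$ into a grid of cells; I would tile that grid by highly invariant ``super-cells'', each a finite union of $\CT_k$-cells, by applying the exact tiling theorem to the coarsened structure, and do so shift-equivariantly, so that the result is again a genuine dynamical tiling $\T_{k+1}$ every tile of which is a union of $\CT_k$-tiles. Congruence is then automatic. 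To obtain \emph{unique} congruence one fixes in advance a single subdivision pattern for each new shape $S'\in\CS_{k+1}$ --- the sets $C_S(S')\subset S'$ appearing after Definition~\ref{tili1} --- and forces every super-tile of shape $S'$ to be cut up in exactly that way; equivariance of the construction makes this consistent across configurations. One also requires the super-cells to be $(K_{k+1},\eps_{k+1})$-invariant as subsets of $G$; then $\bigcup_k\CS_k$ is a F\o lner sequence, the minimal shape size $m_k=\min_{S\in\CS_k}|S|$ tends to infinity, and with some extra care in choosing the parameters one arranges in addition that $\log|\CS_k|/m_k\to 0$. The joining $\TTT=\bigvee_k\T_k$ is then a uniquely congruent F\o lner system of tilings, and by Remark~\ref{remd} it is the inverse limit $\TTT=\varprojlim_k\T_k$, with each bonding map $\T_{k+1}\to\T_k$ a topological factor map; in particular $\htop(\T_k,G)\le\htop(\T_{k+1},G)$ for every $k$.

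It remains to estimate entropy. Every element of $\T_{k'}$ is an exact proper tiling with shapes from $\CS_{k'}$, all of size at least $m_{k'}$; its restriction to a finite set $F$ is determined by the centers of the tiles meeting $F$ --- which lie in a fixed bounded enlargement $F^+$ of $F$, are pairwise disjoint, and each carry at least $m_{k'}$ points, so number at most $|F^+|/m_{k'}$ --- together with one shape from $\CS_{k'}$ per center. Hence the number of $\T_{k'}$-patterns on $F$ is at most $2^{|F^+|(H(1/m_{k'})+\log|\CS_{k'}|/m_{k'})}$, where $H$ is the binary entropy function. Evaluating along a F\o lner sequence $(F_n)$, for which $|F_n^+|/|F_n|\to 1$, this yields $\htop(\T_{k'},G)\le H(1/m_{k'})+\log|\CS_{k'}|/m_{k'}=:\delta_{k'}$, and $\delta_{k'}\to 0$ by the choice of parameters. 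Since $\T_k$ is a topological factor of $\T_{k'}$ whenever $k'\ge k$, we get $\htop(\T_k,G)\le\delta_{k'}$ for all such $k'$, hence $\htop(\T_k,G)=0$; and as the topological entropy of an inverse limit equals the supremum of the entropies of its factors, $\htop(\TTT,G)=\sup_k\htop(\T_k,G)=0$.

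The genuinely hard part is the second step: producing a tower that is at once congruent, \emph{uniquely} congruent, shift-equivariant (so that each level is an honest dynamical tiling) and F\o lner, together with the parameter bookkeeping that keeps $\log|\CS_k|=o(m_k)$. This is exactly where the full weight of the Downarowicz--Huczek--Zhang exact-tiling technology is needed; by contrast the entropy computation in the last step is a soft counting argument, valid for any exact proper tiling with sufficiently large shapes.
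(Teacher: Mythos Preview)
The paper does not prove this theorem; it is quoted as \cite[Theorem~5.2]{DHZ} and used as a black box, so there is no in-paper proof to compare against. Your outline is an accurate summary of the architecture of the original proof in \cite{DHZ}: the exact tiling theorem as input, the inductive construction of a congruent tower with increasingly invariant shapes, unique congruence obtained by fixing subdivision patterns, and the pattern-counting bound $\htop(\T_{k'},G)\le H(1/m_{k'})+\log|\CS_{k'}|/m_{k'}$ followed by passage to the inverse limit. Your identification of the tower construction as the hard part is correct. One phrase is slightly misleading: ``applying the exact tiling theorem to the coarsened structure'' suggests tiling the set of $\CT_k$-centers, but that set is not a group; in \cite{DHZ} the inductive step is carried out directly in $G$, re-running the quasi-tiling/filling machinery with the extra constraint that new tiles be unions of old ones, and this congruence constraint is precisely where the technical work lies. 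Otherwise the sketch is sound.
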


\subsection{Empirical measures, distance between blocks and measures}\label{dbbm}
Throughout this section $G$ stands for a countable cancellative amenable semigroup. 

\smallskip
Given a finite alphabet $\Lambda$, by $\Lambda^*$ we will denote the family of all finite blocks over $\Lambda$:
$$
\Lambda^*=\bigcup_K\Lambda^K,
$$
where $K$ ranges over all finite subsets of $G$. Let $K, K_0$ be finite subsets of $G$ and let $B\in\Lambda^K$, $B_0\in\Lambda^{K_0}$. We define the \emph{frequency of $B$ in $B_0$} as
$$
\Fr_{B_0}(B)=\frac{|\{g\in (K_0)_K:\ B_0|_{Kg}\approx B\}|}{|K_0|},
$$
where $(K_0)_K$ is the $K$-core of $K_0$. Of course, if $K$ does not ``fit'' inside $K_0$ (i.e.\ if $(K_0)_K=\emptyset$, for example if $|K|>|K_0|$) then the above frequency equals zero.

An alternative formula for the frequency is:
\begin{equation}\label{okr}
\Fr_{B_0}(B)=\frac1{|K_0|}\sum_{g\in(K_0)_K}\delta_{g(x)}([B]),
\end{equation}
where $x$ is any point from $[B_0]$.

Later, we will consider finite blocks over product alphabets, such as $\Lambda_1\times\Lambda_2$ or $\Lambda_1\times\Lambda_2\times\rm V$. Blocks over product alphabets can be viewed as \emph{double} or \emph{triple} blocks, for example a block in $(\Lambda_1\times\Lambda_2)^K$ (where $K\subset G$ is finite) can be viewed as a pair of blocks $(B,C)\in \Lambda_1^K\times\Lambda_2^K$. Likewise, every block in $(\Lambda_1\times\Lambda_2\times\rm V)^K$ is a triple of blocks $(B,C,D)\in\Lambda_1^K\times\Lambda_2^K\times\rm V^K$. Thus we will be dealing, among other things, with terms of the form
$$
\Fr_{(B_0,C_0,D_0)}(B,C,D)=\frac{|\{g\in (K_0)_K:\ (B_0,C_0,D_0)|_{Kg}\approx(B,C,D)\}|}{|K_0|}.
$$
Note that we write $\Fr_{(B_0,C_0,D_0)}(B,C,D)$ instead of $\Fr_{(B_0,C_0,D_0)}((B,C,D))$.

\smallskip
When the domain $K_0$ of $B_0$ has good invariance properties (see Remark \ref{sgip}), the function associating to blocks $B\in\Lambda^*$ their frequencies in $B_0$ has some resemblance to an \im, and by slight abuse of terminology we will call it the \emph{empirical measure} generated by $B_0$ and denote it by $\hat B_0$. A similar terminology will be used when dealing with double and triple blocks.
\smallskip

We will be using the following distance between shift-\im s on $\Lambda^G$:
$$
\dist(\mu,\nu)=\sup_K \bigl(u(K)\sup_{B\in\Lambda^K}|\mu(B)-\nu(B)|\bigr),
$$
where the first supremum is taken over all finite sets $K\subset G$ and $u(\cdot)$ is a fixed positive function defined on finite subsets of $K$ and converging to zero (which means that for any $\eps>0$, $u(K)\ge\eps$ for at most finitely many sets $K$). Note that $\dist(\cdot,\cdot)$ is compatible with the weak* topology.

By extension, we can also define the distance between any finite blocks $B_0,B_0'\in\Lambda^*$ by
$$
\dist(B_0,B_0')=\dist(\hat B_0,\hat B_0')=\sup_K \bigl(u(K)\sup_{B\in\Lambda^K}|\Fr_{B_0}(B)-\Fr_{B_0'}(B)|\bigr),
$$
and the distance between a block $B_0$ and an \im\ $\mu\in\mgl$ by 
$$
\dist(B_0,\mu)=\dist(\hat B_0,\mu)=
\sup_K\bigl(u(K)\sup_{B\in\Lambda^K}|\Fr_{B_0}(B)-\mu(B)|\bigr).
$$
So defined function $\dist$ is a metric on the union $\mgl\cup\Lambda^*$. In the following proposition we provide a list of several useful facts involving the metric $\dist(\cdot,\cdot)$.

\begin{fact}\label{fakty}{\color{white}.}
\begin{enumerate}
	\item For every $\eps>0$ there exists a finite family $\Pi_\eps$ of finite subsets of $G$ 
	and a $\delta>0$, such that if $\mu,\nu\in\mgl\cup\Lambda^*$ and $|\mu(B)-\nu(B)|<\delta$ 
	for each block $B\in\Lambda^K$ with $K\in\Pi_\eps$, then $\dist(\mu,\nu)<\eps$. (Such 
	blocks $B$ will be referred to as \emph{pivotal blocks}.) 
	\item For every $\eps>0$ there exists a $\delta>0$ such that if two finite sets 
	$K_1,K_2\subset G$ satisfy
	$$
	\frac{|K_1\cap K_2|}{|K_1\cup K_2|}>1-\delta,
	$$
	and blocks $B_1\in\Lambda^{K_1},\, B_2\in\Lambda^{K_2}$ agree on $K_1\cap K_2$, then
	$\dist(B_1,B_2)<\eps$.
	\item Fix an $\eps>0$. If a finite set $F\subset G$ has sufficiently good invariance 
	properties (see Remark \ref{sgip}) then any block $B\in\Lambda^F$ is $\eps$-close to an   
	\im\ supported by 
	$\Lambda^G$. 
	\item Given a F\o lner \sq\ $\F=(F_n)_{n\in\N}$ and a measure $\mu\in\M_G(\Lambda^G)$, a point $y\in\Lambda^G$ is $\F$-generic 
	for $\mu$ if and only if the blocks $y|_{F_n}$ converge in $\dist$ to $\mu$.
	\item The metric $\dist$ is convex on $\mgl$ and can be naturally extended to a convex 
	metric on the the convex hull of $\mgl\cup\Lambda^*$.
	\item Fix an $\eps>0$ and let $\delta$ and the family of pivotal blocks be as in (1).
	Whenever $K_1,K_2,\dots,K_q$ are pairwise disjoint finite subsets of $G$ with sufficiently 
	good invariance properties, and $B_i\in\Lambda^{K_i}$ ($i=1,2,\dots,q$), then, denoting by 
	$B_0$ the \emph{concatenation} of the blocks $B_1,B_2,\dots,B_q$ (which is 
	a block defined on $K_0=\bigcup_{i=1}^q K_i$), we have 
	for any pivotal block $B$:
	$$
	\Bigl|\Fr_{B_0}(B)-\sum_{i=1}^q \frac{|K_i|}{|K_0|}\ 
	\Fr_{B_i}(B)\Bigr|<\delta.
	$$
	In particular, by (1), we have
	$$
	\dist\Bigl(\hat B_0,\sum_{i=1}^q \frac{|K_i|}{|K_0|}\hat B_i\Bigr)<\eps.
	$$ 
\end{enumerate}
\end{fact}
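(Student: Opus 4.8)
The plan is to take the six items in order of logical dependence: (1) and (5) are essentially immediate from the definition of $\dist$; (6) is the main combinatorial point; and (2), (3), (4) then follow by combining (1) and (6) with Lemma~\ref{estim} and the weak*-compactness of $\M(\Lambda^G)$. For (1): since $0\le|\mu(B)-\nu(B)|\le1$ for every block $B$, the $K$-th term in the supremum defining $\dist(\mu,\nu)$ never exceeds $u(K)$, and the family $\Pi_\eps:=\{K:u(K)\ge\eps/2\}$ is finite because $u\to0$; for $K\notin\Pi_\eps$ the corresponding term is already $<\eps/2$, so it is enough to force $|\mu(B)-\nu(B)|<\delta:=\eps/(2\max_{K\in\Pi_\eps}u(K))$ on the (finitely many) blocks with domain in $\Pi_\eps$, which are by definition the pivotal blocks. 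For (5): the pointwise estimate $|(t\mu_1+(1-t)\mu_2)(B)-(t\nu_1+(1-t)\nu_2)(B)|\le t|\mu_1(B)-\nu_1(B)|+(1-t)|\mu_2(B)-\nu_2(B)|$, after taking $\sup_{B\in\Lambda^K}$, multiplying by $u(K)$, and taking $\sup_K$, gives joint convexity of $\dist$ on $\mgl$; the identical computation with empirical pseudomeasures $\hat B_0$ in place of some of the arguments yields the claimed extension to the convex hull of $\mgl\cup\Lambda^*$.

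For (6), the heart of the matter. Fix $\eps$ and take $\Pi_\eps,\delta$ as in (1); impose on each $K_i$ a single invariance requirement strong enough (via Lemma~\ref{estim}) that, for every pivotal domain $L\in\Pi_\eps$, the core $(K_i)_L$ is a $(1-\delta)$-subset of $K_i$ --- this is what ``sufficiently good invariance properties'' means here. The key observation is that, since $K_0=\bigsqcup_i K_i$, for each pivotal $L$ one has the partition $(K_0)_L=\mathsf B\ \sqcup\ \bigsqcup_i(K_i)_L$, where $\mathsf B:=\{g\in(K_0)_L:Lg\text{ meets at least two of the }K_i\}$; applying $|(K_0)_L|\le|K_0|$ (left translation by any $\ell\in L$ is injective into $K_0$) and $\sum_i|K_i|=|K_0|$, we get
$$
|\mathsf B|=|(K_0)_L|-\sum_i|(K_i)_L|\le|K_0|-\sum_i(1-\delta)|K_i|=\delta|K_0|.
$$
For a pivotal block $B$ on $L$, the set $\{g\in(K_0)_L:B_0|_{Lg}\approx B\}$ decomposes along the same partition, and on each piece $(K_i)_L$ the restriction $B_0|_{Lg}$ coincides with $B_i|_{Lg}$ because $B_0|_{K_i}=B_i$; hence this count differs from $\sum_i|\{g\in(K_i)_L:B_i|_{Lg}\approx B\}|=\sum_i|K_i|\,\Fr_{B_i}(B)$ by at most $|\mathsf B|\le\delta|K_0|$. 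Dividing by $|K_0|$ gives $|\Fr_{B_0}(B)-\sum_i\tfrac{|K_i|}{|K_0|}\Fr_{B_i}(B)|\le\delta$ for every pivotal $B$ (strictness is obtained by a harmless tightening of the invariance demand), and then (1) --- with (5) to interpret the convex combination on the right --- upgrades this to $\dist(\hat B_0,\sum_i\tfrac{|K_i|}{|K_0|}\hat B_i)<\eps$.

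The remaining items are deductions. For (2), write $I=K_1\cap K_2$; the overlap hypothesis makes $|K_j\setminus I|$ and $\bigl||K_1|-|K_2|\bigr|$ a small multiple of $|K_j|$, and for a pivotal $B$ on $L$ the anchors $g$ with $Lg\subseteq I$ contribute the same numerator to $\Fr_{B_1}(B)$ and $\Fr_{B_2}(B)$ (since $B_1,B_2$ agree on $I$), while the remaining anchors number at most $|L|\,|K_j\setminus I|$; combining this with the near-equality of the denominators bounds $|\Fr_{B_1}(B)-\Fr_{B_2}(B)|$ by a constant times the overlap defect, which is made $<\delta$, so (1) gives $\dist(B_1,B_2)<\eps$. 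For (3), take $x\in[B]$ and $\mu_F=\tfrac1{|F|}\sum_{g\in F}\delta_{g(x)}$; then $\|g(\mu_F)-\mu_F\|\le|gF\triangle F|/|F|$, so a routine compactness argument --- $\mgl$ being the set of $G$-fixed points in the compact space $\M(\Lambda^G)$ --- yields some $\nu\in\mgl$ weak*-close to $\mu_F$ once $F$ has good invariance; since $\Fr_B$ and $\mu_F$ agree on pivotal blocks up to the core correction $|F\setminus F_L|/|F|$ (Lemma~\ref{estim}), (1) gives $\dist(B,\nu)<\eps$. For (4), by \eqref{gener} the point $y$ is $\F$-generic for $\mu$ iff $\tfrac1{|F_n|}|\{g\in F_n:y|_{Lg}\approx B\}|\to\mu(B)$ for every finite $L$ and $B\in\Lambda^L$; replacing $F_n$ by its $L$-core perturbs the count by $o(|F_n|)$ since $\F$ is F\o lner, so this is equivalent to $\Fr_{y|_{F_n}}(B)\to\mu(B)$ for all $L,B$, which by (1) (testing on the finitely many pivotal blocks at each level) is in turn equivalent to $\dist(y|_{F_n},\mu)\to0$. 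I expect the only genuinely substantive point to be item (6), and within it the decomposition $(K_0)_L=\mathsf B\sqcup\bigsqcup_i(K_i)_L$ with $|\mathsf B|\le\delta|K_0|$, which neatly sidesteps having to estimate any ``outer boundary'' of $K_0$ --- an awkward object in a cancellative semigroup that is not a group.
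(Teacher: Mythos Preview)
Your proposal is correct and in fact considerably more detailed than the paper's own treatment: the paper explicitly writes ``We skip the (fairly standard) proofs except that of (3)'' and only supplies an argument for item (3). Your proof of (3) is essentially the same as theirs --- both rest on the observation that $\hat B$ is $\dist$-close to the Ces\`aro average $\frac1{|F|}\sum_{g\in F}\delta_{g(x)}$ (via the core correction), and then on approximate invariance plus weak*-compactness of $\M(\Lambda^G)$; the paper phrases this as a contradiction (extract a F\o lner \sq\ of offending blocks and pass to a convergent sub\sq), while you package the same compactness step as ``approximately $G$-fixed $\Rightarrow$ weak*-close to the fixed-point set $\mgl$''. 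Your treatment of (1), (2), (4), (5), (6) fills in what the paper omits, and your decision to isolate (6) as the main combinatorial point is exactly how the proposition is used later in the paper (Theorem~\ref{semi}).

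One small imprecision in (6): the displayed partition $(K_0)_L=\mathsf B\sqcup\bigsqcup_i(K_i)_L$ is literally a partition only when $e\in L$. If $e\notin L$ there can be anchors $g\in(K_0)_L$ with $Lg\subset K_i$ entirely, yet $g\in K_j$ for some $j\neq i$; such $g$ lie in neither $\mathsf B$ (as you defined it) nor any $(K_i)_L$. This does not damage the argument, because what you actually use is the containment $\bigsqcup_i(K_i)_L\subset(K_0)_L$ together with
\[
\Bigl|(K_0)_L\setminus\bigsqcup_i(K_i)_L\Bigr|\le |K_0|-\sum_i|(K_i)_L|\le\delta|K_0|,
\]
and this bound holds regardless of whether $e\in L$. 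A one-line fix is either to note this, or to enlarge each $L\in\Pi_\eps$ to $L\cup\{e\}$ at the outset (harmless for (1)).
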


\begin{proof} We skip the (fairly standard) proofs except that of (3). The formula \eqref{okr} implies that, given $\eps>0$, whenever $F$ has sufficiently good invariance properties then, for any block $B\in\Lambda^F$, we have
$$
\dist\Bigl(B,\frac1{|F|}\sum_{g\in F}\delta_{g(x)}\Bigr)<\eps,
$$
where $x$ is any point from $[B]$. In view of \eqref{domi}, this implies that any \sq\ of blocks, whose domains form a F\o lner \sq, has a sub\sq\ convergent to an \im\ supported by $\Lambda^G$.  Assume that (3) is false. Then for some $\eps>0$ there exist finite sets $F$ with arbitrarily good invariance properties and blocks $B\in\Lambda^F$ such that $\dist(B,\M_G(\Lambda^G))\ge\eps$. Out of these sets $F$ we can select a F\o lner \sq\ $\F=(F_n)_{n\in\N}$ and choose blocks $B_n\in\Lambda^{F_n}$ which are $\eps$-apart from $\M_G(\Lambda^G)$. This is a contradiction as the \sq\ of blocks $(B_n)_{n\in\N}$ must contain a sub\sq\ converging to an element of $\M_G(\Lambda^G)$.
\end{proof}

In the sequel we will need the following lemma, which complements the condition (3). We remark that it is actually valid without the ergodicity assumption, but then proof is much longer.

\begin{lem}\label{bloki}
Fix an $\eps>0$ and let $\nu\in\mgl$ be ergodic. If a finite set $F\subset G$ has sufficiently good invariance properties then there exists a block $B\in\Lambda^F$ such that
$\dist(B,\nu)<\eps$. 
\end{lem}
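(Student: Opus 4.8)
The plan is to mimic the contradiction argument already used in the proof of Proposition \ref{fakty}(3), now bringing in the ergodicity of $\nu$ through Proposition \ref{miner}.

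First I would negate the statement. Unwinding the meaning of ``sufficiently good invariance properties'' (see Remark \ref{sgip}), failure of the lemma means: there is an $\eps>0$ such that finite sets $F\subset G$ with arbitrarily good invariance properties exist for which \emph{no} block $B\in\Lambda^F$ satisfies $\dist(B,\nu)<\eps$; equivalently, $\dist(B,\nu)\ge\eps$ for every $B\in\Lambda^F$. Out of these sets I would select a F\o lner \sq\ $\F=(F_n)_{n\in\N}$ (exactly the standard selection performed in the proof of Proposition \ref{fakty}(3)), so that $\dist(B,\nu)\ge\eps$ holds for every $n$ and every block $B\in\Lambda^{F_n}$.

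Next I would invoke the ergodicity of $\nu$: applying Proposition \ref{miner} to the full shift $X=\Lambda^G$ and the F\o lner \sq\ $\F$ yields a sub\sq\ $\F_\circ=(F_{n_k})_{k\in\N}$ such that $\nu$-almost every point of $\Lambda^G$ is $\F_\circ$-generic for $\nu$. Since the $\nu$-full-measure set of such points is nonempty, I may fix one such point $x$. By Proposition \ref{fakty}(4), $\F_\circ$-genericity of $x$ for $\nu$ is equivalent to $\dist(x|_{F_{n_k}},\nu)\to 0$; in particular $\dist(x|_{F_{n_k}},\nu)<\eps$ for all large $k$. But $x|_{F_{n_k}}$ is a block in $\Lambda^{F_{n_k}}$, so the choice of $\F$ in the previous paragraph forces $\dist(x|_{F_{n_k}},\nu)\ge\eps$ for every $k$ --- a contradiction. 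Hence no such $\eps$ exists, which proves the lemma.

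The only substantive ingredient is Proposition \ref{miner}, and that is precisely where ergodicity of $\nu$ is used (via the mean ergodic theorem together with a diagonal argument over a countable dense subset of $C(\Lambda^G)$); this is also the step that breaks down in the non-ergodic setting alluded to in the remark preceding the lemma. Everything else --- extracting $\F$ from the ``bad'' sets and translating genericity into convergence in $\dist$ --- is routine and already available in the excerpt, so I expect no further obstacle.
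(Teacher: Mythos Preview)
Your proposal is correct and follows essentially the same contradiction argument as the paper: extract a F\o lner sequence from the ``bad'' sets, apply Proposition~\ref{miner} to obtain an $\F_\circ$-generic point, and note that its restrictions to $F_{n_k}$ give blocks converging to $\nu$ in $\dist$, contradicting the assumption. The paper's proof is just slightly terser, omitting the explicit references to Proposition~\ref{fakty}(3) and~(4) that you spell out.
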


\begin{proof}
Suppose this is not true. Then there exists a F\o lner \sq\ $\F=(F_n)_{n\in\N}$ such that, for any $n$, any block $B\in\Lambda^{F_n}$ satisfies $\dist(B,\nu)\ge\eps$. By Proposition~\ref{miner},
there exists a sub\sq\ $\F_\circ=(F_{n_k})_{k\in\N}$ of $\F$ and a point $y\in\Lambda^G$ which is $\F_\circ$-generic for $\nu$. The blocks $y|_{F_{n_k}}$ converge in $\dist$ to $\nu$, contradicting the choice~of~$\F$.
\end{proof}

\subsection{Quasi-generic points for joinings}\label{spg} 
The next theorem is the second out of the three main steps leading to the proof of Theorem \ref{main}.

\begin{thm}\label{generic}{\rm(Cf. \cite[Theorem 2]{Kamae})} Let $G$ be a countable cancellative amenable semigroup, $\F$ an arbitrary F\o lner \sq\ in $G$ and $\Lambda_1, \Lambda_2$ some finite alphabets. Let $\mu$ and $\nu$ be \im s on $\Lambda_1^G$ and $\Lambda_2^G$, respectively. Let $x\in\Lambda_1^G$ be $\F$-generic for $\mu$. For any joining $\xi=\mu\vee\nu$ there exists an element $y\in\Lambda_2^G$ such that the pair $(x,y)$ is $\F$-quasi-generic for $\xi$ (in particular, $y$ is quasi-generic for $\nu$).
\end{thm}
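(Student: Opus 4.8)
The plan is to isolate a finitary approximation lemma, deduce the theorem from it by a diagonal construction, and prove the lemma with the tiling apparatus of Subsection~\ref{til}; this mirrors the three-part structure of Kamae's argument, but the linear order of $\N_0$ used there is replaced by a tiling.

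\smallskip
\emph{Reduction to a finitary statement.} For $y\in\Lambda_2^G$ put $\Phi_n(y)=\frac1{|F_n|}\sum_{g\in F_n}\delta_{g(x,y)}$. Assume we know the following:
\[
(\star)\qquad\text{for every }\eps>0\text{ and every sufficiently large }n\text{ there is }y_n\in\Lambda_2^G\text{ with }\dist(\Phi_n(y_n),\xi)<\eps.
\]
Then a single $y$ with $(x,y)$ $\F$-quasi-generic for $\xi$ is produced recursively: choose $n_1<n_2<\cdots$ and fix more and more coordinates of $y$ so that at stage $k$ one has $\dist(\Phi_{n_k}(y),\xi)<\tfrac1k$. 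At stage $k$ pick $n_k$ so large that the (finitely many) already fixed coordinates occupy a fraction of $F_{n_k}$ small enough that altering them changes $\Phi_{n_k}$ by less than $\tfrac1{2k}$ in $\dist$ (a routine estimate via Proposition~\ref{fakty}(1)); take $y_{n_k}$ as in $(\star)$ with $\eps=\tfrac1{2k}$ and let $y$ agree with $y_{n_k}$ off the already fixed coordinates. Extending $y$ arbitrarily outside $\bigcup_kF_{n_k}$ does not affect the limiting behaviour of $\Phi_{n_k}(y)$, since the $K$-boundary of $F_{n_k}$ is a vanishing fraction of $F_{n_k}$. Finally, as $x$ is $\F$-generic for $\mu$, Proposition~\ref{fakty}(4) gives $\dist(x|_{F_n},\mu)\to0$, so $(\star)$ follows from the \emph{Key Lemma}: for every $\eps>0$ there are $\delta>0$ and a finite $K^*\subset G$ such that, if $F$ is $(K^*,\delta)$-invariant and $\dist(x|_F,\mu)<\delta$, then some $C\in\Lambda_2^F$ satisfies $\dist\bigl((x|_F,C),\xi\bigr)<\eps$.

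\smallskip
\emph{Proof of the Key Lemma (sketch).} For finite $S\ni e$ and $B\in\Lambda_1^S$ with $\mu([B])>0$, the vector $q_B^S(C):=\xi([B]\times[C])/\mu([B])$, $C\in\Lambda_2^S$, is a probability distribution (a finitary avatar of the disintegration of $\xi$ over its first marginal $\mu$). Using the tiling technology of Subsection~\ref{til} (Theorems~\ref{tiln} and \ref{dhz}; for a genuine semigroup one first passes to an ambient group, as indicated there), fix a tiling whose tiles form a F\o lner \sq, whose shapes are as large and as invariant as $\eps$ demands, whose tiles cover all but an $\eps$-fraction of $F$, and --- crucially --- which records the $\mu$-statistics of $x$: for each shape $S$, the fraction of shape-$S$ tiles carrying $\Lambda_1$-pattern $B$ is $\mu([B])\pm\eps$. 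Because $\Lambda_2^G$ is the \emph{full} shift, we may paint the tiles freely: among the shape-$S$ tiles carrying $\Lambda_1$-pattern $B$ (with $\mu([B])>0$; those with $\mu([B])=0$ occupy a vanishing fraction and are painted arbitrarily) we put the pattern $C\in\Lambda_2^S$ on a $q_B^S(C)$-fraction of them, and off the tiles $C$ is arbitrary. By Proposition~\ref{fakty}(6), applied to the concatenation of the tile-blocks (the boundary windows contribute $O(\eps)$ by invariance of the shapes), $\widehat{(x|_F,C)}$ is $O(\eps)$-close to the weighted average, over shapes $S$, over $\Lambda_1$-patterns $B$ and over $\Lambda_2$-patterns $C$, of the empirical measures of the individual painted tiles. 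Inserting the proportions $q_B^S$ and the recorded $\mu$-statistics, this average telescopes to $\xi$: the sum over $C$ turns $\mu([B])q_B^S(\cdot)$ into the window-marginals of $\xi$, the factors $\mu([B])$ cancel against the tile-statistics of $x$, and summing over $B$ and over $S$ (using invariance of $\xi$) reassembles $\xi$. Adjusting constants gives $\dist\bigl((x|_F,C),\xi\bigr)<\eps$.

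\smallskip
\emph{The main obstacle} is precisely the italicized requirement that the tiling record the $\mu$-statistics of $x$. A merely $\F$-generic point need not be $\mu$-typical: along $\F$ it may consist of long stretches typical for various invariant (even ergodic) measures whose barycenter is $\mu$, so a tiling chosen with no regard for $x$ will in general not see the law $\mu$ on its tiles, and the telescoping above will fail. The remedy is to take the tiling inside the zero-entropy, uniquely congruent F\o lner system of tilings supplied by Theorem~\ref{dhz} and to select a tiling in it that is asymptotically decoupled from the process generated by $x$; the vanishing entropy of the tiling system is what permits such a choice without perturbing the empirical statistics of $x$. This step, together with the bookkeeping in the metric $\dist$ via Proposition~\ref{fakty}, is the technical heart of the proof; the diagonal reduction and the passage from $\xi$ to the finitary conditionals $q_B^S$ are routine.
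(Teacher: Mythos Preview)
Your overall architecture—reduce to a finitary approximation lemma via a diagonal construction, then paint tiles according to the conditional law of $\xi$ over $\mu$—matches the paper's, and you correctly isolate the main obstacle: a point that is merely $\F$-generic for $\mu$ need not have $\mu$-typical statistics on the tiles of an externally chosen tiling, so the conditionals $q_B^S(C)=\xi([B]\times[C])/\mu([B])$ cannot be plugged in directly.

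The gap is in your remedy. You propose to pick a tiling from a zero-entropy system ``asymptotically decoupled'' from $x$, asserting that vanishing entropy of the tiling system permits such a choice. But you have not explained why a tiling $\CT$ with $(x,\CT)$ generic for the \emph{product} $\mu\times\omega$ exists; this is a disjointness-type statement that would itself require proof (and zero entropy of $\CT$ alone does not obviously yield it, since nothing is assumed about $\mu$). In fact the paper's proof uses no entropy hypothesis on the tiling at all—any congruent F\o lner sequence of proper tilings will do—so zero entropy is a red herring here.

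The paper's device is different and avoids the obstacle rather than overcoming it. Instead of forcing the tiles to see $\mu$, one accepts whatever joining $\zeta=\mu\vee\omega$ the pair $(x,\CT_k)$ is (quasi-)generic for, and then forms a \emph{three-way} joining $\eta$ on $(\Lambda_1\times\Lambda_2\times{\rm V})^G$ whose marginal on the first two coordinates is $\xi$ and on the first and third is $\zeta$ (e.g.\ the relatively independent joining of $\xi$ and $\zeta$ over their common factor $\mu$). The painting rule is then: among the tiles carrying the \emph{double} block $(B_0,D_S)$, distribute the $\Lambda_2$-blocks $C_0$ according to $\eta(B_0,C_0,D_S)/\eta(B_0,*,D_S)$, not according to your $q_{B_0}^S$. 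Since the empirical frequencies of $(B_0,D_S)$ in $(x,\CT_k)|_F$ are close to $\zeta(B_0,D_S)=\eta(B_0,*,D_S)$ by construction, the telescoping to $\xi$ goes through with no independence hypothesis between $x$ and the tiling. This conditioning on the tile pattern $D_S$ alongside $B_0$ is the idea your sketch is missing.
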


\begin{que} We failed to prove (although we believe it is true) that there exists an element $y\in\Lambda_2^G$ such that $(x,y)$ is $\F$-generic for $\xi$. Such a fact is known for $\N_0$ or $\Z$, but in the general case the question seems to be open.
\end{que}

\begin{rem} We would like to stress two important features of Theorem \ref{generic}. The first one is that ergodicity of $\xi$ or even of $\mu$ is not assumed (hence the set of $\F$-generic points for $\mu$ can have measure zero). The second one is that the corresponding element $y$ exists for \emph{every} (not just almost every) $\F$-generic point $x$. 
\end{rem}

\begin{proof}[Proof of Theorem \ref{generic}] In order to be able to use the machinery of tilings we temporarily assume that $G$ is a group.
Let $(\eps_k)_{k\in\N}$ be a decreasing to zero \sq\ of positive numbers. Each $\eps_k$ determines a $\delta_k>0$ and the family of $k$-pivotal blocks, as in Proposition \ref{fakty} (1). Let $(\CT_k)_{k\in\N}$ be a congruent F\o lner \sq\ of proper tilings of $G$. By passing, if needed, to a sub\sq\ we can assume that the shapes $S\in\S_k$
(where $\S_k$ is the collection of shapes of $\CT_k$) have good enough invariance properties, so that for each $k$-pivotal set $K$ the $K$-core of each $S$ is a $(1-\frac{\delta_k}4)$-subset of $S$ (see Lemma \ref{estim}). That is to say,
\begin{equation}\label{jeden}
\text{for each }S\in\S_k,\ |\{g\in S_k:Kg\not\subset S_k\}|\le\tfrac{\delta_k}4|S_k|.
\end{equation}
Let $M_k$ denote the cardinality of the set of all possible double blocks over $\Lambda_1\times\Lambda_2$ whose domains are the shapes of $\CT_k$. Clearly, $M_k$ is a finite number, not exceeding $|\S_k|(|\Lambda_1||\Lambda_2|)^{|S_{k,\max}|}$, where $S_{k,\max}$ is the largest, in terms of cardinality, element of $\S_k$. Recall that $\CT_k$ can be viewed as a symbolic element over the finite alphabet ${\rm V}_k=\{``S":S\in\S_k\}\cup\{``0"\}$. Recall also that we are given a F\o lner \sq\ $\F=(F_n)_{n\in\N}$ and a point $x\in\Lambda_1^G$ which is $\F$-generic for an \im\ $\mu$. We can choose a sub\sq\ $\F_\circ=(F_{n_k})_{k\in\N}$ of $\F$ so that the following conditions are satisfied: 
\begin{itemize}
	\item if $\bar F_{n_k}$ denotes the $\CT_k$-saturation of $F_{n_k}$ then 
	$\underset{k\to\infty}{\lim}\frac{|\bar F_{n_k}\setminus F_{n_k}|}{|F_{n_k}|}=0$,
 	\item $\underset{k\to\infty}{\lim}\frac{|\bar F_{n_1}\cup \bar F_{n_2}\cup\cdots\cup 
	\bar F_{n_{k-1}}|}{|F_{n_k}|}=0$.
\end{itemize}
The above conditions imply that the \sq\ of sets $\F'_\circ=(F'_{n_k})_{k\in\N}$, where $F'_{n_1}=\bar F_{n_1}$ and
$$
F'_{n_k} = \bar F_{n_k}\setminus(\bar F_{n_1}\cup \bar F_{n_2}\cup\cdots\cup \bar F_{n_{k-1}}), \ \ k\ge 2,
$$
is a F\o lner \sq\ equivalent to $\F_\circ$. In particular, $\F_\circ$-genericity of a point for some \im\ is equivalent to its $\F'_\circ$-genericity for that measure. So, it suffices to find an element $y\in\Lambda_2^G$ such that $(x,y)$ is $\F'_\circ$-generic for $\xi$. The advantage of $\F'_\circ$ over $\F_\circ$ is that $\F'_\circ$ consists of disjoint sets, each $F'_{n_k}$ being a union of tiles of $\CT_k$.

Passing, if necessary, to a sub\sq\ of $(n_k)_{k\in\N}$ we can also assume that
\begin{enumerate}
	\item $\frac{|S_{k,\max}|M_k}{|F'_{n_k}|}<\frac{\delta_k}4$,	
	\item the double block 
	$(B_{F'_{n_k}},D_{F'_{n_k}}):=(x,\CT_k)|_{F'_{n_k}}\in(\Lambda_1\times{\rm 
	V}_k)^{F'_{n_k}}$) satisfies
	$$
	\dist\bigl((B_{F'_{n_k}},D_{F'_{n_k}}),\zeta_k\bigr)<\tfrac{\delta_k\min\{u(S):S\in\CS_k\}}
	{4|S_{k,\max}|M_k},
	$$
	where	$\zeta_k=\mu\vee\omega_k$ is a joining of $\mu$ with some \im\ $\omega_k$ supported 
	by the orbit closure of $\CT_k$ (and $u(\cdot)$ is the function appearing the definition 
	of $\dist$).
\end{enumerate}
The condition (2) can be fulfilled due to Proposition \ref{fakty} (3), and since $x$ is $\F$-generic (hence $\F_\circ$-generic and thus $\F'_\circ$-generic) for $\mu$. 

The continuation of the proof depends on the following technical (and admittedly cumbersome) lemma.

\begin{lem}\label{pomoc}
For each $k\in\N$ let $B_{F'_{n_k}}=x|_{F'_{n_k}}$. There exists a block $C_{F'_{n_k}}\in\Lambda_2^{F'_{n_k}}$ such that the empirical measure generated by the double block $(B_{F'_{n_k}},C_{F'_{n_k}})$ is $\eps_k$-close to $\xi$: $\dist
\bigl((B_{F'_{n_k}},C_{F'_{n_k}}),\xi\bigr)\le\eps_k$.
\end{lem}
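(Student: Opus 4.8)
\textbf{Proof strategy for Lemma \ref{pomoc}.}

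The plan is to build the block $C_{F'_{n_k}}$ tile-by-tile on the partition of $F'_{n_k}$ furnished by $\CT_k$, using the empirical distribution of the double block $(B_{F'_{n_k}},D_{F'_{n_k}}) = (x,\CT_k)|_{F'_{n_k}}$ as a guide. Since $F'_{n_k}$ is a disjoint union of tiles $Sc$ of $\CT_k$, and the empirical measure of this double block is close (by condition (2) above) to the joining $\zeta_k = \mu\vee\omega_k$, the relative frequency with which a given shape $S\in\S_k$ occurs, carrying a given $\Lambda_1$-block $B$ on its center-translate $Sc$, is approximately governed by $\zeta_k$; in particular, among the tiles of shape $S$, the $\Lambda_1$-content is distributed approximately like the conditional measure of $\mu$ given the corresponding cylinder in the orbit closure of $\CT_k$. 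Fix $k$ and drop it from the notation. For each shape $S\in\S$ and each $\Lambda_1$-block $B$ on $S$, let $N(S,B)$ be the number of tiles of $\CT_k$ inside $F'_{n_k}$ that have shape $S$ and on which $x$ reads $B$ (up to the shift identifying $Sc$ with $S$). The idea is: on those $N(S,B)$ tiles we want to overlay $\Lambda_2$-blocks $C$ on $S$ so that, \emph{jointly with $B$}, the pair $(B,C)$ appears on roughly a $\xi(\text{the cylinder of }(B,C)\text{ over }S\text{, conditioned on }B\text{ over }S)$-fraction of them. Concretely, using Lemma \ref{bloki} applied to the ergodic decomposition of $\xi$ — or more simply, the fact that $\xi$ restricted to cylinders over $S$ is an honest coupling — we can choose, for each $(S,B)$, a family of $\Lambda_2$-blocks on $S$ and an assignment of them to the $N(S,B)$ tiles whose resulting empirical conditional distribution over $\Lambda_2^S$ matches $\xi(\cdot\mid B\text{ over }S)$ up to a single-tile rounding error. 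Performing this for every $(S,B)$ and concatenating over all tiles of $F'_{n_k}$ defines $C_{F'_{n_k}}$.

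The verification that $\dist\bigl((B_{F'_{n_k}},C_{F'_{n_k}}),\xi\bigr)\le\eps_k$ proceeds in three estimates, controlled separately by the three conditions assembled before the lemma. First, by Proposition \ref{fakty}(6) (concatenation), the empirical measure of $(B_{F'_{n_k}},C_{F'_{n_k}})$ is within $\eps_k$-order of the $|S c|/|F'_{n_k}|$-weighted average, over tiles $Sc\subset F'_{n_k}$, of the empirical measures of the individual double-blocks $(B,C)$ on those tiles — the error here is absorbed because the shapes $S\in\S_k$ have good invariance properties and $\F'_\circ$ consists of unions of tiles. Second, for a pivotal block $P$ over a pivotal set $K$, counting occurrences of $P$ in the double block over $F'_{n_k}$ reduces to counting occurrences \emph{inside} individual tiles (the occurrences straddling two tiles number at most $|S c| - |(Sc)_K|$ per tile, which is $\le\frac{\delta_k}{4}|Sc|$ by \eqref{jeden}, hence a total relative error $\le\frac{\delta_k}{4}$); and within a tile of shape $S$ carrying $(B,C)$, the frequency of $P$ is $\Fr_{(B,C)}(P)$, which by construction — and by the closeness of $\omega_k$-cylinder-weights to the true tile-distribution (condition (2)) plus the rounding bound (condition (1), which makes the $\le M_k$ rounding errors of size $\le|S_{k,\max}|$ negligible relative to $|F'_{n_k}|$) — averages to $\xi(P)$ up to $O(\delta_k)$. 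Third, I invoke Proposition \ref{fakty}(1) to pass from agreement on the finitely many pivotal blocks (with error $<\delta_k$) to $\dist<\eps_k$.

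\textbf{Main obstacle.} The delicate point is the \emph{joint} control in the second step: on a tile of shape $S$ on which $x$ reads $B$, the overlaid $C$ must be chosen so that the pair $(B,C)$ reproduces the \emph{conditional} measure $\xi(\,\cdot \mid [B]\text{ over }S)$, not merely the marginal $\nu$. This requires that the empirical conditional distribution of $\Lambda_1$-contents over the tiles of shape $S$ — which is dictated by $x$ alone and is not under our control — already be close to the $\mu$-marginal of $\zeta_k$ conditioned appropriately; this is exactly what condition (2) buys us, but matching it up cleanly (choosing which $\Lambda_2$-blocks to place on which tiles so that the joint empirical law over $\Lambda_1^S\times\Lambda_2^S$ is $\le\frac{\delta_k}{4}$-close to $\xi$ restricted to $S$) is a finite combinatorial allocation argument that has to be done with care: one wants to realize a target coupling on a finite set of points up to a $1/N(S,B)$ error, which is possible precisely because $|S_{k,\max}|M_k/|F'_{n_k}| < \delta_k/4$ (condition (1)) keeps all such rounding errors controlled in aggregate. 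Assembling these three error sources — concatenation error, tile-boundary error, and allocation/rounding error — so that their sum is $\le\eps_k$ is the technical heart of the argument.
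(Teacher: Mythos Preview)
Your overall strategy matches the paper's: build $C_F$ tile-by-tile over the $\CT_k$-partition of $F'_{n_k}$, allocate $\Lambda_2$-blocks on each tile according to a target conditional distribution (with a rounding error of one tile per class), and verify via the three error sources you name (tile-boundary via \eqref{jeden}, $\zeta$-approximation via condition~(2), rounding via condition~(1)).

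The gap is in your choice of target and hence in the verification. You aim, on tiles of shape $S$ carrying $\Lambda_1$-content $B_0$, to realise the conditional law $\xi(\,\cdot\mid[B_0])$ over $\Lambda_2^S$, so that ``the joint empirical law over $\Lambda_1^S\times\Lambda_2^S$ is $\le\tfrac{\delta_k}{4}$-close to $\xi$ restricted to $S$''. But this is impossible in general: the $B_0$-marginal over tiles of shape $S$ is dictated by $x$ and the tiling, and by condition~(2) it is close to $\zeta_k(\,\cdot\mid D_S)$, which need not equal $\mu$ unless $\zeta_k=\mu\times\omega_k$ is the product joining. Your construction therefore produces empirical triple-block frequencies $\approx\zeta_k(B_0,D_S)\cdot\xi(C_0\mid B_0)$, and there is no reason the sum
\[
\sum_{S,B_0,C_0}|S|\,\Fr_{(B_0,C_0)}(P)\cdot\zeta_k(B_0,D_S)\,\xi(C_0\mid B_0)
\]
should equal $\xi(P)$: these cylinder values do not come from any shift-invariant measure on the triple product, so the ``sum over $s\in S$'' does not collapse.

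The paper's remedy is to introduce an auxiliary \emph{triple} joining $\eta$ on $(\Lambda_1\times\Lambda_2\times{\rm V}_k)^G$ whose marginal on the first two coordinates is $\xi$ and on the first and third is $\zeta_k$ (for instance the relatively independent joining of $\xi$ and $\zeta_k$ over~$\mu$), and to use as allocation target the vector $\bigl(\eta(B_0,C_0,D_S)/\eta(B_0,*,D_S)\bigr)_{C_0\in\Lambda_2^S}$. The shift-invariance of $\eta$ then yields the clean decomposition
\[
\xi(P)=\eta(P,*)=\sum_{S,B_0,C_0}|S|\,\Fr_{(B_0,C_0)}(P)\cdot\eta(B_0,C_0,D_S)\ \pm\ \tfrac{\delta_k}{4},
\]
against which the empirical count can be matched term by term. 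Once you replace your target $\xi(\,\cdot\mid B_0)$ by $\eta(\,\cdot\mid B_0,D_S)$, the rest of your error bookkeeping goes through exactly as you outline.
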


\begin{proof}
Since we will work with a fixed index $k$, we can simplify our notation. And so, $F$ will stand for $F'_{n_k}$ and we will write $\CT$, $\CS$, $\eps$, $\delta$, $\rm V$, $M$, $\zeta$, $\omega$, etc., skipping the subscript $k$. The block $\CT|_F$ will be written as $D_F$ (hence $(x,\CT)|_F=(B_F,D_F)$).

We are given two joinings of $\mu$: $\xi=\mu\vee\lambda$ on $(\Lambda_1\times\Lambda_2)^G$ and $\zeta=\mu\vee\omega$ on $(\Lambda_1\times{\rm V})^G$. We let $\eta$ be any \im\ supported by $(\Lambda_1\times\Lambda_2\times{\rm V})^G$ whose marginal on the first two coordinates is $\xi$ and that on the first and last coordinates is $\zeta$. For example, $\eta$ can be the joining of $\xi$ and $\zeta$ relatively independent over the common factor $\mu$ on the first coordinate. 

The block $(B_F,D_F)$ is partitioned by the tiling $\CT$ into ``small'' (still much larger than ``pivotal'') blocks whose domains are the tiles of $\CT$ (which have shapes $S\in\S$). Let $(B_0,D_0)\in(\Lambda_1\times{\rm V})^S$ be a double block which occurs at least once in $(B_F,D_F)$. Notice that $D_0$ is determined by the fact that its domain is the shape $S$; $D_0$ has the symbol $``S"$ at its center and $``0"$ everywhere else. We will write $D_S$ instead of $D_0$, to emphasize that this block is determined by its shape $S$. Consider now all occurrences of $(B_0,D_S)$ inside $(B_F,D_F)$. Clearly, due to the presence of $D_S$, $(B_0,D_S)$ can occur only over the tiles of $\CT$ which have the shape $S$. The joining $\zeta$ assigns to this double block some value $\zeta(B_0,D_S)$. 

For convenience, we will use the following notational convention concerning triple blocks: for a finite set $K\subset G$, $B\in\Lambda_1^K$ and $D\in{\rm V}^K$, we will write
$$
[B,*,D]:=\bigcup_{C\in{\Lambda_2^K}}[B,C,D].
$$
Similarly, we will write $[B,C,*]:=\bigcup_{D\in{\rm V}^K}[B,C,D]$. We also remind the reader that in accordance to our convention, when applying measures to cylinders, we will skip square brackets. For example, we will write $\eta(B,*,D)$ instead of $\eta([B,*,D])$.

The value $\zeta(B_0,D_S)$ can be also written as $\eta(B_0,*,D_S)$. The joining $\eta$ assigns values $\eta(B_0,C_0,D_S)$ to all triple blocks $(B_0,C_0,D_S)$, where $C_0$ ranges over $\Lambda_2^{S}$. Consider the following probability vector of conditional probabilities:
$$
\mathsf P_{(B_0,D_S)}=\left\{\frac{\eta(B_0,C_0,D_S)}{\eta(B_0,*,D_S)}:\ C_0\in\Lambda_2^S\right\}.
$$
We will construct the block $C_F$ so that the conditional probabilities are optimally approximated by the frequencies of $(B_0,C_0,D_S)$ in $(B_F,C_F,D_F)$. To this end, we denote by $N_F(B_0,D_S)$ the (nonzero) number of all occurrences of $(B_0,D_S)$ within $(B_F,D_F)$ and then we approximate, in a best possible way, the vector $\mathsf P_{(B_0,D_S)}$ by a probability vector whose entries are rational numbers with the denominator $N_F(B_0,D_S)$. We denote the respective numerators by $N_F(B_0,C_0,D_S)$. Observe that, for each $C_0\in\Lambda_2^S$, we have
\begin{equation}\label{szesc}
\left|\frac{N_F(B_0,C_0,D_{S})}{N_F(B_0,D_{S})}-\frac{\eta(B_0,C_0,D_S)}{\eta(B_0,*,D_S)}\right|\le\frac1{N_F(B_0,D_{S})}.
\end{equation}
Finally, we place the blocks $C_0\in\Lambda_2^{S}$ in the middle layer of the future triple block $(B_F,C_F,D_F)$ exactly at the positions of the occurrences of $(B_0,D_{S})$ within $(B_F,D_F)$, subject to the condition that each block $C_0$ should be used exactly $N_F(B_0,C_0,D_{S})$ times. This is possible since
$$
\sum_{C_0\in\Lambda_2^S}N_F(B_0,C_0,D_{S})=N_F(B_0,D_{S}). 
$$
Finally, we perform the above procedure for any double block $(\tilde B_0,\tilde D_0)$ appearing at least once in $(B_F,D_F)$. Once this is done, the definition of $C_F$ is completed. 
\smallskip

We shall now check that the empirical measure associated to the double block $(B_F,C_F)$ is indeed $\eps$-close to the joining $\xi$. Consider any pivotal double block $(B,C)$. The frequency of $(B,C)$ in $(B_F,C_F)$ can be evaluated by averaging its frequencies in the blocks $(B_0,C_0,D_S)$ (equivalently, in $(B_0,C_0)$) with weights proportional to the sizes of the shapes $S$ and the frequencies of $(B_0,C_0,D_S)$ in $(B_F,C_F,D_F)$. In this manner we will miss some occurrences of $(B,C)$, namely the ones not entirely contained in some $(B_0,C_0,D_S)$ (but these occurrences constitute a small fraction). We present now these ideas in a more rigorous way. The equalities below hold up to error terms indicated by the expressions to the right of ``$\pm$'' sign. The sum sign $\sum$ stands for the triple sum $\sum_{S\in\S}\sum_{B_0\in\Lambda_1^S}\sum_{C_0\in\Lambda_2^S}$. Observe also that the number of terms in $\sum$ is equal to $M$.
\begin{multline*}
\Fr_{(B_F,C_F)}(B,C)=\\
\sum|S|\Fr_{(B_F,C_F,D_F)}(B_0,C_0,D_S)\cdot\Fr_{(B_0,C_0)}(B,C)\overset{\eqref{jeden}}\pm\frac\delta4=\\
\sum|S|\tfrac{N_F(B_0,C_0,D_S)}{|F|}\cdot\Fr_{(B_0,C_0)}(B,C)\pm\frac\delta4=\\
\sum|S|\cdot\Fr_{(B_0,C_0)}(B,C)\cdot \frac{N_F(B_0,C_0,D_S)}{N_F(B_0,D_S)}\cdot \frac{N_F(B_0,D_S)}{|F|}\pm\frac\delta4=\\
\sum|S|\cdot\Fr_{(B_0,C_0)}(B,C)\cdot \left(\frac{\eta(B_0,C_0,D_S)}{\eta(B_0,*,D_S)}\overset{\eqref{szesc}}\pm\frac1{N_F(B_0,D_S)}\right)\cdot \frac{N_F(B_0,D_S)}{|F|}\pm\frac\delta4=\\
\sum|S|\cdot\Fr_{(B_0,C_0)}(B,C)\cdot \frac{\eta(B_0,C_0,D_S)}{\eta(B_0,*,D_S)}\cdot\Fr_{(B_F,D_F)}(B_0,D_S)\pm\\
\sum\tfrac{|S|}{|F|}\cdot\Fr_{(B_0,C_0)}(B,C)\pm\frac\delta4=\dots
\end{multline*}
The last sum does not exceed $\frac{|S_{\max}|}{|F|}\cdot M$, and so, by assumption (1), it does not exceed $\frac\delta4$. Moreover, by assumption (2) and the formula defining $\dist$, the frequency $\Fr_{(B_F,D_F)}(B_0,D_S)$ differs from $\zeta(B_0,D_S)$ (i.e.\ from $\eta(B_0,*,D_S)$) by at most $\tfrac\delta{4|S_{\max}|M}$, and so we can continue as follows:
\begin{multline*}
\!\!\!\!\!\!\!\ldots=\sum|S|\cdot\Fr_{(B_0,C_0)}(B,C)\cdot \frac{\eta(B_0,C_0,D_S)}{\eta(B_0,*,D_S)}\cdot\left(\eta(B_0,*,D_S)\pm \frac\delta{4|S_{\max}|M}\right)\pm\frac{2\delta}4=\\
\sum|S|\cdot\Fr_{(B_0,C_0)}(B,C)\cdot \eta(B_0,C_0,D_S)\pm\\
\sum|S|\cdot\Fr_{(B_0,C_0)}(B,C)\cdot \frac{\eta(B_0,C_0,D_S)}{\eta(B_0,*,D_S)}\cdot\frac\delta{4|S_{\max}|M}\pm\frac{2\delta}4.
\end{multline*}
The last sum, this time, does not exceed $\frac\delta{4M}\cdot M=\frac\delta4$.
We have proved that
\begin{equation}\label{joe}
\Fr_{(B_F,C_F)}(B,C)=\sum_{S\in\S}\sum_{B_0\in\Lambda_1^S}\sum_{C_0\in\Lambda_2^S}|S|\cdot\Fr_{(B_0,C_0)}(B,C)\cdot \eta(B_0,C_0,D_S)\pm\frac{3\delta}4.
\end{equation}

Now we are going to approximate $\xi(B,C)$. Let $K\in\Pi_\eps$ be the domain of $(B,C)$
(see Proposition \ref{fakty} (1)). Consider a point $(x,y)$ belonging to the cylinder corresponding to the double block $(B,C)$. For any $z\in\rm V^G$, $(x,y,z)|_K = (B,C,D)$ for some $D\in{\rm V}^K$. We are only interested in elements $z$ contained in the orbit closure of $\CT$. Notice that every such $z$ represents a tiling with the set of shapes $\CS$. For every such $z$, the coordinate $e$ is contained in a tile $T$ of some shape $S\in\S$. Then $T=Ss^{-1}$, where the center $s^{-1}$ is the inverse of some $s\in S$ (because $e\in T$). So $z(s^{-1})=``S"$ which means that $z\in s([``S"])$. 
(Recall that $[``S"]$ denotes the cylinder associated to the one-symbol block with the symbol $``S"$.)

Thus, we have the following disjoint union representation of the cylinder corresponding to $(B,C)$:
\begin{multline*}
[B,C]=\bigcup_{S\in\S}\bigcup_{s\in S}[B,C]\cap s([``S"])= \\
\bigcup_{S\in\S}\bigcup_{s\in S}\bigcup_{B_0\in\Lambda_1^S}\bigcup_{C_0\in\Lambda_2^S}[B,C,*]\cap s([B_0,C_0,D_S])=\\
\bigcup_{S\in\S}\bigcup_{B_0\in\Lambda_1^S}\bigcup_{C_0\in\Lambda_2^S}\ \bigcup_{s\in S}[B,C,*]\cap s([B_0,C_0,D_S]),
\end{multline*}
where $(B_0,C_0)$ ranges over all double blocks with the domain $S$. 

We will break the union over $s\in S$ in two parts: $s\in S_K$ and $s\in S\setminus S_K$. If $s\in S_K$ then the intersection of cylinders $[B,C,*]\cap s([B_0,C_0,D_S])$ either equals $s([B_0,C_0,D_S])$ or it is empty, depending on whether $(B,C)$ occurs in $(B_0,C_0)$ with an anchor at $s$ or not. The number of elements $s\in S$ for which $(B,C)$ occurs in $(B_0,C_0)$ with an anchor at $s$ equals $|S|\cdot\Fr_{(B_0,C_0)}(B,C)$. Finally, recall that $|S\setminus S_K|\le|S|\frac\delta4$.

Thus, using the invariance of $\eta$, we can write
\begin{multline*}
\xi(B,C)=\eta(B,C,*)=\\
\sum_{S\in\S}\sum_{B_0\in\Lambda_1^S}\sum_{C_0\in\Lambda_2^S}|S|\cdot(\Fr_{(B_0,C_0)}(B,C)\pm\tfrac\delta4)\cdot\eta(B_0,C_0,D_S)=\\
\sum_{S\in\S}\sum_{B_0\in\Lambda_1^S}\sum_{C_0\in\Lambda_2^S}|S|\cdot(\Fr_{(B_0,C_0)}(B,C))\cdot\eta(B_0,C_0,D_S)\pm\frac\delta4
\end{multline*}
(we have used the equality $\sum_{S\in\S}\sum_{B_0\in\Lambda_1^S}\sum_{C_0\in\Lambda_2^S}|S|\eta(B_0,C_0,D_S)=1$). 
Combining this approximation with \eqref{joe} we conclude that
$$
\Fr_{(B_F,C_F)}(B,C)=\xi(B,C)\pm\delta.
$$
By Proposition \ref{fakty} (1), the empirical measure generated by $(B_F,C_F)$ is $\eps$-close to~$\xi$.
\end{proof}

We point out that the method of proof of the above lemma bears resemblance to the combinatorial method of constructing normal numbers pioneered by Copeland and Erd\H{o}s \cite{CE}.
\smallskip

We are just one easy step away from completing the proof of Theorem \ref{generic}  for groups. In Lemma~\ref{pomoc}, for each $k\in\N$, we have created a block $C_{F'_{n_k}}$ such that the block $(B_{F'_{n_k}},C_{F'_{n_k}})$ is $\eps_k$-close to $\xi$. We now define $y\in\Lambda_2^G$ by first defining its restriction to the (disjoint) union of the F\o lner sets $F'_{n_k}$ by the formula $y(g) = C_{F'_{n_k}}(g)$, where $k$ is the unique index such that $g\in F'_{n_k}$. We then define $y$ on the remaining part of $G$ completely arbitrarily. Since $(x,y)|_{F'_{n_k}}=(B_{F'_{n_k}},C_{F'_{n_k}})$, and the empirical measure generated by this double block is $\eps_k$-close to $\xi$, $(x,y)$ is $\F'$-generic, hence $\F_\circ$-generic and thus $\F$-quasi-generic, for $\xi$.
\smallskip

Lastly, let $G$ be a general countable cancellative amenable semigroup. It is known that any such semigroup is embeddable in a group (see \cite[Volume 1, Chapter 1]{CP} and \cite[Proposition 1.23]{P}). As shown in \cite[Theorem 2.12]{BDM}, for any such semigroup there exists a countable amenable group $\bar G$ containing $G$ as a subsemigroup, such that any F\o lner sequence $\F$ in $G$ is a F\o lner sequence in $\bar G$. We extend the symbolic element $x\in\Lambda^G$ (which is $\F$-generic for $\mu$) to an element $\bar x\in\Lambda^{\bar G}$ by assigning the values $\bar x(g)$ for $g\in\bar G\setminus G$ completely arbitrarily. Such an element $\bar x$ is $\F$-generic for the unique \inv\ (under the shift action of $\bar G$) extension $\bar\mu$ of $\mu$ onto $\Lambda^{\bar G}$. By the proved above version of the theorem for groups, we have an element $\bar y\in\Lambda^{\bar G}$ such that $(\bar x,\bar y)$ is $\F$-generic for the unique extension $\bar\xi$ of $\xi$ ($\bar\xi$ is a joining of $\bar\mu$ and the unique extension $\bar\nu$ of $\nu$). Obviously, if $y$ denotes the restriction of $\bar y$ to $G$, the pair $(x,y)$ is $\F$-generic for $\xi$ (under the shift action of $G$). We are done.
\end{proof}

\subsection{Making a quasi-generic point generic}\label{msgg}
In this subsection we deal with the last key step leading to the proof of Theorem \ref{main}. This step concerns a passage from an $\F$-quasi-generic point to an $\F$-generic point. 

We will need the following general (and standard) fact about extreme points in compact convex sets. We supply a proof for reader's convenience.

\begin{fact}\label{extreme}
Let $\M$ be a compact subset of a locally convex linear space $(V,\rho)$, where $\rho$ is a convex metric. Let $\nu$ be an extreme point of $\M$. For any $\eps>0$ there exists a $\delta=\delta(\eps)>0$ such that any element in $B_\delta(\nu)$ (the open $\delta$-ball around $\nu$) \emph{cannot} be decomposed as a convex combination of elements of $\M$ with larger than or equal to $\eps$ contribution of points lying outside $B_\eps(\nu)$. Formally, if 
\begin{equation}\label{cc}
\mu=\sum_{i=1}^n\alpha_i\mu_i+\sum_{j=1}^m\beta_j\nu_j,
\end{equation}
where $\{\alpha_1,\alpha_2,\dots,\alpha_n,\beta_1,\beta_2,\dots,\beta_m\}$ is 
a probability vector, $\sum_{i=1}^n\alpha_i\ge\eps$, $\mu_i, \nu_j\in \M$ ($i=1,2\dots,n, \ j=1,2\dots,m$) and $\rho(\mu_i,\nu)\ge\eps$ for all $i=1,2,\dots,n$, then $\rho(\mu,\nu)\ge\delta$.
\end{fact}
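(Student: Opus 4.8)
The statement to prove is Fact~\ref{extreme}, a routine separation-type fact about extreme points of compact convex sets.

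\medskip

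The plan is to argue by contradiction and use a compactness/sequential argument. Suppose the conclusion fails for some $\eps>0$. Then for every $\delta>0$ there is a decomposition of the form \eqref{cc} with $\rho(\mu,\nu)<\delta$, $\sum_{i=1}^n\alpha_i\ge\eps$, and $\rho(\mu_i,\nu)\ge\eps$ for all $i$. Applying this with $\delta=\frac1k$, $k\in\N$, I obtain a sequence of such decompositions. The key observation is that we may replace the whole ``bad part'' $\sum_{i=1}^n\alpha_i\mu_i$ by a single point of $\M$: since $\M$ is convex and compact, the normalized average $\tilde\mu_k=\bigl(\sum_i\alpha_i\bigr)^{-1}\sum_i\alpha_i\mu_i$ lies in $\M$, and similarly $\tilde\nu_k=\bigl(\sum_j\beta_j\bigr)^{-1}\sum_j\beta_j\nu_j$ lies in $\M$ (if $m=0$ we simply set the second term to $0$ and $\sum_i\alpha_i=1$). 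Writing $s_k=\sum_{i=1}^n\alpha_i\in[\eps,1]$, we have $\mu^{(k)}=s_k\tilde\mu_k+(1-s_k)\tilde\nu_k$ with $\rho(\mu^{(k)},\nu)<\frac1k$. Here I use convexity of the metric $\rho$ to guarantee that $\rho(\tilde\mu_k,\nu)\ge\eps$: since $\rho(\cdot,\nu)$ is a convex function (being $\rho$ of a convex combination, bounded below by the same convex combination of the values — wait, convexity of $\rho$ gives $\rho(\tilde\mu_k,\nu)\le\frac1{s_k}\sum\alpha_i\rho(\mu_i,\nu)$, the wrong direction). Let me instead keep the bad part as it is and not average it; what I actually need below is only that the bad part has total mass $\ge\eps$ and consists of points at distance $\ge\eps$ from $\nu$, which is preserved under passing to subsequences of the individual $\mu_i$'s. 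So I will extract, along a subsequence in $k$, convergent limits: $s_k\to s\in[\eps,1]$, and — after noting that we may assume each decomposition uses at most ``two effective points'' is false, so instead — use that the probability measures $\rho_k=\sum_i\alpha_i\delta_{\mu_i}+\sum_j\beta_j\delta_{\nu_j}$ on the compact metric space $\M$ have a weak* convergent subsequence $\rho_k\to\rho$, a probability measure on $\M$ whose barycenter is $\lim\mu^{(k)}=\nu$.

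\medskip

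With this reformulation the proof becomes clean. The barycenter map is continuous, so $\nu=\int_\M\omega\,d\rho(\omega)$. Moreover the closed set $\{\omega\in\M:\rho(\omega,\nu)\ge\eps\}$ carries $\rho$-mass $\ge\eps$: indeed each $\rho_k$ gives it mass $\ge s_k\ge\eps$ because all the points $\mu_i$ sit in that closed set, and mass of a closed set is upper semicontinuous under weak* convergence, so the limit mass is $\ge\limsup s_k\ge\eps>0$. Hence $\rho$ is not the point mass $\delta_\nu$. But a compact convex set in a locally convex space has the property that its extreme points are exactly the points which are \emph{not} barycenters of any probability measure other than the corresponding point mass (this is the easy direction of the characterization of extreme points via barycenters: if $\nu$ were a barycenter of $\rho\ne\delta_\nu$, split $\rho$ into the part on a small ball around $\nu$ and the complementary part, each of positive mass by the above, normalize to get two distinct elements of $\M$ whose convex combination is $\nu$, contradicting extremality). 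This contradicts the assumption that $\nu$ is extreme, and the proof is complete.

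\medskip

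The only mildly delicate point — and the one I'd call the main obstacle — is the bookkeeping needed to make the ``extract a limiting barycentric measure'' step rigorous when the number of terms $n,m$ in \eqref{cc} is allowed to vary with $\delta$. The clean fix, as above, is to stop thinking of \eqref{cc} as a finite sum and instead encode it as the probability measure $\rho_k$ on $\M$; then compactness of $\M$ (hence of the space of probability measures on $\M$ in the weak* topology) and continuity of the barycenter map do all the work, and the two facts I need about weak* limits — upper semicontinuity of mass on closed sets, and continuity of barycenters — are standard. Everything else (convexity of $\rho$ is used only to know barycenters are well defined and the auxiliary splitting argument works; it is part of the hypotheses) is routine. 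I should remark that this Fact is used later with $\M=\mgl$ sitting inside the convex hull of $\mgl\cup\Lambda^*$ equipped with the metric $\dist$, which by Proposition~\ref{fakty}(5) is indeed a convex metric on a locally convex space, so the hypotheses are met in the intended application.
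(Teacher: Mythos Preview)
Your proof is correct and, after the false start you explicitly abandon, follows essentially the same route as the paper: encode each convex combination as a probability measure on $\M$, pass to a weak* limit $\rho$ using compactness, use continuity of the barycenter map to see that $\nu$ is the barycenter of $\rho$, use the Portmanteau inequality on the closed set $\M\setminus B_\eps(\nu)$ to get $\rho\neq\delta_\nu$, and contradict extremality. The only cosmetic difference is that the paper goes straight to the probability-measure formulation without the preliminary attempt to reduce to two points.
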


\begin{proof}
The convex combination \eqref{cc} can be viewed as the barycenter of the following probability measure on $\M$:
$$
\mathsf P =\sum_{i=1}^n\alpha_i\delta_{\mu_i}+\sum_{j=1}^m\beta_j\delta_{\nu_j},
$$
with $\mathsf P(D)\ge\eps$, where $D=\M\setminus B_\eps(\nu)$. If the statement was not true, we could find a \sq\ of measures $(\mathsf P_k)_{k\in\N}$ satisfying $\mathsf P_k(D)\ge\eps$ and whose barycenters converge to $\nu$. By the weak* compactness of the space of all probability measures on $\M$, we can assume (passing to a sub\sq) that $\mathsf P_k$ converge to a probability measure $\mathsf R$ on $\M$. Because $D$ is closed, we would have $\mathsf R(D)\ge\eps$. On the other hand, by continuity of the map associating to each probability measure on $\M$ its barycenter, we would have that the barycenter of $\mathsf R$ equals $\nu$. This is impossible because $\nu$, being an extreme point of $\M$, is the barycenter of but one probability measure on $\M$, the Dirac measure $\delta_\nu$, and since $\delta_\nu(D)=0$, we have that $\mathsf R\neq\delta_\nu$, a contradiction.
\end{proof}

For a given symbolic element $y\in\Lambda^G$ and a F\o lner \sq\ $\F$ in $G$, a symbolic element $y'\in\Lambda^G$ will be called an \emph{$\F$-modification} of $y$ if the set $\{g\in G:y(g)\neq y'(g)\}$ has $\F$-density zero. We can now state the main theorem of this subsection.  

\begin{thm}\label{semi}{\rm(Cf. \cite[Theorem 1]{Kamae})}
Let $\F=(F_n)_{n\in\N}$ be a F\o lner \sq\ in a countable cancellative amenable semigroup $G$. Suppose that $y\in\Lambda^G$ is $\F$-quasi-generic for an \emph{ergodic} measure $\nu$. Let $\F_\circ=(F_{n_k})_{k\in\N}$ be a sub\sq\ of $\F$ such that $\F_\circ$-generic for $\nu$. Then there exists an $\F_\circ$-modification $y'$ of $y$ which is $\F$-generic for $\nu$. 
\end{thm}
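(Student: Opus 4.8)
The plan is to obtain $y'$ by a ``surgery'' on $y$: partition $G$ into tiles of better and better invariance and, scale by scale, replace the block $y$ puts on a tile on which $y$ looks ``far from $\nu$'' by a block whose empirical measure is close to $\nu$, leaving $y$ untouched on the remaining tiles. Two things then have to be controlled: that the set of overwritten coordinates has $\F_\circ$-density zero (so $y'$ is an $\F_\circ$-modification of $y$), and that after the surgery the Ces\`aro averages of $y'$ along \emph{every} $F_n$ (not just along $\F_\circ$) converge to $\nu$. Ergodicity of $\nu$ is what makes the first point possible: by Proposition~\ref{extreme}, a block whose empirical measure is close to the \emph{extreme} point $\nu$ cannot be almost covered by disjoint sub-blocks each of which is far from $\nu$. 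As a preliminary, and exactly as at the end of the proof of Theorem~\ref{generic}, I would reduce to the case that $G$ is a countable amenable group: embed $G$ into such a group $\bar G$ for which $\F$ remains a F\o lner sequence, extend $y$ arbitrarily to $\bar y\in\Lambda^{\bar G}$ and $\nu$ to its unique shift-invariant (and again ergodic) extension $\bar\nu$; an $\F_\circ$-modification of $\bar y$ that is $\F$-generic for $\bar\nu$ restricts over $G$ to the required $y'$.

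For the construction, fix $\eps_k\downarrow0$, decreasing slowly enough compared with the rate at which $\dist(y|_{F_{n_k}},\nu)\to0$, and, using Theorem~\ref{dhz}, fix a uniquely congruent F\o lner system of tilings $\TTT=\bigvee_k\T_k$ together with one of its elements $\boldsymbol{\CT}=(\CT_k)_k$; let $\S_k$ be the collection of shapes of $\T_k$. After discarding finitely many $\T_k$ we may assume the shapes in $\S_k$ have invariance good enough that: every block on a shape of $\S_k$ is $\eps_k$-close to some invariant measure on $\Lambda^G$ (Proposition~\ref{fakty}(3)); for the ergodic measure $\nu$ there is, modulo shift, a block on each shape of $\S_k$ that is $\eps_k$-close to $\nu$ (Lemma~\ref{bloki}); the $K$-cores of these shapes are $(1-\eps_k)$-subsets for every $k$-pivotal $K$ (Lemma~\ref{estim}); and, using $|F_n|\to\infty$, that for each $k$ all sufficiently large $F_n$ differ from a disjoint union of $\CT_k$-tiles by at most an $\eps_k$-fraction. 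Call a $\CT_k$-tile $T$ \emph{$k$-bad} if $\dist(y|_T,\nu)\ge\sqrt{\eps_k}$, and build $y'$ by overwriting $y$ on suitably chosen bad tiles, the replacement on a tile $T$ of shape $S\in\S_k$ being a block $\eps_k$-close to $\nu$ which, using the congruence of $\TTT$, is itself a concatenation of $\eps_j$-close-to-$\nu$ blocks over the $\CT_j$-tiles ($j<k$) partitioning $T$; elsewhere $y'=y$. The genericity of $y'$ is the easy half: for any $n$ pick $k$ with $\S_k$ good and $F_n$ an $\eps_k$-approximate union of $\CT_k$-tiles; each such tile carries in $y'$ a block $O(\sqrt{\eps_k})$-close to $\nu$ (an untouched $k$-good tile still carries $y|_T$, which is within $\sqrt{\eps_k}$ of $\nu$; any other carries a $\nu$-close piece of a replacement block), so by Proposition~\ref{fakty}(5)--(6) the block $y'|_{F_n}$ is $O(\sqrt{\eps_k})$-close to $\nu$; since the admissible $k$ tend to $\infty$ with $n$, $y'$ is $\F$-generic for $\nu$.

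The hard part — and the reason for bringing in tilings — is showing that the overwritten set has $\F_\circ$-density zero. The basic estimate is: by Proposition~\ref{fakty}(6), $y|_{F_{n_k}}$ is, up to error $O(\eps_k)$, the concatenation of the blocks $y|_T$ over the $\CT_k$-tiles $T$ filling $F_{n_k}$, so its empirical measure is $O(\eps_k)$-close to the weighted average, over those tiles, of the empirical measures of the blocks $y|_T$; since $\dist(y|_{F_{n_k}},\nu)\to0$ and $\nu$ is extreme, Proposition~\ref{extreme} forces the $k$-bad $\CT_k$-tiles inside $F_{n_k}$ to cover only an $o(1)$-fraction of $F_{n_k}$, and the same reasoning inside each $(k+1)$-good tile of $\CT_{k+1}$ controls the \emph{maximal} bad tiles. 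What remains — and what I expect to be the crux — is to \emph{organize} the overwriting coherently across scales, via the unique congruence of $\TTT$, so that exactly the maximal bad tiles are touched, every tile relevant to some $F_n$ ends up carrying an almost-$\nu$ block, and the contributions of all scales to $\{g:y(g)\ne y'(g)\}\cap F_{n_k}$ add up to $o(|F_{n_k}|)$. This is precisely where the non-nestedness of $\F$ is felt: for $n\notin\{n_k\}$ the set $F_n$ may legitimately have to be overwritten on a non-negligible fraction of itself, and it is only Proposition~\ref{extreme} (hence the ergodicity of $\nu$) that prevents such overwrites from accumulating inside the $F_{n_k}$. A minor, routine point is to verify that $\bar\nu$ is ergodic, or else to run the whole argument directly over $\bar G$ acting on $\Lambda^{\bar G}$.
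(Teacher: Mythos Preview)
Your overall strategy---surgery on tiles, with Proposition~\ref{extreme} supplying the key estimate that bad tiles are $\F_\circ$-sparse---is exactly right, and your reduction to groups is fine. But the proof as written is genuinely incomplete: you yourself flag that ``what remains---and what I expect to be the crux---is to organize the overwriting coherently across scales''. You never actually specify the overwriting rule (which bad tiles at which scales get touched), and without that neither half of the argument goes through. In particular, your genericity paragraph asserts that for the chosen $k$ every $\CT_k$-tile inside $F_n$ carries a block $O(\sqrt{\eps_k})$-close to $\nu$; but a $k$-bad tile lying inside a $(k{+}1)$-good tile may never be overwritten under a ``maximal bad tile'' rule, and then it still carries $y|_T$, which is \emph{far} from $\nu$. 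Conversely, if you overwrite such tiles, you have to re-do the $\F_\circ$-density bookkeeping across all scales simultaneously, and you have not done so.

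The paper sidesteps this multi-scale organization entirely by using a different and simpler tiling tool: not the uniquely congruent system of Theorem~\ref{dhz}, but the single tiling $\Theta$ of Theorem~\ref{tiln}, whose tiles themselves form a (disjoint, exhaustive) F\o lner sequence and whose $\Theta$-saturations of the $F_n$ are equivalent to $\F$. With this $\Theta$ there is only \emph{one} scale. For a fixed $\eps$, Proposition~\ref{extreme} applied to the decomposition of $y|_{F_{n_k}}$ into the blocks $y|_\theta$ (over tiles $\theta\subset F_{n_k}$) shows that the union of ``bad'' tiles---those with $\dist(y|_\theta,\nu)>2\eps$---has $\F_\circ$-density zero. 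One then runs this for a sequence $\eps_i\downarrow0$: the set $A_i$ of ``$\eps_i$-good'' tiles has $\F_\circ$-density $1$, so for each $k$ there is a largest $i_k\to\infty$ with $|F_{n_k}\cap A_{i_k}|\ge(1-\eps_{i_k})|F_{n_k}|$. Setting $A=\bigcup_k(F_{n_k}\cap A_{i_k})$ and defining $y'$ to agree with $y$ on tiles contained in $A$ and to be the $\nu$-optimal block $B_\theta$ on every other tile gives both conclusions at once: the $\F_\circ$-modification property is immediate from the choice of $i_k$, and $\F$-genericity follows because for any $\eps>0$ only finitely many tiles $\theta$ can have $\dist(y'|_\theta,\nu)>\eps$ (the replaced tiles eventually satisfy $\dist(B_\theta,\nu)\to0$ by Lemma~\ref{bloki}, and the unreplaced ones are $\eps_{i_k}$-good for some $k$ with $2\eps_{i_k}\le\eps$ once $k$ is large). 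No congruence, no hierarchy, no cross-scale accounting.
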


\begin{rem}
Easy examples for $G=\N_0$ (or $\Z$) show that in the formulation below the ergodicity assumption is essential.
\end{rem}

\begin{proof} We will prove Theorem \ref{semi} for groups. The argument behind the reduction to the case of groups is very similar to the one provided at the end of the proof of Theorem \ref{generic} and we will skip the details. We will be using the tiling $\Theta$ described in Theorem~\ref{tiln}. The \sq\ of $\Theta$-saturations, $\F^\Theta=(F_n^\Theta)_{n\in\N}$, is a F\o lner \sq\ equivalent to $\F$. In particular, it suffices to prove the theorem for the new F\o lner \sq\ $\F^\Theta$ (and its sub\sq\ $\F_\circ^\Theta=(F_{n_k}^\Theta)_{k\in\N}$). The advantage of $(F_n^\Theta)_{n\in\N}$ over $\F$ is that for every $n$, $F_n^\Theta$ is a union of the elements of an auxiliary F\o lner \sq\ $\Theta$ (which is disjoint and exhaustive). For the rest of this proof, for any $n\in\N$, $F_n$ will denote $F_n^\Theta$. 

Fix an $\eps>0$ and divide the tiles $\theta$ of $\Theta$ in two classes: ``bad'', i.e.\ such that $\dist(y|_\theta,\nu)>2\eps$, and the remaining ones (``good''). We will now prove the following claim: 
\begin{equation}\label{siudym}
\text{\parbox{.85\textwidth}{The union $U_{\mathsf{bad}}$ of the ``bad'' tiles has $\F_\circ$-density zero. 
}}
\end{equation}
Suppose that $\bar d_{\F_\circ}(U_{\mathsf{bad}})=\gamma>0$. Note that $\nu$ is an extreme point of $\mgl$. Proposition \ref{extreme} applied to $\nu$ and $\eps=\frac\gamma4$ provides a number $\delta=\delta(\eps)$. Let $\Theta'\subset\Theta$ be a family of tiles $\theta$ which satisfies the following two conditions: 
\begin{enumerate}
\item For any $\theta\in\Theta'$, for any block $B\in\Lambda^\theta$, we have $\dist\bigl(B,\mgl\bigr)<\tfrac\delta4$.
\item For any finite collection of distinct tiles $\theta_1,\theta_2,\cdots,\theta_q\in\Theta'$ ($q\in\N$) and any blocks $B_i\in \Lambda^{\theta_i}$ ($i=1,2\dots,q$), the concatenation $B_0=B_1B_2\dots B_q$ satisfies
$$
\dist\Bigl(B_0,\sum_{i=1}^q\tfrac{|\theta_i|}{\sum_{j=1}^q|\theta_j|}\hat B_i\Bigr)<\frac\delta4.
$$
(Recall that $\hat B$ denotes the empirical measure associated with a block $B$ and that when applying $\dist$ we identify blocks with their empirical measures.) 
\end{enumerate}
By Proposition \ref{fakty} (3) and (6), and since $\Theta$ is in fact a F\o lner \sq, such a family $\Theta'$ exists and contains all but finitely many tiles of $\Theta$. 
Thus, for large enough $k$, if $\bar F_{n_k}$ denotes the union of the tiles contained in $F_{n_k}$ which satisfy conditions (1) and (2) above, then the fraction $\frac{|\bar F_{n_k}|}{|F_{n_k}|}$ is as close to 1 as we wish. We choose $k$ so that this fraction is larger than $1-\eps$, and we choose $k$ also large enough to ensure that 
\begin{equation}\label{cuatro}
\dist(y|_{\bar F_{n_k}},y|_{F_{n_k}})<\tfrac\delta4
\end{equation}
(we are using Proposition~\ref{fakty}~(2)). Since $y$ is $\F_\circ$-generic for $\nu$, we can also assume that 
\begin{equation}\label{cinco}
\dist(y|_{F_{n_k}},\nu)<\tfrac\delta4.
\end{equation}
Now, by (1), for every $\theta\subset\bar F_{n_k}$ there exists an \im\ $\mu_\theta$ such that $\dist(y|_\theta,\mu_\theta)<\frac\delta4$. Note that if $\theta$ is a ``bad'' tile then
\begin{equation}\label{dos}
\dist(\mu_\theta,\nu) \ge \dist(y|_\theta,\nu)-\dist(\mu_\theta,y|_\theta)\ge 2\eps-\tfrac\delta4=\eps.
\end{equation}
By (2), we obtain
\begin{equation}\label{seis}
\dist\Bigl(y|_{\bar F_{n_k}}, \sum_{\theta\subset\bar F_{n_k}}\frac{|\theta|}{|\bar F_{n_k}|}\,y|_\theta\Bigr)<\tfrac\delta4,
\end{equation}
and by convexity of the metric $\dist$, we also have
\begin{equation}\label{siete}
\dist\Bigl(\sum_{\theta\subset\bar F_{n_k}}\frac{|\theta|}{|\bar F_{n_k}|}\,y|_\theta,\mu\Bigr)<\tfrac\delta4,
\end{equation}
where
\begin{equation}\label{mu}
\mu=\sum_{\theta\subset\bar F_{n_k}}\frac{|\theta|}{|\bar F_{n_k}|}\,\mu_\theta.
\end{equation}
By \eqref{cuatro}, \eqref{cinco}, \eqref{seis}, \eqref{siete} and the triangle inequality, we get 
\begin{equation}\label{uno}
\dist(\mu,\nu)<\delta.
\end{equation}
Since $\bar d_{\F_\circ}(U_{\mathsf{bad}})=\gamma$, we can choose $k$ large enough so that (in addition to the previous conditions), we have
$$
\frac{|F_{n_k}\cap U_{\mathsf{bad}|}}{|F_{n_k}|}\ge \frac\gamma2=2\eps.
$$
With this choice of $k$, since $\frac{|\bar F_{n_k}|}{|F_{n_k}|}>1-\eps$, we easily see that 
$$
\frac{|\bar F_{n_k}\cap U_{\mathsf{bad}|}}{|\bar F_{n_k}|}\ge \frac\gamma2-\eps=\eps.
$$
 This implies that in the convex combination \eqref{mu} defining $\mu$, the contribution of measures $\mu_\theta$ associated to ``bad'' tiles $\theta$ is at least $\eps$. In view of \eqref{uno} and \eqref{dos}, this stands in contradiction with Fact \ref{extreme}. This ends the proof of \eqref{siudym}.
\smallskip

For each tile $\theta$ of $\Theta$ let $B_\theta$ be the block such that 
$$
\dist(B_\theta,\nu)=\min\{\dist(B,\nu):B\in\Lambda^\theta\}.
$$
Because $\Theta$ (taken in some order) is a F\o lner \sq, Lemma \ref{bloki} implies that $\dist(B_\theta,\nu)\to 0$ as the index of $\theta$ goes to infinity. So, the point $\tilde y$ defined by the relation $\tilde y|_\theta=B_\theta$ for every $\theta\in\Theta$, is $\Theta$-generic for $\nu$ and thus also $\F$-generic for $\nu$ (recall that each $F_n$ is a union of the tiles of $\Theta$). However, $\tilde y$ need not be an $\F_\circ$-modification of $y$. To create such a modification we will define $y'$ by the same rule as $\tilde y$ except that it will coincide with $y$ on the tiles on which $y|_\theta$ is already sufficiently close to $\nu$. To this end, we fix a \sq\ of positive numbers $(\eps_i)_{i\in\N}$ that is decreasing to zero. We let $A_i$ denote the union of all tiles that are ``good'' for $\eps_i$, i.e.\ such that $\dist(y|_\theta,\nu)\le2\eps_i$. The sets $A_i$ are nested (that is, $A_i \supset A_{i+1}$) and have empty intersection. On the other hand, by \eqref{siudym}, each of the sets $A_i$ has the $\F_\circ$-density $1$. Thus, for each $i$ and all but finitely many $k$, we have
\begin{equation}\label{frp}
\frac{|F_{n_k}\cap A_i|}{|F_{n_k}|}>1-\eps_i.
\end{equation}
It follows that, for all but finitely many $k$ \eqref{frp} holds for a nonempty (yet finite) set of indices $i$. For each $k\in\N$, let $i_k$ be the maximal index $i$ for which \eqref{frp} holds. Then $i_k$ grows to infinity as $k$ increases. Observe that the set $A=\bigcup_{k\in\N} (F_{n_k}\cap A_{i_k})$ is a union of tiles of $\Theta$. We now define $y'$ by setting 
$$
y'|_\theta=\begin{cases} y|_\theta,&\text{ if }\theta\subset A,\\
B_\theta,&\text{ otherwise}.
\end{cases}
$$
It is immediately seen that $y'$ is an $\F_\circ$-modification of $y$: within each set $F_{n_k}$, we have $y'=y$ at least on $F_{n_k}\cap A_{i_k}$ of cardinality at least $(1\!-\!\eps_{i_k})|F_{n_k}|$. It remains to check that $y'$ is $\F$-generic for $\nu$. This will be accomplished once we show that, given $\eps>0$, only finitely many tiles $\theta$ satisfy $\dist(y'|_\theta,\nu)>\eps$. Actually, in this manner we will show that $y'$ is $\Theta$-generic for $\nu$. However, since each $F_n$ is a disjoint union of tiles of $\Theta$, this will immediately imply (with the help of Proposition \ref{fakty} (6)) that $y'$ is $\F$-generic for $\nu$. By Lemma \ref{bloki} and since $\Theta$ is a F\o lner \sq, it is clear that among the tiles $\theta$ such that $y'|_\theta=B_\theta$, only finitely many satisfy $\dist(y'|_\theta,\nu)>\eps$. For each of the remaining tiles $\theta$ we have $y'|_\theta=y|_\theta$ and, by the construction, there exists an index $k$ such that $\theta$ is contained in $F_{n_k}$ and is ``good'' for $\eps_{i_k}$, meaning $\dist(y|_\theta,\nu)<2\eps_{i_k}$. If $2\eps_{i_k}\le\eps$, then $\dist(y'|_\theta,\nu)\le\eps$, and we do not need to count such tiles. The remaining tiles are contained in the sets $F_{n_k}$ with $k$ such that $2\eps_{i_k}>\eps$. There are finitely many such integers $k$ and hence only finitely many tiles fall in this category.
\end{proof}

\subsection{Proof of the first main result}\label{pmr}

\begin{proof}[Proof of Theorem \ref{main}]
(4)$\implies$(5): If $A$ has zero $\F$-density, then any $\F$-normal element $y\in\Lambda^G$ can be altered along $A$ in such a way that the resulting element $y'$ is not simply $\F$-normal along $A$. On the other hand the modified element $y'$ will remain $\F$-normal.
If $A$ is not $\F$-deterministic, then, for some sub\sq\ $\F_\circ=(F_{n_k})_{k\in\N}$ of $\F$, the element $x=\mathbbm 1_A$ is $\F_\circ$-generic for an \im\ $\mu$ of positive entropy (and clearly $A$ has positive $\F_\circ$-density). By Theorem~\ref{depzc}, there exists a joining $\xi=\mu\times\lambda$ which makes the zero-coordinate partitions $\{[0],[1]\}$ of $\{0,1\}^G$ and $\Lambda$ of $\Lambda^G$ not stochastically independent. In particular, for some $a\in\Lambda$, 
$$
\xi\bigl([*,a]\,\bigl|\bigr.\,[1,*]\bigr)\neq\tfrac1{|\lambda|},
$$
where $\xi(\cdot|\cdot)$ denotes the conditional measure of one set given another, and
$$
[*,a]:=\{0,1\}^G\times [a], \ \ \ [1,*]:=[1]\times\Lambda^G.
$$
By Theorem~\ref{generic}, there exists an element $y\in\Lambda^G$ such that the pair $(x,y)$ is $(F_{n_k})$-quasi-generic. Hence there exists a \sq\ $(n_{k_i})_{i\in\N}$ such that $y$ $(F_{n_{k_i}})$-generic for $\xi$. In particular, $y$ is $(F_{n_{k_i}})$-generic for $\lambda$. Because $\lambda$ is ergodic, Theorem \ref{semi} provides an $(F_{n_{k_i}})$-modification $y'$ of $y$, which is $\F$-generic for $\lambda$, i.e.\ $\F$-normal. The pair $(x,y')$ remains $(F_{n_{k_i}})$-generic for $\xi$. Thus, the limit 
$$
\lim_{i\to\infty} \frac{|\{g\in (F_{n_{k_i}}\cap A): y'(g)=a|}{|F_{n_{k_i}}\cap A|}
$$ 
exists and is different from $\frac1{|\Lambda|}$, and hence the limit 
$$
\lim_{n\to\infty} \frac{|\{g\in (F_n\cap A): y'(g)=a|}{|F_n\cap A|}
$$ 
either does not exist or is different from $\frac1{|\Lambda|}$. In either case, $y'$ is not simply $\F$-normal along $A$ and thus $A$ does not preserve simple $\F$-normality.

\smallskip\noindent
(2)$\implies$(4): Orbit-$\F$-normality along $A$ of some $y\in\Lambda^G$ implies in particular that the ``thin cylinders'', i.e. cylinders associated with single symbols, are visited by the orbit of $y$ with appropriate relative $\F$-densities along $A$, which is precisely the simple $\F$-normality along $A$ of $y$. 

\smallskip\noindent
(3)$\implies$(4): 
The set $\{e\}$ is always visible in $A$ (for some sets $A$ this is the only visible set). 
So, the definition of block-$\F$-normality along $A$ applies to thin cylinders, which is precisely the simple $\F$-normality~along~$A$.

\smallskip\noindent
(1)$\implies$(2): Assume $A$ has positive lower $\F$-density and fails to preserve orbit-$\F$-normality. Then there exists some $\F$-normal element $y\in\Lambda^G$, a finite set $K\subset G$ and block $B\in\Lambda^K$, such that the following limit either does not exist or is different from $|\Lambda|^{-|K|}$:
\begin{equation}\label{lim}
\lim_{n\to\infty}\frac{|\{g\in F_n\cap A: y|_{Kg}\approx B\}|}{|F_n\cap A|}.
\end{equation}
We can thus choose a sub\sq\ $\F_\circ=(F_{n_k})_{k\in\N}$  of $\F$ such that
\begin{itemize}
	\item $A$ has positive $\F_\circ$-density,
	\item $(x,y)$ is $\F_\circ$-generic for some joining $\xi=\mu\vee\lambda$, where $\mu$ is 
	an \im\ for which $x=\mathbbm 1_A$ is $\F$-quasi-generic,  
	\item the limit \eqref{lim} taken along $n_k$ exists and is different from 
	$|\Lambda|^{-|K|}$.
\end{itemize}
Since $A$ has positive $\F_\circ$-density, we have that $\mu([1])>0$.
The (different from $|\Lambda|^{-|K|}$) limit equals the conditional probability $\xi([*,B]\,\bigl|\bigr.\,[1,*])$. More formally, we have:
\begin{equation*}
|\Lambda|^{-|K|}\neq\lim_{k\to\infty}\frac{|\{g\in F_{n_k}\cap A: y|_{Kg}\approx B\}|}{|F_{n_k}\cap A|}= \xi([*,B]\,\bigl|\bigr.\,[1,*]).
\end{equation*}
Since $\lambda(B)=|\Lambda|^{-|K|}$, the joining $\xi$ is not the independent joining. This implies that $\mu$ has positive entropy 
(cf. \cite[page 14]{Fu2}), and we conclude that $A$ is not $\F$-deterministic.

\smallskip\noindent
(1)$\implies$(3): Now suppose that a set $A$ of positive lower $\F$-density does not preserve block-$\F$-normality. Then there exists an $\F$-normal element $y$, an $\F$-visible in $A$ finite set $K\subset G$ and a block $B\in\{0,1\}^K$ for which the following limit either does not exist or is different from $|\Lambda|^{-|K|}$:
\begin{equation}\label{lim1}
\lim_{n\to\infty} \frac{|\{g\in F_n\cap A_K, \  y|_{Kg}\approx B\}|}{|F_n\cap A_K\}|}.
\end{equation}
We can choose a subsequence $\F_\circ=(F_{n_k})_{k\in\N}$  of $\F$ so that
\begin{itemize}
	\item $A$ has positive $\F_\circ$-density,
	\item $(x,y)$ is $\F_\circ$-generic for some joining $\xi=\mu\vee\lambda$, 
	\item the limit \eqref{lim1} taken along $n_k$ exists and is different from 
	$|\Lambda|^{-|K|}$,
	\item the core $A_K$ has $\F_\circ$-density (i.e.\ $\underline d_\F(A_K)=\overline d_\F(A_K)$). 
\end{itemize}
Because $K$ is $\F$-visible (and thus $\F_\circ$-visible) in $A$ and $A$ has positive $\F_\circ$-density, the $\F_\circ$-density of $A_K$ is positive. This implies that $\mu([C])>0$, where $C\in\{0,1\}^K$ is the block of just 1's. We have
\begin{equation*}
|\Lambda|^{-|K|}\neq\lim_{k\to\infty}\frac{|\{g\in F_{n_k}\cap A_K: y|_{Kg}\approx B\}|}{|F_{n_k}\cap A_K|}= \xi([*,B]\,\bigl|\bigr.\,[C,*]).
\end{equation*}
Thus the joining $\xi$ is not the independent joining. As before, $\mu$ must have positive entropy and hence $A$ is not $\F$-deterministic.
\end{proof}

By the examples provided below, (5) (positive upper $\F$-density and $\F$-determinism) does not imply (4) (simple $\F$-normality preservation), and neither (2) (orbit-$\F$-normality preservation) nor (3) (block-$\F$-normality preservation) implies (1) (positive lower $\F$-density and $\F$-determinism). So, without additional assumptions on $A$ (for example, that it has positive lower $\F$-density) or on $\F$ (for example, that it is nested and grows subexponentially) we cannot formulate a theorem with full equivalence. 

\begin{prop}\label{5n2} In any countable cancellative amenable semigroup $G$ there exists a F\o lner \sq\ $\F$ and an $\F$-deterministic set $A$ of positive upper $\F$-density, which does not preserve simple $\F$-normality.
\end{prop}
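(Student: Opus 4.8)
The plan is to exhibit a nested, exhaustive F\o lner \sq\ $\F=(F_n)_{n\in\N}$ in $G$ with $\lim_n|F_{n+1}|/|F_n|=1$, together with a set $A$ that is $\F$-deterministic, satisfies $|F_n\cap A|\to\infty$, and has $\overline d_\F(A)>0$ but $\underline d_\F(A)=0$. Once this is done, Theorem \ref{1} (whose conclusion would force $\underline d_\F(A)>0$) shows at once that $A$ cannot preserve simple $\F$-normality, which is exactly the assertion. So the whole issue is to build such an $\F$ and $A$.

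I would build $\F$ recursively together with an increasing \sq\ of \emph{milestone indices} $1=p_0<p_1<p_2<\cdots$, after fixing an exhausting \sq\ $W_1\subseteq W_2\subseteq\cdots$ of finite subsets of $G$ with $e\in W_1$ and $\bigcup_kW_k=G$ (for general semigroups one first passes to an ambient group, as in the proof of Theorem \ref{generic}; we suppress this reduction). Besides $|F_{n+1}|/|F_n|\to1$ I would demand, for each $j$: (i) $F_{p_j}$ is $(W_j,2^{-j})$-invariant; (ii) $|F_{p_j}|\ge 2^{j}|F_{p_{j-1}}|$; and (iii) $W_jF_n\subseteq F_{p_j}$ whenever $p_{j-1}\le n<p_j$. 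Condition (iii) reduces to $W_jF_{p_j-1}\subseteq F_{p_j}$, which I would simply force by \emph{defining} $F_{p_j}:=W_jF_{p_j-1}$, having first run the construction slowly up to an index $p_j-1$ at which $F_{p_j-1}$ is $(W_j^2,2^{-j})$-invariant and $|F_{p_j-1}|\ge 2^{j}|F_{p_{j-1}}|$. The single ``milestone jump'' then has ratio at most $1+2^{-j}$, so $|F_{n+1}|/|F_n|\to1$ is unharmed and $F_{p_j}$ is again highly invariant; between milestones one interpolates to keep the \sq\ nested, exhaustive and slowly growing, which is a routine refinement of a nested exhaustive F\o lner \sq. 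Finally I put $A=\bigcup_{j\ge1}\bigl(F_{p_{2j}}\setminus F_{p_{2j-1}}\bigr)$. The density bookkeeping is then mechanical: for $p_{2k-1}\le n<p_{2k}$ one has $F_{p_{2k-1}}\subseteq F_n\subseteq F_{p_{2k}}$, hence $F_n\cap A=(F_n\setminus F_{p_{2k-1}})\cup\bigcup_{j<k}(F_{p_{2j}}\setminus F_{p_{2j-1}})$ and, using (ii), $|F_n\cap A|/|F_n|=1-|F_{p_{2k-1}}|/|F_n|+O(2^{-2k})$; this is $o(1)$ at $n=p_{2k-1}$ and tends to $1$ at $n=p_{2k}-1$. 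The range $p_{2k}\le n<p_{2k+1}$ is symmetric, so $\overline d_\F(A)=1$, $\underline d_\F(A)=0$, and $|F_n\cap A|\to\infty$ because every $F_n$ with $n$ large contains a whole annulus.

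The heart of the argument is that $A$ is $\F$-deterministic. Writing $\bar 0,\bar 1$ for the constant points of $\{0,1\}^G$, it suffices to prove that every accumulation point of $\tfrac1{|F_n|}\sum_{g\in F_n}\delta_{g(\mathbbm1_A)}$ is a convex combination of $\delta_{\bar 0}$ and $\delta_{\bar 1}$: such measures are carried by the fixed-point set $\{\bar 0,\bar 1\}$ and hence have Kolmogorov--Sinai entropy zero. For this, in turn, I would show that for every finite $W\subseteq G$ the transition set $T_W=\{g\in G:\mathbbm1_A\text{ is non-constant on }Wg\}$ has $\overline d_\F(T_W)=0$, since then every non-constant cylinder over $W$ receives $\F$-frequency $0$ along $\mathbbm1_A$. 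From $\mathbbm1_A=\sum_j(\mathbbm1_{F_{p_{2j}}}-\mathbbm1_{F_{p_{2j-1}}})$, membership of $g$ in $T_W$ forces $Wg$ to meet both $F_{p_\ell}$ and its complement for some $\ell$; so for $p_{2k-1}\le n<p_{2k}$ and $W\subseteq W_{j_0}$ one gets $T_W\cap F_n\subseteq\bigcup_{\ell<2k}\{g\in F_n:Wg\cap F_{p_\ell}\ne\emptyset,\ Wg\not\subseteq F_{p_\ell}\}$, because for $\ell\ge 2k$ condition (iii) (applied with $j=2k$) gives $WF_n\subseteq F_{p_{2k}}\subseteq F_{p_\ell}$, killing those terms for large $k$. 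Among the surviving indices, the boundedly many $\ell<j_0$ contribute at most a constant (negligible against $|F_n|$), while for $j_0\le\ell<2k$ the $\ell$-th set has cardinality $\le c(1+|W|)2^{-\ell}|F_{p_\ell}|$ by (i), and the geometric decay of $|F_{p_\ell}|$ in $\ell$ coming from (ii) makes the sum at most $C\,2^{-2k}|F_n|=o(|F_n|)$. Hence $|T_W\cap F_n|=o(|F_n|)$; the range $p_{2k}\le n<p_{2k+1}$ is identical. Thus every $\F$-quasi-generic measure of $\mathbbm1_A$ lies in $\mathrm{conv}\{\delta_{\bar 0},\delta_{\bar 1}\}$, so $A$ is $\F$-deterministic, and Theorem \ref{1} finishes the proof.

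The step I expect to be the real obstacle is precisely this F\o lner-geometric bookkeeping in the determinism argument: the milestones $F_{p_j}$ must be simultaneously fast-growing, highly invariant and ``deep'' (condition (iii)) while $|F_{n+1}|/|F_n|\to1$ is maintained so that Theorem \ref{1} remains applicable. What makes it go through is the observation that the depth requirement only asks a milestone to absorb the $W_j$-neighbourhood of an \emph{already very invariant} set, at a cost of just a factor $1+2^{-j}$; without this, a single large-scale boundary $\partial F_{p_\ell}$ could a priori occupy a positive fraction of some $F_n$ and destroy the determinism of $A$.
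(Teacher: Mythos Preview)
Your argument is essentially correct, but it takes a far more laborious route than the paper. The paper simply chooses a \emph{disjoint} F\o lner sequence $\F=(F_n)_{n\in\N}$, sets $A'=\bigcup_n F_{2n-1}$ (trivially $\F$-deterministic, since $\mathbbm1_{A'}|_{F_n}$ is constant for every $n$, so only the point masses at $\bar 0$ and $\bar 1$ arise as $\F$-quasi-generic measures), and then adds a small correction $A''\subset\bigcup_n F_{2n}$ with $|A''\cap F_{2n}|\to\infty$ but $|A''\cap F_{2n}|/|F_{2n}|\to 0$, so that $A=A'\cup A''$ still satisfies $|F_n\cap A|\to\infty$. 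As $A''$ has $\F$-density zero, $A$ is an $\F$-modification of $A'$ and remains $\F$-deterministic. Failure of simple-normality preservation is then shown \emph{by hand}: take any $\F$-normal $y$ and alter it on $A''$ so that the resulting $y'$ has wrong symbol statistics on $F_{2n}\cap A=A''\cap F_{2n}$; since $A''$ has $\F$-density zero, $y'$ stays $\F$-normal. No appeal to Theorem~\ref{1} is needed, and there is no F\o lner-sequence engineering whatsoever.

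Your approach, by contrast, routes the conclusion through Theorem~\ref{1}, which forces you to manufacture a \emph{nested} F\o lner sequence with $|F_{n+1}|/|F_n|\to 1$ plus several additional geometric constraints, and then to run a boundary-counting argument to verify determinism. The strategy is sound and does buy a little extra: your example lives in the class of F\o lner sequences where Theorem~\ref{1} applies, so it shows that even there (5) does not imply (4). The cost is that the ``routine refinement'' producing a nested $\F$ with $|F_{n+1}|/|F_n|\to 1$ \emph{together with} your milestone conditions (i)--(iii) is genuinely delicate---na\"\i vely inserting elements of $G$ one at a time between two far-apart F\o lner sets destroys invariance of the intermediate sets---so that step deserves more than a one-line dismissal. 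If the goal is only Proposition~\ref{5n2}, the disjoint-F\o lner construction is dramatically shorter.
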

\begin{proof} Let $\F=(F_n)_{n\in\N}$ consist of disjoint sets, and let $A'=\bigcup_{n\in\N}F_{2n-1}$. The set $A'$ is $\F$-deterministic, because the indicator function $\mathbbm1_{A'}\in\{0,1\}^G$ is $\F$-quasi-generic for only two measures, the pointmass concentrated at the fixpoint $\{0\}^G$ and the pointmass concentrated at the fixpoint $\{1\}^G$. Let $A''$ be the subset of $\bigcup_{n\in\N}F_{2n}$ such that,
$$
\lim_{n\to\infty}|A''\cap F_{2n}|=\infty \text{ \ \ and \ \ }\lim_{n\to\infty}\frac{|A''\cap F_{2n}|}{|F_{2n}|}=0.
$$
Define $A=A'\cup A''$. Since $A$ is an $\F$-modification of $A'$, it is $\F$-deterministic as well. Clearly, $\underline d_\F(A)=0$ and $\overline d_\F(A)=1$. Let $y\in\Lambda^G$ be $\F$-normal. Now, $y$ can be altered along $A''$ in such a way that the resulting element $y'$ is not simply $(F_{2n})$-normal along $A$. Then $y'$ is not simply $\F$-normal along~$A$. On the other hand, $y'$ is an $\F$-modification of $y$, and thus it remains $\F$-normal. This shows that $A$ does not preserve simple $\F$-normality.
\end{proof}

\begin{prop}\label{234n1}
There exists a F\o lner \sq\ $\F$ in $\N$ and a set $A\subset\N$ which preserves both orbit- and block-$\F$-normality and has lower $\F$-density zero.
\end{prop}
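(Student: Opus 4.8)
The plan is to choose the F\o lner sequence $\F$ so that, for \emph{every} $n$, the trace $F_n\cap A$ is itself one of the members of $\F$ and lies entirely inside $A$; then checking orbit- and block-$\F$-normality ``along $A$'' reduces to checking ordinary $\F$-normality of $y$ along a subsequence of $\F$. A second family of much larger members of $\F$, each containing exactly one of the small members, will be used to push the lower $\F$-density of $A$ down to $0$.

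Concretely, I would fix integer sequences $(\ell_k)_{k\in\N}$, $(L_k)_{k\in\N}$ with $\ell_k<L_k<\ell_{k+1}$, $\ell_k\to\infty$ and $\ell_k/L_k\to 0$ (for instance $\ell_k=2^{3^k}$ and $L_k=2^{3^k+3^{k-1}}$), put $s_1=1$, $s_{k+1}=s_k+2L_k$, and set
$$
H_k=\{s_k,s_k+1,\dots,s_k+\ell_k-1\},\qquad G_k=\{s_k,s_k+1,\dots,s_k+L_k-1\},
$$
so that $H_k\subset G_k$, the sets $G_k$ are pairwise disjoint and separated by gaps of length $L_k\to\infty$, and in particular $H_j\cap G_k=\emptyset$ whenever $j\ne k$. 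Then I define $\F=(F_n)_{n\in\N}$ by $F_{2k-1}=H_k$, $F_{2k}=G_k$, and put $A=\bigcup_{k\in\N}H_k$. The routine facts are: each $F_n$ is an interval with $|F_n|\to\infty$ (indeed with strictly increasing cardinalities, so $\F$-normal points exist by the remark following Definition~\ref{normal}), hence $\F$ is a F\o lner sequence; since $H_j\cap G_k=\emptyset$ for $j\ne k$ one has $F_n\cap A=H_{\lceil n/2\rceil}$ for every $n$, so $|F_n\cap A|=\ell_{\lceil n/2\rceil}\to\infty$, i.e.\ \eqref{infi} holds; and $\tfrac{|F_{2k}\cap A|}{|F_{2k}|}=\ell_k/L_k\to 0$, so $\underline d_\F(A)=0$ (while $\overline d_\F(A)=1$).

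The heart of the proof is that $F_n\cap A$ is always one of the sets $H_k=F_{2k-1}$ belonging to $\F$. Hence, if $y\in\Lambda^\N$ is $\F$-normal, then from $\tfrac1{|F_n|}\sum_{g\in F_n}\delta_{g(y)}\to\lambda$ we get in particular $\tfrac1{|H_k|}\sum_{g\in H_k}\delta_{g(y)}\to\lambda$, and since $F_n\cap A=H_{\lceil n/2\rceil}$ the averages in Definition~\ref{GnA} coincide with $\tfrac1{|H_{\lceil n/2\rceil}|}\sum_{g\in H_{\lceil n/2\rceil}}\delta_{g(y)}$ and hence converge to $\lambda$; so $y$ is orbit-$\F$-normal along $A$. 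For Definition~\ref{GbnA}, I would first show that for a fixed finite $K\subset\N$ and all $k$ with $L_k>\max K$, no $g\in G_k\setminus H_k$ lies in the $K$-core $A_K$ (Definition~\ref{core}) and $H_k\cap A_K=\{s_k,\dots,s_k+\ell_k-1-\max K\}$; thus $F_{2k-1}\cap A_K=F_{2k}\cap A_K$ is an interval differing from $H_k$ in at most $\max K$ points. Consequently $\tfrac{|F_n\cap A_K|}{|F_n\cap A|}\to 1$, so every finite $K$ is $\F$-visible in $A$, and for both $n=2k-1$ and $n=2k$ the block-normality ratio along $A$ equals $\bigl(|\{g\in H_k:y|_{Kg}\approx B\}|\pm\max K\bigr)/(\ell_k-\max K)$, which tends to $|\Lambda|^{-|K|}$ as $k\to\infty$ because $y$ is $\F$-normal (so the count over $H_k=F_{2k-1}$ is $(|\Lambda|^{-|K|}+o(1))\ell_k$) and $\ell_k\to\infty$. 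Hence $y$ is block-$\F$-normal along $A$, and $A$ preserves both notions.

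I expect the only step requiring genuine care to be the claim that, for large $k$, anchors $g\in A_K$ cannot straddle the gaps of $A$, so that $G_k\cap A_K=H_k\cap A_K$ equals the stated interval. This is where the growing gaps $L_k\to\infty$ are used; it is elementary but slightly delicate because $K$ need not be an interval, so one argues by bounding $\min(K+g)$ and $\max(K+g)$ and observing that both must land in $H_k$. Everything else — the F\o lner property, condition \eqref{infi}, $\underline d_\F(A)=0$, and the reduction of the ``along $A$'' conditions to $\F$-normality of $y$ along the odd-indexed subsequence of $\F$ — is immediate from the construction.
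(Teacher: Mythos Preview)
Your proof is correct and follows essentially the same mechanism as the paper's: you arrange that $F_{2k}\cap A=F_{2k-1}\cap A$ and that $A$ has $\F_{\mathsf{odd}}$-density $1$, so that orbit- and block-$\F$-normality along $A$ reduce to $\F_{\mathsf{odd}}$-normality of $y$. The paper realizes this with the nested F\o lner sets $F_n=[1,n!]$ and $A=\bigcup_k[(2k)!+1,(2k+1)!]$, whereas you use pairwise disjoint intervals $H_k\subset G_k$; but the key identity $F_{2k}\cap A=F_{2k-1}\cap A$ and the density dichotomy (density $1$ along odds, density $0$ along evens) are identical, so this is the same argument in slightly different clothing. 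Your treatment of the block case, working out $F_n\cap A_K=(H_k)_K$ explicitly, is a bit more detailed than the paper's one-line appeal to ``$\F_{\mathsf{odd}}$-density~$1$''.
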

\begin{proof}
Let $A$ be the union over $k\in\N$ of the intervals \ $[(2k)!+1,(2k+1)!]$. The \sq\ $x=\mathbbm 1_A$ consists of alternating blocks of zeros and ones, each dominating (in terms of length) everything on its left. For each $n\in\N$, let $F_n=[1,n!]$. Let $\F,\ \F_{\mathsf{odd}}$ and $\F_{\mathsf{even}}$ be defined as $(F_n)_{n\in\N},\ (F_{2n+1})_{n\in\N}$ and $(F_{2n})_{n\in\N}$, respectively. Clearly, $A$ has lower $\F$-density zero (realized along $\F_{\mathsf{even}}$) and upper $\F$-density one (along $\F_{\mathsf{odd}}$). Now, let $y\in\Lambda^G$ be $\F$-normal (since $\F$ is a sub\sq\ of the classical F\o lner \sq, every classically normal \sq\ will do). 
Clearly, $y$ is $\F_{\mathsf{odd}}$-normal (as well as $\F_{\mathsf{even}}$-normal). 
Since $A$ has $\F_{\mathsf{odd}}$-density $1$, it preserves both orbit- and block-$\F_{\mathsf{odd}}$-normality. On the other hand, to see that $A$ is $\F_{\mathsf{even}}$-normality preserving, observe that, for any $n\in\N$, $F_{2n}\cap A=F_{2n-1}\cap A$, so that $\F_{\mathsf{odd}}$-normality of some $y\in\Lambda^G$ along $A$ implies $\F_{\mathsf{even}}$-normality of $y$ along $A$. Hence $A$ preserves both orbit- and block-$\F$-normality, even though its lower $\F$-density equals zero.
\end{proof}

\begin{rem} In view of Theorem \ref{1}, the above example is possible due to the fast growth of the cardinalities $|F_n|$.
\end{rem}

\section{Second main result: determinism = subexponential complexity}

In his book \cite{W2}, Weiss sketched a proof of a combinatorial characterization of completely deterministic sets in $\N$, which relates determinism to low subword complexity. In this section we establish a generalization of Weiss' criterion for countable cancellative amenable semigroups.
\smallskip

\subsection{The notion of complexity and its growth rate}
Let $G$ be a countable cancellative amenable semigroup and let $x\in\Lambda^G$ be a symbolic element. (As always, $\Lambda$ stands for a finite alphabet.)
If $\Lambda=\{0,1\}$, we can naturally identify $x$ with the set $\{g\in G:x(g)=1\}$. 

\begin{defn}\label{61}
Given a finite set $K\subset G$ and an arbitrary set $A\subset G$, we let $\mathbf C_x(K|A)$ denote the cardinality of the the collection of blocks over $K$ appearing in $x$ \emph{anchored at $A$}, that is 
$$
\mathbf C_x(K|A)=|\{B\in\Lambda^K: (\exists g\in A)\ x|_{Kg}\approx B\}|.
$$ 
If $A=G$, we will simply write $\mathbf C_x(K)$.
\end{defn}

\begin{defn}\label{coml}
We define the \emph{complexity} of $x\in\Lambda^G$ as the function $K\mapsto\ \mathbf C_x(K)$ on finite subsets of $G$. Given a F\o lner \sq\ $\F$ in $G$, we also define the $(\F,\eps)$-complexity of $x$ as the function
$$
K\mapsto\ \mathbf C_{x,\F,\eps}(K)= \inf_A \mathbf C_x(K|A),
$$
where $A$ ranges over all sets $A$ with $\underline d_\F(A)\ge1-\eps$.
\end{defn}

\begin{lem}\label{cinv}
Both $\mathbf C_x(K)$ and $\mathbf C_{x,\F,\eps}(K)$ are \inv, i.e.\ for each finite set $K\subset G$, every F\o lner \sq\ $\F$, every $\eps>0$ and each $g\in G$, we have 
\begin{gather*}
\mathbf C_{g(x)}(K)=\mathbf C_x(Kg) = \mathbf C_x(K), \text{ and}\\
\mathbf C_{g(x),\F,\eps}(Kg)=\mathbf C_{x,g\F,\eps}(K)=\mathbf C_{x,\F,\eps}(K).
\end{gather*}
\end{lem}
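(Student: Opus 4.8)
The statement asserts four shift-invariance identities, which split naturally into two groups: the two equalities about $\mathbf C_x(\cdot)$ (plain complexity) and the two about $\mathbf C_{x,\F,\eps}(\cdot)$ ($(\F,\eps)$-complexity). The plan is to verify each by unwinding Definitions~\ref{61} and~\ref{coml} together with the defining relation~\eqref{shift} of the shift action, $g(x)(h)=x(hg)$, paying attention to how ``occurs anchored at'' transforms under shifting. The key observation, which I would isolate first as an elementary remark, is that for a finite block $B\in\Lambda^K$, an element $h\in G$, and a symbolic point $z\in\Lambda^G$, the condition $z|_{Kh}\approx B$ (i.e.\ $z(uh)=B(u)$ for all $u\in K$) is, by~\eqref{shift}, exactly the condition $(g(x))|_{Kh}\approx B \iff x|_{Khg}\approx B$ when $z=g(x)$. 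This is the cancellation-group-like bookkeeping that makes everything go through for cancellative semigroups (cancellativity guarantees $h\mapsto hg$ is injective, so anchor sets map bijectively).

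\textbf{First group: $\mathbf C_{g(x)}(K)=\mathbf C_x(Kg)=\mathbf C_x(K)$.} For the leftmost equality: a block $B\in\Lambda^K$ occurs in $g(x)$ anchored at some $h\in G$ iff $x|_{Khg}\approx B$, i.e.\ iff $B$ (viewed as a block on $K$) is the block one reads in $x$ at the anchor $hg$. As $h$ ranges over $G$ and $g$ is fixed, $hg$ also ranges over a subset of $G$ (all of $G$ if $G$ is a group, and in the semigroup case one should instead observe that the collection of blocks on $K$ occurring \emph{somewhere} in $g(x)$ equals the collection of blocks on $K$ occurring somewhere in $x$, because the orbit closures of $x$ and $g(x)$ have the same set of blocks on any fixed finite domain --- more carefully, $Kh'$ for $h'\in G$ and $Khg$ for $h\in G$ cover the ``same'' translated copies of $K$ up to the shift-equivalence $\approx$, and one uses that $\mathbf C_x(K)$ only counts blocks up to $\approx$). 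Then $\mathbf C_{g(x)}(K)=\mathbf C_x(K)$. For $\mathbf C_x(Kg)=\mathbf C_x(K)$: a block on the domain $Kg$ that occurs in $x$ is, modulo shift ($\approx$), the same data as a block on the domain $K$ that occurs in $x$ --- the map sending a block $B\in\Lambda^{Kg}$ to the block $B'\in\Lambda^K$ with $B'(u)=B(ug)$ is a bijection respecting ``occurs in $x$'', by the very definition of $\approx$ and of ``occurs''. Hence the two cardinalities coincide. I would write this out carefully but briefly.

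\textbf{Second group: $\mathbf C_{g(x),\F,\eps}(Kg)=\mathbf C_{x,g\F,\eps}(K)=\mathbf C_{x,\F,\eps}(K)$.} Here one must also track what happens to the sets $A$ over which the infimum is taken and to the lower density condition $\underline d_\F(A)\ge 1-\eps$. Fix $g\in G$. The core computation is: $\mathbf C_{g(x)}(Kg\mid A)=|\{B\in\Lambda^{Kg}:\exists h\in A,\ (g(x))|_{Kg\cdot h}\approx B\}|$; by~\eqref{shift}, $(g(x))|_{(Kg)h}\approx B$ iff $x|_{(Kg)hg}\approx B'$ for the shifted block $B'$, etc. The cleaner route is: (i) show $\mathbf C_{g(x)}(Kg\mid A)=\mathbf C_x(K\mid Ag)$ using the block-shift bijection from the first part together with $g(x)$'s definition, where $Ag=\{hg:h\in A\}$; (ii) observe that $\underline d_\F(A)\ge 1-\eps \iff \underline d_{g\F}(Ag)\ge 1-\eps$ (since $|F_n g\cap Ag|=|F_n\cap A|$ and $|F_n g|=|F_n|$ by cancellativity --- here $g\F$ denotes $(F_n g)_{n\in\N}$, which is again a F\o lner sequence), so taking the infimum over all qualifying $A$ transforms exactly into the infimum over all qualifying $A'=Ag$; this yields $\mathbf C_{g(x),\F,\eps}(Kg)=\mathbf C_{x,g\F,\eps}(K)$. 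Finally $\mathbf C_{x,g\F,\eps}(K)=\mathbf C_{x,\F,\eps}(K)$ because the lower $\F$-density and lower $g\F$-density of any set agree (again $|F_ng\cap A'|$ versus $|F_n\cap A'|$: one needs here that the sequences $\F$ and $g\F$ are equivalent in the sense of Definition~\ref{equi}, which follows from $\F$ being F\o lner, or more directly that right-translation preserves relative densities --- this is where I would invoke that $\F$ and $g\F$ give the same notions of density, as noted after Definition~\ref{equi}), hence the defining infima over the same families of sets coincide.

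\textbf{Main obstacle.} The only genuinely delicate point is the semigroup (as opposed to group) bookkeeping: right-translation $h\mapsto hg$ need not be surjective, so one cannot simply reparametrize anchor sets by ``$h\mapsto hg$ is a bijection of $G$.'' I expect to handle this exactly as the paper does elsewhere (e.g.\ the reductions at the ends of the proofs of Theorems~\ref{generic} and~\ref{semi}): either embed $G$ into a group, or --- better here, since the statement is purely combinatorial --- argue directly that counting blocks \emph{up to $\approx$} (which is what both $\mathbf C_x$ and $\mathbf C_{x,\F,\eps}$ do) is insensitive to whether anchors run over $G$ or over $Gg$, because $\approx$ already quotients out the translation. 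Concretely, $\{B\in\Lambda^K:\exists h\in G,\ x|_{Kh}\approx B\}=\{B\in\Lambda^K:\exists h\in G,\ x|_{Khg}\approx B\}$ since $Khg=K(hg)$ and for any potential anchor $h'$ of the form needed on the right one checks the block read at $h'$ also appears (trivially) on the left; the reverse inclusion is the subtle one and uses that we only need \emph{existence} of an anchor, and the relevant translated copies $Kh$ are cofinal. For $\mathbf C_{x,\F,\eps}$ the density side is clean because cancellativity gives $|F_n g|=|F_n|$ and $|F_n g\cap Ag|=|F_n\cap A|$ on the nose. I would therefore structure the final write-up as: (1) the block-shift bijection lemma; (2) the first group of equalities; (3) the density-invariance observation for right translates; (4) the second group of equalities; with the semigroup subtlety absorbed into step~(1) via a one-line ``count modulo $\approx$'' argument or an explicit appeal to the group-embedding reduction used earlier in the paper.
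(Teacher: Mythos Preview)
Your outline is far more detailed than the paper's own proof, which is a single sentence: it declares that only the last equality needs comment and then observes that $\F$ and $g\F=(gF_n)_{n\in\N}$ are equivalent F\o lner sequences, hence yield identical lower densities, so the infima defining $\mathbf C_{x,g\F,\eps}(K)$ and $\mathbf C_{x,\F,\eps}(K)$ range over the same family of sets $A$. For the first three equalities you are supplying arguments the paper simply leaves implicit; your block-shift bookkeeping is sound in spirit, and you are right to flag the semigroup subtlety.

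There is, however, a real gap in your handling of the last equality. You take $g\F$ to mean $(F_ng)_{n\in\N}$, the \emph{right} translate, and then assert that $\F$ and $(F_ng)$ are equivalent, ``which follows from $\F$ being F\o lner.'' But the F\o lner condition in this paper (Definition~\ref{Folner}) is a \emph{left} invariance condition, $|KF_n\triangle F_n|/|F_n|\to 0$; it gives $|gF_n\triangle F_n|/|F_n|\to 0$, i.e.\ equivalence with the left translate, and says nothing about $|F_ng\triangle F_n|/|F_n|$. In non-abelian amenable groups a left F\o lner sequence need not enjoy any right invariance, so your deduction that $\underline d_{(F_ng)}=\underline d_\F$ is unsupported. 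The paper's convention $g\F=(gF_n)$ is precisely what makes the equivalence immediate from the left F\o lner property. A related slip: if you carefully chase indices in your step~(i) using $g(x)(h)=x(hg)$, the anchor of the corresponding $K$-block in $x$ comes out as $ghg$, not $hg$, so the translated anchor set is $gAg$ rather than $Ag$, and the middle equality does not pair with $(F_ng)$ the way you claim either.
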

\begin{proof} Only the last equality requires explanation. It follows from the observation that the F\o lner \sq s $\F$ and $g\F = (gF_n)_{n\in\N}$ are equivalent, hence yield the same upper and lower densities of sets, so the respective infima over $A$ in Definition \ref{coml} (applied to $\F$ and $g\F$) have the same range.
\end{proof}

\begin{defn}\label{rof}
The \emph{rate of growth} of the complexity (respectively $(\F,\eps)$-complexity) of $x\in\Lambda^G$ is defined as 

\begin{align}
\mathsf{Rate}(x)&=\lim_{m\to\infty} \frac{\log \mathbf C_x(K_m)}{|K_m|}\,,\label{ratex}\\
\mathsf{Rate}_{\F,\eps}(x)&=\sup_{\mathfrak K}\limsup_{m\to\infty} \frac{\log \mathbf C_{x,\F,\eps}(K_m)}{|K_m|},\label{ratexfe}
\end{align}
where, in \eqref{ratex}, $(K_m)_{m\in\N}$ is some fixed centered F\o lner \sq\ in $G$ and, in \eqref{ratexfe}, the supremum is taken over all centered F\o lner \sq s $\mathfrak K=(K_m)_{m\in\N}$ in $G$.
\end{defn}

\begin{prop}\label{limex}
\begin{enumerate}
\item The limit in \eqref{ratex} exists, does not depend on the F\o lner \sq\ $(K_m)_{m\in\N}$, and can be replaced by the infimum over \emph{all} finite sets $K\subset G$. That is, 
$$
\mathsf{Rate}(x)= 
\inf\Bigl\{\tfrac{\log \mathbf C_x(K)}{|K|}:\ K\subset G,\ |K|<\infty\Bigr\}. 
$$

\item There exists a centered F\o lner \sq\ $\mathfrak K$ for which the supremum in \eqref{ratexfe} is attained.
\end{enumerate}
\end{prop}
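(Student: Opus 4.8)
The plan is to prove the two parts separately, relying in an essential way on the \emph{infimum rule} for entropy \eqref{dfr} (adapted to counting complexity rather than Shannon entropy) and on the approximation lemmas in Proposition \ref{fakty}.

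\medskip
\textbf{Part (1).} First I would establish that the set function $K\mapsto\log\mathbf C_x(K)$ is subadditive in the sense appropriate for amenable semigroups, namely $\log\mathbf C_x(K_1\cup K_2)\le\log\mathbf C_x(K_1)+\log\mathbf C_x(K_2)$ whenever $K_1,K_2$ are disjoint, and more generally $\log\mathbf C_x\bigl(\bigcup_i K_i\bigr)\le\sum_i\log\mathbf C_x(K_i)$ for a finite disjoint family. This is immediate: a block on $\bigcup_i K_i$ appearing in $x$ is determined by its restrictions to the $K_i$, so the number of such blocks is at most the product of the $\mathbf C_x(K_i)$. Also $\log\mathbf C_x(K)$ is shift-invariant (Lemma \ref{cinv}) and monotone: $K\subset K'$ implies $\mathbf C_x(K)\le\mathbf C_x(K')$. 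Given subadditivity plus invariance plus monotonicity for a set function on finite subsets of an amenable (semi)group, the Ornstein--Weiss lemma (the same abstract lemma that gives the infimum rule \eqref{dfr} and is quoted from \cite{DFR}; it applies verbatim to cancellative amenable semigroups after embedding $G$ in a group as in \cite[Theorem 2.12]{BDM}) yields that $\frac{\log\mathbf C_x(F_n)}{|F_n|}$ converges for every F\o lner sequence, the limit is independent of the sequence, and equals $\inf_K\frac{\log\mathbf C_x(K)}{|K|}$. In particular the limit in \eqref{ratex} exists for the fixed centered F\o lner sequence $(K_m)$ and equals the infimum over all finite $K$; I should note that the centeredness of $(K_m)$ plays no role here beyond guaranteeing it is a F\o lner sequence.

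\medskip
\textbf{Part (2).} This is the genuinely nontrivial half. I want a single centered F\o lner sequence $\mathfrak K^\star$ realizing the supremum in \eqref{ratexfe}. The first observation is that for fixed $\eps$, the quantity $\frac{\log\mathbf C_{x,\F,\eps}(K)}{|K|}$ is \emph{not} subadditive (the infimum over large-density sets $A$ destroys it), which is exactly why we only get a $\sup\limsup$ and not a clean limit; so a different mechanism is needed. The strategy is a diagonal/exhaustion argument over a countable cofinal family of finite sets. Enumerate a countable base of finite subsets of $G$, say $G=\{g_0=e,g_1,g_2,\dots\}$ and $K^{(j)}=\{g_0,\dots,g_j\}$, which is itself a centered (nested, exhaustive) F\o lner sequence; call $R$ the supremum in \eqref{ratexfe}. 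By definition of $R$ as a supremum over centered F\o lner sequences $\mathfrak K$, there is a sequence of centered F\o lner sequences $\mathfrak K^{(i)}=(K^{(i)}_m)_{m\in\N}$ with $\limsup_m\frac{\log\mathbf C_{x,\F,\eps}(K^{(i)}_m)}{|K^{(i)}_m|}\to R$. From each $\mathfrak K^{(i)}$ I extract a single set $L_i=K^{(i)}_{m_i}$ with $\frac{\log\mathbf C_{x,\F,\eps}(L_i)}{|L_i|}\ge R-\frac1i$ and, crucially, with invariance properties good enough that $L_i$ is $(K^{(i)},\frac1i)$-invariant where $K^{(i)}$ is some finite set exhausting $G$ as $i\to\infty$ (possible because $(K^{(i)}_m)_m$ is F\o lner, so for large $m$ its members are as invariant as desired). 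Translating each $L_i$ by an element of $G$ if necessary, using the shift-invariance of $\mathbf C_{x,\F,\eps}$ from Lemma \ref{cinv} (here is where I use $\mathbf C_{x,g\F,\eps}=\mathbf C_{x,\F,\eps}$ and the equivalence of $\F$ and $g\F$), I can arrange $e\in L_i$ for every $i$. The sequence $\mathfrak K^\star=(L_i)_{i\in\N}$ is then a centered F\o lner sequence with $\limsup_i\frac{\log\mathbf C_{x,\F,\eps}(L_i)}{|L_i|}\ge R$; and since $R$ was the supremum over all centered F\o lner sequences, this $\limsup$ is in fact $=R$, so the supremum in \eqref{ratexfe} is attained at $\mathfrak K^\star$.

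\medskip
\textbf{Where the difficulty lies.} Part (1) is routine once subadditivity is observed. The real care is needed in Part (2) in the step where single sets $L_i$ are pulled out of the sequences $\mathfrak K^{(i)}$ and then simultaneously (a) made centered by a shift without changing the complexity value, and (b) made to have ever-better invariance so that $(L_i)_i$ is itself F\o lner. Point (a) relies precisely on the invariance statement $\mathbf C_{x,\F,\eps}(Kg)=\mathbf C_{x,\F,\eps}(K)$ of Lemma \ref{cinv}, which in turn rests on the fact that $\F$ and any translate $g\F$ are equivalent and hence define the same upper/lower densities --- so the family of admissible sets $A$ (those with $\underline d_\F(A)\ge1-\eps$) is genuinely translation-insensitive; this is the one place where one must be slightly careful, since $G$ need not be a group and $g$ need not be invertible, but the translate $Kg$ and the sequence $gF_n$ still make sense and the density argument goes through. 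Point (b) is a standard ``pass to a far-enough member of a F\o lner sequence'' move. Combining these, no further obstacle arises.
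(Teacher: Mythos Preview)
Your argument is correct and matches the paper's approach. For Part~(1) the paper proves the slightly stronger Shearer inequality for $K\mapsto\log\mathbf C_x(K)$ (via \cite[Proposition~6.1]{DFR}) and then applies the infimum rule \cite[Proposition~3.3]{DFR}, rather than plain subadditivity plus Ornstein--Weiss as you do, but these are minor variants of the same subadditivity mechanism; for Part~(2) your diagonal construction is exactly the ``simple diagonal technique'' the paper alludes to without detail (and your translation step is in fact unnecessary, since each $\mathfrak K^{(i)}$ is already centered, so $e\in L_i$ automatically).
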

\begin{proof}
A family $\K$ of finite (not necessarily different) subsets $K\subset G$ is called a $k$-cover of a finite set $F\subset G$ whenever each element of $F$ belongs to at least $k$ elements $K$ of $\K$.

We will now show that for every set $A\subset G$, the function $f(K)=\log \mathbf C_x(K|A)$ satisfies Shearer's inequality, i.e.\ whenever $\K$ is a $k$-cover of $F$, we have
\begin{equation}\label{sh}
f(F) \leq \frac1k\sum_{K\in\K} f(K).
\end{equation}

To see that this is true, denote by $\mathfrak X$ the collection of blocks 
$\{B\in\Lambda^F: (\exists g\in A)\ x|_{Fg}\approx B\}$, and for each $K\in\K$, let $\mathfrak X_K$ denote the family of restrictions
$\{B|_{F\cap K}: B\in\mathfrak X\}.$
With this notation we are precisely in the setup of \cite[Proposition 6.1]{DFR}, which says that
$$
\mathbf C_x(F|A) = |\mathfrak X|\le \prod_{K\in\K} |\mathfrak X_K|^{\frac1k}.
$$
Now, for each $K\in\K$, the collection of blocks $\mathfrak X_K$ is contained in the family $\{B\in\Lambda^{F\cap K}: (\exists g\in A)\ x|_{(F\cap K)g}\approx B\}$ whose cardinality is obviously not larger than $\mathbf C_x(K|A)=|\{B\in\Lambda^K:(\exists g\in A)\ x|_{Kg}\approx B\}|$. We have shown that 
$$
\mathbf C_x(F|A) \le \prod_{K\in\K} \mathbf C_x(K|A)^{\frac1k}.
$$
By taking logarithms on both sides we obtain \eqref{sh}. Since complexity is invariant, the proof of (1) is completed by a direct application of the infimum rule \cite[Proposition 3.3]{DFR}.

The statement (2) is nearly obvious. The maximizing F\o lner \sq\ $\mathfrak K$ can be obtained using a simple diagonal technique.
\end{proof}

We precede the formulation and proof of our second main theorem (Theorem~\ref{chara}, which is another characterization of determinism) by the presentation of some important auxiliary material: the notion of tile-entropy (introduced in \cite{DZ} under the name of ``tiled entropy'') and the associated Counting Lemma.

\subsection{Tile-entropy} In this subsection we introduce the notion of tile-entropy which is especially useful for obtaining certain combinatorial estimates. It will be instrumental in subsection \ref{secmain}, where we establish a characterization of determinism via combinatorial complexity.

As in Section \ref{til}, we will temporarily assume that $G$ is a (countable amenable) group. Throughout the rest of this subsection we let $\TTT=\bigvee_{k\in\N}\T_k$ be a F\o lner, uniquely congruent, zero entropy system of tilings of $G$ (which exists by Theorem~\ref{dhz}). We now introduce more notation. 
For each $k\in\N$ let $\S_k$ be the family of shapes of $\T_k$ (see Definition \ref{tili1}).
For $S\in\CS_k$ and $s\in S$, define
$$
[S,s] =\{\boldsymbol\CT=(\CT_k)_{k\in\N}\in\TTT: s^{-1}\in C_S(\CT_k)\}.
$$
If $\boldsymbol\CT\in[S,s]$, then $Ss^{-1}$ is the tile of $\CT_k$ which contains the unit $e$, i.e.\ \emph{the central tile of $\CT_k$}. The set $[S,e]$ will be abbreviated as $[S]$. Observe that  $\boldsymbol\CT\in[S]$ if and only if $\CT_k(e)=``S"$, so the notation is consistent with that of one-symbol cylinders $[``S"]$ over the alphabet ${\rm V}_k=\{``S": S\in\CS_k\}\cup\{0\}$. The family $\D_{\CS_k}=\{[S,s]:S\in\CS_k,\ s\in S\}$ is a partition of $\TTT$ which classifies its elements $\boldsymbol\CT$ according to the shape and position of the central tile of $\CT_k$. Also note that $\boldsymbol\CT\in[S,s]$ if and only if $s^{-1}(\boldsymbol\CT)\in[S]$, i.e.\ $[S,s]=s([S])$. So, if $\nu$ is a shift-\im\ on $\TTT$, then $\nu([S,s])=\nu([S])$ for all $s\in S$.

\smallskip
Let $X\subset\Lambda^G$ be a subshift and let $\bar X=X\times\TTT$. The elements of $\bar X$ have the form $\bar x=(x,\boldsymbol\CT)=(x,\CT_1,\CT_2,\dots)$.

We will use the following notational convention. For $\bar x\in\bar X$ and $S\in\CS_k$, the expression $\bar x(g)=``S"$ means that if $\bar x=(x,\CT_1,\CT_2,\dots)$, then $\CT_k(g)=``S"$.
With this convention, $[S]$ and $[S,s]$ (where $S\in\CS_k,\ s\in S$), and hence also the partition $\D_{\CS_k}$, may be ``lifted'' to $\bar X$ in the following way:
$$
[S]=\{\bar x\in \bar X: \bar x(e)=``S"\}, \ \ [S,s]=s([S]) =\{\bar x\in \bar X: \bar x(s^{-1})=``S"\}.
$$

\begin{defn}
Let $\P$ be a finite measurable partition of $\bar X$ and let $\mu$ be  a probability measure on $\bar X$. The \emph{$k$th tile-entropy of $\P$} with respect to $\mu$, $H_{\T_k}(\mu,\P)$, is defined by
$$
H_{\T_k}(\mu,\P)=\sum_{S\in\CS_k}\mu([S])H(\mu_{[S]},\P^S), \ \ 
$$
where $\mu_{[S]}$ is the normalized conditional measure $\mu$ on $[S]$.
\end{defn}

The following theorem is proved in \cite{DZ}. It is the monotonicity that makes tile-entropy especially useful.

\begin{thm}\label{tte} (\cite[Theorem 4.27]{DZ})
Let $\mu\in\mgxb$. The \sq\ of tile-entropies $(H_{\T_k}(\mu,\P))_{k\in\N}$ is nonincreasing
and 
$$
\lim_{k\to\infty}H_{\T_k}(\mu,\P)= h(\mu,\P).
$$
\end{thm}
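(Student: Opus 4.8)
The plan is to prove the two assertions of the theorem — monotonicity of $(H_{\T_k}(\mu,\P))_{k}$ and the identification of its limit — separately. For \emph{monotonicity}, fix $k$. Since $\TTT$ is uniquely congruent, each shape $S'\in\S_{k+1}$ decomposes, in a manner independent of the particular $\boldsymbol\CT\in\TTT$, as $S'=\bigcup_{S\in\S_k}\bigcup_{c\in C_S(S')}Sc$ (disjointly), so that $\P^{S'}=\bigvee_{S\in\S_k}\bigvee_{c\in C_S(S')}c^{-1}(\P^S)$. I would first prove the identity
$$
\mu_{[S]}=\sum_{S'\in\S_{k+1}}\ \sum_{c\in C_S(S')}\frac{\mu([S'])}{\mu([S])}\,c(\mu_{[S']}),
$$
which is a \emph{genuine convex combination}: the point is the partition $[S]=\bigcup_{S'\in\S_{k+1}}\bigcup_{c\in C_S(S')}[S',c]$ (an element of $[S]$ has its central layer-$k$ tile $S$ inside a unique central layer-$(k+1)$ tile, and matching centres forces that tile to be $S'c^{-1}$ with $c\in C_S(S')$, i.e.\ the element lies in $[S',c]$), together with the shift-invariance of $\mu$, which turns $\mu(E\cap[S',c])=\mu(E\cap c([S']))$ into $\mu(c^{-1}(E)\cap[S'])=\mu([S'])\,(c(\mu_{[S']}))(E)$ for $E\subset[S]$; taking $E$ to be the whole space shows the weights sum to one. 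Granting this, concavity of $\rho\mapsto H(\rho,\P^S)$ gives $\mu([S])H(\mu_{[S]},\P^S)\ge\sum_{S'}\sum_{c\in C_S(S')}\mu([S'])H(\mu_{[S']},c^{-1}(\P^S))$, and then summing over $S\in\S_k$, regrouping by $S'$, and invoking subadditivity of Shannon entropy across $\P^{S'}=\bigvee_{S,c}c^{-1}(\P^S)$ yields $H_{\T_k}(\mu,\P)\ge H_{\T_{k+1}}(\mu,\P)$.

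Monotonicity makes $\lim_k H_{\T_k}(\mu,\P)$ exist, and it suffices to prove both $h(\mu,\P)\le H_{\T_k}(\mu,\P)$ for every $k$ and $\liminf_k H_{\T_k}(\mu,\P)\le h(\mu,\P)$. For the \emph{lower bound} $h(\mu,\P)\le H_{\T_k}(\mu,\P)$: the partition $\D_{\S_k}$ is lifted from the factor $\TTT$, and $\htop(\TTT,G)=0$ forces $h(\nu,\D_{\S_k})=0$ for the $\TTT$-marginal $\nu$ of $\mu$; hence $h(\mu,\D_{\S_k})=0$, so $h(\mu,\P)=h(\mu,\P\vee\D_{\S_k})$, and approximating the tiling $\sigma$-algebra $\B_\TTT$ by finite partitions gives $h(\mu,\P)\le h(\mu,\P\mid\B_\TTT)+h(\nu)=h(\mu,\P\mid\B_\TTT)$. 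I would then estimate $h(\mu,\P\mid\B_\TTT)$ along F\o lner sets $F$ that are saturated by the tiles of $\T_k$: disintegrating $\mu$ over $\B_\TTT$, writing $\P^F=\bigvee_{Sc\subset F}c^{-1}(\P^S)$ fibrewise, using subadditivity, transferring each summand to the central fibre over $[S]$ by invariance of $\nu$, and averaging by concavity exactly as in the monotonicity step, one obtains $\frac1{|F|}H(\mu,\P^F\mid\B_\TTT)\le H_{\T_k}(\mu,\P)+o(1)$ (the bookkeeping term $\frac1{|F|}H(\mu,\D_{\S_k}^{F})$ is $o(1)$ precisely because $h(\nu)=0$); letting $F$ grow gives $h(\mu,\P\mid\B_\TTT)\le H_{\T_k}(\mu,\P)$.

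The remaining inequality $\liminf_k H_{\T_k}(\mu,\P)\le h(\mu,\P)$ is the hard part, and I expect it to be the main obstacle, because $\mu_{[S]}$ fails to be shift-invariant, so one cannot simply use $\frac1{|S|}H(\rho,\P^S)\to h(\rho,\P)$. The plan is to exploit that $\TTT$ is a \emph{F\o lner} system: the shapes $S\in\S_k$ become arbitrarily invariant as $k\to\infty$, and a maximal/mean-ergodic-theorem argument should show that, conditioned on $[S]$, the sample read over the central tile $S$ is — off a set of positions of vanishing relative size — statistically indistinguishable from $\mu$ at the scales relevant to entropy. Combined with the Shannon--McMillan--Breiman theorem for amenable group actions (the number of $\eps$-typical $\P^S$-names is $2^{(h(\mu,\P)+o(1))|S|}$), this forces $\frac1{|S|}H(\mu_{[S]},\P^S)\le h(\mu,\P)+\eps$ for all shapes of all sufficiently large $k$; since $\sum_{S\in\S_k}|S|\mu([S])=1$, the convex combination $H_{\T_k}(\mu,\P)=\sum_S|S|\mu([S])\cdot\frac1{|S|}H(\mu_{[S]},\P^S)$ is then $\le h(\mu,\P)+\eps$ for large $k$. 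Making this transfer of entropy control from the invariant measure $\mu$ to the non-invariant conditional measures $\mu_{[S]}$, uniformly in the shape, is the technical heart of the argument, and is exactly what is carried out in \cite{DZ}.
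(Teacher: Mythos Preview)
The paper does not supply its own proof of this theorem: it is quoted verbatim from \cite[Theorem~4.27]{DZ} and used as a black box (the only thing proved in this subsection is the corollary that follows). So there is no in-paper argument to compare against; what you have written is essentially an outline of the proof one finds in \cite{DZ}.

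That said, your outline is sound. The monotonicity step is correct and complete as written: the decomposition $[S]=\bigcup_{S'\in\S_{k+1}}\bigcup_{c\in C_S(S')}c([S'])$ holds by unique congruence, invariance of $\mu$ turns it into the convex representation of $\mu_{[S]}$ you state, and then concavity of $\rho\mapsto H(\rho,\P^S)$ followed by subadditivity $\sum_{S,c}H(\mu_{[S']},c^{-1}(\P^S))\ge H(\mu_{[S']},\P^{S'})$ gives $H_{\T_k}(\mu,\P)\ge H_{\T_{k+1}}(\mu,\P)$. For the limit, your identification of the two inequalities and of which one is hard is accurate; the lower bound $h(\mu,\P)\le H_{\T_k}(\mu,\P)$ does follow from $h(\nu)=0$ and a saturated-F\o lner computation along the lines you indicate, and the upper bound genuinely requires the Shannon--McMillan type counting over the (increasingly invariant) shapes that you defer to \cite{DZ}. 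Your sketch matches the structure of the argument in that reference.
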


\begin{cor}\label{unif}
Let $\M\subset\mgxb$ be a compact subset of \im s on $\bar X$, all having entropy zero. The \sq\ of functions $\mu\mapsto H_{\T_k}(\mu,\Lambda)$ converges to zero uniformly on $\M$.
\end{cor}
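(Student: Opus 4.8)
Corollary \ref{unif} asks us to upgrade the pointwise convergence $H_{\T_k}(\mu,\Lambda)\to h(\mu,\Lambda)=0$ (valid for each $\mu\in\M$ by Theorem \ref{tte}, since every $\mu\in\M$ has entropy zero) to uniform convergence on the compact set $\M$. The standard tool for such an upgrade is Dini's theorem: a monotone sequence of continuous functions on a compact space converging pointwise to a continuous limit converges uniformly. So my plan has three ingredients to verify.

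First, monotonicity: Theorem \ref{tte} already gives that for each fixed $\mu$, the sequence $(H_{\T_k}(\mu,\Lambda))_{k\in\N}$ is nonincreasing, so the sequence of functions $\mu\mapsto H_{\T_k}(\mu,\Lambda)$ is pointwise nonincreasing on $\M$. Second, the pointwise limit is the constant function $0$, which is trivially continuous; this uses the hypothesis that all members of $\M$ have entropy zero. Third — and this is the point that needs a small argument — I must check that each function $\mu\mapsto H_{\T_k}(\mu,\Lambda)$ is continuous on $\M$ (in the weak* topology). Unwinding the definition,
$$
H_{\T_k}(\mu,\Lambda)=\sum_{S\in\CS_k}\mu([S])\,H(\mu_{[S]},\Lambda^S),
$$
a finite sum (the collection of shapes $\CS_k$ is finite). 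Each cylinder $[S]\subset\bar X$ is clopen, so $\mu\mapsto\mu([S])$ is weak* continuous. For the conditional part, note that $\mu([S])H(\mu_{[S]},\Lambda^S)=-\sum_{B\in\Lambda^S}\mu([S]\cap[B])\log\bigl(\mu([S]\cap[B])/\mu([S])\bigr)$, which on the region $\mu([S])>0$ is a finite combination of terms $\mu([S]\cap[B])\log\mu([S]\cap[B])$ and $\mu([S]\cap[B])\log\mu([S])$ — each of these is continuous in $\mu$ because $t\mapsto t\log t$ extends continuously to $[0,\infty)$ with value $0$ at $0$, and each cylinder involved is clopen. (One checks that when $\mu([S])=0$ the whole summand tends to $0$ as well, so no discontinuity arises at the boundary.) Hence $H_{\T_k}(\cdot,\Lambda)$ is continuous on all of $\mgxb$, in particular on $\M$.

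With these three facts in hand, Dini's theorem applies directly: $\M$ is compact, the $H_{\T_k}(\cdot,\Lambda)$ are continuous, the sequence decreases pointwise, and the pointwise limit $0$ is continuous; therefore the convergence is uniform on $\M$, which is the assertion of the corollary.

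The only mildly delicate point — the ``main obstacle'', such as it is — is the continuity of $\mu\mapsto H_{\T_k}(\mu,\Lambda)$ near measures with $\mu([S])=0$ for some shape $S$; there one must observe that the corresponding summand $\mu([S])H(\mu_{[S]},\Lambda^S)$ is bounded by $\mu([S])\log|\Lambda^S|$ and so vanishes continuously, so the normalization by $\mu([S])$ causes no trouble. Everything else is a routine invocation of Dini's theorem and of the already-cited Theorem \ref{tte}.
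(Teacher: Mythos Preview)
Your proof is correct and follows essentially the same approach as the paper: continuity of $\mu\mapsto H_{\T_k}(\mu,\Lambda)$ (because the relevant cylinders are clopen), monotonicity from Theorem~\ref{tte}, and then Dini's theorem. The paper's version is terser---it asserts continuity in one line without unpacking the conditional term or the $\mu([S])=0$ edge case---but the argument is the same.
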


\begin{proof}In the formulation of the corollary, $\Lambda$ stands for the lift from $X$ to $\bar X$ of the zero-coordinate partition $\Lambda$, which is clopen. Thus the functions $\mu\mapsto H_{\T_k}(\mu,\Lambda)$ are continuous on $\M$. Now the assertions follows via Dini's theorem from the monotonicity guaranteed by Theorem~\ref{tte}.
\end{proof}

Recall that $\hat B$ stands for the empirical measure associated to a block $B$. The following definition extends the $k$th tile-entropy of the zero-coordinate partition $\Lambda$ to certain empirical measures.

\begin{defn}\label{69}
Fix two indices $k,k'\in\N$, $k<k'$. Choose a shape $S'\in\CS_{k'}$ and let $B\in\Lambda^{S'}$. We define
\begin{equation}\label{ddd}
H_{\T_k}(B,\Lambda)=\sum_{S\in\CS_k}\hat B([S])H(\hat B_{[S]},\Lambda^S).
\end{equation}
\end{defn}
The terms appearing in the above definition require some clarification. Recall that $S'$ is partitioned by the tiles of $\T_k$: 
$$
S'=\bigcup_{S\in\CS_k}\ \bigcup_{c\in C_S(S')}Sc.
$$ 
We will refer to the tiles $Sc$ in this partition as \emph{$k$-subtiles} of $S'$. For each $S\in\CS_k$, $\hat B([S])$ equals $\frac{|C_S(S')|}{|S'|}$ (i.e.\ it is the frequency of the symbol $``S"$ in the symbolic representation of $\T_k$ within the shape $S'$; note that this value does not depend on the block $B$, only on the shape $S'$). Next, given $S\in\CS_k$ and a block $R\in\Lambda^S$, we have
$$
\hat B_{[S]}(R) = \frac{|\{c\in C_S(S'): B|_{Sc}=R\}|}{|C_S(S')|},
$$
i.e.\ this is the frequency of the occurrences of $R$ among all blocks occurring within the block $B$ over the $k$-subtiles which have the shape $S$. Note that $\hat B_{[S]}$ is a probability measure on the finite collection of blocks $\Lambda^S$, so the right hand side of \eqref{ddd} is nothing but a linear combination of Shannon entropies $H(\hat B_{[S]},\Lambda^S)$.

Observe that the $k$th tile-entropy of the zero-coordinate partition $\Lambda$, viewed as the function $\nu\mapsto H_{\T_k}(\nu,\Lambda)$, $\nu\in \M_G(\Lambda^G)\cup\B^*$, is continuous with respect to the metric $\dist$ (defined in subsection \ref{dbbm}).  

\begin{lem}[The counting lemma]\label{counting}
Fix some $k\in\N$. For $S'\in\bigcup_{k'>k}\S_{k'}$ and $c>0$ define
$$
\mathbf C[k,S',c] := |\{B\in\Lambda^{S'}: H_{\T_k}(B,\Lambda)<c\}|.
$$
Then, 
$$
\limsup_{|S'|\to\infty}\frac{\log(\mathbf C[k,S',c])}{|S'|}\le c.
$$
\end{lem}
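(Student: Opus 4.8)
The plan is to estimate the number of blocks $B\in\Lambda^{S'}$ with small $k$th tile-entropy by a standard counting argument that parallels the proof that a type class of a probability distribution $p$ has size roughly $2^{|S'|H(p)}$. Recall that $S'$ is partitioned into its $k$-subtiles, and the number of subtiles of shape $S$ is $|C_S(S')|=|S'|\,\hat B([S])$, a quantity depending only on $S'$ (not on $B$). A block $B\in\Lambda^{S'}$ is thus completely determined by the data consisting of, for each shape $S\in\CS_k$, the word of length $|C_S(S')|$ over the alphabet $\Lambda^S$ recording which block of $\Lambda^S$ sits on each $k$-subtile of shape $S$ (together with a fixed enumeration of the centers $C_S(S')$, which is part of the data attached to $S'$).

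First I would fix $k$ and a shape $S'$, and group the blocks $B$ according to their \emph{subtile type}: for each $S\in\CS_k$, the empirical distribution $\hat B_{[S]}\in\M(\Lambda^S)$ of blocks appearing over the subtiles of shape $S$. The number of possible subtile types is polynomial in $|S'|$ (at most $(|C_S(S')|+1)^{|\Lambda^S|}\le (|S'|+1)^{|\Lambda|^{|S_{\max}|}}$ per shape, and $|\CS_k|$ shapes), hence subexponential in $|S'|$; so it suffices to bound the number of $B$ with a \emph{fixed} subtile type $(\pi_S)_{S\in\CS_k}$. For a fixed type, the $B$'s are exactly the ways of arranging, for each $S$, $|C_S(S')|$ blocks over the subtiles of shape $S$ so that block $R\in\Lambda^S$ is used $|C_S(S')|\pi_S(R)$ times; the number of such arrangements is $\prod_{S\in\CS_k}\binom{|C_S(S')|}{(|C_S(S')|\pi_S(R))_{R\in\Lambda^S}}$, which by the standard multinomial/Stirling estimate is at most $2^{\sum_{S}|C_S(S')|\,H(\pi_S,\Lambda^S)}=2^{|S'|\sum_S \hat B([S])H(\hat B_{[S]},\Lambda^S)}=2^{|S'|H_{\T_k}(B,\Lambda)}$. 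If $H_{\T_k}(B,\Lambda)<c$ this is at most $2^{c|S'|}$. Multiplying by the subexponential count of types gives $\mathbf C[k,S',c]\le P(|S'|)\,2^{c|S'|}$ with $P$ polynomial, and taking $\tfrac1{|S'|}\log$ and $\limsup_{|S'|\to\infty}$ yields the claimed bound $c$.

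The one point requiring a little care — and the step I expect to be the main (minor) obstacle — is making the multinomial estimate genuinely uniform as $|S'|\to\infty$: the bound $\binom{N}{(Nq_j)_j}\le 2^{NH(q)}$ is exact and needs no correction, but one must be sure that the number of subtile types and the maximal shape size $|S_{k,\max}|$ stay bounded \emph{for fixed $k$} while $|S'|$ ranges over shapes in $\bigcup_{k'>k}\CS_{k'}$; this is exactly the content of ``$k$ is fixed'' (the shapes $\CS_k$ and alphabet are finite), so the polynomial factor $P$ depends on $k$ but not on $S'$, and it disappears in the limit. No deeper input is needed: the counting lemma is purely combinatorial, using only the uniquely-congruent structure that lets us realize a block over $S'$ as an assignment of $\Lambda^S$-blocks to $k$-subtiles, and the inequality $H_{\T_k}(B,\Lambda)<c$ to control the dominant multinomial term.
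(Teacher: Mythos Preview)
Your argument is correct and in fact more direct than the paper's. The paper proceeds by encoding each block $B\in\Lambda^{S'}$ as a double word $\mathbb B=(\mathbb B_1,\mathbb B_2)$ of length $m_{S'}=|C_k(S')|$ over the product alphabet $\S_k\times\bigcup_S\Lambda^S$, verifies the identity $H_{\T_k}(B,\Lambda)=H_1(\mathbb B|\mathbb B_1)\cdot\frac{m_{S'}}{|S'|}$, and then invokes an external counting lemma \cite[Lemma 2.8.7]{D} for conditional entropy of double words; because the normalizing ratio $\frac{m_{S'}}{|S'|}$ varies with $S'$, the paper must pass to subsequences along which this ratio converges to some $t>0$ and juggle an auxiliary $\eps$ before closing the estimate. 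Your method-of-types argument avoids all of this: the multinomial bound $\binom{N}{(Nq_j)_j}\le 2^{NH(q)}$ applied shape by shape gives the exponent $\sum_S|C_S(S')|H(\hat B_{[S]},\Lambda^S)=|S'|H_{\T_k}(B,\Lambda)$ directly, with the correct normalization built in, and the polynomial type-count (depending only on $k$) disappears in the limit. What the paper's route buys is reuse of a pre-packaged lemma; what yours buys is a self-contained proof with no subsequence or limiting-ratio argument.
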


\begin{proof}  We will use \cite[Lemma 2.8.7]{D}, case of $n=1$, which concerns double words $\mathbb B=(\mathbb B_1,\mathbb B_2)\in \Lambda^{\{1,2,\dots,m\}}$, where $\Lambda=\Lambda_1\times\Lambda_2$ is a product alphabet. Following the notation of \cite{D}, we let $H_1(\mathbb B)$ stand for the entropy of the zero-coordinate partition $\Lambda$ with respect to the empirical measure associated with $\mathbb B$. $H_1(\mathbb B_1)$ is defined analogously for $\mathbb B_1$ and $\Lambda_1$ and the conditional entropy $H_1(\mathbb B|\mathbb B_1)$ is simply the difference $H_1(\mathbb B)-H_1(\mathbb B_1)$. Also, for $c>0$, we define
$$
\mathbf C[1,m,c]=|\{\mathbb B\in \Lambda^{\{1,2,\dots,m\}}: H_1(\mathbb B|\mathbb B_1)\le c\}|.
$$
Then, \cite[Lemma 2.8.7]{D} states that for any $c>0$, we have
$$
\limsup_{m\to\infty}\frac{\log(\mathbf C[1,m,c])}m\le c.
$$

Our Lemma \ref{counting} follows almost directly from the cited result, subject to an appropriate translation of terminology. 

Recall that for $S'\in\CS_{k'}$, $C_S(S')$ denotes the set of centers of the $k$-subtiles of $S'$ of shape $S$. We also denote $C_k(S')=\bigcup_{S\in\S_k}C_S(S')$ (the set of centers of all the $k$-subtiles of $S'$). Now, we enumerate $C_k(S')$ as $\{c_1,c_2,\dots, c_{m_{S'}}\}$, where $m_{S'}=|C_k(S')|$. Next, we create a word $\mathbb B_1\in({\rm V}_k\setminus\{``0"\})^{\{1,2,\dots,m_{S'}\}}$, by the rule
$$
\mathbb B_1(i) = ``S" \iff c_i\in C_S(S'), \ \ \ i=1,2,\dots,m_{S'}.
$$
Note that $\mathbb B_1$ is simply the ordered list of shapes of the $k$-subtiles of $S'$. Now, given a block $B\in\Lambda^{S'}$, we create another word, $\mathbb B_2\in\{``R": R\in\bigcup_{S\in\CS_k}\Lambda^S\}^{\{1,2,\dots,m_{S'}\}}$ (the symbols $``R"$ are associated bijectively to all possible blocks $R$ over all shapes $S\in\CS_k$). The rule is now
$$
\mathbb B_2(i) = ``R" \text{ for some }  R\in\Lambda^S, S\in\CS_k \iff c_i\in C_S(S')\text{ and }B|_{Sc_i}\approx R.
$$
Note that $\mathbb B_2$ is the ordered list of blocks obtained by restricting $B$ to the $k$-subtiles of $S'$. Finally, we create a double word $\mathbb B=(\mathbb B_1,\mathbb B_2)$. Clearly, $\mathbb B$ depends on $B$ and the map $B\mapsto\mathbb B$ is injective. Now, by a straightforward comparison of the empirical measures, we see that
$$
\hat{\mathbb B}_1([``S"])=\frac{|C_S(S')|}{|C_k(S')|}= \hat B([S])\cdot \frac{|S'|}{m_{S'}}.
$$
The conditional measures do not need normalization: for any $S\in\CS_k$ and $R\in\Lambda^S$, we have 
$$
\hat{\mathbb B}\bigl([``S",``R"]\,\bigr|\,\bigl.[``S",*\,]\bigr)=\hat B_{[S]}(R). 
$$
So, we obtain:
\begin{equation}\label{rra}
H_{\T_k}(B,\Lambda) = H_1(\mathbb B|\mathbb B_1)\cdot \frac{m_{S'}}{|S'|}.
\end{equation}  
Since the multiplier $\frac{m_{S'}}{|S'|}$ depends on $S'$, we cannot use \cite[Lemma 2.8.7]{D} (see the discussion at the beginning of the proof) just yet.

Note that the fraction $\frac{m_{S'}}{|S'|}=\frac{|C_k(S')|}{|S'|}$ represents the frequency of the centers of the $k$-subtiles within $S'$ and hence it satisfies
$$
1\ge\frac{m_{S'}}{|S'|}\ge \frac1{\max\{|S|:S\in\S_k\}}=:t_0>0.
$$ 
Thus, any accumulation point of the set of ratios $\{\frac{m_{S'}}{|S'|}:S'\in\bigcup_{k'>k}\S_{k'}\}$ is a positive number.  Clearly, it suffices to prove the assertion of the lemma for $S'$ varying along a \sq\ $(S'_n)_{n\in\N}$, selected from $\bigcup_{k'>k}\S_{k'}$, for which the fractions $\frac{m_{S'_n}}{|S'_n|}$ converge. Choose such a \sq\ and let $t\ge t_0$ be the corresponding limit. Given $\eps>0$, for any sufficiently large $n$ and $S'=S'_n$ we have  
\begin{equation}\label{obie}
t-\eps\le\frac{m_{S'}}{|S'|}\le t+\eps.
\end{equation} 
Then, for any block $B\in\Lambda^{S'}$ and the corresponding double block $\mathbb B$, by \eqref{rra} and \eqref{obie}, we have
$$
H_{\T_k}(B,\Lambda) \ge H_1(\mathbb B|\mathbb B_1)\cdot(t-\eps).
$$
Thus, for any $c>0$, we have 
$$
\mathbf C[k,S',c]\le|\{\mathbb B\in\Lambda^{\{1,2,\dots,m_{S'}\}}:H_1(\mathbb B|\mathbb B_1)\le \tfrac c{t-\eps}\}|=\mathbf C[1,m_{S'},\tfrac c{t-\eps}].
$$
Now we are in a position to use \cite[Lemma 2.8.7]{D}, which implies that if $m_{S'}$ is sufficiently large (equivalently, if $S'=S'_n$ for a sufficiently large $n$), then 
$$
\frac{\log(\mathbf C[k,S',c])}{m_{S'}}\le \frac c{t-2\eps}\le \frac c{t-2\eps}\frac{(t+\eps)|S'|}{m_{S'}}
$$
(note that, by \eqref{obie}, we have $\frac{(t+\eps)|S'|}{m_{S'}}\ge1$). Multiplying both sides by $\frac{m_{S'}}{|S'|}$ we obtain
$$
\frac{\log(\mathbf C[k,S',c])}{|S'|}\le c\cdot\frac{t+\eps}{t-2\eps}.
$$
Since $\eps$ is arbitrarily small, the right hand side is arbitrarily close to $c$ and the proof is finished.

\end{proof}

\subsection{The second main result and its proof}\label{secmain}

We are ready to formulate and prove our complexity-based criteria for determinism.

\begin{thm}\label{chara}
An element $x\in\Lambda^G$ is strongly deterministic if and only if its rate of growth of complexity (see Definition \ref{rof}) equals zero:
$$
\mathsf{Rate}(x)=0. 
$$
An element $x\in\Lambda^G$ is $\F$-deterministic if and only if  for every $\eps>0$ the rate of growth of $(\F,\eps)$-complexity of $x$ (see Definition \ref{rof}) equals zero:
$$
\mathsf{Rate}_{\F,\eps}(x)=0. 
$$
\end{thm}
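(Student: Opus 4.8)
The plan is to handle the two equivalences separately, reducing throughout to the case where $G$ is a group exactly as in the closing argument of the proof of Theorem~\ref{generic} (pass to an enveloping group $\bar G\supseteq G$, extend all symbolic data arbitrarily off $G$, and note that $\bar G\setminus G$ has $\F$-density zero, so the densities and complexities involved are unaffected). For the first equivalence: by Theorem~\ref{banal}, $x$ is strongly deterministic iff the subshift $X_x:=\overline{\{g(x):g\in G\}}$ has $\htop(X_x,G)=0$, i.e.\ iff $h(\mu,\Lambda)=0$ for all $\mu\in\M_G(X_x)$. Since the $K$-blocks occurring in $X_x$ are exactly those occurring in $x$, one has $H(\mu,\Lambda^K)\le\log\mathbf C_x(K)$ for every finite $K$, so by~\eqref{dfr} and Proposition~\ref{limex}(1), $h(\mu,\Lambda)=\inf_K\tfrac1{|K|}H(\mu,\Lambda^K)\le\mathsf{Rate}(x)$; thus $\mathsf{Rate}(x)=0$ forces strong determinism. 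Conversely, $\mathsf{Rate}(x)=\gamma>0$ gives $\mathbf C_x(F_n)\ge2^{\gamma|F_n|}$ along any F\o lner \sq, and the standard construction (one point of $X_x$ per occurring cylinder $[B]$, $B\in\Lambda^{F_n}$, normalized, Ces\`aro-averaged, with a weak* limit taken) produces $\mu\in\M_G(X_x)$ with $h(\mu,\Lambda)\ge\gamma$ --- the lower-bound half of the variational principle for countable amenable-group subshifts, which we quote.

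\textbf{$\F$-determinism: the easy direction.}
Assume $\mathsf{Rate}_{\F,\eps}(x)=0$ for every $\eps>0$, and let $\mu$ be an \im\ for which $x$ is $\F$-quasi-generic, say $\F_\circ$-generic along a sub\sq\ $\F_\circ=(F_{n_j})$ of $\F$. Fix $\eps>0$ and a finite $K\subset G$, and choose $A$ with $\underline d_{\F}(A)\ge1-\eps$ realizing $\mathbf C_{x,\F,\eps}(K)$ (the infimum in Definition~\ref{coml} is attained, being over positive integers). With $\mathcal B=\{B\in\Lambda^K:\exists g\in A,\ x|_{Kg}\approx B\}$, every $g\in F_{n_j}\cap A$ witnesses a block of $\mathcal B$, so genericity of $x$ along $\F_\circ$ gives $\mu\bigl(\bigcup_{B\in\mathcal B}[B]\bigr)\ge1-\eps$; grouping $\Lambda^K$ into $\mathcal B$ and its complement, the standard conditional-entropy estimate yields $H(\mu,\Lambda^K)\le1+\log\mathbf C_{x,\F,\eps}(K)+\eps|K|\log|\Lambda|$. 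Dividing by $|K_m|$ along a centered F\o lner \sq\ $(K_m)$ and letting $m\to\infty$ gives $h(\mu,\Lambda)\le\mathsf{Rate}_{\F,\eps}(x)+\eps\log|\Lambda|=\eps\log|\Lambda|$; since $\eps$ was arbitrary, $h(\mu)=h(\mu,\Lambda)=0$, so $x$ is $\F$-deterministic.

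\textbf{$\F$-determinism: the hard direction.}
Now assume $x$ is $\F$-deterministic. Fix a F\o lner, uniquely congruent, zero-entropy system of tilings $\TTT=\bigvee_k\T_k$ (Theorem~\ref{dhz}), a point $\boldsymbol\CT^*\in\TTT$, form $\bar X=X_x\times\TTT$, and consider $\bar x=(x,\boldsymbol\CT^*)$. The set $\M\subset\mgxb$ of \im s for which $\bar x$ is $\F$-quasi-generic is compact; its projections to $X_x$ are $\F$-quasi-generic for $x$, hence of entropy zero, and its projections to $\TTT$ have entropy zero because $\htop(\TTT,G)=0$, so by subadditivity of entropy on a joining $h(\mu)=0$ for all $\mu\in\M$. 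Given $\eps>0$ and a target $c>0$, Corollary~\ref{unif} supplies $k_0$ with $\sup_{\mu\in\M}H_{\T_{k_0}}(\mu,\Lambda)<c\eps$. For an arbitrary centered F\o lner \sq\ $(K_m)$, let $R(g)$ be the $\T_{k_0}$-saturation of $K_mg$ (so $|R(g)\triangle K_mg|=o(|K_m|)$ uniformly in $g$, the shapes of $\T_{k_0}$ being boundedly large) and put
\[
A_m=\{g\in G:\ H_{\T_{k_0}}(x|_{R(g)},\Lambda)<c\}.
\]
One checks that $\underline d_{\F}(A_m)\ge1-\eps$: by Markov's inequality $\overline d_{\F}(G\setminus A_m)\le\tfrac1c\limsup_n\tfrac1{|F_n|}\sum_{g\in F_n}H_{\T_{k_0}}(x|_{R(g)},\Lambda)$, and an averaging argument bounds the inner average by $H_{\T_{k_0}}(\widehat{\bar x|_{\tilde F_n}},\Lambda)+o_n(1)$ for a mild $\T_{k_0}$-tiled enlargement $\tilde F_n\supseteq F_n$, whereupon $\dist$-continuity of $\nu\mapsto H_{\T_{k_0}}(\nu,\Lambda)$ on $\bar X$ together with $\dist(\widehat{\bar x|_{\tilde F_n}},\M)\to0$ pushes the $\limsup$ below $\sup_{\mu\in\M}H_{\T_{k_0}}(\mu,\Lambda)<c\eps$. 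One also checks that $\mathbf C_x(K_m\mid A_m)\le2^{(c+o(1))|K_m|}$: each $x|_{K_mg}$ is a restriction of $x|_{R(g)}$; the possible $R(g)$ are determined by the ${\rm V}_{k_0}$-pattern of $\CT^*_{k_0}$ over a set of size $|K_m|+o(|K_m|)$, and since $\CT^*_{k_0}$ is strongly deterministic (its orbit closure lies in $\T_{k_0}$, of topological entropy zero) the first part of the theorem gives at most $2^{o(|K_m|)}$ admissible $R(g)$; and for each fixed one, the Counting Lemma~\ref{counting} gives at most $2^{(c+o(1))|R(g)|}=2^{(c+o(1))|K_m|}$ blocks $B\in\Lambda^{R(g)}$ with $H_{\T_{k_0}}(B,\Lambda)<c$. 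Hence $\limsup_m\tfrac{\log\mathbf C_{x,\F,\eps}(K_m)}{|K_m|}\le\limsup_m\tfrac{\log\mathbf C_x(K_m\mid A_m)}{|K_m|}\le c$ for every centered F\o lner \sq, so $\mathsf{Rate}_{\F,\eps}(x)\le c$; letting $c\to0$ gives $\mathsf{Rate}_{\F,\eps}(x)=0$ for all $\eps>0$.

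\textbf{The main obstacle.}
I expect the crux to be the averaging estimate underlying $\underline d_{\F}(A_m)\ge1-\eps$: that the spatial average over anchors $g\in F_n$ of the local tile-entropies $H_{\T_{k_0}}(x|_{R(g)},\Lambda)$ is dominated, up to $o_n(1)$ boundary terms (for fixed $m$), by the single empirical tile-entropy $H_{\T_{k_0}}(\widehat{\bar x|_{\tilde F_n}},\Lambda)$. This should follow from concavity of Shannon entropy --- averaging the empirical block-distributions on shape-$S$ tiles over the overlapping windows $R(g)$ can only raise entropy --- together with a Fubini-type count of how many of the windows $R(g)$, $g\in F_n$, cover a prescribed $\T_{k_0}$-tile; controlling the boundary discrepancies is the part that needs genuine care. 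A secondary, routine point is that the Counting Lemma~\ref{counting} extends verbatim from shapes $S'\in\bigcup_{k'>k_0}\S_{k'}$ to arbitrary regions tiled by $\T_{k_0}$-tiles, the only property used in its proof being that the density of tile-centers stays bounded away from $0$.
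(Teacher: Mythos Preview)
Your proof is correct in outline; the easy direction and the first equivalence match the paper's argument essentially verbatim. In the hard direction (from $\F$-determinism to zero $(\F,\eps)$-rate) you take a genuinely different route.

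The paper introduces \emph{two} tiling levels $k<k'$: a $\CT_{k'}$-tile $T'$ is declared ``good'' when $H_{\T_k}(\bar x|_{T'},\Lambda)\le\gamma$, and a single application of concavity on the \emph{disjoint} partition of $F_n$ into $\CT_{k'}$-tiles shows that the good tiles cover a $(1-\gamma^2)$-fraction of $F_n$. The set $A_m$ is then defined purely geometrically, as $\{g:|U_{\mathsf{good}}\cap K_mg|\ge(1-2\gamma)|K_m|\}$, and its density bound follows from an elementary double-counting (the $\alpha_n$ computation). The block count uses the Counting Lemma exactly as stated, on shapes $S'\in\S_{k'}$.

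Your approach uses a single level $k_0$ and defines $A_m$ directly by the entropy condition $H_{\T_{k_0}}(x|_{R(g)},\Lambda)<c$ on the saturated window $R(g)$. This is more direct, but the price is precisely what you flagged as the main obstacle: the averaging inequality over the \emph{overlapping} windows $R(g)$. It does go through --- the key is to weight by $|R(g)|$ (equivalently, by tile-center counts) so that each interior $\CT_{k_0}$-tile of shape $S$ is covered by exactly $|K_m^{-1}S|$ windows, a number depending only on $S$; the discrepancies between weighted and unweighted averages, and between the values $|K_m^{-1}S|$ across shapes, are both $O(\delta_m)$ with $\delta_m\to0$ by the F\o lner property of $(K_m)$. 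The paper's two-level trick simply sidesteps this entire calculation by reducing to disjoint pieces, at the cost of one extra parameter.

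One further point: your ``routine'' extension of the Counting Lemma to arbitrary $\CT_{k_0}$-saturated regions also needs a uniformity check --- the threshold on $|R|$ beyond which the bound holds must not depend on the particular region $R$. This is fine because the tile-center density $m_R/|R|$ stays in the fixed interval $[1/\max_S|S|,\,1/\min_S|S|]$, and the one-dimensional estimate from \cite{D} underlying the lemma is locally uniform in the entropy parameter; but it deserves a sentence.
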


\begin{proof}
The case of strong determinism requires no proof. Indeed, the equivalence between strong determinism and zero rate of growth of complexity follows from the well-known fact that topological entropy of the subshift generated by $x$ (i.e.\ its orbit closure) equals the rate of growth of the complexity of $x$. 

We pass to proving that $\F$-determinism implies subexponential $(\F,\eps)$-complexity. We temporarily assume that $G$ is a group. Pick an $\F$-deterministic element $x\in\Lambda^G$ and an $\eps>0$. We let $\mathfrak K=(K_m)_{m\in\N}$ be a centered F\o lner \sq, on which the supremum in the definition of the rate of growth of the $(\F,\eps)$-complexity of $x$ is attained (see Proposition~\ref{limex}~(2)). Our goal is to show that for any $\delta>0$ and $m$ sufficiently large, 
$$
\frac{\log \mathbf C_{x,\F,\eps}(K_m)}{|K_m|}<\delta,
$$
which means that there exists a set $A_m$ such that $\underline d_\F(A_m)\ge1-\eps$ and
$$
\frac{\log \mathbf C_x(K_m|A_m)}{|K_m|}<\delta.
$$

We use a deterministic, F\o lner system of tilings $\TTT$ with \tl\ entropy zero to create the system $\bar X=\Lambda^G\times\TTT$. Let $\bar x=(x, \boldsymbol\CT)\in\bar X$, where $x$ is our $\F$-deterministic symbolic element, while $\boldsymbol\CT=(\CT_1,\CT_2,\dots)\in\TTT$ is arbitrary, but fixed throughout the rest of the proof. 
We will now successively select three parameters, $k,k'$ (indexes of the system of tilings) and $m$ (the index of $K_m$). 

Pick a positive number 
\begin{equation}\label{gam}
\gamma<\min\bigl\{\eps,\tfrac\delta{7\log|\Lambda|}\bigr\}.
\end{equation} 
We now invoke the notion of tile-entropy. By Corollary \ref{unif}, there exists $k\in\N$ such that $H_{\T_k}(\mu,\Lambda)<\gamma^3$ for all $\mu\in\M$. By continuity of the function $\mu\mapsto H_{\T_k}(\mu,\Lambda)$ on measures (including empirical measures), this inequality holds for all empirical measures associated to the blocks $\bar x|_{F_n}$ for large enough $n$. We note this fact as follows:
\begin{equation}\label{totu}
H_{\T_k}(\bar x|_{F_n},\Lambda)<\gamma^3.
\end{equation}
By the Counting Lemma \ref{counting}, there exists $k'>k$ so large that all shapes $S'\in\CS_{k'}$ satisfy 
\begin{equation}\label{cl}
\frac{\log(\mathbf C[k,S',\gamma])}{|S'|}\le 2\gamma.
\end{equation}
Because $\T_{k'}$ has \tl\ entropy zero and $\CT_{k'}\in\T_{k'}$, we can now choose $m$ large enough so that
\begin{equation}\label{dv}
|\{D\in{\rm V}_{k'}^{K_m}: (\exists g\in G)\ \CT_{k'}|_{K_mg}\approx D\}|<2^{\gamma|K_m|}.
\end{equation}
(in words: the number of blocks $D$ over the alphabet ${\rm V}_{k'}=\{``S\,": S\in\CS_{k'}\}\cup\{``0"\}$, and having the domain $K_m$, which appear in $\CT_{k'}$ is less than $2^{\gamma|K_m|}$). 
\smallskip

Observe that the \sq\ of $\CT_{k'}$-saturations $(F_n^{\CT_{k'}})_{n\in\N}$ is a F\o lner \sq\ equivalent to $\F$. To simplify the notation, from now on $F_n$ will stand for $F_n^{\CT_{k'}}$ and $\F$ will denote $(F_n^{\CT_{k'}})_{n\in\N}$ (note that \eqref{totu} still applies for $n$ large enough). 

We will call a tile $T'$ of $\CT_{k'}$ ``good'' if $H_{\T_k}(\bar x|_{T'},\Lambda)\le\gamma$. By \eqref{cl}, we know that for each shape $S'\in\CS_{k'}$,
\begin{equation}\label{gb}
|\{B\in\Lambda^{S'}: (\exists c\in G)\ S'c = T'\in\CT_{k'}, T' \text{ is ``good'' }, x|_{T'}\approx B\}|\le 2^{2\gamma|S'|}
\end{equation}
(in words: the number of different blocks appearing in $x$ over the ``good'' tiles with shape $S'$ does not exceed $2^{2\gamma|S'|}$).

Using the elementary fact that the $k$th tile-entropy is concave on empirical measures associated to disjoint $\CT_k$-saturated sets, we get 
$$
\sum_{T'\subset F_n}\frac{|T'|}{|F_n|}H_{\T_k}(\bar x|_{T'},\Lambda)\le H_{\T_k}(\bar x|_{F_n},\Lambda)<\gamma^3,
$$
which immediately implies that the union $U_{\mathsf{good}}$ of the ``good'' tiles occupies a fraction larger than $(1-\gamma^2)$ of $F_n$. In this manner, we have shown that 
\begin{equation}\label{ess}
\underline{d}_{(F_n)}(U_{\mathsf{good}})\ge 1-\gamma^2. 
\end{equation}

For a large F\o lner set $F_n$, let $F'_n$ denote its $K_m^{-1}$-core. Let
$$
\alpha_n = \sum_{g\in F_n} |U_{\mathsf{good}}\cap K_mg\cap F'_n|.
$$ 
Since each $g'\in F'_n$ belongs to $K_mg$ for exactly $|K_m|$ elements $g\in F_n$, we have
$$
\alpha_n = |K_m|\cdot |U_{\mathsf{good}}\cap F_n'|.
$$
By \eqref{ess} and since $\frac{|F_n'|}{|F_n|}\to 1$, we easily get that, for $n$ large enough,
$$
\alpha_n>|K_m||F_n|(1-2\gamma^2).
$$ 
Writing on the left hand side the formula defining $\alpha_n$, and dividing both sides by $|K_m||F_n|$, we obtain
$$
\frac1{|F_n|}\sum_{g\in F_n} \frac1{|K_m|}|U_{\mathsf{good}}\cap K_mg\cap F_n'|\ge 1-2\gamma^2,
$$
which implies that at least as many as $(1-\gamma)|F_n|$ elements $g\in F_n$ satisfy
$$
\frac1{|K_m|}|U_{\mathsf{good}}\cap K_mg|\ge \frac1{|K_m|}|U_{\mathsf{good}}\cap K_mg\cap F_n'|\ge 1-2\gamma.
$$
We have shown that the set 
$$
A_m=\{g\in G: \tfrac1{|K_m|}|U_{\mathsf{good}}\cap K_mg|\ge 1-2\gamma\}
$$
has lower $\F$-density at least $1-\gamma>1-\eps$ (as it is required for $A_m$). 
\smallskip

It remains to estimate the cardinality 
$$
\mathbf C_x(K_m|A_m)=|\{B\in\Lambda^{K_m}: (\exists_g\in A_m)\ x|_{K_mg}\approx B\}|. 
$$

Fix a block $D\in{\rm V}_{k'}^{K_m}$. For $g\in G$, the fact that $\CT_{k'}|_{K_mg}\approx D$,
determines (up to shifting, of course) the partition by the tiles of $\CT_{k'}$ of the set 
$$
(K_mg)'=\bigcup_{S'\in\CS_{k'}}\ \bigcup_{c\in K_mg\cap C_{S'}(\CT_{k'})}S'c
$$ 
(in words: $(K_mg)'$ is the union of all the tiles of $\CT_{k'}$ whose centers lie in $K_mg$). Note that by choosing $m$ large enough we can assume that, regardless of $g$, 
\begin{equation}\label{fv}
\frac{|K_mg\setminus(K_mg)'|}{|K_m|}<\gamma.
\end{equation}

Throughout this paragraph we restrict our attention to only such elements $g$ for which $\CT_{k'}|_{K_mg}\approx D$. We first estimate the number of blocks $x|_{U_{\mathsf{good}}\cap (K_mg)'}$. For every $S'\in\CS_{k'}$, the number of possible blocks $x|_{S'c}$, where $S'c$ is a ``good'' tile, has been estimated in \eqref{gb} by $2^{2\gamma|S'|}$. So, the number of blocks of the form $x|_{U_{\mathsf{good}}\cap (K_mg)'}$ does not exceed  
\begin{equation}\label{eee}
2^{2\gamma|U_{\mathsf{good}}\cap (K_mg)'|}\le 2^{2\gamma|K_m|(1+\gamma)}. 
\end{equation}
Cardinality of $K_mg\setminus (U_{\mathsf{good}}\cap (K_mg)')$ can be estimated using \eqref{fv} and \eqref{ess} (for large $m$), by $2\gamma|K_m|$. Thus, the number of possible blocks $x|_{K_mg\setminus (U_{\mathsf{good}}\cap(K_mg)')}$ does not exceed
$|\Lambda|^{2\gamma|K_m|}$. Combining this result with \eqref{eee}, we conclude that the number of blocks of the form $x|_{K_mg}$ is bounded by 
$$
2^{2\gamma|K_m|(1+\gamma)}\cdot|\Lambda|^{2\gamma|K_m|}\le |\Lambda|^{6\gamma|K_m|}.
$$

Recall, that this estimate was done after restricting $g$ to such elements that $\CT_k|_{K_mg}\approx D$. So, in order to obtain the final estimate of $\mathbf C_x(K_m|A_m)$, we must multiply the above number $|\Lambda|^{6\gamma|K_m|}$ by the number of possible blocks $D$, which, as shown in  \eqref{dv}, is less than $2^{\gamma|K_m|}$. For simplicity, we will multiply by $|\Lambda|^{\gamma|K_m|}$ and conclude
$$
\mathbf C_x(K_m|A_m)\le |\Lambda|^{7\gamma|K_m|}\le 2^{\delta|K_m|},
$$
by the choice of $\gamma$ \eqref{gam}. This ends the proof of the implication, as we have found a set $A_m$ with $\underline d_\F(A_m)\ge1-\eps$, such that $\frac{\log \mathbf C_x(K_m|A_m)}{|K_m|}\le\delta$.

\medskip
We pass to the proof of the converse implication. Let $x\in\Lambda^G$ be such that $\mathsf{Rate}_{\F,\eps}(x)=0$ for any $\eps>0$. That is, for any $\eps,\delta>0$ there exists $m_0\in\N$ such that for every $m\ge m_0$ there exists a set $A_m$ of lower $\F$-density at least $1-\eps$ such that 
\begin{equation}\label{eqq}
\frac{\log \mathbf C_x(K_m|A_m)}{|K_m|}<\delta.
\end{equation}
(Here again we let $\mathfrak K=(K_m)_{m\in\N}$ be a centered F\o lner \sq, on which the supremum in the definition of the $(\F,\eps)$-complexity is attained.) We need to show that any \im\ $\mu$, for which $x$ is $\F$-quasi-generic, has entropy zero. 

We start by fixing an $\eps>0$ and $\delta=\eps$, and choosing $m$ large enough, so that \eqref{eqq} holds for $K_m$ with a suitable set $A_m$. We will show that 
\begin{equation}\label{topok}
\frac1{|K_m|}H(\mu,\Lambda^{K_m})<O(\eps), 
\end{equation}
where $O(\eps)$ depends only on $\eps$ and tends to $0$ as $\eps\to0$. Then, by passing to the limit as $m\to\infty$, we will conclude that $h(X,\Sigma,\mu,G)=h(\mu,\Lambda)<O(\epsilon)$ for every $\eps>0$, i.e. that $h(X,\Sigma,\mu,G)=0$, and the proof will be finished.

Since from now on the index $m$ is fixed, we will skip it in the notation and write just $K$ and $A$. Denote by $\B$ the family of blocks $\{B\in\Lambda^K: (\exists g\in A)\ x|_{Kg}\approx B\}$. By \eqref{eqq}, $|\B|\le 2^{\eps|K|}$, while the fact that $\underline d_\F(A)\ge 1-\eps$ implies that $\mu(Q)<\eps$, where
$$
Q=\bigcup_{B\in\Lambda^K\setminus\B}[B].
$$
Denoting by $\Q$ the partition $\Lambda^G=Q\cup Q^c$, we can write
\begin{multline*}
H(\mu,\Lambda^K)\le \\
H(\mu,\Lambda^K|\Q)+H(\mu,\Q)=
\mu(Q^c)H(\mu_{Q^c},\Lambda^K)+\mu(Q)H(\mu_Q,\Lambda^K)+ H(\mu,\Q)\le\\ 1\cdot\eps|K|+\eps|K|\log|\Lambda|+H(\eps),
\end{multline*}
where $H(\eps)=-\eps\log\eps-(1-\eps)\log(1-\eps)$. Note that $H(\eps)\to0$ as $\eps\to0$.

Dividing by $|K|$, we obtain
$$
\frac1{|K|}H(\mu,\Lambda^K)<\eps+\eps\log(|\Lambda|) + H(\eps) = O(\eps).
$$
This ends the proof of the theorem for groups.
\smallskip

We will now explain how to extend the theorem to the case of a general countable, cancellative, amenable semigroup $G$. As in the proof of Theorem \ref{generic}, we embed $G$ in a group $\bar G$ so that our F\o lner \sq\ $\F$ in $G$ is also a F\o lner \sq\ in $\bar G$. We extend $x$ to $\bar x\in\Lambda^{\bar G}$ by assigning the values $\bar x(g)$ for $g\in\bar G\setminus G$ completely arbitrarily. Notice that by doing so we may increase the complexity of $\bar x$ (in comparison to that of $x$) but not the $(\F,\eps)$-complexity (the infimum in Definition \ref{coml} will not change when restricted to subsets of the subsemigroup $G\subset \bar G$). So, 
\begin{equation}\label{rir}
\mathsf{Rate}_{\F,\eps}(\bar x)=\mathsf{Rate}_{\F,\eps}(x).
\end{equation} 
On the other hand, any $G$-\im\ $\mu$ on $\Lambda^G$ extends uniquely to an \im\ $\bar\mu$ on $\Lambda^{\bar G}$. Since $G$ and $\bar G$ have a common F\o lner \sq\ (and since for the calculation of entropy the F\o lner \sq\ may be chosen arbitrarily), $h(\mu,G)=h(\bar\mu,\bar G)$. Moreover, a point $\bar x\in\Lambda^{\bar G}$ is $\F$-quasi-generic for $\bar\mu$ if and only $x$ is $\F$-quasi-generic for $\mu$. This implies that $x$ is $\F$-deterministic if and only if so is $\bar x$. In view of \eqref{rir},
the characterization of $\F$-determinism in terms of $(\F,\eps)$-complexity extends from groups to semigroups.
\end{proof}

\section{Examples of deterministic sets}\label{nine}

Recall (see Remark \ref{dd}) that sets with Banach density 0 or 1 are strongly deterministic for trivial reasons. Similarly, sets with $\F$-density 0 or 1 are $\F$-deterministic for trivial reasons: the indicator function of such a set is $\F$-quasi-generic for the point mass concentrated at a fixpoint. In view of Theorem \ref{main}, we are mostly interested in examples of $\F$-deterministic sets with positive lower $\F$-density (in particular, strongly deterministic sets with positive lower Banach density), because they automatically become examples of $\F$-normality preserving sets. In this section we describe several classes of examples of $\F$-deterministic sets with lower and upper $\F$-densities strictly between 0 and 1, and of strongly deterministic sets with lower and upper Banach densities strictly between 0 and 1. Before we proceed, let us make some general observations concerning the existence of deterministic sets in countable cancellative amenable semigroups $G$.

\begin{prop}\label{trivia}
\begin{enumerate}
\item In any countable amenable group $G$ there exists a strongly deterministic subset with both lower and upper Banach densities strictly between 0 and 1.
\item For any F\o lner \sq\ $\F$ in $G$, there exists an $\F$-deterministic subset of $G$ which is not strongly deterministic.
\item For any F\o lner \sq\ $\F$ in $G$, the collection of all $\F$-deterministic subsets of $G$, viewed as a subset of $\{0,1\}^G$, is of first category.
\end{enumerate}
\end{prop}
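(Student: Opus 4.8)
\textbf{Proof plan for Proposition \ref{trivia}.}

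The plan is to handle the three items in turn, using the dynamical characterizations developed earlier in the paper rather than direct combinatorial constructions.

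For part (1), the idea is to realize a set via visiting times in an explicit zero-entropy system. First I would take any nontrivial minimal equicontinuous (or, more simply, any nontrivial finite-rank/distal) $G$-system $(X,G)$ with positive topological entropy zero --- for instance, when $G$ has a nontrivial finite-index subgroup or a nontrivial character, use a rotation on a compact group quotient; in full generality, use the uniquely congruent F\o lner system of tilings $\TTT$ from Theorem \ref{dhz}, which has topological entropy zero and is visibly nontrivial. In $\TTT$ (or its chosen subsystem), pick a clopen set $C$ whose indicator is not $\nu$-a.s.\ constant for the relevant invariant measures; concretely one can take $C=[S]$ for a shape $S$ in one of the $\T_k$, so that both $\inf_\mu\mu(C)>0$ and $\sup_\mu\mu(C)<1$, the infimum/supremum over all invariant measures on the orbit closure of the chosen point $x_0$. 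By Theorem \ref{vit} (implication (3)$\implies$(1)), the set $A=\{g\in G:g(x_0)\in C\}$ is strongly deterministic, and by Remark \ref{dd}(i) the conditions $\inf_\mu\mu(C)>0$ and $\sup_\mu\mu(C)<1$ translate exactly into $0<d_*(A)\le d^*(A)<1$. The one point needing care is producing, in an arbitrary countable amenable group, a point $x_0$ and clopen $C$ with $0<\inf_\mu\mu(C)$ and $\sup_\mu\mu(C)<1$ simultaneously; the cleanest route is to observe that $\TTT$ carries at least two distinct ergodic measures (or one non-fixed-point ergodic measure), so for a suitable shape $S$ the cylinder $[S]$ has measure bounded away from $0$ and $1$ across all invariant measures supported on the orbit closure of a generic point.

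For part (2), I would reuse the argument already sketched in the text right after the definition of $\F$-determinism: take a minimal subshift $X\subset\Lambda^G$ with positive topological entropy admitting an ergodic measure $\mu$ of Kolmogorov--Sinai entropy zero (existence via \cite{FH} for groups, extended to cancellative amenable semigroups through natural extensions). By Proposition \ref{miner} there is a subsequence $\F_\circ$ of $\F$ such that $\mu$-a.e.\ point is $\F_\circ$-generic for $\mu$, hence $\F$-quasi-generic for $\mu$; but we need these points to be $\F$-quasi-generic \emph{only} for zero-entropy measures. To get that for \emph{the given} $\F$ (not merely some F\o lner sequence), I would instead invoke Theorem \ref{chara}: it suffices to exhibit $x\in\Lambda^G$ with $\mathsf{Rate}_{\F,\eps}(x)=0$ for every $\eps>0$ but $\mathsf{Rate}(x)>0$. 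Take $x$ to be a suitable point of the positive-entropy minimal subshift that is $\F$-generic for the zero-entropy measure $\mu$ (such a point exists by the mean ergodic theorem along a subsequence, and one can promote it to $\F$-genericity after an $\F$-density-zero modification by the argument of Theorem \ref{semi} applied with the ergodic measure $\mu$); then $x$ is $\F$-deterministic, while minimality and $\htop(X,G)>0$ force $\mathsf{Rate}(x)>0$, so $x$ is not strongly deterministic. Passing from the symbolic element $x$ to the set $\{g:x(g)=1\}$ (after replacing $\Lambda$ by $\{0,1\}$ via a topological factor that does not kill entropy, or by directly working with a binary positive-entropy zero-K-S-entropy subshift) gives the desired $\F$-deterministic, not strongly deterministic, \emph{set}.

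For part (3), the plan is a Baire-category argument in the Polish space $\{0,1\}^G$. Using Theorem \ref{chara}, $A$ is $\F$-deterministic iff for every $\eps>0$, $\mathsf{Rate}_{\F,\eps}(\mathbbm 1_A)=0$, i.e.\ for every $\eps,\delta>0$ and every centered F\o lner sequence $\mathfrak K$, for all large $m$ there is a set $A'$ with $\underline d_\F(A')\ge1-\eps$ and $\log\mathbf C_{\mathbbm 1_A}(K_m|A')/|K_m|<\delta$. I would fix one centered F\o lner sequence $\mathfrak K$ realizing the sup (Proposition \ref{limex}(2)), fix rational $\eps,\delta$, and note that the condition ``there exists $m\ge m_0$ with $\mathbf C_{\mathbbm 1_A}(K_m|A')\ge 2^{\delta|K_m|}$ for \emph{every} $A'$ with $\underline d_\F(A')\ge 1-\eps$'' is an \emph{open dense} condition on $A$: openness because $\mathbf C_{\mathbbm 1_A}(K_m|A')$ depends only on finitely many coordinates of $A$ once one truncates the definition of lower density to a finite window, and density because, given any finite partial specification of $A$, one can fill in the remaining coordinates on a large F\o lner set so as to realize \emph{all} $|\Lambda|^{|K_m|}=2^{|K_m|}$ blocks over $K_m$ anchored at a positive-lower-density set of $g$'s, overwhelming any bound $2^{\delta|K_m|}<2^{|K_m|}$. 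Intersecting these countably many open dense sets over rational $\eps,\delta$ and $m_0\in\N$ yields a dense $G_\delta$ of sets with $\mathsf{Rate}_{\F,\eps}(\mathbbm 1_A)>0$ for some $\eps$, i.e.\ of non-$\F$-deterministic sets; hence the $\F$-deterministic sets form a first category set. The main obstacle here is the genuinely quantitative density step --- showing that a prescribed finite pattern can be extended so that $K_m$-blocks of \emph{full} complexity appear along a set of $g$'s of lower $\F$-density bounded below by a constant independent of the pattern --- which requires packing many disjoint shifted copies of $K_m$ inside F\o lner sets (using the $K_m$-core estimates of Lemma \ref{estim}) and writing down a block that exhibits every element of $\Lambda^{K_m}$ on a positive fraction of anchors; the F\o lner sequence's invariance guarantees there is enough room, but the bookkeeping of "how large $m_0$, hence how large the first relevant $m$, must be so that the extension lies outside a given basic open set" is the delicate part.

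I expect part (3)'s density argument to be the principal technical obstacle; parts (1) and (2) are essentially repackagings of Theorems \ref{vit}, \ref{banal}, \ref{chara} and the existence result of \cite{FH}, modulo the routine-but-necessary verification that one can arrange the relevant densities to be strictly between $0$ and $1$.
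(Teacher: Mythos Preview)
Your plan diverges from the paper's proof in all three parts, and in each case the paper's route is markedly shorter.

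\textbf{Part (1).} The paper does not use $\TTT$ directly. It takes any free zero-entropy action on a zero-dimensional space (existence from \cite{DHZ}), passes to a \emph{minimal} subsystem $(X',G)$, picks any proper nonempty clopen $U$, and lets $A$ be the visiting-times set. Minimality instantly gives that visits to $U$ and to $U^c$ are both syndetic, hence $0<d_*(A)\le d^*(A)<1$. Your route via $C=[S]$ in $\TTT$ is in the same spirit, but the step you flag as ``needing care'' (that $\inf_\mu\mu([S])>0$ and $\sup_\mu\mu([S])<1$ over all invariant measures on an orbit closure) is precisely what minimality delivers for free; without it you would have to analyze the simplex of invariant measures on $\TTT$, which is extra work.

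\textbf{Part (2).} The paper's argument is entirely elementary and avoids \cite{FH} and Theorem~\ref{semi}. It picks a second F\o lner sequence $\F'=(F_n')$ whose union $U=\bigcup_n F_n'$ has $\F$-density zero, then sets $z=y$ on $U$ and $z=x$ off $U$, where $y$ is $\F'$-normal and $x$ is strongly deterministic. Then $z$ is an $\F$-modification of $x$ (hence $\F$-deterministic) but is $\F'$-normal (hence not strongly deterministic). Your approach can be made to work, but it has a gap as written: after applying Theorem~\ref{semi}, the modified point need not lie in the minimal subshift $X$, so ``minimality and $\htop(X,G)>0$ force $\mathsf{Rate}(x)>0$'' does not apply directly. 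The fix is to observe that an $\F$-generic point for $\mu$ has orbit closure containing $\supp(\mu)$, and $\supp(\mu)=X$ by minimality, so the orbit closure still has positive entropy; you should state this explicitly.

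\textbf{Part (3).} Here your plan has a genuine obstacle. You claim that, for fixed $m,\eps,\delta$, the condition ``$\mathbf C_{x,\F,\eps}(K_m)\ge 2^{\delta|K_m|}$'' is open in $x$. But $\mathbf C_{x,\F,\eps}(K_m)=\inf_{A'} \mathbf C_x(K_m|A')$ is an infimum over all $A'$ with $\underline d_\F(A')\ge 1-\eps$, a condition on $A'$ involving infinitely many $F_n$; there is no finite truncation that both preserves the meaning of the infimum and makes the resulting set depend on only finitely many coordinates of $x$. Your parenthetical ``once one truncates the definition of lower density to a finite window'' changes the quantity being bounded and no longer matches Theorem~\ref{chara}. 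The paper sidesteps this entirely: it shows that the set $\mathcal N_{\mathsf q}(\F)$ of $\F$-\emph{quasi}-normal elements (those $\F$-quasi-generic for the Bernoulli measure $\lambda$) is a dense $G_\delta$ by writing it explicitly as a countable intersection of clopen sets indexed by cylinders and tolerances, and then notes that $\mathcal N_{\mathsf q}(\F)$ is disjoint from the $\F$-deterministic elements. This is both shorter and avoids the lower-semicontinuity issue altogether.
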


\begin{proof} (1) As shown in \cite{DHZ}, there exists a free action \xg\ of entropy zero  on a zero-dimensional space $X$ (free means that $g(x)=x$, for some $g\in G$ and $x\in X$, implies $g=e$). Let $(X',G)$ be a minimal subsystem of \xg. Since the action of $G$ on $X$ is free, $X'$ is not a single point. Clearly, $X'$ is zero-dimensional, $(X',G)$ is free and has \tl\ entropy zero. The desired set can be obtained as the set of visiting times of the orbit of some $x\in X'$ to some clopen proper (neither empty nor equal to $X'$) subset $U$ of $X'$. By minimality, the visiting times to both $U$ and its complement form syndetic sets, hence have positive lower densities. 

(2) Note that there exists another F\o lner \sq\ $\F'=(F_n')_{n\in\N}$, such that the union $U=\bigcup_{n\in\N} F'_n$ has zero $\F$-density and the cardinalities $|F'_n|$ strictly increase. Now, any point $z\in\{0,1\}^G$ of the form
$$
z_g = 
\begin{cases}
y_g & g\in U,\\
x_g & \text{otherwise},
\end{cases}
$$
where $y$ is $\F'$-normal and $x$ is strongly deterministic, is $\F$-deterministic but not strongly deterministic (as it is $\F'$-normal).

(3) By passing to a sub\sq\ of $\F$ we can assume that the cardinalities $|F_n|$ strictly increase. Then, by \cite[Theorem 4.2]{BDM}, there exist many $\F$-normal elements in $\{0,1\}^G$. Although the collection $\mathcal N(\F)$ of all such elements is of first category (see \cite[Proposition 4.7]{BDM}), the collection $\mathcal N_{\mathsf q}(\F)$ of elements $x\in\{0,1\}^G$, which are $\F$-quasi-normal (i.e. $\F$-quasi-generic for the Bernoulli measure), is residual. Indeed, it is dense (the smaller set $\mathcal N(\F)$ is not only dense but has full Bernoulli measure with full \tl\ support), and we have
$$
\mathcal N_{\mathsf q}(\F)=\bigcap_{k\in\N}\ \bigcap_{i\le k}\ \bigcap_{B\in\{0,1\}^{K_i}}\ \bigcap_{n_0\in\N}\ \bigcup_{n\ge n_0}
W(F_n,B,\tfrac1k),
$$
where $(K_i)_{i\in\N}$ is a \sq\ consisting of all finite subsets of $G$, and 
$W(F_n,B,\frac1k)$ with $B\in\{0,1\}^{K_i}$ is the union of all cylinders $[C]$ with $C\in \{0,1\}^{F_n}$ such that
$$
2^{-|K_i|}-\tfrac1k<\Fr_C(B)<2^{-|K_i|}+\tfrac1k.
$$
Since each set $W(F_n,B,\frac1k)$ is clopen, $\mathcal N_{\mathsf q}(\F)$ is a $G_\delta$-set. Clearly, the residual set $\mathcal N_{\mathsf q}$ is disjoint from the set of $\F$-deterministic elements.
\end{proof}

\subsection{Strongly deterministic sets associated to tilings}
It follows from Theorem \ref{dhz} that every countable amenable group $G$ admits a dynamical tiling $\T$ of entropy zero. Then, for any $\CT\in\T$ and any shape $S\in\CS(\CT)$, the set $C_S(\CT)$ (of centers of the tiles $T\in\CT$ of shape $S$) is strongly deterministic, as it coincides with the set of visiting times of the orbit of $\CT$ in a clopen set (see Theorem \ref{vit}). 

We can apply this fact to a more specific case. Suppose the group $G$ contains a finite subgroup $S$. The equivalence relation $g\sim h\iff gh^{-1}\in S$ partitions $G$ into
countably many equivalence classes. If $C_S\subset G$ is a set containing exactly one element from each class, then the family $\{S c:c\in C_S\}$ is a monotiling of $G$ with one shape $S$.  A priori the centers $c\in C_S$ can be selected as arbitrary elements of the tiles, and there seems to be no specific preference to any particular selection (except for the central tile $S$ where it is natural to choose $e$). However, the general theory of tilings tells us that there exists an ``intelligent'' selection, for which the resulting set of centers $C_S$ is strongly deterministic. See \cite[Section 6]{DHZ}, which includes more details and uses this idea to construct a zero-entropy free $G$-action for any countable amenable group $G$. Clearly, the set $C_S$ has Banach density $1/|S|$.

We illustrate this idea on a concrete example. 
In the following proposition we consider the group $G$ of all finite permutations of $\N$, i.e. bijections $g:\N\to\N$ which move only finitely many elements, with the multiplication defined as the composition in the reversed order: $gh = h\circ g$. (This convention is frequently used in the permutation group literature.) 
\begin{prop}
Let $G$ be the group of all finite permutations of $\N$.
For every $k\ge2$, the set 
$$
C^{\mathsf{incr}}_k = \{g\in G: g(1)<g(2)<\cdots<g(k)\}
$$ 
is strongly deterministic in $G$ (and has positive Banach density $\frac1{k!}$).
\end{prop}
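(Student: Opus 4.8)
The plan is to read off the Banach-density claim from the coset structure of $G$ and to establish strong determinism through the complexity criterion of Theorem~\ref{chara}. Set $S=\{s\in G:s(i)=i\text{ for all }i>k\}$, the copy of $\mathrm{Sym}(\{1,\dots,k\})$ sitting inside $G$, so $|S|=k!$. The first step is to record the coset picture, keeping in mind the convention $gh=h\circ g$, so that $(sg)(i)=g(s(i))$. With this convention the left coset $Sg=\{sg:s\in S\}$ consists precisely of the $k!$ permutations obtained from $g$ by rearranging the values $g(1),\dots,g(k)$ among the positions $1,\dots,k$ while leaving $g$ unchanged on $\{k+1,k+2,\dots\}$. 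Since $g$ is injective the values $g(1),\dots,g(k)$ are distinct, so exactly one such rearrangement lists them increasingly; hence each left coset $Sg$ contains exactly one element of $C^{\mathsf{incr}}_k$. In other words $C^{\mathsf{incr}}_k$ is a transversal for the partition of $G$ into left cosets of $S$, which is exactly the monotiling of $G$ with the single shape $S$ alluded to before the proposition.

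For the density I would fix an arbitrary F\o lner sequence $\F=(F_n)_{n\in\N}$. Because $S$ is a finite subset of $G$, the $F_n$ are eventually $(S,\eps)$-invariant for every $\eps>0$, whence $|sF_n\triangle F_n|/|F_n|\to0$ for each $s\in S$. Summing the identity $\sum_{s\in S}\mathbbm 1_{C^{\mathsf{incr}}_k}(sg)=1$ (valid for every $g\in G$ by the transversal property) over $g\in F_n$, and substituting $g'=sg$, gives $|F_n|=\sum_{s\in S}|C^{\mathsf{incr}}_k\cap sF_n|=k!\,|C^{\mathsf{incr}}_k\cap F_n|+o(|F_n|)$, so $d_\F(C^{\mathsf{incr}}_k)=1/k!$. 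Since $\F$ was arbitrary, the Banach density of $C^{\mathsf{incr}}_k$ equals $1/k!$.

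To prove strong determinism I would apply Theorem~\ref{chara} to $x_0=\mathbbm 1_{C^{\mathsf{incr}}_k}\in\{0,1\}^G$, so it is enough to show $\mathsf{Rate}(x_0)=0$. I would use the centered F\o lner sequence $K_m=S_m:=\{g\in G:g(i)=i\text{ for }i>m\}$; this is a F\o lner sequence since it is an increasing sequence of finite subgroups exhausting $G$. For $g\in G$ and $h\in S_m$ with $m\ge k$, the numbers $h(1),\dots,h(k)$ are distinct elements of $\{1,\dots,m\}$, so the bit $\mathbbm 1_{C^{\mathsf{incr}}_k}(hg)=\mathbbm 1[g(h(1))<\cdots<g(h(k))]$ depends only on the linear order that $g$ induces on $\{1,\dots,m\}$. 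There are exactly $m!$ such orders, hence $\mathbf C_{x_0}(S_m)\le m!$, and by Proposition~\ref{limex}(1),
$$
\mathsf{Rate}(x_0)=\lim_{m\to\infty}\frac{\log\mathbf C_{x_0}(S_m)}{|S_m|}\le\lim_{m\to\infty}\frac{\log(m!)}{m!}=0.
$$
As $\mathsf{Rate}(x_0)\ge0$ trivially, we obtain $\mathsf{Rate}(x_0)=0$, so Theorem~\ref{chara} yields that $x_0$, and hence the set $C^{\mathsf{incr}}_k$, is strongly deterministic.

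I do not anticipate a serious obstacle; the work is essentially bookkeeping. The one point requiring care is the permutation-group convention $gh=h\circ g$: one must check that $C^{\mathsf{incr}}_k$ is a transversal of the \emph{left} cosets $Sg$ rather than the right ones, and, in the complexity estimate, that although $g$ ranges over all of $G$ only its restriction-order on $\{1,\dots,m\}$ is relevant — which is precisely what makes $\mathbf C_{x_0}(S_m)$ at most $m!$, i.e.\ subexponential in $|S_m|=m!$.
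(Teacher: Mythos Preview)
Your proof is correct, but it follows a genuinely different route from the paper's.

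The paper exhibits $C^{\mathsf{incr}}_k$ as the set of centers of the monotiling $\CT_k$ with shape $S_k$ (your $S$), then shows that the whole \sq\ $(\CT_k)_{k\ge2}$ forms a \emph{uniquely congruent} F\o lner system of monotilings: each $S_{k+1}$ decomposes as $\bigcup_{i=1}^{k+1}S_kc_i$ with $c_i\in C^{\mathsf{incr}}_k$, and this decomposition is the same for every tile of shape $S_{k+1}$. Unique congruence makes $\TTT=\overset\longleftarrow{\lim}\T_k$ an inverse limit, and since each $\T_k$ is a monotiling with shape of size $k!$, one gets $\htop(\T_k)\le H(\tfrac1{k!})\to0$; hence $\htop(\TTT)=0$, and $C^{\mathsf{incr}}_k$ is the set of visiting times to the clopen cylinder $[``S_k"]$ in this zero-entropy system, so Theorem~\ref{vit} applies.

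Your argument bypasses the tiling machinery entirely: you go straight to the complexity criterion of Theorem~\ref{chara}, observing that the block $x_0|_{S_mg}$ is determined by the linear order $g$ induces on $\{1,\dots,m\}$, giving $\mathbf C_{x_0}(S_m)\le m!=|S_m|$ and hence $\mathsf{Rate}(x_0)=0$. This is shorter and more self-contained. The paper's approach, on the other hand, yields additional structural information --- the sets $C^{\mathsf{incr}}_k$ for all $k$ simultaneously arise from a single zero-entropy inverse-limit system --- and illustrates the general philosophy that zero-entropy tilings manufacture strongly deterministic sets, which is the theme of that subsection.
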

\begin{proof}For each finite permutation $g\in G$, there exists $n_g\in\N$ such that $g(n)=n$ for all $n>n_g$. For each $k\ge 2$, the permutations $g$ with $n_g\le k$ form a finite subgroup $S_k$. For any permutation $c\in G$, there exists a unique $s\in S_k$ such that $sc\in C^{\mathsf{incr}}_k$. Indeed, $s$ can be identified as follows: if we write the numbers $c(1),c(2),\dots,c(k)$ in the increasing order, then the arguments $1,2,\dots,k$  appear in the order $s(1),s(2),\dots,s(k)$. We have shown that $C^{\mathsf{incr}}_k$ contains exactly one element from each of the equivalence classes defined by $S_k$, implying that the family $\CT_k=\{S_k c:c\in C^{\mathsf{incr}}_k\}$  is a monotiling with shape $S_k$. In particular, this implies that the Banach density of $C^{\mathsf{incr}}_k$ equals $\frac1{|S_k|}=\frac1{k!}$. We will now show that $C^{\mathsf{incr}}_k$ is a strongly deterministic set. 

Let $\boldsymbol\CT=(\CT_k)_{k\in\N}$ and let $\TTT$ be the orbit closure of $\boldsymbol\CT$ in $\prod_{k\in\N}{\rm V}_k^G$. Observe that $(S_k)_{k\ge 2}$ is a F\o lner \sq\ in $G$, hence $\TTT$ is a F\o lner system of monotilings. Note that, for each $k\in\N$, we have
$$
S_{k+1}=\bigcup_{i=1}^{k+1} S_kc_i,
$$
where 
\begin{equation}\label{incr}
c_i=(1,2,\dots,i-1,i+1,i+2,\dots,k+1,i,k+2,k+3,\dots)
\end{equation}
(note that the centers $c_i$ belong to $C^{\mathsf{incr}}_k$). We have shown that the system of monotilings $\TTT$ is congruent. 

We claim that the system $\TTT$ is uniquely congruent. We need to show that whenever $T=S_{k+1}c$ is a tile of $\CT_{k+1}$ with $c\in C^{\mathsf{incr}}_{k+1}$ and we represent it as the union of tiles of $\CT_k$---namely, as  $T=\bigcup_{i=1}^{k+1} S_kc_ic$, where, for each $i=1,2,\dots,k+1$, $c_i\in C^{\mathsf{incr}}_k$ (i.e.\ $c_i$ is as in \eqref{incr})---then the elements $c_ic$ also belong to $C^{\mathsf{incr}}_k$. Recall, that the product $c_ic$ is realized as the composition $c\,\circ c_i$. This composition is increasing on $\{1,2,\dots,k\}$ because the image $c_i(\{1,2,\dots,k\})$ is ordered increasingly and is contained in the set $\{1,2,\dots,k+1\}$ on which $c$ is increasing.

The proof is finished by invoking the general fact that the entropy of any deterministic system of monotilings has \tl\ entropy zero. Indeed, the \tl\ entropy of a monotiling with a shape $S$ of cardinality $r$ does not exceed the Shannon entropy of a two-element partition with probabilities $\frac1r$ and $1-\frac1r$, which equals
$$
H(\tfrac1r)=-\tfrac1r\log\tfrac1r - (1-\tfrac1r)\log(1-\tfrac1r).
$$
In the deterministic case, $\TTT$ is the inverse limit $\overset{\leftarrow}\lim_{k\to\infty}\T_k$ (see Remark \ref{remd}), so its entropy equals
$$
\htop(\TTT,G)=\lim_{k\to\infty} \htop(\T_k,G)\le \lim_{k\to\infty} H(\tfrac1{k!})=0.
$$
(This also implies that $\htop(\T_k,G)=0$ for each $k\in\N$.)
We conclude that each of the sets $C^{\mathsf{incr}}_k$, being the set of visiting times of $\CT_k\in\T_k$ to the clopen set $[``S_k"]$, is a strongly deterministic subset of~$G$.
\end{proof}

\subsection{Strong determinism via distality}\label{sdistal}
In 1968 W. Parry \cite{Pa} proved that distal $\Z$-actions have \tl\ entropy zero. The proof passes to actions of countable amenable groups essentially without modifications (see also \cite{Ya} 2015, where a more general result is obtained via a different approach). 
Because Parry's proof is very short, for reader's convenience we include a version adapted to amenable group actions. 
\begin{prop}\label{distal} Let $G$ be a countable amenable group.
Any distal system $(X,G)$ has \tl\ entropy zero. 
\end{prop}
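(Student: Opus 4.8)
The plan is to follow Parry's original argument, adapted to amenable groups. Recall that a system $(X,G)$ is \emph{distal} if for every pair of distinct points $x,y\in X$ one has $\inf_{g\in G}d(g(x),g(y))>0$. The key structural input is the Furstenberg structure theorem for distal systems, which carries over verbatim to actions of arbitrary (in particular countable amenable) groups: $(X,G)$ is distal if and only if it can be built from the one-point system by a (possibly transfinite) tower of isometric extensions. However, Parry's proof avoids invoking the full structure theorem and instead argues directly via the following dichotomy: if $(X,G)$ is distal and nontrivial, then it admits a nontrivial \emph{equicontinuous} factor; passing to this factor and taking a relatively-distal decomposition, one can reduce the entropy computation to a statement about isometric extensions.

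First I would prove the core lemma: an isometric (or equicontinuous) extension does not increase topological entropy, i.e.\ if $\pi:(X,G)\to(Y,G)$ is a factor map such that the extension is isometric in the fiber direction, then $\htop(X,G)=\htop(Y,G)$. Via the variational principle (formula \eqref{vp}), it suffices to show $h(\mu)=h(\pi(\mu))$ for every $\mu\in\mgx$, and by the Abramov--Rokhlin addition formula for amenable group actions this reduces to showing that the relative entropy $h(\mu\mid\pi)$ vanishes. For an isometric extension one covers each fiber by finitely many balls of radius $\eps$ whose count is bounded independently of the base point; a standard Bowen-ball / separated-set counting argument along a F\o lner sequence $\F=(F_n)_{n\in\N}$, using that the acting transformations are fiberwise isometries, shows the number of $(F_n,\eps)$-separated points in a fiber grows subexponentially in $|F_n|$, hence the relative topological entropy is zero, and then the relative variational principle forces $h(\mu\mid\pi)=0$.

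Next I would run the transfinite induction. Using the Furstenberg tower, write $(X,G)$ as the inverse limit of a transfinite system $(X_\alpha,G)_{\alpha\le\eta}$ where $X_0$ is the trivial system, each $X_{\alpha+1}\to X_\alpha$ is an isometric extension, and at limit ordinals one takes inverse limits. By the core lemma, $\htop(X_{\alpha+1},G)=\htop(X_\alpha,G)$ at successor steps. For limit ordinals one uses that entropy is upper semicontinuous under inverse limits of factors: $\htop(\varprojlim X_\beta,G)=\sup_\beta\htop(X_\beta,G)$ (again via the variational principle and the fact that an invariant measure on the inverse limit is determined by its projections, whose entropies are monotone in $\beta$). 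Since $\htop(X_0,G)=0$ and the value is preserved at every stage, $\htop(X_\eta,G)=\htop(X,G)=0$.

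The main obstacle I expect is making the transfinite induction fully rigorous in the amenable-group setting — specifically, verifying that the Abramov--Rokhlin / relative variational principle machinery used in the core lemma is available for actions of countable amenable groups (it is, e.g.\ by the work of Ward--Zhang and Danilenko, but this must be cited or sketched carefully), and handling the limit-ordinal step so that one genuinely exhausts all of $X$. An alternative, more self-contained route that sidesteps the transfinite bookkeeping is Parry's direct argument: show that for a distal system every invariant measure has the property that the Pinsker factor is trivial, by exploiting that in a distal system the ``asymptotic pair'' relation is trivial while positive entropy would produce, via the Rokhlin--Sinai or Kamae-type arguments, nontrivial asymptotic behavior along a F\o lner sequence — this is the shorter path and is presumably the one the authors follow given the remark that Parry's proof is ``very short.''
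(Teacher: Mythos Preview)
Your approach via the Furstenberg structure theorem and relative entropy is valid in principle, but it is a genuinely different and much heavier route than the one the paper takes. The paper's proof (following Parry) avoids the structure theorem, Abramov--Rokhlin, and transfinite induction entirely. Instead it proceeds as follows: by Ellis's theorem a distal system is pointwise almost periodic, so via the ergodic variational principle one reduces to a minimal distal system with a nonatomic ergodic measure $\mu$. Fixing a point $x_0$ and a small $r>0$, one chooses nested open balls $U_n$ around $x_0$ with $\mu(U_n)\le r^n$ and lets $\P=\{U_{n-1}\setminus U_n:n\ge1\}\cup\{x_0\}$. This countable partition has Shannon entropy $O(-r\log r)$, which can be made arbitrarily small. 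The key observation is that $\P$ is generating: by minimality every orbit visits arbitrarily small $U_n$, and by distality two distinct orbits cannot visit the same $U_n$ at the same times for all $n$. Hence the Kolmogorov--Sinai entropy is bounded by the Shannon entropy of $\P$, which is arbitrarily small.

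What your approach buys is structural insight (entropy is preserved step-by-step through the distal tower), at the cost of importing the relative variational principle and the Furstenberg hierarchy for amenable group actions --- machinery whose amenable-group versions, as you correctly flag, require care to cite. The paper's approach buys brevity and self-containment: it uses only Ellis's theorem, the variational principle, and the generator theorem, all of which are already set up in the paper. Your closing suggestion about Pinsker factors and asymptotic pairs is also not what the paper does; the actual argument is the generating-partition trick just described.
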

\begin{proof}
By a result of Ellis \cite{El} (1958), any distal action $(X,G)$ (of any group $G$ on a compact metric space $X$) is \emph{pointwise almost periodic}, i.e.\ $X$ is a disjoint union of minimal sets. This implies that any ergodic $G$-\inv\ measure on $X$ is supported by a minimal set. (We remark in passing that, by a theorem of Furstenberg \cite{Fu1}, any distal action of a general, not necessarily amenable, group has an \inv, and thus also an ergodic, measure.) Now, in the amenable case, the ergodic version of the variational principle (see Subsection \ref{2.5}) allows us to restrict the proof to ergodic measures and minimal distal actions. 
So without loss of generality, we assume that $(X,G)$ is a minimal distal system. Let $\mu$ be an ergodic \im\ on $X$. We can also assume that $\mu$ is nonatomic, as otherwise we are dealing with a finite space $X$ on which any action trivially has entropy zero. Fix a point $x_0\in X$ and a positive number $r$. Let $U_0=X$ and let $(U_n)_{n\in\N}$ be a nested \sq\ of open balls around $x_0$ such that $\mu(U_n)\le r^n$. This is possible as $\mu$ is nonatomic. Let $V_n=U_{n-1}\setminus U_n$ ($n\ge 1$). Then $\P=\{V_n:n\ge 1\}\cup\{x_0\}$ is a countable, measurable partition of $X$. Since the function $t\mapsto -t\log t$ is increasing for small $t$ and decreasing for $t$ close to $1$, the Shannon entropy of this partition is at most
$$
\hspace{25pt}-\mu(V_1)\log\mu(V_1)+\sum_{n=2}^\infty -nr^n\log r\le -(1-r)\log(1-r)+\frac {-r\log r}{(1-r)^2}.
$$
The right hand side of the above formula can be made arbitrarily small by the choice of $r$. Finally observe that the partition $\P$ is generating (i.e.\ separates orbits). Indeed, by minimality, each orbit visits arbitrarily small balls $U_n$, and, by distality, any two distinct orbits cannot visit all such balls at the same ``times'' $g\in G$. Now we use two facts about measure entropy that are valid for any action of a (countable) amenable group: 
\begin{itemize} 
	\item the dynamical entropy of a measure preserving action is attained on any (finite or 
	countable with finite Shannon entropy) generating partition,
	\item the dynamical entropy of a (finite or countable) partition is dominated by the Shannon entropy of this partition.
\end{itemize}
So the measure-preserving system $(X,\mu,G)$ has zero entropy, and hence the \tl\ distal system $(X,G)$ has \tl\ entropy zero.
\end{proof}
In view of Theorem~\ref{vit}, Proposition \ref{distal} leads to a rather large family of strongly deterministic sets. Indeed, take any distal action $(X,G)$, any point $x\in X$ and any set $U$ with small boundary, whose interior is not disjoint from the orbit of $x$ (equivalently, any minimal distal action, any point and any set with nonempty interior and small boundary), and let $A=\{g\in G: g(x)\in U\}$. Then $A$ is strongly deterministic.

This approach is especially useful when applied to so-called generalized polynomials. By 
\emph{generalized polynomials in $d$ variables} $(d\in\N)$ we mean the elements of the smallest class of functions $u:\R^d\to\R$ which contains the coordinate projections and constants, and is closed under addition, multiplication and the ``integer part'' operation. The following fact holds:

\begin{prop}\label{poly}
Consider a function $u=(u_1,u_2,\dots,u_l):\R^d\to[0,1)^l$ where $u_i$ is a generalized polynomial in $d$ variables, for each $i=1,2,\dots,l$. Let $W\subset[0,1)^l$ be an open set, such that $\partial W$ has Lebesgue measure zero. Then the set
$$
A=\{n\in\Z^d: u(n)\in W\}
$$
is a strongly deterministic subset of $\Z^d$.

\end{prop}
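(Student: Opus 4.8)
The plan is to realize $A$ as the set of visiting times of a single orbit to a set with ``small boundary'' inside a topological dynamical system of entropy zero, and then to conclude by the implication (2)$\Rightarrow$(1) of Theorem~\ref{vit}. The input that makes this possible is the structure theory of bounded generalized polynomials due to Bergelson and Leibman \cite{BL}: applied to $u=(u_1,\dots,u_l)\colon\R^d\to[0,1)^l$, it provides a compact nilmanifold $X$, commuting niltranslations $T_1,\dots,T_d$ of $X$ --- which together define a $\Z^d$-action, written $n\mapsto T^n$ --- a point $x_0\in X$, and a bounded ``piecewise polynomial'' map $\Psi\colon X\to[0,1)^l$ such that
\[
u(n)=\Psi(T^n x_0)\qquad\text{for all }n\in\Z^d.
\]
Moreover the discontinuity set $D_\Psi$ of $\Psi$ is, up to translation, contained in a finite union of proper closed sub-nilmanifolds of $X$.

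Next I would pass to the orbit closure $Y=\overline{\{T^n x_0:n\in\Z^d\}}\subseteq X$, with the restricted $\Z^d$-action. Nilsystems are distal (they are built from isometric extensions), so $(Y,\Z^d)$ is distal, and hence, by Proposition~\ref{distal}, $\htop(Y,\Z^d)=0$. Being distal, $(Y,\Z^d)$ is also pointwise almost periodic, so $Y$ is minimal; and a minimal nilsystem is uniquely ergodic. Let $m_Y$ denote its unique invariant measure, i.e.\ the Haar measure of the sub-nilmanifold $Y$. Put $C=\Psi^{-1}(W)\cap Y$. Since $T^n x_0\in Y$ for every $n$ and $u(n)=\Psi(T^n x_0)$, we get
\[
A=\{n\in\Z^d:u(n)\in W\}=\{n\in\Z^d:T^n x_0\in C\},
\]
so $A$ is exactly the set of visiting times of $x_0$ to $C$ in the zero-entropy system $(Y,\Z^d)$.

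It remains to check that $C$ has $m_Y$-negligible boundary (taken inside $Y$); as $m_Y$ is the \emph{only} invariant measure of $(Y,\Z^d)$, this is all that condition (2) of Theorem~\ref{vit} asks for. Since $W$ is open, any $y\in\partial C$ at which $\Psi|_Y$ is continuous must satisfy $\Psi(y)\in\partial W$, whence
\[
\partial C\ \subseteq\ (D_\Psi\cap Y)\ \cup\ \bigl(\Psi^{-1}(\partial W)\cap Y\bigr).
\]
Here both sets on the right are $m_Y$-null: the first because (by the Bergelson--Leibman construction) $Y$ meets the sub-nilmanifolds carrying $D_\Psi$ only in a finite union of proper sub-nilmanifolds of $Y$, which therefore receives zero $m_Y$-mass; the second because $\partial W$ has zero Lebesgue measure by hypothesis while the distribution $\Psi_*m_Y$ of the values of $u$ charges no Lebesgue-null subset of $[0,1)^l$, again by \cite{BL}. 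Granting this, $m_Y(\partial C)=0$, so Theorem~\ref{vit}(2) applies to the triple $\bigl((Y,\Z^d),\,C,\,x_0\bigr)$ and $A$ is strongly deterministic.

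The substantive step --- and the one I expect to be the main obstacle --- is the verification that $m_Y(\partial C)=0$, i.e.\ that $\Psi$ is $m_Y$-almost everywhere continuous on the orbit closure $Y$ and that $\Psi_*m_Y$ is non-singular along $\partial W$. This is precisely the content of the Bergelson--Leibman distribution theorem for bounded generalized polynomials (equivalently, of the statement that $\{n:u(n)\in W\}$ possesses a density whenever $\mathsf{Leb}(\partial W)=0$), and it is where the depth of the result lies. Everything else --- obtaining entropy zero from distality via Proposition~\ref{distal}, and packaging $A$ as a set of visiting times via Theorem~\ref{vit} --- is routine.
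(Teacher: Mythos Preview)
Your approach is essentially the same as the paper's: invoke \cite{BL} to represent $u(n)=\Psi(T^nx_0)$ on a nilmanifold, use distality (Proposition~\ref{distal}) to get entropy zero, and apply Theorem~\ref{vit}(2) to the visiting-times set $C=\Psi^{-1}(W)$. The paper is terser --- it works directly on the ergodic nilsystem $X$ rather than passing explicitly to the orbit closure, and it asserts ``$f^{-1}(W)$ is open and has small boundary'' without the decomposition $\partial C\subseteq D_\Psi\cup\Psi^{-1}(\partial W)$ that you spell out.

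One caution: your claim that $\Psi_*m_Y$ ``charges no Lebesgue-null subset of $[0,1)^l$'' is stronger than what \cite{BL} actually gives, and is false in general (e.g.\ if $u_1=u_2$ the pushforward is supported on the diagonal). What \cite{BL} does provide is that $\Psi$ is piecewise polynomial with respect to a partition into sub-nilmanifold pieces, so on each piece $\Psi^{-1}(\partial W)$ is a semialgebraic set of measure zero in $Y$; this, together with $m_Y(D_\Psi)=0$, is what yields $m_Y(\partial C)=0$. The paper sweeps this under the phrase ``has small boundary,'' so your level of detail is already beyond what the paper offers --- just don't lean on absolute continuity.
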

 
\begin{proof}
It is proved in \cite{BL} that any bounded generalized polynomial $u:\Z^d \to \R^l$ has a representation $u(n) = f(\varphi(n)x)$, $n\in\Z^d$, where $f$ is a piecewise polynomial function on a compact nilmanifold $X$, $x\in X$ and $\varphi:\Z^d\times X\to X$ is an ergodic $\Z^d$-action by translations on $X$. Thus the set $A$ equals
$$
\{n\in\Z^d: \varphi(n)x\in f^{-1}(W)\}.
$$
The set $f^{-1}(W)$ is open and has small boundary in $X$. It is known that any $\Z^d$-action by translations on a nilmanifold is distal (see, e.g., \cite[Theorem 2.14]{Le1}; also cf. \cite[Ch. 4, Theorem 3]{AHG}). 
It now follows from Theorem \ref{vit} that the set $A$ is strongly deterministic.
\end{proof}

We say that a function $F:\mathbb{Z}^k \to \mathbb{Z}$ is a \emph{generalized linear function} if $F$ belongs to the smallest class of functions which contains the coordinate projections and constants, and is closed under addition, multiplication by constants, and the ``integer part'' operation. We say that a function $F=(F_1,F_2,\dots,F_k):\mathbb{Z}^k\to \mathbb{Z}^k$ is a $k$-dimensional generalized linear function if each component is a generalized linear function in $k$ variables. 

\begin{prop}
If $F=(F_1,F_2,\dots,F_k)$ is a $k$-dimensional generalized linear function such that $F_i$ is unbounded for each $i=1,2,\dots,k$, then $F(\Z^k)$ is a strongly deterministic subset in $\Z^k$. 
\end{prop}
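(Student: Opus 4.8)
The plan is to deduce the statement from Proposition \ref{poly}, together with the fact (contained in Corollary \ref{uf}) that a finite union of strongly deterministic sets is strongly deterministic, since the indicator function of a union is a polynomial in the indicator functions of the members.

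First I would isolate a structural fact about generalized linear functions: by induction on the defining expression, every generalized linear $F_i:\Z^k\to\R$ can be written as $F_i=L_i+b_i$, where $L_i$ is a genuine $\R$-linear form and $b_i:\Z^k\to\R$ is a \emph{bounded} generalized polynomial (the only step needing a comment is $F_i=\lfloor F_i'\rfloor$, where $\lfloor F_i'\rfloor=L_i'+\bigl(b_i'-\{F_i'\}\bigr)$ and the fractional part $\{F_i'\}$ is a bounded generalized polynomial); I would also note, by the same kind of induction, that a generalized linear function composed with generalized linear functions is again generalized linear. Writing $\Lambda\in M_k(\R)$ for the matrix with rows $L_i$ and $B=(b_1,\dots,b_k)$, we get $F(m)=\Lambda m+B(m)$ with $B$ bounded. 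If $\Lambda$ is singular, then $F(\Z^k)\subset\operatorname{colsp}(\Lambda)+\overline{B(\Z^k)}$ lies within a bounded distance of a proper linear subspace of $\R^k$, hence has upper Banach density $0$; by Remark \ref{dd}(ii) (or Theorem \ref{fsq}) it is then strongly deterministic. This, in particular, disposes of every $F$ one of whose components is bounded, so the hypothesis that all $F_i$ are unbounded plays no role in the determinism conclusion; it only ensures that in the remaining case $F(\Z^k)$ has positive density, i.e.\ is a genuine example.

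Now assume $\Lambda$ is invertible. Put $C=\sup_n\|\Lambda^{-1}B(n)\|_\infty<\infty$ and $\mathcal V=\{v\in\Z^k:\|v\|_\infty\le C+1\}$, a finite set. If $m=F(n)$ then $n=\Lambda^{-1}m-\Lambda^{-1}B(n)$, so $n=\lfloor\Lambda^{-1}m\rfloor+v$ for some $v\in\mathcal V$; conversely any $m$ of the form $F(\lfloor\Lambda^{-1}m\rfloor+v)$ lies in the image. Hence
$$
F(\Z^k)=\bigcup_{v\in\mathcal V}A_v,\qquad A_v=\bigl\{m\in\Z^k:\ F(\lfloor\Lambda^{-1}m\rfloor+v)=m\bigr\}.
$$
For each $v$ the function $\Gamma_v(m)=F(\lfloor\Lambda^{-1}m\rfloor+v)-m$ is $\Z^k$-valued and generalized linear in $m$, and the identity $\Gamma_v(m)=\Lambda v-\Lambda\{\Lambda^{-1}m\}+B(\lfloor\Lambda^{-1}m\rfloor+v)$ shows it is bounded; thus $\Gamma_v$ takes only finitely many values (the degenerate cases $\Gamma_v\equiv 0$ and $0\notin\Gamma_v(\Z^k)$ give $A_v=\Z^k$ or $A_v=\varnothing$, both trivially strongly deterministic). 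In the remaining case, after the affine rescaling $u_v=\bigl\{\tfrac1{2R_v+1}(\Gamma_v+R_v\mathbf 1)\bigr\}$ (coordinatewise fractional part, $R_v=\|\Gamma_v\|_\infty$), the map $u_v$ is a generalized polynomial $\R^k\to[0,1)^k$ whose restriction to $\Z^k$ takes finitely many distinct values, one of which, $c_v\in(0,1)^k$, corresponds to $\Gamma_v=0$. Taking for $W_v$ a small enough open ball about $c_v$, contained in $(0,1)^k$ and avoiding the other values of $u_v$, we get $A_v=\{m\in\Z^k:u_v(m)\in W_v\}$ with $W_v$ open and $\partial W_v$ of Lebesgue measure zero; Proposition \ref{poly} then gives that $A_v$ is strongly deterministic, and a finite union of such sets is strongly deterministic.

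The crux — and the step I expect to require the most care — is this passage to a generalized-polynomial membership condition: one must observe that every preimage of $m$ sits in a bounded window around $\Lambda^{-1}m$, so that membership in the image is decided by finitely many admissible ``roundings'' $v$, each encoded by the vanishing of an integer-valued \emph{bounded} generalized polynomial $\Gamma_v$, and then that this equality, being a condition on a function with finitely many values, is equivalent to an \emph{open} condition $u_v(m)\in W_v$ of exactly the type handled by Proposition \ref{poly}. Everything else is bookkeeping, together with the closure properties furnished by Corollary \ref{uf}.
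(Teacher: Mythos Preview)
Your argument is correct and takes a genuinely different route from the paper's. The paper simply cites \cite{BLS} for the one-dimensional result (the image of an unbounded generalized linear function is the set of visiting times of a point to a ``polygonal set'' under a toral translation), asserts that the proof carries over to $k$ variables, and then invokes Theorem~\ref{vit}. You instead stay internal to the paper: after the linear-plus-bounded decomposition $F=\Lambda+B$, you write the image (in the invertible case) as a finite union $\bigcup_v A_v$, where each $A_v$ is the zero set of a \emph{bounded, integer-valued} generalized polynomial $\Gamma_v$, then recast ``$\Gamma_v=0$'' as an open membership condition for a $[0,1)^k$-valued generalized polynomial and apply Proposition~\ref{poly}; the finite union is handled via Corollary~\ref{uf}. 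Your approach buys a self-contained derivation from results already proved in the paper (Proposition~\ref{poly} and Corollary~\ref{uf}) at the cost of spelling out the preimage bookkeeping; the paper's approach buys brevity at the cost of an external citation plus an unproved ``generalizes with no substantial changes''. Dynamically, your route passes through nilmanifolds (via Proposition~\ref{poly} and \cite{BL}), whereas the paper's route stays on the torus.

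One small caveat on your parenthetical remark about the role of the unboundedness hypothesis: unboundedness of each $F_i$ only gives $L_i\neq 0$, not that $\Lambda$ is invertible, so ``the remaining case'' in which you assert positive density is not quite the same as ``all $F_i$ unbounded''. This does not touch the determinism proof, which correctly treats both the singular and invertible cases, but the aside as written slightly overstates what the hypothesis buys.
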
  
\begin{proof}[Sketch of proof]  
It was proved in \cite{BLS} that the indicator function of the image set of any unbounded $1$-dimensional generalized linear function can be written as the visiting times of a point to a ``polygonal set'' under a toral translation (which obviously has zero entropy). This proof generalizes to $k$ variables with no substantial changes. Thus, applying our characterization of deterministic functions in Theorem~\ref{vit} gives the desired result.
\end{proof}

\subsection{Strong determinism via automatic sets I} Von Haeseler \cite{H} extended the notion of an automatic \sq\ from $\Z$ to a general group setting. We recount the basic definitions here. Let $G$ be a finitely generated group equipped with a norm $\|\cdot \|$. In this context a norm is a map from $G$ to $[0,\infty)$ satisfying $\|g\|=0$ if and only if $g=e$, $\|g\|=\|g^{-1}\|$ for all $g\in G$ and $\|gg'\|\le \|g\|+\|g'\|$ for all $g,g'\in G$. A group endomorphism $H:(G,\|\cdot\|)\to (G,\|\cdot\|)$ is said to be expanding if there exists $C>1$ such that $\|H(g)\| \ge C\|g\|$ for all non-unit $g\in G$. Suppose now that $H$ is an expanding endomorphism such that $H(G)$ is a subgroup of finite index. We then call $V\in G$ a residue set (w.r.t. $H$) if $e\in V$ and if for any $g\in G$ there exists a unique $v\in V$ and a unique $g'\in $ such that $g= v H(g')$. We can then iterate this procedure to find the corresponding $v'$ and $g''$ such that $g'=v' H(g'')$, and so on. We thus say that a residue set $V$ is a \emph{complete digit set} if each $g\in G\setminus\{e\}$ has a finite representation as 
\begin{equation}\label{eqn1}
    g = v_0 H(v_1)H^2 (v_2) \cdots H^n (v_{n}), 
\end{equation}
with each $v_i\in V$ ($i=0,1,\dots,n$) and $v_n\neq e$. 

As an example illustrated in \cite{H}, if $G=\langle x \rangle$, i.e.\ $G$ is isomorphic to $\mathbb{Z}$, then $H(x^j)=x^{3j}$ is expanding and $V=\{x^{-1}, x^0, x^1\}$ is a complete digit set. In \cite[Theorem 2.2.7]{H}, von Haeseler gives a fairly simple criterion for a complete digit set to exist, which encompasses interesting non-Abelian examples such as the discrete Heisenberg group.

We need some notation not present in von Haeseler. The \sq\ $\Rep(g)=(v_0(g),v_1(g),\dots)$, where, for $i=0,1,\dots,n$, $v_i(g)=v_i$ are the elements of $V$ appearing in \eqref{eqn1} and $v_i(g)=e$ for $i>n$, will be called the \emph{digit representation} of $g$ (with respect to $(V,H)$). The number $n$ will be called the \emph{order} (of magnitude) of $g$ and denoted by $\mathsf{Ord}(g)$. Additionally, we let $\bar v(e)=(e,e,\dots)$ and $\mathsf{Ord}(e)=0$. Note that $\bar v$ is a bijection between $G$ and $\bigcup_{n\ge0}\bigl(V^{\times n}\times(V\setminus\{e\})\times\{e\}\times\{e\}\times\cdots\bigr)\cup\{\bar v(e)\}$ (where $V^{\times n}$ denotes the Cartesian power). 

We will say that a complete digit set is ``good'' if there exists $n_0$ such that whenever both $\mathsf{Ord}(g)\le n$ and $\mathsf{Ord}(g')\le n$, then we have that $\mathsf{Ord}(gg')\le n+n_0$. This replicates our intuition from decimal expansions that the sum of two 5-digit numbers will have no more than 6 digits.

Let $\Lambda$ be a finite set and $x\in \Lambda^G$. We say that $x$ is $(V,H)$-automatic if there exists a finite set $S$ called \emph{the set of states} with a distinguished \emph{initial state} $s_0\in S$, maps $\alpha_v: S\to S$ indexed by $v\in V$ and a map $\omega:S\to \Lambda$ such that the following conditions hold:
\begin{itemize}
    \item $x(e)=\omega(s_0)$,
    \item For $g\in G\setminus\{e\}$ with representation $g= v_0 H(v_1)H^2 (v_2) \cdots 
    H^{n-1}(v_n)$, as above, we have that
    \begin{equation}\label{eqof}
    x(g)=\omega(\alpha_{v_0} \circ \alpha_{v_1}\dots \circ \alpha_{v_n} (s_0)).
    \end{equation}
\end{itemize}
We note that in \eqref{eqof}, we have reversed the order of the $\alpha_{v_i}$'s from what von Haeseler has. This makes his definitions more consistent with classical notions of automaticity.

While von Haeseler does not compute $\mathsf{Rate}(x)$ for $(V,H)$-automatic $x\in \Lambda^G$, we will do so now under the additional assumption that $V$ is ``good''. 

\begin{prop}\label{76}
Suppose $(G,\|\cdot\|)$ is a finitely generated group equipped with a norm, that $H$ is an expanding group endomorphism with $H(G)$ of finite index and that $V$ is a ``good'' digit set. If $x\in \Lambda^G$ is $(V,H)$-automatic, then $\mathsf{Rate}(x)=0$.
\end{prop}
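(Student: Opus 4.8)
The plan is to bound the number of distinct blocks $x|_{Kg}$ that can occur in $x$, using the automatic structure to show this number grows subexponentially in $|K|$, and then invoke Proposition~\ref{limex}~(1), which lets us compute $\mathsf{Rate}(x)$ as an infimum over finite sets. Concretely, it suffices to exhibit a F\o lner \sq\ $(K_m)_{m\in\N}$ (or even just a \sq\ of finite sets with $|K_m|\to\infty$) along which $\frac{\log\mathbf C_x(K_m)}{|K_m|}\to 0$, since the limit in \eqref{ratex} is independent of the chosen F\o lner \sq\ and equals the infimum over all finite $K$.

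First I would analyze how $x|_{Kg}$ depends on $g$. Fix a finite set $K$. For $h\in K$, the value $x(hg)$ is determined by the state $\alpha_{v_0(hg)}\circ\alpha_{v_1(hg)}\circ\cdots\circ\alpha_{v_n(hg)}(s_0)$, i.e.\ by the digit representation $\Rep(hg)$. The key point is that $\Rep(hg)$ is, up to a bounded-length prefix, determined by $\Rep(g)$ together with a bounded amount of ``carry'' information: since $\mathsf{Ord}(h)\le\mathrm{const}(K)$ for all $h\in K$ and $V$ is ``good'', multiplication by $h$ on the left affects $\Rep(g)$ only in a window of bounded length near the low-order end, and propagates a carry that is itself an element of a bounded set. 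This is the group-theoretic analogue of the familiar fact that multiplying an $N$-digit number by a fixed small constant changes only the lowest few digits plus a bounded carry. I would make this precise by defining, for each $h\in K$, a finite-state transducer (composing the ``left-multiply-by-$h$'' operation with the automaton $(S,s_0,\{\alpha_v\},\omega)$) that reads $\Rep(g)$ digit by digit and outputs $x(hg)$; the number of internal states of such a transducer is bounded by a constant $N_h$ depending only on $h$ (through $\mathsf{Ord}(h)$ and $n_0$) and on $|S|$ and $|V|$.

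Next, running all $|K|$ transducers (one for each $h\in K$) in parallel on the common input $\Rep(g)$, the output tuple $(x(hg))_{h\in K}$ — which is exactly $x|_{Kg}$ modulo the identification of $K$ with its position — is produced by a single finite-state machine whose state space has size at most $\prod_{h\in K}N_h$. But the outputs we care about are produced after reading a prefix of $\Rep(g)$ of length $\mathsf{Ord}(g)+O(1)$, and the relevant observation is subtler: I would instead bound $\mathbf C_x(K)$ by counting the possible ``local configurations'' — the set of digits of $\Rep(g)$ in the bounded window relevant to $K$ together with the finitely many possible carry-states entering that window. Both quantities are bounded by constants depending only on $K$, not exponentially in $|K|$ in a bad way; rather, choosing $K=K_m$ from a F\o lner \sq, the window size and the number of relevant carries grow much more slowly than $|K_m|$ because $H$ is \emph{expanding} (so the digit positions touched by a ball of radius $R$ number only $O(\log R)$, while $|K_m|$ grows polynomially or faster). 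This is where I expect the main obstacle: making rigorous the claim that, for a F\o lner \sq, the set $K_m$ can be ``anchored'' so that all its translates $K_mg$ see only $o(|K_m|)$ many digit-positions' worth of free choices — equivalently, that the automatic structure is genuinely ``low-dimensional'' relative to the group's growth. I would handle this by passing to a convenient F\o lner \sq\ adapted to $H$ (e.g.\ iterated $H$-preimages of a fixed set, or balls in the word norm), using the expanding property quantitatively to bound the number of digit positions met by $K_mg$ uniformly in $g$, and then observing $\mathbf C_x(K_m)\le |S|\cdot|V|^{O(\log|K_m|)}=|K_m|^{O(1)}$, whence $\log\mathbf C_x(K_m)=O(\log|K_m|)=o(|K_m|)$.

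Finally, with the estimate $\frac{\log\mathbf C_x(K_m)}{|K_m|}\to 0$ along this F\o lner \sq\ in hand, Proposition~\ref{limex}~(1) yields $\mathsf{Rate}(x)=0$, which by Theorem~\ref{chara} is equivalent to $x$ being strongly deterministic — though the statement of Proposition~\ref{76} only asks for the former. The ``good digit set'' hypothesis is used precisely to guarantee the carry stays bounded under left multiplication; without it the window relevant to $K_mg$ could grow with $g$ and the parallel transducer argument would break down.
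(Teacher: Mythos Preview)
Your overall strategy --- mimic Cobham's argument by bounding $\mathbf C_x(K_m)$ via the automatic structure and the bounded-carry property of a ``good'' digit set --- is exactly the approach the paper takes. But the execution has a real gap: after correctly abandoning the parallel-transducer count (which gives a useless exponential bound $\prod_{h\in K_m}N_h$), you never make precise what finite data actually determines the block $B_g:=x|_{K_mg}$, and your final estimate $\mathbf C_x(K_m)\le |S|\cdot|V|^{O(\log|K_m|)}$ is asserted rather than derived.

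Here is what is missing. Take $K_m=\{h\in G:\mathsf{Ord}(h)\le m\}$, so $|K_m|\approx|V|^m$ (this is the paper's choice; you do not need $K_m$ to be F\o lner, only that $|K_m|\to\infty$, by the infimum formula in Proposition~\ref{limex}). For $h\in K_m$ and any $g$, split $g=g_{[0,m]}\cdot g_{[m+1,\infty)}$ and use ``good''ness to write
\[
hg=(h\cdot g_{[0,m]})_{[0,m]}\cdot(h\cdot g_{[0,m]})_{[m+1,m+n_0]}\cdot g_{[m+1,\infty)}.
\]
The value $x(hg)$ then factors: the low-order part $(hg)_{[0,m]}$ determines a map $S\to S$, and the high-order part determines an entry state in $S$. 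The crucial observation --- which your sketch gestures at (``carry-states entering that window'') but does not pin down --- is that for \emph{fixed} $g_{[0,m]}$, the high-order part of $hg$ depends on $h$ only through the carry $(h\cdot g_{[0,m]})_{[m+1,m+n_0]}$, which takes at most $|V|^{n_0}$ values. So the entry-state assignment $h\mapsto s_{n-m}(hg)$ is constant on each of at most $|V|^{n_0}$ atoms of a partition of $K_m$, and on each atom takes a value in $S$ determined by $g_{[m+1,\infty)}$ alone. Hence $B_g$ is determined by the pair $\bigl(g_{[0,m]},\ \text{a map from carry-values to }S\bigr)$, giving $\mathbf C_x(K_m)\le |V|^{m+1}\cdot|S|^{|V|^{n_0}}+|V|^{m+n_0}=O(|K_m|)$. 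This is Cobham's count transplanted to $G$. Your phrasing ``bound the number of digit positions met by $K_mg$ uniformly in $g$'' is misleading: that number is \emph{not} bounded in $g$; what is bounded is the number of positions at which $\Rep(hg)$ \emph{varies} with $h\in K_m$, and the relevant finite datum beyond $g_{[0,m]}$ is a map from $|V|^{n_0}$ carry-values to $S$, not a single state --- which is why the correct constant is $|S|^{|V|^{n_0}}$, not the $|S|$ in your asserted bound.
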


\begin{cor}Applying Theorem \ref{chara}, we see that if $G$ is also amenable then any $(V,H)$-automatic $x$ is strongly deterministic.
\end{cor}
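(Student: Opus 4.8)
The plan is to deduce the corollary as a formal consequence of Proposition \ref{76} together with the complexity characterization of strong determinism furnished by the first assertion of Theorem \ref{chara}. First I would observe that the standing hypotheses of this subsection are precisely those of Proposition \ref{76}: the group $G$ is finitely generated and equipped with a norm $\|\cdot\|$, $H$ is an expanding group endomorphism with $H(G)$ of finite index, and $V$ is a ``good'' complete digit set. Since $x\in\Lambda^G$ is assumed to be $(V,H)$-automatic, Proposition \ref{76} applies without modification and yields
$$
\mathsf{Rate}(x)=0.
$$
No new computation enters at this stage; the vanishing of the rate of growth of the complexity of $x$ is imported wholesale from the preceding proposition, whose proof carries the entire combinatorial burden.

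The second ingredient is the amenability hypothesis, and its only function is to bring Theorem \ref{chara} into play. Here I would record the routine verification that a finitely generated amenable group is, in particular, a countable cancellative amenable semigroup: every group is cancellative, finite generation guarantees countability, and amenability is now assumed. Consequently $G$ lies in the class of semigroups for which Theorem \ref{chara} was established. The first equivalence in that theorem asserts that an element of $\Lambda^G$ is strongly deterministic if and only if its rate of growth of complexity equals zero. Combining this with $\mathsf{Rate}(x)=0$ obtained above immediately gives that $x$ is strongly deterministic, which is the desired conclusion.

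There is, in truth, no genuine obstacle to surmount in the corollary itself: the substantive work resides entirely in Proposition \ref{76} (the complexity bound for automatic elements) and in Theorem \ref{chara} (the tiling-based equivalence between strong determinism and subexponential complexity). The one point deserving explicit emphasis is that Proposition \ref{76} made \emph{no} amenability assumption, whereas Theorem \ref{chara} does; amenability is exactly the hypothesis that unlocks the latter, whose proof rests on the theory of tilings of amenable groups. Thus the role of the added assumption ``$G$ is also amenable'' is precisely to license the passage from the complexity-theoretic statement $\mathsf{Rate}(x)=0$ to the dynamical statement of strong determinism, and this is all that the corollary requires.
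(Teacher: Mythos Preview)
Your proposal is correct and follows exactly the approach the paper intends: the corollary is stated without a separate proof because it is an immediate combination of Proposition~\ref{76} (yielding $\mathsf{Rate}(x)=0$) with the first part of Theorem~\ref{chara}, the amenability hypothesis being precisely what makes the latter applicable.
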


\begin{proof}[Proof of Proposition \ref{76}]
In the proof we will essentially mimic the technique of Cobham's classic result \cite{C} on the complexity of automatic sequences over $\mathbb{N}$.

We will use one more notation: for $0\le k\le l$ and $g\in G$ we let
$$
g_{[k,l]}=H^k(v_k(g))H^{k+1}(v_{k+1}(g))\cdots H^l(v_l(g)). 
$$ 
In other words, $g_{[k,l]}$ is the element of $G$ whose digit representation equals the restriction of $\Rep(g)$ to the interval of integers $[k,l]\cap\Z$ (all other digits are set to $e$). Note that for any $k\ge 0$ and $h,g\in G$ with $\mathsf{Ord}(h)\le k$, we have
\begin{gather}
g=g_{[0,k]}\cdot g_{[k+1,\infty)},\label{split}\\
(hg)_{[0,k]}=(h_{[0,k]}\cdot g_{[0,k]})_{[0,k]}.\label{kit}
\end{gather}

\smallskip
By Proposition \ref{limex}, it suffices to prove that 
\[
\lim_{m} \frac{\log \mathbf C_x(K_m)}{|K_m|} =0
\]
for some sequence of finite sets $(K_m)_{m\in\N}$. We will let $K_m$ consist of all $h\in G$ with $\mathsf{Ord}(h)\le m$. Clearly, $|K_m|=|V|^m$. We need to count how many different blocks $B_g\in\Lambda^{K_m}$, given by $B_g(h)=x(hg)$ ($h\in K_m$), will one obtain by varying $g$ over the group $G$. To this end we represent each $g\in G$ as $g_{[0,m]}\cdot g_{[m+1,\infty)}$ (using \eqref{split}). Then $hg = h\cdot g_{[0,m]}\cdot g_{[m+1,\infty)}$. Since both $\mathsf{Ord}(h)$ and $\mathsf{Ord}(g_{[0,m]})$ are bounded by $m$, we have $\mathsf{Ord}(h\cdot g_{[0,m]})\le m+n_0$ (since $V$ is ``good''). Thus,
\begin{gather}
h\cdot g_{[0,m]}=(h\cdot g_{[0,m]})_{[0,m]}\cdot(h\cdot g_{[0,m]})_{[m+1,m+n_0]}, \text{ and}\\
hg=(h\cdot g_{[0,m]})_{[0,m]}\cdot(h\cdot g_{[0,m]})_{[m+1,m+n_0]}\cdot g_{[m+1,\infty)}.\label{rep}
\end{gather}
In the main counting argument we will ignore the elements $g$ for which $\mathsf{Ord}(g)\le m+n_0$. They belong to $|K_{m+n_0}|$ and thus can produce at most $|K_{m+n_0}|=|V|^{m+n_0}$ different blocks $B_g$. (We will add this number at the end.) For the remaining elements $g$ it is seen from the formula \eqref{eqof} that for any $h\in K_m$, $x(hg)$ is determined by two objects:
\begin{enumerate}
	\item[(a)] the element $s_{n-m}(hg)=\alpha_{v_{m+1}(hg)}\circ\alpha_{v_{m+2}(hg)}\dots\circ 
	\alpha_{v_n(hg)}(s_0)$, where $n$ stands for $\mathsf{Ord}(hg)$,
	\item[(b)] the function $\alpha_{v_0(hg)}\circ\alpha_{v_1(hg)}\dots\circ\alpha_{v_m(hg)}:S\to 
	S$.
\end{enumerate}
The function in (b) is determined by $(hg)_{[0,m]}$.
Since $\mathsf{Ord}(h)\le m$, we have, by \eqref{kit}, that $(hg)_{[0,m]}=(h\cdot g_{[0,m]})_{[0,m]}$ (which is the first item in \eqref{rep}). Since $G$ is a group and $V$ is ``good'', we know that for all $g\in G\setminus K_{m+n_0}$, we have $\mathsf{Ord}(hg)\ge m$, so that the function in (b) is well-defined. If $n=\mathsf{Ord}(hg)=m$, then in (a) we define
$s_0(hg)=s_0$. Until declared otherwise, we restrict out attention to elements $g\in G\setminus{K_{m+n_0}}$ with a fixed restriction $g_{[0,m]}$. The element $s_{n-m}(hg)$ in~(a) depends on $(hg)_{[m+1,\infty)}=(h\cdot g_{[0,m]})_{[m+1,m+n_0]}\cdot g_{[m+1,\infty)}$ (the second and third items in \eqref{rep}). We classify the elements $h\in K_m$ according to the value of $(h\cdot g_{[0,m]})_{[m+1,m+n_0]}$. In this manner, we have partitioned $K_m$ into $|V|^{n_0}$ subsets. We call this partition $\P$ and we restrict our attention to one atom $P$ of $\P$. On $P$, $s_{m-n}(hg)$ depends only on $g_{[m+1,\infty)}$. Since $s_{m-n}(hg)\in S$, all elements $g$ (with the given restriction $g_{[0,m]}$) can be classified into $|S|$ classes according to the value $s_{n-m}(hg)$ on $P$. That is to say, all elements $g$ in one class determine the same (constant) assignment $h\mapsto s_{n-m}(hg)$ on $P$. Thus, for every $g$ (still with the restriction $g_{[0,m]}$), the assignment $h\mapsto s_{n-m}(gh)$ is one of $|S|^{|V|^{n_0}}$ functions from $K_m$ to $S$, constant on the atoms of $\P$. Taking into account that there are $|V|^m$ choices for $g_{[0,m]}$ and adding $|V|^{m+n_0}$ for the elements $g\in K_{m+n_0}$, we obtain that there are not more than $|V|^m(|S|^{|V|^{n_0}}+|V|^{n_0})$ blocks $B_g$ as $g$ ranges over $G$. Clearly, this number estimates $\mathbf C_x(K_m)$ as the function $\omega:S\to\Lambda$ can only lower the complexity. Taking logarithms and dividing by $|K_m|=|V|^m$, we get a \sq\ which tends to $0$.
\end{proof}

\subsection{Strong determinism via automatic sets, II} We consider a second variation of automatic sequences, this time applied to the semigroup  $(\N,\times)$. To differentiate between natural numbers viewed as elements of the above multiplicative semigroup and the same natural numbers viewed additively (as they were up to now), the elements of $(\N,\times)$ will be denoted by capital letters $N,M$. This semigroup is isomorphic to the additive semigroup $\ell_1(\N_0)$ of all finitely supported $\N_0$-valued \sq s, simply by mapping each $N\in\N$ to the sequence of powers for each prime in the prime factor decomposition of $N$. That is, $N=\prod_{j\in\N} p_j^{e_j}$ corresponds to the sequence $(e_j)_{j\in\N}$. We will let $\pi$ denote the isomorphism from $(\N,\times)$ to $(\ell_1(\N_0),+)$, and when needed, use $e_j(N)$ instead of $e_j$.

We may then fix an integer $b\ge 2$, let $V= \{0,1,\dots,b-1\}$ and consider a projection $\phi$ from $\ell_1(\N_0)$ to $V^*$ (the collection of all finite words over the alphabet $V$) that acts by concatenating the base-$b$ expansions $\bar e_j$ of the numbers $e_j$, in decreasing order ($e_j=0$ being represented by the empty string), so that $\phi((e_j)_{j\in\N}) = \dots \bar e_3\bar e_2\bar e_1$. For example, if $b=10$, then $\phi(7^{903}5^{2}3^{0}2^{1})=90321$. Note that since all but finitely many numbers $e_j$ must be $0$, the string resulting from $\phi$ will always be finite. For convenience, we will abuse notation and denote $\phi\circ \pi$ by just $\phi$. If $\phi(N)=v_nv_{n-1}\dots v_0$ with $v_n\neq 0$, then we will say that the order of $N$, denoted by $\mathsf{Ord}(N)$, is $n$.

\begin{defn}
Let $\Lambda$ be a finite alphabet and let $x\in \Lambda^\N$. We say that $x$ is \emph{automatic} (with respect to $(\N,\times)$ and $b$) if there exists a finite set $S$ of states with a distinguished initial state $s_0\in S$, maps $\alpha_v: S\to S$ indexed by $v\in V$ and a map $\omega:S\to\Lambda$ such that
\begin{itemize}
    \item For $N\in\N$ with $\phi(N)=v_nv_{n-1}v_{n-2}\dots v_0$, ($v_i\in V$, 
    $i=0,1,\dots,n$, $n=\mathsf{Ord}(N)$), we have that
    \begin{equation}\label{eq:omegadef}
    x(N)=\omega(\alpha_{v_0} \circ \dots \circ \alpha_{v_n} (s_0)). 
    \end{equation}
\end{itemize}
\end{defn}

\begin{prop}\label{seven}
If $x\in\Lambda^\N$ is automatic with respect to $(\N,\times)$ and $b$, then $x$ is strongly deterministic.
\end{prop}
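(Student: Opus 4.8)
The plan is to verify the complexity criterion of Theorem~\ref{chara}. Since $(\N,\times)$ is isomorphic (via $\pi$) to $(\ell_1(\N_0),+)$, a countable cancellative abelian — hence amenable — semigroup, it suffices to prove that $\mathsf{Rate}(x)=0$. By Proposition~\ref{limex}(1) we have $\mathsf{Rate}(x)=\inf_K\tfrac{\log\mathbf C_x(K)}{|K|}$, the infimum being over all finite $K\subset\N$, so it is enough to exhibit finite sets $K_m$ with $\tfrac{\log\mathbf C_x(K_m)}{|K_m|}\to0$. I will take
\[
K_m=\Bigl\{\,\textstyle\prod_{j=1}^{m}p_j^{a_j}\ :\ 0\le a_j\le m\,\Bigr\},\qquad |K_m|=(m+1)^m,
\]
and estimate $\mathbf C_x(K_m)=|\{B_N:N\in\N\}|$, where $B_N\in\Lambda^{K_m}$ is the block $B_N(H)=x(HN)$.

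The first step is to rewrite the automaton output in a ``coordinate-wise'' form. The key observation is that in $(\N,\times)$ one has $e_j(HN)=e_j(H)+e_j(N)$ for every prime index $j$, and — crucially — carrying in base $b$ never propagates between different exponent positions; it occurs only inside the base-$b$ expansion $\bar e_j$ of a single exponent. Hence, for $H\in K_m$ (so $e_j(H)=0$ for $j>m$ and $e_j(H)\le m$ for $j\le m$), the string $\phi(HN)$ splits as a ``high'' part $\bar e_J(N)\bar e_{J-1}(N)\cdots\bar e_{m+1}(N)$, depending only on $N$, followed by a ``low'' part $\bar w_m\bar w_{m-1}\cdots\bar w_1$ with $w_j=e_j(H)+e_j(N)$ (zero exponents contributing the empty string, so $J$ may be any index $\ge m$). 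Running the automaton, most significant digit first, the high part carries $s_0$ to a state $t_N\in S$ that depends only on $N$, after which one obtains
\[
x(HN)=\omega\bigl(\Psi_{w_1}\circ\Psi_{w_2}\circ\cdots\circ\Psi_{w_m}(t_N)\bigr),
\]
where $\Psi_k\in S^S$ denotes ``apply the maps $\alpha_d$ for the base-$b$ digits $d$ of $k$, most significant first'' (with $\Psi_0=\mathrm{id}$).

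The second step is the count. The displayed formula shows that $B_N$ is completely determined by the finite datum consisting of $t_N\in S$ together with, for each $j\in\{1,\dots,m\}$, the map $\gamma_j\colon\{0,1,\dots,m\}\to S^S$, $\gamma_j(a)=\Psi_{a+e_j(N)}$: indeed, for $H=\prod_{j\le m}p_j^{a_j}\in K_m$ we recover $B_N(H)=\omega\bigl((\gamma_1(a_1)\circ\cdots\circ\gamma_m(a_m))(t_N)\bigr)$ from that datum alone. The number of possible data is at most $|S|\cdot\bigl(|S|^{|S|}\bigr)^{(m+1)m}=|S|^{1+|S|m(m+1)}$, so $\log_2\mathbf C_x(K_m)\le\bigl(1+|S|m(m+1)\bigr)\log_2|S|=O(m^2)$. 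Dividing by $|K_m|=(m+1)^m$ gives $\tfrac{\log\mathbf C_x(K_m)}{|K_m|}\to0$, whence $\mathsf{Rate}(x)=0$, and Theorem~\ref{chara} yields that $x$ is strongly deterministic.

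I do not expect a genuine obstacle. The only point requiring care is the bookkeeping of the variable block lengths and carry digits inside $\phi(HN)$, which at first glance could seem to blow up the complexity; it does not, because the ``no carry between exponents'' remark confines the dependence on $H$ to the $m$ exponent coordinates of index $\le m$, while all of the (possibly large, possibly numerous) exponents of $N$ of index $>m$ are harmlessly absorbed into the single state $t_N$. Since $|K_m|=(m+1)^m$ grows super-exponentially in $m$ whereas the relevant amount of data is only $\exp\!\bigl(O(m^2)\bigr)$, even this very crude estimate is far more than enough; no sharper combinatorial analysis is needed.
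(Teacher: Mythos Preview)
Your proof is correct and follows the same overall strategy as the paper --- apply the complexity criterion of Theorem~\ref{chara} via Proposition~\ref{limex}(1) by exhibiting a sequence of finite sets with $\tfrac{\log\mathbf C_x(K_m)}{|K_m|}\to0$ --- but the two executions diverge in an instructive way.

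The paper restricts to one-dimensional slices $K_m=\{2^{e_1}:b^{m+1}\le e_1<2b^{m+1}\}$ lying entirely inside the powers of a single prime, and then closely imitates Cobham's classical argument: the carry when adding $e_1(M)$ to $e_1(N)$ stays within the $\bar e_1$-block and is controlled by splitting $K_m$ into two pieces according to whether the $(m{+}1)$st base-$b$ digit flips. This yields the sharp bound $\mathbf C_x(K_m)\le|S|^2|K_m|$, i.e.\ linear complexity. Your approach instead uses the $m$-dimensional boxes $K_m=\{\prod_{j\le m}p_j^{a_j}:0\le a_j\le m\}$ and exploits more directly the product structure of $(\N,\times)$: since $\phi$ concatenates the blocks $\bar e_j$ without interaction, multiplication by $H$ only perturbs the $m$ low blocks, and each perturbation is captured by a single map $\gamma_j\in(S^S)^{\{0,\dots,m\}}$. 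The resulting bound $\log\mathbf C_x(K_m)=O(m^2)$ is much cruder, but is trivially swallowed by the super-exponential growth of $|K_m|=(m+1)^m$. Your route avoids the intra-block carry bookkeeping entirely (by packaging each $w_j$ into $\Psi_{w_j}$), at the cost of a looser count; the paper's route gives a tighter complexity estimate, at the cost of tracking the carry explicitly. Both land the same conclusion.
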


\begin{rem}Note that since $(\N,\times)$ is not finitely generated, this example is distinct from the example of the previous section.
\end{rem}

\begin{proof}[Proof of Proposition \ref{seven}]
We will apply Theorem \ref{chara} and Proposition \ref{limex}, so that it suffices to show, for some sequence of sets $(K_m)_{m\in\N}$, that 
\[
\lim_{m\to\infty} \frac{\log \mathbf C_x(K_m)}{|K_m|}=0.
\]
For this we will again mimic the method of Cobham \cite{C}. 

We will require an additional definition. If $\phi(N)=v_n v_{n-1}\dots v_0$ with $n=\mathsf{Ord}(N)$ and if $k,l\in\N$, $k\le l$, then $\phi_{[k,l]}(N) = v_lv_{l-1}\dots v_k$, where if $i>n$ then we assume $v_i$ represents the empty string. 

We will let $K_m\subset \mathbb{N}$ consist of all powers of 2 of the form $M=2^{e_1}$ with $e_1\in [b^{m+1},2b^{m+1}-1]$. Consider $K_m N$ for some $N\in\N$. For any $M\in K_m$, the value of $x(M\!N)$ is determined by two objects:
\begin{enumerate}
	\item[(a)] the element $s_{n-m}(M\!N)=\alpha_{v_{m+1}(M\!N)}\circ\alpha_{v_{m+2}(M\!N)}\dots\circ 
	\alpha_{v_n(M\!N)}(s_0)$, where $n$ stands for $\mathsf{Ord}(M\!N)$, 
	\item[(b)] the function $\alpha_{v_0(M\!N)}\circ\alpha_{v_1(M\!N)}\dots\circ\alpha_{v_m(M\!N)}:S\to
	S$.
\end{enumerate}
By the definition of $K_m$, we always have that $\mathsf{Ord}(M\!N)\ge m$ for all $M\in K_m$, so the functions $\alpha_{v_i(M\!N)}$ in (b) above are well-defined.
If $n=\mathsf{Ord}(M\!N)=m$ then in (a) we let $s_0(M\!N)=s_0$.
Now, let $e_0\in [0,b^{m+1}-1]$ be the integer which is congruent to $e_1(N)$ modulo $b^{m+1}$, and let $I_m^1 := [b^{m+1},2b^{m+1}-e_0-1]$ and $I_m^2 := [2b^{m+1}-e_0-1,2b^{m+1}-1]$. Partition $K_m$ into $K_m^1$ and $K_m^2$ so that $M\in K_m^\iota$ if $M=2^{e_1}$ with $e_1\in I_m^\iota$, $\iota=1,2$. This partition is important since the digits in the base-$b$ expansion of of $e_1(M\!N)$ for $M\in K_m^\iota$ can only differ in their last $m+1$ places ($\iota=1,2$). Since  we also have that all elements in $\pi(K_m N)$ agree on $e_j$ for $j\ge 2$, we know that $\phi_{[m+1,\infty)}$ (and hence $s_{n-m}$) is constant on each of the two sets $K_m^\iota N$. Thus, for every $N$ with the same $e_0$, the assignment $M\mapsto s_{n-m}(M\!N)$ is one of $|S|^2$ functions from $K_m$ to $S$. Moreover, since $e_1(M)\ge b^{m+1}$ for each $M\in K_m$, we have that $\phi_{[0,m]}(M\!N)$ is dependent only on $M$ and $e_0$, so the assignment $M\mapsto \alpha_{v_0(M\!N)}\circ\alpha_{v_1(M\!N)}\dots\circ\alpha_{v_m(M\!N)}$ is completely determined by $e_0$. Since there are $b^{m+1}$ possibilities for $e_0$, we conclude that there are at most $|S|^2 b^{m+1}=|S|^2|K_m|$ possibilities for blocks $B_N\in \Lambda^{K_m}$, given by $B_N =x(M\!N)$ ($M\in K_m$). As before, this shows that $\mathbf C_x(K_m)$ grows at most linearly in $|K_m|$, so taking its logarithm and dividing by $|K_m|$ completes the proof.
\end{proof}

As an explicit example, with $b=2$, we may consider a ``multiplicative Thue-Morse sequence" $x\in \{0,1\}^\N$ defined by the rule: $x(N)=1$ if and only if $\phi(N)$ has an odd number of $1$'s. This is automatic by considering $S=\{0,1\}$, $s_0=0$, $\alpha_0(s)=s$, $\alpha_1(s)=1-s$, and $\omega(s)=s$, which is the same automaton set-up used for the classical Thue-Morse sequence (see, for example, \cite{ARS}).

\subsection{$\F$-determinism via generalizations of $k$-free numbers} 
As we have already mentioned, the set of square-free numbers is completely deterministic (i.e.\ $\F$-deterministic for the classical F\o lner \sq\ $F_n=\{1,2,\dots,n\}$ in $\N$) but not strongly deterministic. There are several generalizations of this fact, some of them still concerning $\N$, some, $\Z^n$. 
\begin{enumerate} 
	\item Consider any subset $\B\subset\N$. A number $n$ is \emph{$\B$-free} if no $b\in\B$ 
	divides $n$. In \cite{ALR} it is shown that if $\B$ satisfies Erd\H os's condition: $\B$ is infinite, consists of pairwise relatively prime numbers and satisfies $\sum_{b\in\B}\frac1b<\infty$, then the set of $\B$-free numbers is completely deterministic but not strongly deterministic.
	
	\item Consider the additive group of $\mathcal{O}_K$, the ring of integers of some algebraic extension of $\mathbb{Q}$.  An integer $a\in \mathcal{O}_K$ is said to be $k$-free if the principal ideal $(a)$ generated by $a$ does not contain the $k$th power of any prime ideal. The set of $k$-free integers is $\F$-deterministic for $\F$ being the F\o lner sequence of cubes, centered at the origin, with sides of size $2n+1$. This is a consequence of Corollary 1.2 in \cite{CV}.
    
  \item Let $\Lambda$ be a lattice on $\mathbb{R}^m$ ($m\in\N$) equipped with addition and for $\ell\neq 0$ in $\Lambda$ define its $k$-content $c_k(\ell)$ as the largest integer $c$ such that $\ell\in c^k \Lambda$. (Extend $c_k$ to $0$ by defining $c_k(0)=\infty$.) The set of $k$-free points $V=V(\Lambda,k)$ is the set of points with $c_k(\ell)=1$. This set is again $\F$-deterministic for $\F=(F_n)_{n\in\N}$ being the F{\o}lner sequence of balls of radius $n$ centered at the origin, intersected with $\Lambda$. This follows from the work of Pleasants and Huck \cite{PH}: they do not explicitly state that the characteristic function of $V(\Lambda,k)$ is $\F$-generic, but this follows easily from their work, and they show that the entropy for the resulting measure is $0$. They also show that the rate of growth of the complexity (what they call the patch-counting entropy) is non-zero, so this is an example of an $\F$-deterministic set that is not strongly deterministic. 
\begin{rem}
Since the lattice $\Lambda$ in example (3) is the image of $\Z^m$ by a linear change of coordinates, the set $V(\Lambda,k)$ is a linear image of the set of $k$-free elements in $\Z^m$, which equals 
$$
\{(k_1,k_2,\dots,k_m)\in\Z^m: \mathsf{gcd}(k_1,k_2,\dots,k_m) \text{ is $k$-free in $\N$}\}.
$$
The change of coordinates results in changing the F\o lner \sq\ consisting of balls in $\Lambda$ to ellipsoids in $\Z^m$.
\end{rem}
\end{enumerate}


\begin{thebibliography}{BDM}

\bibitem[ALR]{ALR}H. Abdalaoui, M. Lema\'nczyk, T. de la Rue, \emph{A dynamical point of view on the set of $B$-free integers}, International Mathematics Research Notices {\bf 16} (2015), 7258--7286.

\bibitem[ARS]{ARS}J.-P. Allouche, N. Rampersad, J. Shallit, \emph{Periodicity, repetitions, and orbits of an automatic sequence}, Theoretical Computer Science {\bf 410} (2009), 2795--2803. 

\bibitem[AHG]{AHG}  L. Auslander, F. Hahn, L. Green, \emph{Flows on Homogeneous Spaces}, Princeton University Press, Princeton, New Jersey, 1963.

\bibitem[BDM]{BDM}V. Bergelson, T. Downarowicz and M. Misiurewicz, \emph{A fresh look at the notion of normality}, Annali della Scuola Normale Superiore di
Pisa, Classe di Scienze (to appear);  \url{https://www.math.iupui.edu/*mmisiure/normal.pdf}, 2019.


\bibitem[BL]{BL} V. Bergelson and A. Leibman, \emph{Distribution of values of bounded generalized polynomials}, Acta Math. {\bf 198} (2007), 155--230.

\bibitem[BLS]{BLS} V. Bergelson, A. Leibman, and Y. Son, \emph{Joint ergodicity along generalized linear functions}, Ergod. Theory Dynam. Syst. \textbf{36} (2016), 2044--2075. 

\bibitem[BV]{BV} V. Bergelson and J. Vandehey, \emph{A hot spot proof of the generalized Wall theorem}, American Mathematical Monthly {\bf 126} (2019), 876--890.

\bibitem[BH]{BH} J. F. Berglund and N. Hindman \emph{Filters and the Weak Almost Periodic Compactification of a Discrete Semigroup}, Trans. Amer. Math. Soc. {\bf 284} (1984), 1--38. 

\bibitem[CV]{CV} F. Cellarosi and I. Vinogradov, \emph{Ergodic properties of $k$-free integers in number fields},  J. Modern. Dynamics {\bf7} (2013), 461--488.

\bibitem[CP]{CP} A. H. Clifford and G. B. Preston, \emph{The algebraic theory of semigroups}, American Mathematical Society, 1961.

\bibitem[C]{C} A. Cobham, \emph{Uniform tag sequences}, Theory Comput. Syst. {\bf6} (1972), 164--192.

\bibitem[CE]{CE} A. H. Copeland and P. Erd\H{o}s, \emph{Note on normal numbers}, Bull. Am. Math. Soc., \textbf{52} (1946), 857--860.

\bibitem[DG]{DG}K. Deleeuw and I. Glicksberg, \emph{Applications of almost periodic compactifications}, Acta mathematica {\bf 105} (1961), 63--97.

\bibitem[D]{D} Downarowicz, T. \emph{Entropy in dynamical systems}, New Mathematical Monographs 18, Cambridge Univ. Press, 2011.

\bibitem[DFR]{DFR} T. Downarowicz, B. Frej, and P. Romagnoli, \emph{Shearer's inequality and Infimum Rule for Shannon entropy and topological entropy}, Contemporary Math. {\bf 669} (2016), 63--75. 

\bibitem[DZ]{DZ} T. Downarowicz, and G. Zhang, \emph{Symbolic extensions of amenable group actions and the comparison property}, Mem. Amer. Math. Soc. (to appear);  \url{https://arxiv.org/pdf/1901.01457.pdf}, 2019.

\bibitem[DHZ]{DHZ}
T.~Downarowicz, D.~Huczek and G.~Zhang,
\emph{Tilings of amenable groups}, J. Reine Angew. Math. {\bf 747} (2016), 277--298.

\bibitem[DS]{DS}
T.~Downarowicz, J. Serafin, \emph{Possible entropy functions}, Israel J. Math.
{\bf 135} (2003), 221--250.

\bibitem[El]{El} R. Ellis, \emph{Distal transformation groups}, Pacific J. Math. {\bf 8} (1958), 401--405.

\bibitem[FH]{FH} B. Frej and D. Huczek, \emph{Minimal models for actions of amenable groups},
Groups, Geometry and Dynamics {\bf 11} (2017), 567--583.

\bibitem[Fu1]{Fu1} H. Furstenberg, \emph{The structure of distal flows}, Amer. J. Math. {\bf 83} (1963), 477--515.

\bibitem[Fu2]{Fu2} H. Furstenberg, \emph{Disjointness in Ergodic Theory, Minimal Sets, and a
Problem in Diophantine Approximation}, Mathematical Systems Theory {\bf 1} (1967), 1--49.

\bibitem[H]{H}
F. von Haeseler, \emph{Automatic sequences}, De Gruyter Expositions in Mathematics 36, de Gruyter 2003.

\bibitem[K]{Kamae} T. Kamae, \emph{Subsequences of normal sequences}, Israel J. Math. {\bf 16} (1973), 121--149.

\bibitem[Le1]{Le1}A. Leibman, \emph{Pointwise convergence of ergodic averages for polynomial sequences of translations on a nilmanifold}, Ergod. Theory Dynam. Syst. {\bf 25} (2005), 201--213.

\bibitem[Le2]{Le2} A. Leibman, \emph{Nilsequences, null-sequences, and multiple
correlation sequences}, Ergod. Theory. Dynam. Syst. {\bf 35} (2015), 176--191.

\bibitem[Mi]{Mi} L. Mirsky, \emph{Note on an asymptotic formula connected with $r$-free integers}, The Quarterly Journal of Mathematics {\bf 18} (1947), 178--182.

\bibitem[Pa]{Pa}
W. Parry, \emph{Zero entropy of distal and related transformations}, In: Topological Dynamics, J. Auslander and W. Gottschalk, eds., Benjamin, New York, 1968, 383--389.

\bibitem[P]{P} A. L. T. Paterson, \emph{Amenability}, Mathematical Surveys and Monographs 29. Am. Math. Soc., Providence, RI, 1988.

\bibitem[PH]{PH}
P. A. Pleasants and C. Huck, \emph{Entropy and Diffraction of the $k$-Free Points in $n$ -Dimensional Lattices}, Discrete Comput. Geom. {\bf 50} (2013), 39--68.

\bibitem[R]{Rauzy} G. Rauzy, \emph{Nombres normaux et processus d\'{e}terministes}, Acta Arithmetica, {\bf29} (1976), 211--225.

\bibitem[RN]{RN} C. Ryll-Nardzewski, \emph{Generalized random ergodic theorems and weakly almost periodic functions}, Bull. Acad. Polon. Sci., S\'{e}r. Sci. Math. Astronom. Phys. {\bf 10} (1962), 271--275.

\bibitem[Sa]{Sa}
P.~Sarnak, \emph{Three lectures on the M\"obius function randomness and dynamics}; \url{https://publications.ias.edu/sites/default/files/MobiusFunctionsLectures(2).pdf}, 2010.


\bibitem[Wa]{Wa}
D. D. Wall, \emph{Normal numbers}, Ph.D. thesis, University of California, Berkeley, 1949.

\bibitem[WK]{WK}
X. Wang and T. Kamae, \emph{Selection rules preserving normality}, Israel J. Math. {\bf 232}
(2019), 427--442.

\bibitem[W1]{W1}
B.~Weiss, \emph{Normal sequences as collectives}, Proc. Syrup. on Topological Dynamics and
Ergodic Theory {\bf 95}, Univ. of Kentucky, 1971.

\bibitem[W2]{W2}
B.~Weiss, \emph{Single orbit dynamics}, CBMS Regional Conference Series in Mathematics 95, Amer. Math. Soc., 2000.

\bibitem[Ya]{Ya}
K.~Yan, \emph{Conditional entropy and fiber entropy for amenable group actions}, J. Diff. Eq. {\bf 259} (2015), 3004--3031.


\end{thebibliography}
\end{document}